\newtheorem{theorem}{Theorem}[section]
\newtheorem{lemma}[theorem]{Lemma}
\newtheorem{corollary}[theorem]{Corollary}
\newtheorem{proposition}[theorem]{Proposition}
\newtheorem{conjecture}[theorem]{Conjecture}
\theoremstyle{definition}
\newtheorem{definition}[theorem]{Definition}
\newtheorem{notation}[theorem]{Notation}
\newtheorem{remark}[theorem]{Remark}
\newtheorem{example}[theorem]{Example}
\newtheorem{question}{Question}
\def\C{{\mathbb C}}
\def\G{{\mathbb G}}
\def\N{{\mathbb N}}
\def\P{{\mathbb P}}
\def\Q{{\mathbb Q}}
\def\R{{\mathbb R}}
\def\Z{{\mathbb Z}}
\def\cC{{\mathcal C}}
\def\cD{{\mathcal D}}
\def\cE{{\mathcal E}}
\def\cF{{\mathcal F}}
\def\cI{{\mathcal I}}
\def\cJ{{\mathcal J}}
\def\cL{{\mathcal L}}
\def\cM{{\mathcal M}}
\def\cN{{\mathcal{N}}}
\def\cO{{\mathcal{O}}}
\def\cU{{\mathcal U}}
\def\cX{{\mathcal X}}
\def\cY{{\mathcal Y}}
\def\cZ{{\mathcal Z}}
\def\Q{{\mathbb{Q}}}
\def\G{{\mathbb{G}}}
\def\fg{{\mathfrak g}}
\def\fh{{\mathfrak h}}
\def\fp{{\mathfrak p}}
\def\fsp{{\mathfrak sp}}
\def\sl{\operatorname{SL}}
\def\so{\operatorname{SO}}
\def\sp{\operatorname{Sp}}
\def\eps{\varepsilon}
\def\lra{\longrightarrow}
\def\ra{\rightarrow}
\def\lra{\longrightarrow}
\def\rat{\dashrightarrow}
\def\hs{\hspace{0.1 em}}
\def\operatorname#1{\mathop{\rm #1}\nolimits}
\def\Proj{\operatorname{Proj}}
\def\Aut{\operatorname{Aut}}
\def\Chow{\operatorname{Chow}}
\def\Exc{\operatorname{Exc}}
\def\Hom{\operatorname{Hom}}
\def\Pic{\operatorname{Pic}}
\def\Sing{\operatorname{Sing}}
\def\Sym{\operatorname{Sym}}
\def\Spec{\operatorname{Spec}}
\def\Sym{\operatorname{Sym}}
\def\codim{\operatorname{codim}}
\def\id{\operatorname{id}}
\def\rk{\operatorname{rk}}
\def\supp{\operatorname{supp}}
\def\det{\operatorname{det}}
\def\coker{\operatorname{coker}}
\def\rat{\operatorname{RatCurves}}
\def\sga{{\langle}}
\def\sgc{{\rangle}}
\def\qed{\hspace{\fill}$\rule{2mm}{2mm}$}
\def\NE{{\operatorname{NE}}}
\def\Nef{{\operatorname{Nef}}}
\def\Nu{{\operatorname{N_1}}}
\newcommand{\cNE}[1]{\overline{\NE}(#1)}
\def\ev{{\operatorname{ev}}}
\def\tr{\operatorname{tr}}
\def\ad{\operatorname{ad}}
\def\Gl{\operatorname{GL}}
\newcommand{\sgn}{\operatorname{sgn}}
\newcommand{\Chi}{\ensuremath \raisebox{2pt}{$\chi$}}
\begin{document}
%\pagewiselinenumbers

\title[A survey on the Campana-Peternell Conjecture]{A survey on the Campana-Peternell Conjecture}

\author[R. Mu\~noz]{Roberto Mu\~noz}
\address{Departamento de Matem\'atica Aplicada, ESCET, Universidad
Rey Juan Carlos, 28933-M\'ostoles, Madrid, Spain}
\email{roberto.munoz@urjc.es}
\thanks{First author partially supported by the Spanish government project MTM2012-32670. Second author were partially supported by PRIN project ``Geometria delle variet\`a algebriche'' and the Department of Mathematics of the University of Trento. Third author supported by the Korean National Researcher Program 2010-0020413 of NRF. Fourth author partially supported by JSPS KAKENHI Grant Number 26800002. Fifth author supported  by Polish NCN grant 2013/08/A/ST1/00804.This paper was written during the period in which the third author was a Visiting Researcher at the Korea Institute for Advanced Study (KIAS). He would like to thank this institution for its support and hospitality.}

\author[G. Occhetta]{Gianluca Occhetta}
\address{Dipartimento di Matematica, Universit\`a di Trento, via
Sommarive 14 I-38123 Povo di Trento (TN), Italy} 
%\thanks{Second and third author were partially supported by PRIN project ``Geometria delle variet\`a algebriche'' and the Department of Mathematics of the University of Trento.}
\email{gianluca.occhetta@unitn.it}

\author[L.E. Sol\'a Conde]{Luis E. Sol\'a Conde}
\address{Korea Institute for Advanced Study, 85 Hoegiro, Dongdaemun-gu, Seoul, 130--722, Republic of Korea}
\email{lesolac@gmail.com}
%\thanks{Third author supported by the National Researcher Program 2010-0020413 of NRF}

\author[K. Watanabe]{Kiwamu Watanabe}
\address{Course of Mathematics, Programs in Mathematics, Electronics and Informatics,
Graduate School of Science and Engineering, Saitama University.
Shimo-Okubo 255, Sakura-ku Saitama-shi, 338-8570 Japan}
\email{kwatanab@rimath.saitama-u.ac.jp}
%\thanks{Fourth author partially supported by JSPS KAKENHI Grant Number 26800002.}

\author[J. Wi\'sniewski]{Jaros\l{}aw A. Wi\'sniewski}
\address{Instytut Matematyki UW, Banacha 2, 02-097 Warszawa, Poland}
\email{J.Wisniewski@uw.edu.pl} 
%\thanks{Fifth author supported  by Polish NCN grant  2013/08/A/ST1/00804.}
  %\thanks{This collaboration has been supported
  %by CIRM at the University of Trento and WCMCS at the University of
  %Warsaw.}

\subjclass[2010]{Primary 14J45; Secondary 14E30, 14M17}

\begin{abstract}
In 1991 Campana and Peternell proposed, as a natural algebro-geometric extension of Mori's characterization of the projective space, the problem of classifying the complex projective Fano manifolds whose tangent bundle is nef, conjecturing that the only varieties satisfying these properties are rational homogeneous. In this paper we review some background material related to this problem, with special attention to the partial results recently obtained by the authors.
\end{abstract}

\maketitle

\tableofcontents

%!TEX root = CPconjecture.tex

\section{Introduction}\label{sec:intro}

%%%%%%%%%%%%%%%%%%%%%%%%%%%%%%%%%%%%%%%%%%%%%%%%%%%%%%%
Throughout this paper, unless otherwise stated, we will work in the
framework of smooth complex projective varieties.  Any variety $X$
comes naturally equipped with the sheaf of its K\"ahler differentials
$\Omega_X$ and its tensor algebra. Thus, properties of the objects
related to differential forms can be used for setting up the
classification of algebraic varieties. The fundamental example is the
classical trichotomy of smooth algebraic curves which are divided into
three unequal classes depending on the global properties of the sheaf
of differential forms or, dually, of their tangent bundle. The most
obvious property differentiating these classes is the sign of the
degree of the tangent bundle, equal to the Euler characteristic for
curves defined over complex numbers, which implies fundamentally
different behaviour of other invariants of the underlying variety.

This natural idea is extended to higher dimensions in a number of
ways. Firstly, one considers the canonical divisor $K_X$ associated to
$\det \Omega_X$ and pluricanonical systems $|mK_X|$, for $m>0$, which
give rise to the definition of Kodaira dimension of a
variety. Secondly, the numerical properties of the canonical divisor
(provided it is ${\mathbb Q}$-Cartier), that is the sign of
intersection of $K_X$ with curves on $X$, are fundamental for the
minimal model program in which the classification of higher
dimensional varieties is modelled on the trichotomy for
curves. Clearly, the situation is by far more complicated because the
higher dimensional varieties can be built from lower dimensional ones
on which $K_X$ may have different behaviour. In this scheme of
classification the {\em building blocks} are those varieties on which
either $K_X$ or $-K_X$ is positive, or it is numerically
trivial. Here, the notion of positivity of a line bundle, or more
generally, of a ${\mathbb Q}$-Cartier divisor, is generalized to
higher dimensions by ampleness; see the classical book
\cite{Hartshorne-ample} for discussion of the notion of ampleness, its
extensions and analytic counterparts.

It is plausible to expect that once the sign of the canonical divisor
is fixed a more refined classification can be achieved by
investigating the structure of the tangent sheaf $T_X$ of $X$. For
example, the celebrated theorem of Beauville, \cite{Beauville}, asserts
that complex K\"ahler manifolds with $K_X$ numerically trivial are, up
to a finite \'etale cover, products of complex tori, Calabi-Yau
varieties and hyper-K\"ahler (or irreducible symplectic) varieties.
Beauville's theorem is an incarnation of a result of de Rham
about decomposition of Riemannian varieties with respect to their
holonomy groups. The theorem of de Rham asserts that the structure on
the tangent bundle resulting from the action of holonomy translates to
the global structure of the variety.

The varieties for which $-K_X$ is ample are called Fano. Due to the
result of Campana, \cite{Campana}, and Koll\'ar, Miyaoka and Mori,
\cite{KMM}, we know that there is only a finite number of deformation
types of smooth Fano varieties in every dimension. They can be studied
well in the framework of the minimal model program, yet their
classification is a challenging problem especially in the case when
their Picard group is ${\mathbb Z}$. On the other hand, since $-K_X=\det T_X$ we can ask
questions about positivity of $T_X$ itself.

Given a vector bundle $\cE$ on $X$, we denote by $\P(\cE)$ the
Grothendieck projectivization of $\cE$, that is, the projective
bundle
$$
\P(\cE):=\Proj_X\left(\bigoplus_{r\geq
    0}S^r\cE\right)\stackrel{p}{\longrightarrow}X
$$
with $\cO(1):=\cO_{\P(\cE)}(1)$ denoting the relative hyperplane
section bundle which satisfies $p_*\cO(1)=\cE$. We say that $\cE$ is
ample if $\cO_{\P(\cE)}(1)$ is ample on $\P(\cE)$. The theorem of Mori
\cite{Mori}, which is a cornerstone of the minimal model program,
asserts that $\P^n$ is the only manifold of dimension $n$ with ample
tangent bundle, as it was conjectured by Hartshorne. An analytic
counterpart of Hartshorne's conjecture is known as Frankel conjecture:
every compact K\"ahler manifold of dimension $n$ with positive
bisectional curvature is biholomorphic to $\P^n$. An analytic proof of
this conjecture was provided by Siu and Yau in \cite{SiuYau}.

Nefness is an (algebro-geometric) positivity property, that appears as
a natural generalization of the concept of ampleness. More concretely,
given a line bundle $L$ on a projective variety $X$, we say that $L$
is {\em nef} if $L \cdot C\geq 0$ for every irreducible curve $C\subset
X$. In other words: nef line bundles are those whose numerical classes
are limits of ample classes.

Generalizing the definition of nefness from line bundles to vector
bundles of any rank we have the following definition:

\begin{definition}\label{def:nef} A vector bundle $\cE$ is {\it nef}
  if and only if the tautological line bundle $\cO(1)$ of $\P(\cE)$
 is a nef line bundle.
\end{definition}

We refer the interested reader to \cite{CP} for an account on the
general properties of nef vector bundles.
The following conjecture formulated and proved for complex 3-folds in
\cite{CP} naturally extends the one by Hartshorne.

\begin{conjecture}[Campana--Peternell Conjecture]\label{conj:CPconj}
  Any Fano manifold whose tangent bundle is nef is rational homogeneous.
\end{conjecture}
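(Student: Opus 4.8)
Although Conjecture~\ref{conj:CPconj} is still open in general, there is a well-defined strategy behind it, into which most of the partial results surveyed below fit. The plan is to argue by induction on $\dim X$, splitting the analysis according to the Picard number $\rho(X)$. First I would dispose of the case $\rho(X)\geq 2$ by reducing it to lower-dimensional instances of the conjecture together with the Picard-number-one case; the second, and decisive, step is to settle the case $\rho(X)=1$ using the geometry of rational curves on $X$.

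For $\rho(X)\geq 2$, one picks an elementary contraction $\varphi\colon X\to Y$. Since $X$ is Fano it is covered by rational curves, and combining this with the nefness of $T_X$ one shows that $\varphi$ is a \emph{smooth} morphism with no singular fibres, and that a general fibre $F$ and the base $Y$ are again Fano manifolds with nef tangent bundle, both of dimension strictly smaller than $\dim X$; by induction $F$ and $Y$ are rational homogeneous. It then remains to prove that such a fibration is itself homogeneous. Iterating the $\mathbb{P}^1$-bundle contractions leads to the notion of a \emph{flag-type manifold}, a nef-tangent Fano manifold carrying a tower of $\mathbb{P}^1$-bundle structures whose length equals its Picard number; the key intermediate result is that every flag-type manifold is rational homogeneous, the point being that the intersection pattern of the contracting curves and of the relative tangent bundles reconstructs a Cartan matrix, hence a semisimple group $G$, with $X\cong G/B$ for a Borel subgroup $B$. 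One then has to show that an arbitrary $X$ with $\rho(X)\geq 2$ becomes a flag-type manifold after a suitable fibre product, and to glue the resulting local homogeneous structures into a genuine transitive action on $X$ itself.

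The case $\rho(X)=1$ is the core of the conjecture and is approached through the variety of minimal rational tangents. One fixes a minimal dominating family of rational curves; nefness of $T_X$ forces the restriction of $T_X$ to any such curve to be nef on $\mathbb{P}^1$, from which one deduces that these curves are standard, so that at a general point $x$ the VMRT $\mathcal{C}_x\subset\mathbb{P}(T_xX)$ is a smooth, irreducible, nondegenerate projective subvariety, itself expected to be Fano with nef tangent bundle of smaller dimension. The plan is then to identify $\mathcal{C}_x$, together with its embedding and second fundamental form, with the VMRT of a rational homogeneous space $S$ of Picard number one --- using the positivity coming from nefness of $T_X$, an induction on the VMRT, and the classification of such highest-weight varieties --- and finally to invoke the Cartan--Fubini type extension theorem of Hwang and Mok, which promotes the fibrewise isomorphism of VMRT structures to a biregular isomorphism $X\cong S$.

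The hard part will be this last stage in full generality: establishing smoothness of $\mathcal{C}_x$ and matching its embedded projective geometry with that of a homogeneous model requires delicate positivity estimates, and the recognition step via Cartan--Fubini extension is currently available only under prolongation-rigidity hypotheses or when $\mathcal{C}_x$ is large enough; precisely the degenerate, low-dimensional VMRTs --- those for which $X$ could a priori be an exotic Fano rather than a homogeneous space --- remain out of reach. A secondary obstacle, in the $\rho(X)\geq 2$ reduction, is the passage from local $\mathbb{P}^1$-bundle and group structures to a global homogeneous structure on $X$.
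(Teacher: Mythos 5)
The statement you were asked to prove is an open conjecture: the paper offers no proof of it, and your text is, as you yourself say, a strategy outline rather than a proof. Judged as such, it tracks the paper's own programme remarkably closely. Your reduction for $\rho(X)\geq 2$ via smoothness of elementary contractions is Theorem~\ref{thm:smooth} (due to Demailly--Peternell--Schneider and reproved in Section~\ref{sec:CPvar}); your ``flag-type manifold'' step is exactly the two-part strategy of Section~\ref{sec:FTman}: (A) every Fano manifold whose elementary contractions are smooth $\P^1$-fibrations is a complete flag $G/B$ (Theorem~\ref{thm:main}, proved via Cartan matrices, Weyl groups and Bott--Samelson varieties), and (B) every CP-manifold should be dominated by such a manifold (Conjecture~\ref{conj:CP1}, which is the open part). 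Your Picard-number-one route through the VMRT and the recognition theorem of Hong--Hwang (Theorem~\ref{them:HH}) is likewise the approach the paper surveys for low pseudoindex and for the $1$-ample case.

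The genuine gaps are the ones you name, and it is worth being precise about where they sit. In the $\rho(X)\geq 2$ reduction, knowing that fibre and base of a contraction are homogeneous does \emph{not} by itself make $X$ homogeneous, and the paper's way around this is not ``gluing local group structures'' but the domination statement (B): one must produce a surjection from a CP-manifold of strictly smaller width $\tau$, and this is unknown beyond dimension $5$. In the $\rho(X)=1$ case, nefness of $T_X$ alone does not give smoothness, nondegeneracy, or homogeneity of $\cC_x$, nor does it pin down its projective embedding; the recognition theorems apply only once $\cC_x$ is matched with the VMRT of a specific homogeneous model, and producing that match is precisely what fails in general. So your proposal is an accurate map of the territory, not a proof, and no argument currently closes either gap.
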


We note that an apparently harder question about all manifolds with
nef tangent bundle reduces to the one above because of a result by
Demailly, Peternell and Schneider, \cite{DPS}, who proved the
following: Any compact K\"ahler manifold with nef tangent bundle
admits a finite \'etale cover with smooth Albanese map whose
fibers are Fano manifolds with nef tangent bundle. We also note that, in the framework of complex geometry,
K\"ahler manifolds with nonnegative bisectional curvature (a condition known to be stronger than nefness) have been characterized by Mok, see
\cite{Mk0}; within the class of Fano manifolds of Picard number one, they correspond to irreducible Hermitian symmetric spaces. Finally, \ref{conj:CPconj} is known to be true for toric varieties; indeed, by the work of Fujino and Sato \cite{FS}, the only smooth toric variety whose nef and pseudoeffective cones coincide are products of projective spaces.

Another version of the same problem is the following.

\begin{conjecture}\label{conj:CPconj2}
  Let $X$ be a Fano manifold, and assume that $T_X$ is nef. Then $T_X$ is globally generated.
\end{conjecture}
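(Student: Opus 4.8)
\medskip
\noindent\emph{Towards a proof.} The first thing to notice is that Conjecture~\ref{conj:CPconj2} is actually equivalent to Conjecture~\ref{conj:CPconj}, which is why it is a convenient form on which to run an induction. Indeed, if $T_X$ is globally generated then for every $x\in X$ the evaluation map $H^{0}(X,T_X)\to T_xX$ is surjective; since $H^{0}(X,T_X)$ is the Lie algebra of the algebraic group $\Aut^{0}(X)$, every orbit of $\Aut^{0}(X)$ is then open, so $X$ is homogeneous. A smooth projective homogeneous variety is a product of an abelian variety and a rational homogeneous space $G/P$, and the Albanese of a Fano manifold is trivial, so $X=G/P$. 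Conversely, for $X=G/P$ the tangent bundle is a quotient of the trivial bundle $\fg\tensor\cO_X$, hence globally generated. Thus \ref{conj:CPconj2} and \ref{conj:CPconj} say the same thing, and a proof of either one is a proof of the other.

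Accordingly, the plan is to produce enough global vector fields on $X$, arguing by induction and reducing to Picard number one. Given an elementary Mori contraction $\f\colon X\to Y$, the nefness of $T_X$ is used (this is part of the available structure theory for such manifolds) to show that $\f$ is a smooth fibre bundle, that the base $Y$ is a Fano manifold with nef tangent bundle, and that the fibre $F$ is a Fano manifold with nef tangent bundle. By induction on $\rho(X)$ we may assume $T_Y$ and $T_F$ globally generated — equivalently, $Y$ and $F$ rational homogeneous — and we propagate this through the relative tangent sequence
$$0\lra T_{X/Y}\lra T_X\lra \f^{*}T_Y\lra 0.$$
Since $\f$ is surjective, $\f^{*}T_Y$ is globally generated; it then remains to check that $T_{X/Y}$ is globally generated on $X$ — using that it restricts to $T_F$ on the fibres together with the local triviality of $\f$ — and that $H^{0}(X,T_X)\to H^{0}(X,\f^{*}T_Y)$ is surjective, for which it suffices that $R^{1}\f_{*}T_{X/Y}=0$ and $H^{1}(Y,\f_{*}T_{X/Y})=0$, properties that can be read off from the structure of the (rigid, rational homogeneous) fibres and the inductive description of $Y$.

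The main obstacle is the base case $\rho(X)=1$, where no fibration is available. Here $X$ is covered by minimal rational curves $\ell$, and nefness forces $T_X|_{\ell}\iso\cO(2)\osum\cO(1)^{\osum p}\osum\cO^{\osum(n-1-p)}$ with $p+2=-K_X\dt\ell$; one must then construct sections of $T_X$ out of the geometry of the family of minimal rational curves — equivalently, in the spirit of Hwang--Mok theory, show that the variety of minimal rational tangents at a general point is homogeneous under the action of $H^{0}(X,T_X)$. Establishing such a lower bound on the space of global vector fields, i.e. that a Fano manifold of Picard number one with nef tangent bundle cannot be too rigid, is precisely the hard core of the Campana--Peternell conjecture; no general argument for it is known, which is why \ref{conj:CPconj2}, exactly like \ref{conj:CPconj}, is so far only proved in low dimensions and under various additional hypotheses.
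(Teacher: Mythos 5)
The statement you are addressing is Conjecture \ref{conj:CPconj2}; the paper states it as an open problem and offers no proof, and your text---correctly and explicitly---does not supply one either, so there is no paper proof to compare against. What you do establish, the equivalence of Conjecture \ref{conj:CPconj2} with Conjecture \ref{conj:CPconj}, is right, and it is exactly the content of the Proposition in Section \ref{ssec:homogman} (homogeneity is equivalent to global generation of $T_X$, via the orbit maps of $\Aut^0(X)$) combined with the Borel--Remmert theorem and the triviality of the Albanese of a Fano manifold; the converse direction, that $T_{G/P}$ is a quotient of $\fg\tensor\cO_{G/P}$, is likewise as in the paper.

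Where your sketch is misleading is in locating \emph{all} of the difficulty in the Picard number one case. The inductive step you outline is not formal either: if $\f:X\to Y$ is a smooth elementary contraction with rational homogeneous fibre $F$ and base $Y$, the global generation of $T_{X/Y}$ and the vanishing $H^1(X,T_{X/Y})=0$ do \emph{not} follow from the homogeneity of $F$ and $Y$ alone. For instance $X=\P(\cO_{\P^2}\oplus\cO_{\P^2}(1))\to\P^2$ has fibre $\P^1$ and base $\P^2$, both homogeneous, yet $T_{X/\P^2}$ has negative degree on a section and $X$ is not homogeneous; of course $T_X$ is not nef there, which is precisely the point --- nefness of $T_X$ must enter the inductive step in an essential, non-formal way, and no one knows how to make it do so in general. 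This is why the paper's results for $\rho(X)>1$ (Theorem \ref{them:CPlowdim}, and the flag-type machinery of Section \ref{sec:FTman}) still require substantial case-by-case work even though Theorem \ref{thm:smooth} makes the induction available. Your identification of the $\rho(X)=1$ case as the hard open core is accurate, but the passage from homogeneous fibre and base to homogeneity (equivalently, global generation of $T_X$) on the total space is a second, independent open difficulty, not a routine d\'evissage.
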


For brevity let us introduce the following definition:

\begin{definition}\label{def:CP} A smooth complex Fano manifold with
  nef tangent bundle will be called a {\em CP-manifold}.
\end{definition}

According to \cite{Pandharipande} a projective manifold is convex if
every morphism $f: \P^1\rightarrow X$ is unobstructed which means that
$H^1(\P^1,f^*TX)=0$. Clearly, every CP-manifold is convex. Thus
a natural extension of Campana--Peternell conjecture concerns convex
Fano manifolds, see \cite{Pandharipande} for a discussion of this
problem.

\medskip

In the present survey we discuss Campana--Peternell conjecture and
related questions.  The structure of the paper is the following. Since
the problem is to find out whether one may recognize homogeneity
properties on a CP-manifold, we start by recalling some basic facts on
homogeneous manifolds in Section \ref{sec:Prelimi}. The section is
completed with a review of some of the properties of families of rational
curves on algebraic varieties, that we will need later. In Section
\ref{sec:CPvar} we will show that every contraction of a CP-manifold
$X$ is smooth and that the Mori cone of $X$ is simplicial, two
important properties that were known to hold for rational homogenous
spaces.  The fourth section contains some positive answers to the
conjecture in low dimensions, including the classification of
CP-manifolds that have two $\P^1$-fibrations.

In general, it is not even known whether the nefness of $T_X$ implies
its semiampleness. If this property holds, one might study the variety $\P(T_X)$
by looking at the contraction associated to the tautological line
bundle $\cO(1)$. In fact, if this morphism exists, it satisfies a number of
interesting properties, that may eventually lead us to conclude the
homogeneity of $X$ under some extra assumptions. This is the case for
instance if $T_X$ is big and $1$-ample. We discuss this in Section
\ref{sec:symp}.

Finally, the last section reviews some results recently obtained by
the authors, and discuss the way in which these results may be used to
attack the problem of Campana-Peternell.

\subsection{Glossary of notations}
{\hspace{0.2cm}}

\begin{flushleft}
\begin{longtable}{l p{8.8cm}}
$X$& Smooth complex projective variety\\
$\Omega_X$& Sheaf of differentials of $X$\\
$T_X=\Omega_X^\vee$& Tangent bundle of $X$\\
$m$& Dimension of $X$\\
$\cO(K_X)=\bigwedge^m\Omega_X$& Canonical line bundle of $X$\\
$\Pic(X)$& Picard group of $X$\\
$N^1(X)=(\Pic(X)/\equiv)\otimes_\Z\R$& Vector space of numerical classes of $\R$-divisors\\
$N_1(X)(=N^1(X)^\vee)$& Vector space of numerical classes of real $1$-cycles\\
$n=\rho(X)=\dim(N^1(X))$& Picard number of $X$\\
$\NE(X)$& Mori cone of $X$\\
$\Nef(X)$& Nef cone of $X$\\
$R_1,R_2,\dots$& Extremal rays of $\cNE{X}$\\
$\Gamma_1,\Gamma_2,\dots$& Minimal rational curves generating $R_1,R_2,\dots$\\
$p_i:\cU_i\to\cM_i$& Family of deformations of $\Gamma_i$\\
$q_i:\cU_i\to X$& Evaluation morphism\\
$\cC_{i,x}\subset\P(\Omega_{X,x})$& VMRT of the family $\cM_i$ at $x$\\
$\pi_I:X\to X_I$& Contraction of the face generated by $R_i$, $i\in I$\\
$K_I$& Relative canonical divisor of $\pi_I$\\
$\P(\cE)=\Proj_X\left(\Sym(\cE)\right)$\hspace{-0.2cm}& (Grothendieck) Projectivization of a bundle $\cE$\\
$\cO(1)=\cO_{\P(\cE)}(1)$& Tautological line bundle\\
$\cX=\P(T_X)$& Projectivization of the tangent bundle of $X$\\
$\phi:\cX\to \cY$& Crepant contraction of $\cX$\\ %(when $\cO_{\cX}(1)$ is semiample) \\
$\overline{\Gamma}_i$& Minimal section of $\cX$ over $\Gamma_i$\\
$\overline{p}_i:\overline{\cU}_i\to\overline{\cM}_i$& Family of deformations of $\overline{\Gamma}_i$\\
$\overline{q}_i:\overline{\cU}_i\to \cX$& Evaluation morphism\\
$E(a_1^{k_1},\dots,a_r^{k_r})$& $\bigoplus_{j=1}^r\cO_{\P^1}(a_j)^{\oplus k_j}$\\

\end{longtable}
\end{flushleft} %\label{sec:Intro}
%!TEX root = CPconjecture.tex

\section{Preliminaries}\label{sec:Prelimi}

For the reader's convenience we will recall in this preliminary section some basic background on rational homogeneous manifolds and the positivity of their tangent bundles. Then we will briefly review some well known results on deformations of rational curves on an algebraic variety $X$, paying special attention to the case in which $X$ is a CP-manifold.

\subsection{Homogeneous manifolds}\label{ssec:homogman}

A smooth complex variety $X$ is said to be {\it homogeneous} if $X$ admits a transitive action of an algebraic group $G$. In this paper, we will only consider projective homogeneous manifolds, but we will refer to them simply as {\it homogeneous manifolds}.

For any projective manifold $X$, the automorphism group scheme ${\Aut}(X)$ is defined as the scheme representing the automorphism functor \cite[Theorem~3.7]{MO}, and the Lie algebra of ${\Aut}(X)$ is identified with that of the derivations of $\cO_X$:
\begin{eqnarray}
 {\rm Lie}({\Aut}(X)) \cong H^0(X, T_X).
\end{eqnarray}
Since our base field has characteristic zero, we know that ${\Aut}(X)$ is reduced thanks to Cartier \cite{Oo}.
This leads to the following characterization of homogeneous manifolds from the viewpoint of the spannedness of the tangent bundle.

\begin{proposition} Let $X$ be a projective manifold. Then $X$ is homogeneous if and only if $T_X$ is globally generated.
In particular, the tangent bundle of a homogeneous manifold is nef.
\end{proposition}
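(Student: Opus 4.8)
The plan is to prove the two implications of the equivalence separately and then read off nefness. Beyond the two facts already recorded above — the isomorphism $\mathrm{Lie}(\Aut(X)) \cong H^0(X, T_X)$ and the reducedness of $\Aut(X)$ in characteristic zero — the only input I need is the standard description of the tangent space to an orbit: if an algebraic group $G$ acts on a variety and $x$ is a point, then the orbit $G\cdot x$ is a smooth locally closed subvariety, and $T_x(G\cdot x)$ is the image of the differential at the identity of the orbit map $\mu_x\colon g\mapsto g\cdot x$, a linear map $\mathrm{Lie}(G)\to T_{X,x}$ which factors as the infinitesimal action $\mathrm{Lie}(G)\to H^0(X,T_X)$ followed by the evaluation $\mathrm{ev}_x\colon H^0(X,T_X)\to T_{X,x}$. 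It is this step that uses characteristic zero, through the smoothness of stabilizers.

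First I treat the easy direction. Suppose $X$ is homogeneous, say $G$ acts transitively. Then for every $x\in X$ we have $G\cdot x = X$, hence $T_{X,x} = T_x(G\cdot x) = \mathrm{image}\big(\mathrm{Lie}(G)\to T_{X,x}\big)$. Since this map factors through $\mathrm{ev}_x$, the evaluation $\mathrm{ev}_x$ is surjective for every $x$; that is, $T_X$ is globally generated — indeed already by the finite-dimensional image of $\mathrm{Lie}(G)$ inside $H^0(X,T_X)$.

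For the converse I would take $G:=\Aut^0(X)$, the identity component of the automorphism group scheme: it exists by \cite{MO}, it is a connected algebraic group since $X$ is projective, and by reducedness it is smooth with $\mathrm{Lie}(G)=H^0(X,T_X)$. Assume $T_X$ is globally generated. Then for every $x\in X$ the evaluation $\mathrm{ev}_x\colon H^0(X,T_X)\to T_{X,x}$ is surjective, so $T_x(G\cdot x)=T_{X,x}$ and therefore $\dim(G\cdot x)=\dim X$. Being a smooth irreducible locally closed subvariety of full dimension in the irreducible variety $X$, the orbit $G\cdot x$ is open and dense. As this applies to every point, $X$ is partitioned into nonempty open orbits, and connectedness forces a single orbit: $X$ is homogeneous.

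Finally, the last assertion follows because any globally generated vector bundle $\mathcal E$ is nef: the generating global sections of $\mathcal E=p_*\cO(1)$ give global sections of $\cO(1)=\cO_{\P(\mathcal E)}(1)$ that generate it, so $\cO(1)$ is a globally generated, hence nef, line bundle; taking $\mathcal E=T_X$ shows that the tangent bundle of a homogeneous manifold is nef. I do not expect a genuine obstacle here: once the orbit-tangent-space fact (and with it the role of characteristic zero) is granted, everything is formal, the one point to phrase carefully being why the full-dimensional orbits must exhaust the connected variety $X$.
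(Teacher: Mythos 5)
Your proof is correct and follows essentially the same route as the paper: both take $G=\Aut^0(X)$ with $\mathrm{Lie}(G)=H^0(X,T_X)$ and use the identification of the differential of the orbit map $\mu_x$ at the identity with the evaluation of sections at $x$. You simply make explicit the details (openness of full-dimensional orbits, the connectedness argument, and the global generation of $\cO(1)$ on $\P(T_X)$) that the paper leaves to the reader.
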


\begin{proof} Let $G$ be the identity component of $\Aut(X)$. Then $G$ is an algebraic group with Lie algebra $H^0(X, T_X)$.
The evaluation map is denoted by
$${\rm ev}: H^0(X, T_X) \otimes \cO_X \to T_X.$$
On the other hand,
for any point $x \in X$, consider the orbit map
$$
\mu_x: G \to X;\,\,\, g \mapsto gx.
$$
Since the differential of $\mu_x$ at the identity $e \in G$ coincides with the evaluation at $x$, then our claim follows.
\end{proof}

The following structure theorem, due to A. Borel and R. Remmert, tells us that there are basically two types of homogeneous manifolds: projective algebraic groups (i.e. {\it abelian varieties}) and quotients of simple Lie groups.

\begin{theorem}[\cite{BR}] Any homogeneous manifold $X$ is isomorphic to a product $A \times Y_1\times\dots\times Y_k$, where $A$ is an abelian variety and every $Y_i$ is projective variety of the form $G_i/P_i$, where $G_i$ is a simple Lie group and $P_i\subset G_i$ is a parabolic subgroup.
\end{theorem}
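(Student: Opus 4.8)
The plan is to read off the structure of $X$ from the structure of the algebraic group $G:=\Aut^0(X)$. Since $X$ is homogeneous, $G$ acts transitively on $X$ (the $G$-orbits are equidimensional, as $\Aut(X)$ permutes them transitively, hence of full dimension since $X$ is covered by countably many of them, hence open; as $X$ is connected there is a single orbit) and of course faithfully. By Chevalley's structure theorem there is an exact sequence $1\to G_{\mathrm{aff}}\to G\to A_0\to 1$ with $G_{\mathrm{aff}}$ the largest connected normal linear subgroup and $A_0$ an abelian variety. The first key point is that $G_{\mathrm{aff}}$ is semisimple: its radical is characteristic in $G_{\mathrm{aff}}$, hence normal in $G$, and being connected and solvable it has a fixed point on the complete variety $X$ by Borel's fixed point theorem; its fixed locus is then nonempty and $G$-stable, so it is all of $X$ by transitivity, whence the radical acts trivially and therefore (the action being faithful) is trivial.

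Next I would split off the abelian part using the largest anti-affine subgroup $A_1\subseteq G$. By the structure theory of algebraic groups $A_1$ is central in $G$ and $G=G_{\mathrm{aff}}\cdot A_1$; its affine part is a connected central subgroup of the semisimple group $G_{\mathrm{aff}}$, hence trivial, so $A_1$ is an abelian variety. Fix $x_0\in X$ and let $P:=\mathrm{Stab}_G(x_0)$. Centrality and transitivity force $A_1$ to act freely on $X$ (an element of $A_1$ fixing $x_0$ fixes $g\cdot x_0$ for every $g$, hence fixes $X$, hence is trivial), so in particular $A_1\cap P=\{e\}$. Since $G_{\mathrm{aff}}$ is normal, $X/G_{\mathrm{aff}}$ is a homogeneous space under the abelian variety $A_0$, hence an abelian variety, and its fibres are the $G_{\mathrm{aff}}$-orbits; thus $G_{\mathrm{aff}}/(P\cap G_{\mathrm{aff}})$ is complete and $P\cap G_{\mathrm{aff}}$ is parabolic. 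The subgroup $A_1\cap G_{\mathrm{aff}}$, being at once proper and affine, is finite, and being central in $G_{\mathrm{aff}}$ it lies in every parabolic of $G_{\mathrm{aff}}$; hence $A_1\cap G_{\mathrm{aff}}\subseteq A_1\cap P=\{e\}$, and therefore $G\cong G_{\mathrm{aff}}\times A_1$.

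It remains to obtain the product structure of $X$ itself. Because $P\cap A_1=\{e\}$, the projection $G=G_{\mathrm{aff}}\times A_1\to G_{\mathrm{aff}}$ is injective on $P$; let $Q$ be its image. Then $Q$ contains the parabolic $P\cap G_{\mathrm{aff}}$, so $Q$ is itself parabolic — in particular connected — and $P$ is the graph of a homomorphism $\tau\colon Q\to A_1$ from a connected linear group to an abelian variety, which must be trivial. Hence $P=Q\subseteq G_{\mathrm{aff}}$ and $X=G/P=(G_{\mathrm{aff}}\times A_1)/(P\times\{e\})\cong (G_{\mathrm{aff}}/P)\times A_1$. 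Finally, writing the semisimple group $G_{\mathrm{aff}}$ as a product $G_1\times\cdots\times G_k$ of simple groups (up to a central isogeny, which does not affect the quotient by a parabolic), the parabolic $P$ decomposes as $P_1\times\cdots\times P_k$, so that $G_{\mathrm{aff}}/P\cong\prod_i G_i/P_i$; taking $A:=A_1$ and $Y_i:=G_i/P_i$ finishes the proof.

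The delicate step is this last passage, from the statement that $X$ fibres over an abelian variety with rational homogeneous fibres to the literal product decomposition: it is precisely here that one must use that $G$ is the \emph{full} automorphism group (equivalently, that the transitive action is faithful), since for non-faithful transitive actions one can produce non-split such fibrations. The argument hinges on the interplay of three facts — the centrality of the anti-affine subgroup $A_1$, the semisimplicity of $G_{\mathrm{aff}}$, and the nonexistence of nonconstant homomorphisms from linear groups to abelian varieties. The remaining ingredients (Chevalley's theorem, Borel's fixed point theorem, the basic theory of parabolic subgroups, and the structure of anti-affine groups) are standard.
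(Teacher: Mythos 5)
The paper states this theorem with only the citation to Borel and Remmert and gives no proof of its own, so there is no internal argument to compare yours against; I assess it on its own merits. Your proof is correct, and it is the standard modern argument in the algebraic category: Chevalley's theorem together with the faithfulness of the $\Aut^0(X)$-action kills the radical of the affine part; the Rosenlicht decomposition $G=G_{\mathrm{aff}}\cdot G_{\mathrm{ant}}$ and the semisimplicity of $G_{\mathrm{aff}}$ force the anti-affine part to be an abelian variety meeting both $G_{\mathrm{aff}}$ and the isotropy group trivially; and the nonexistence of nonconstant homomorphisms from a connected linear group to an abelian variety pushes the isotropy group into the affine factor, yielding the product. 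The individual steps check out --- in particular the connectedness of $Q$, which you need in order to kill $\tau$, is correctly secured by noting that $Q$ contains the parabolic $P\cap G_{\mathrm{aff}}$, and the freeness of the $A_1$-action correctly uses centrality plus faithfulness. This route differs from the original Borel--Remmert proof, which treats arbitrary compact homogeneous K\"ahler manifolds and splits off the torus factor transcendentally via the Albanese map; your argument is purely algebraic (and so does not cover the non-algebraic K\"ahler case, which the paper does not need). One cosmetic caveat: the closing remark that ``for non-faithful transitive actions one can produce non-split such fibrations'' is misleading as literally stated, since any projective homogeneous space is a product by the theorem itself, no matter which transitive action one starts from; what is true is that the splitting argument cannot be run without passing to the faithful action, and that fibrations with rational homogeneous fibres over abelian varieties need not split in the absence of homogeneity. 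This side remark does not affect the validity of the proof.
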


A product of simple Lie groups is called {\it semisimple} (equivalently, they are usually defined as the Lie groups that have no notrivial normal connected solvable subgroups), and a projective quotient $G/P$ of a semisimple Lie group is called a {\it rational homogeneous manifold}.

Abelian varieties and rational homogeneous manifolds may be already distinguished at the level of the positivity of their tangent spaces.
In fact, the group structure of an abelian variety forces its tangent bundle (hence its canonical divisor) to be trivial, whereas the fact that semisimple Lie groups are affine varieties implies that their projective quotients are Fano manifolds. In other words, rational homogeneous manifolds are the first (conjecturally, the only) examples of CP-manifolds.

\begin{proposition}\label{rathomogfano} For a rational homogeneous manifold $X$, the anticanonical divisor $-K_X$ is ample and globally generated.
\end{proposition}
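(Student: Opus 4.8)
The plan is to combine the Borel--Remmert structure theorem with the previous proposition, and then use the affineness of semisimple Lie groups to pin down positivity of $-K_X$. First I would invoke the fact that $X = G/P$ with $G$ semisimple and $P\subset G$ parabolic; since $G$ has no nontrivial characters on $P$ that obstruct the argument, the point is that $G$ is an affine variety. The natural projection $G\to G/P$ is then an affine morphism from an affine variety, so $G/P$ carries no nonconstant global functions and in particular cannot contain an abelian variety as a factor; more to the point, the canonical bundle $-K_X$ is the determinant of $T_X$, and by the preceding proposition $T_X$ is globally generated, hence so is its determinant $-K_X = \det T_X = \bigwedge^m T_X$. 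That already gives global generation for free.

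For ampleness, I would argue as follows. Global generation of $T_X$ gives a morphism $X\to \mathrm{Grass}$ (or, at the level of $-K_X$, a morphism $\psi\colon X\to \P(H^0(X,-K_X)^\vee)$) whose fibers are exactly the sets on which all global vector fields are tangent; but since $G$ acts transitively, the only $G$-invariant fibration structures correspond to intermediate parabolics $P\subset Q\subset G$. If $-K_X$ were not ample, the semiample morphism $\psi$ defined by some power of $-K_X$ would be a nontrivial contraction $X=G/P\to G/Q$ contracting curves $C$ with $-K_X\cdot C = 0$. The key step is then to rule this out: on a rational homogeneous space every extremal contraction is again rational homogeneous of the form $G/Q$, and for such a contraction the relative anticanonical bundle is relatively ample (the fibers $Q/P$ are themselves rational homogeneous, hence Fano), which forces $-K_X\cdot C>0$ for every curve, a contradiction. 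Equivalently, and more cleanly, one notes that $G$ semisimple means $\mathfrak{g}$ is the sum of its simple ideals, so there is no surjection $\mathfrak g \to \mathbb C$, hence no global function and — tracking this through the Borel--Weil picture — the line bundle $-K_X$ corresponds to a strictly dominant weight (the sum $2\rho_{\mathfrak g/\mathfrak p}$ of positive roots not in $\mathfrak p$ pairs strictly positively with every coroot of $\mathfrak p$), and a line bundle on $G/P$ attached to a strictly dominant weight is ample.

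In the write-up I would lean on the representation-theoretic description: once we know $X\cong G/P$, choose a Borel $B\subset P\subset G$ and a maximal torus $T\subset B$; then $\Pic(G/P)$ is the lattice of weights trivial on the semisimple part of $P$, $-K_{G/P}$ is the class of $\sum_{\alpha\in \Phi^+\setminus\Phi_P^+}\alpha$, and the standard criterion says such a bundle is ample iff this weight is $P$-dominant and regular on the directions transverse to $P$, which holds automatically. Global generation follows either from the same Borel--Weil--Bott vanishing or, as noted, directly from global generation of $T_X$. The main obstacle is purely expository: deciding how much of this standard structure theory of line bundles on $G/P$ to recall versus cite, since the survey audience may want the statement "strictly dominant $\Rightarrow$ ample" taken as known. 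I would opt for a short proof that quotes the Borel--Remmert theorem above, uses global generation of $T_X$ (hence of $\det T_X$) for the spannedness half, and for ampleness cites the standard fact that $-K_{G/P}$ is associated to a regular dominant weight, with a one-line indication via the relative-Fano-ness of the fibers of any contraction $G/P\to G/Q$.
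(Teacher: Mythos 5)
Your proof is correct in substance but follows a genuinely different route from the one in the paper. The paper reproduces Mori's elementary argument: writing $X=G/P$, the affineness of $G$ forces $\dim P>0$, hence $h^0(X,T_X)=\dim G>\dim X$; wedging $m=\dim X$ global vector fields, one of which lies in $T_{P,e}$ and therefore vanishes at the base point, produces a section of $-K_X=\det T_X$ that vanishes somewhere but not identically. Homogeneity then gives that $-K_X$ is globally generated \emph{and nontrivial}, and ampleness follows by showing the induced morphism $\varphi:X\to Y$ is finite: any positive-dimensional fiber component $F^0$ would be homogeneous under the stabilizer $P'$, so $-K_{F^0}$ would be nontrivial by the same counting argument, contradicting $\cO_X(-K_X)|_{F^0}=\varphi^*\cO_Y(1)|_{F^0}=\cO_{F^0}$. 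Your ``cleaner'' route instead identifies $-K_{G/P}$ with the weight $\sum_{\alpha\in\Phi^+\setminus\Phi_P^+}\alpha$ and quotes the standard criterion that a line bundle attached to a weight pairing strictly positively with the coroots transverse to $P$ is ample. That is a complete and correct argument, but it imports the Borel--Weil description of $\Pic(G/P)$ and the ampleness criterion as black boxes, whereas the paper's point is precisely to avoid this machinery; what the paper's approach buys is self-containedness (only affineness of $G$ and a dimension count), what yours buys is brevity for a reader who already knows the structure theory.

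One caution about your first, geometric route to ampleness: the step ``the fibers $Q/P$ are themselves rational homogeneous, hence Fano'' is circular as written, since Fano-ness of rational homogeneous manifolds is exactly the statement being proved. It can be repaired by induction on dimension (the fibers of a nontrivial contraction have strictly smaller dimension, and one must also justify that $Q/P$ is rational homogeneous for the Levi quotient of $Q$), but as stated it assumes the conclusion. The paper's fiber argument sidesteps this by never asserting the fiber is Fano, only that its anticanonical is nontrivial, which follows from the same section-counting argument applied to the (affine) stabilizer $P'$. Your representation-theoretic route is unaffected by this issue, so the proposal as a whole stands.
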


\begin{proof} The following argument is due to S. Mori (see for example \cite[V. Theorem~1.4]{kollar}).
Writing $X$ as a quotient $G/P$ of a semisimple Lie group $G$, the fact that $G$ is affine and $G/P$ is projective implies that $P$ is positive dimensional.

On the other hand, $P$ might be identified with the isotropy subgroup of a point $x\in X$. Consider the orbit map $\mu_{x}: G \to X$, $g \mapsto gx$.

Since its differential $d\mu_{x}$ at the identity $e$ sends $T_{P,e}$ to $\{{\bf{0}}\} \subset T_{X,x},$
any section $\sigma \in H^0(X, T_X) \setminus \{{\bf 0} \}$ contained in $T_{P,e}$ vanishes at $x$. Furthermore, we have $$h^0(X, T_X)=\dim G=\dim X+ \dim P>\dim X.$$ These facts imply that we have a section $s \in H^0(X, \cO_X(-K_X)) \setminus \{{\bf 0} \}$ which vanishes at $x$, but is not everywhere zero. Since $X$ is homogeneous under $G$, $\cO_X(-K_X)$ is non-trivial and globally generated.

Let us consider the morphism $\varphi : X \to Y$ defined by the linear system $|\cO_X(-K_X)|$, which is $G$-equivariant, and a fiber $F=\varphi^{-1}(y)$. If $P'$ is the stabilizer of $\varphi(y)$, then $F$ is homogeneous under $P'$. If $F^0$ is a positive-dimensional irreducible component of $F$, then $-K_{F^0}$ is non-trivial by the same argument as above. However this contradicts the fact that
 $$\cO_F(-K_{F^0})=\cO_X(-K_X)|_{F^0}=\varphi^{\ast}\cO_Y(1)|_{F^0}=\cO_{F^0}.$$ Thus $\varphi$ is finite. As a
consequence, $-K_X$ is ample.
\end{proof}

\subsection{Rational homogeneous manifolds and Dynkin diagrams}\label{ssec:rathom}

One of the most remarkable properties of semisimple Lie groups and rational homogeneous manifolds is that they may be completely described in terms of certain  combinatorial objects named Dynkin diagrams, whose (brief) description is the goal of this section.

Let us then consider a semisimple Lie group $G$, and let us denote by $\fg$ its associated Lie algebra, that determines $G$ via the exponential map.

\subsubsection*{\bf Cartan decomposition}\label{sssec:decomp}
We start by choosing a {\it Cartan subalgebra}, i.e. an abelian subalgebra $\fh\subset\fg$ of maximal dimension. Being $\fh$ abelian, its adjoint action on $\fg$
defines an eigenspace decomposition,
$$
\fg=\fh\oplus\bigoplus_{\alpha\in\fh^\vee\setminus\{0\}} \fg_\alpha,\mbox{ where } \fg_\alpha:=\left\{g\in\fg\,|\,[h,g]=\alpha(h)g,\mbox{ for all }h\in\fh\right\},
$$
called {\it Cartan decomposition} of $\fg$. The
elements $\alpha\in\fh^\vee\setminus\{0\}$ for which $\fg_\alpha\neq 0$ are called {\it roots} of $\fg$, and the set of these elements will be denoted by $\Phi$.
The eigenspaces $\fg_\alpha$ (so-called {\it root spaces}) are one-dimensional, for every $\alpha\in\Phi$.
Moreover, $\Phi$ satisfies that given $\alpha\in \Phi$, then $k\alpha\in\Phi$ iff $k=\pm 1$, and that $[\fg_\alpha,\fg_\beta]=\fg_{\alpha+\beta}$ iff $0\neq\alpha+\beta\in\Phi$. In particular the set $\Phi$ contains all the information necessary to reconstruct completely the Lie algebra $\fg$ (thus the group $G$) out of it.

\subsubsection*{\bf Root system and Weyl group of $\fg$}\label{ssseweyl}

The key property of $\Phi$, that allows us to list all the possible semisimple Lie algebras, is the behaviour of its group of symmetries.

The {\it Killing form} $\kappa(X,Y):=\tr(\ad_X\circ\ad_Y)$ defines, on every semisimple Lie algebra, a nondegenerate
bilinear form on $\fh$, whose restriction to the real vector space $E$ generated by $\Phi$ is positive definite. Within the euclidean space $(E,\kappa)$, every root $\alpha\in\Phi$ defines a {\it reflection} $\sigma_\alpha$, given by:
$$
\sigma_\alpha(x)=x-\sga x,\alpha\sgc\alpha,\quad\mbox{where}\quad\sga x,\alpha\sgc:=2\dfrac{\kappa(x,\alpha)}{\kappa(\alpha,\alpha)}.
$$
One may then show that the group $W\subset\so(E,\kappa)$ generated by the $\sigma_\alpha$'s,  called the {\it Weyl group of $\fg$}, leaves the set $\Phi$ invariant. Furthermore, one may show that $\sga \alpha,\beta\sgc$ is an integer, a property that allows us to say that $\Phi$ is a {\it root system} in $(E,\kappa)$ (see \cite[VI.1 Def. 1]{Bourb}). The important point to remark here is that, as we will see precisely in the next paragraph, a reduced root system may be reconstructed out of a very small amount of data: a well chosen basis and the reflections with respect to its elements (that will be determined by a matrix of integers).

\subsubsection*{\bf Cartan matrix of $\fg$}\label{sssec:cartan}

Set $n:=\dim_\C(\fh)$ and $D:=\{1,2,\dots,n\}$.
It is known that for every root system one may always find a {\it base of simple roots}, i.e. a basis of $\fh^\vee$ formed by elements of $\Phi$ satisfying that the coordinates of every element of $\Phi$ are integers, all of them nonnegative or all of them nonpositive.
A base $\Delta=\left\{\alpha_i\right\}_{i\in D}$ provides a decomposition of the set of roots according to their sign $\Phi=\Phi^+\cup\Phi^-$, where $\Phi^-=-\Phi^+$.  Moreover, every positive root can be obtained from simple roots by means of reflections $\sigma_{\alpha_i}$. It is then clear that the matrix $M$ whose coefficients are $\sga\alpha_i,\alpha_j\sgc$, the so-called  {\it Cartan matrix of $\fg$}, encodes the necessary information to reconstruct $\fg$ from the set of simple roots $\Delta$.

\subsubsection*{\bf Dynkin diagrams}\label{ssec:dynkinhomo}

The coefficients of the Cartan matrix $M$ of $\fg$ are subject to arithmetic restrictions:
\begin{itemize}
\item $\sga \alpha_i,\alpha_i\sgc=2$ for all $i$,
\item $\sga \alpha_i,\alpha_j\sgc=0$ if and only if $\sga \alpha_j,\alpha_i\sgc=0$, and
\item if $\sga \alpha_i,\alpha_j\sgc\neq 0$, $i \neq j$, then $\sga \alpha_i,\alpha_j\sgc\in\Z^-$ and $ \sga \alpha_i,\alpha_j\sgc\sga\alpha_j,\alpha_i\sgc=1,2$ or $3$.
\end{itemize}
which allow us to represent $M$ by a {\it Dynkin diagram}, that we denote by $\cD$: it consists of a graph whose set of nodes is $D$ and where the nodes $i$ and $j$ are joined by $\sga \alpha_j,\alpha_i\sgc\sga \alpha_i,\alpha_j\sgc$ edges. When two nodes $i$ and $j$ are joined by a double or triple edge, we add to it an arrow, pointing to $i$ if $\sga \alpha_i,\alpha_j\sgc>\sga \alpha_j,\alpha_i\sgc$. One may prove that $\cD$ is independent of the choices made (Cartan subalgebra, base of simple roots), hence, summing up:
\begin{theorem}
 There is a one to one correspondence between isomorphism classes of semisimple Lie algebras and Dynkin diagrams of reduced root systems.
\end{theorem}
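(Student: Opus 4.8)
The plan is to prove the two correspondences hidden in the statement — between semisimple Lie algebras and reduced root systems, and between reduced root systems and their Dynkin diagrams — and then compose them. First I would establish that every semisimple Lie algebra $\fg$ produces a reduced root system $\Phi$ in the Euclidean space $(E,\kappa)$, which is already sketched in the preceding paragraphs; the point to be made precise is that, up to isomorphism, $\Phi$ does not depend on the choice of Cartan subalgebra $\fh$. This is the classical conjugacy theorem: any two Cartan subalgebras of $\fg$ are conjugate under an (inner) automorphism, so the induced root systems are isometric. Conversely, one must show $\fg$ is determined by $\Phi$; here I would invoke Serre's presentation, giving generators $e_i,f_i,h_i$ indexed by a base of simple roots $\Delta=\{\alpha_i\}_{i\in D}$ and relations written purely in terms of the Cartan integers $\sga\alpha_i,\alpha_j\sgc$. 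Concretely, one checks that the one-dimensionality of the root spaces $\fg_\alpha$ and the bracket rule $[\fg_\alpha,\fg_\beta]=\fg_{\alpha+\beta}$ force $\fg$ to be the quotient of the free Lie algebra on these generators by the Serre relations, so two semisimple Lie algebras with isometric root systems are isomorphic.

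Next I would handle the passage $\Phi\leftrightarrow\cD$. Given $\Phi$ and a base $\Delta$, the Cartan matrix $M=(\sga\alpha_i,\alpha_j\sgc)$ satisfies exactly the three arithmetic constraints listed in the excerpt — the diagonal entries equal $2$, the off-diagonal entries are non-positive integers, vanishing symmetrically, and the products $\sga\alpha_i,\alpha_j\sgc\sga\alpha_j,\alpha_i\sgc\in\{0,1,2,3\}$ (this last because $|\cos\theta_{ij}|<1$ for distinct simple roots, by positive-definiteness of $\kappa$ on $E$). The Dynkin diagram $\cD$ records $M$ faithfully: the number of edges between $i$ and $j$ recovers the product of the two Cartan integers, and the arrow on a multiple edge records which of $\sga\alpha_i,\alpha_j\sgc$, $\sga\alpha_j,\alpha_i\sgc$ is $-1$; since one of the two is always $-1$ when they are nonzero, $M$ is reconstructible from $\cD$. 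I would then argue that $\cD$ is independent of the choice of base: any two bases of a root system are conjugate under the Weyl group $W$, and $W$ acts by isometries, so the Cartan integers $\sga\alpha_i,\alpha_j\sgc$ — being expressible through $\kappa$ — are unchanged. Combined with the conjugacy of Cartan subalgebras, this gives a well-defined map (isomorphism classes of semisimple Lie algebras) $\to$ (Dynkin diagrams of reduced root systems).

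It remains to see this map is bijective. Injectivity follows from the two reconstructions above composed: $\cD\rightsquigarrow M\rightsquigarrow\Phi\rightsquigarrow\fg$. For surjectivity, one must show every abstract reduced root system arises from some $\fg$; given the Cartan matrix, one forms the Lie algebra by generators and Serre relations and checks it is finite-dimensional and semisimple with the prescribed root system — equivalently, one may cite the Cartan–Killing classification, which exhibits a concrete simple Lie algebra for each connected diagram of type $A_n,B_n,C_n,D_n,E_6,E_7,E_8,F_4,G_2$, together with the observation that direct sums of Lie algebras correspond to disjoint unions of diagrams.

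The main obstacle — and the part I would be least inclined to reprove from scratch in a survey — is the well-definedness half, i.e. the conjugacy theorem for Cartan subalgebras (and the analogous transitivity of $W$ on bases), together with the finite-dimensionality of the Lie algebra defined by Serre's relations. These are the substantial inputs; the rest is the bookkeeping translating $M$ into the graph $\cD$ and back, which is routine. In the survey I would state the theorem citing \cite{Bourb} for these structural facts and only indicate the diagram $\leftrightarrow$ matrix dictionary in detail.
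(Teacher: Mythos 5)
The paper states this theorem as classical background and gives no proof of its own: the preceding paragraphs merely set up the Cartan decomposition, the root system, the Weyl group and the Cartan matrix, and then assert that $\cD$ is independent of the choices made. Your outline is the standard argument behind that assertion (conjugacy of Cartan subalgebras and transitivity of $W$ on bases for well-definedness, Serre's presentation for injectivity, the Cartan--Killing classification for surjectivity), it is correct, and it correctly identifies which inputs must be taken from \cite{Bourb} or \cite{Hum} rather than reproved; this matches exactly how the survey treats the statement.
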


Furthermore, the classification theorem of root systems tells us that every reduced root system is a disjoint union of mutually orthogonal irreducible root subsystems, each of them corresponding to one of the {\it connected finite Dynkin diagrams} ${\rm A}_n$, ${\rm B}_n$, ${\rm C}_n$, ${\rm D}_n$ ($n\in\N$), ${\rm E}_6$, ${\rm E}_7$, ${\rm E}_8$, ${\rm F}_4$, ${\rm G}_2$:

%\vspace{-0.5cm}

\begin{equation}\label{eq:dynkins}
\begin{array}{c}\vspace{-0.1cm}
\ifx\du\undefined
  \newlength{\du}
\fi
\setlength{\du}{3.3\unitlength}
\begin{tikzpicture}
\pgftransformxscale{1.000000}
\pgftransformyscale{1.000000}

%%%%%% COLORS
\definecolor{dialinecolor}{rgb}{0.000000, 0.000000, 0.000000} % EXTERIOR
\pgfsetstrokecolor{dialinecolor}
\definecolor{dialinecolor}{rgb}{0.000000, 0.000000, 0.000000} % INTERIOR
\pgfsetfillcolor{dialinecolor}

%%%%%% NODES

\pgfsetlinewidth{0.300000\du}
\pgfsetdash{}{0pt}
\pgfsetdash{}{0pt}
%\pgfsetmiterjoin

%%% #1
\pgfpathellipse{\pgfpoint{-6\du}{0\du}}{\pgfpoint{1\du}{0\du}}{\pgfpoint{0\du}{1\du}}
\pgfusepath{stroke}
\node at (-6\du,0\du){};
%\pgfpathellipse{\pgfpoint{-6\du}{0\du}}{\pgfpoint{1\du}{0\du}}{\pgfpoint{0\du}{1\du}}
%\pgfusepath{fill}
%\node at (-6\du,0\du){};

%%% #2
\pgfpathellipse{\pgfpoint{4\du}{0\du}}{\pgfpoint{1\du}{0\du}}{\pgfpoint{0\du}{1\du}}
\pgfusepath{stroke}
\node at (4\du,0\du){};
%\pgfpathellipse{\pgfpoint{4\du}{0\du}}{\pgfpoint{1\du}{0\du}}{\pgfpoint{0\du}{1\du}}
%\pgfusepath{fill}
%\node at (4\du,0\du){};

%%% #3
\pgfpathellipse{\pgfpoint{14\du}{0\du}}{\pgfpoint{1\du}{0\du}}{\pgfpoint{0\du}{1\du}}
\pgfusepath{stroke}
\node at (14\du,0\du){};
%\pgfpathellipse{\pgfpoint{14\du}{0\du}}{\pgfpoint{1\du}{0\du}}{\pgfpoint{0\du}{1\du}}
%\pgfusepath{fill}
%\node at (14\du,0\du){};

%%% #4
\pgfpathellipse{\pgfpoint{24\du}{0\du}}{\pgfpoint{1\du}{0\du}}{\pgfpoint{0\du}{1\du}}
\pgfusepath{stroke}
\node at (24\du,0\du){};
%\pgfpathellipse{\pgfpoint{24\du}{0\du}}{\pgfpoint{1\du}{0\du}}{\pgfpoint{0\du}{1\du}}
%\pgfusepath{fill}
%\node at (24\du,0\du){};

%%% #5
\pgfpathellipse{\pgfpoint{34\du}{0\du}}{\pgfpoint{1\du}{0\du}}{\pgfpoint{0\du}{1\du}}
\pgfusepath{stroke}
\node at (34\du,0\du){};
%\pgfpathellipse{\pgfpoint{34\du}{0\du}}{\pgfpoint{1\du}{0\du}}{\pgfpoint{0\du}{1\du}}
%\pgfusepath{fill}
%\node at (34\du,0\du){};

%%% #6
\pgfpathellipse{\pgfpoint{44\du}{0\du}}{\pgfpoint{1\du}{0\du}}{\pgfpoint{0\du}{1\du}}
\pgfusepath{stroke}
\node at (44\du,0\du){};
%\pgfpathellipse{\pgfpoint{44\du}{0\du}}{\pgfpoint{1\du}{0\du}}{\pgfpoint{0\du}{1\du}}
%\pgfusepath{fill}
%\node at (44\du,0\du){};

%%%%%% LINKS
\pgfsetlinewidth{0.300000\du}
\pgfsetdash{}{0pt}
\pgfsetdash{}{0pt}
\pgfsetbuttcap

{\draw (-5\du,0\du)--(3\du,0\du);}
{\draw (5\du,0\du)--(13\du,0\du);}
%{\draw (15\du,0\du)--(23\du,0\du);}
{\draw (25\du,0\du)--(33\du,0\du);}
{\draw (35\du,0\du)--(43\du,0\du);}
%{\draw (34.65\du,0.7\du)--(43.35\du,0.7\du);}
%{\draw (34.65\du,-0.7\du)--(43.35\du,-0.7\du);}

%%%%%% ARROW HEAD

%{\pgfsetcornersarced{\pgfpoint{0.300000\du}{0.300000\du}}\definecolor{dialinecolor}{rgb}{0.000000, 0.000000, 0.000000}
%\pgfsetstrokecolor{dialinecolor}
%\draw (40.8\du,-1.2\du)--(37\du,0\du)--(40.8\du,1.2\du);}

%%%%%% DASHED LINK
\pgfsetlinewidth{0.400000\du}
\pgfsetdash{{1.000000\du}{1.000000\du}}{0\du}
\pgfsetdash{{1.000000\du}{1.000000\du}}{0\du}
\pgfsetbuttcap
%{\draw (10.479373\du,11.994351\du)--(17.462706\du,11.994351\du);}
{\draw (15.3\du,-1\du)--(23\du,-1\du);}

%%%%%% TAGS
\node[anchor=west] at (48\du,0\du){${\rm A}_n$};

\node[anchor=south] at (-6\du,1.1\du){$\scriptstyle 1$};

\node[anchor=south] at (4\du,1.1\du){$\scriptstyle 2$};

\node[anchor=south] at (14\du,1.1\du){$\scriptstyle 3$};

\node[anchor=south] at (24\du,1.1\du){$\scriptstyle n-2$};

\node[anchor=south] at (34\du,1.1\du){$\scriptstyle n-1$};

\node[anchor=south] at (44\du,1.1\du){$\scriptstyle n$};

\end{tikzpicture} \vspace{-0.1cm}\\
\ifx\du\undefined
  \newlength{\du}
\fi
\setlength{\du}{3.3\unitlength}
\begin{tikzpicture}
\pgftransformxscale{1.000000}
\pgftransformyscale{1.000000}

%%%%%% COLORS
\definecolor{dialinecolor}{rgb}{0.000000, 0.000000, 0.000000} % EXTERIOR
\pgfsetstrokecolor{dialinecolor}
\definecolor{dialinecolor}{rgb}{0.000000, 0.000000, 0.000000} % INTERIOR
\pgfsetfillcolor{dialinecolor}

%%%%%% NODES

\pgfsetlinewidth{0.300000\du}
\pgfsetdash{}{0pt}
\pgfsetdash{}{0pt}
%\pgfsetmiterjoin

%%% #1
\pgfpathellipse{\pgfpoint{-6\du}{0\du}}{\pgfpoint{1\du}{0\du}}{\pgfpoint{0\du}{1\du}}
\pgfusepath{stroke}
\node at (-6\du,0\du){};
%\pgfpathellipse{\pgfpoint{-6\du}{0\du}}{\pgfpoint{1\du}{0\du}}{\pgfpoint{0\du}{1\du}}
%\pgfusepath{fill}
%\node at (-6\du,0\du){};

%%% #2
\pgfpathellipse{\pgfpoint{4\du}{0\du}}{\pgfpoint{1\du}{0\du}}{\pgfpoint{0\du}{1\du}}
\pgfusepath{stroke}
\node at (4\du,0\du){};
%\pgfpathellipse{\pgfpoint{4\du}{0\du}}{\pgfpoint{1\du}{0\du}}{\pgfpoint{0\du}{1\du}}
%\pgfusepath{fill}
%\node at (4\du,0\du){};

%%% #3
\pgfpathellipse{\pgfpoint{14\du}{0\du}}{\pgfpoint{1\du}{0\du}}{\pgfpoint{0\du}{1\du}}
\pgfusepath{stroke}
\node at (14\du,0\du){};
%\pgfpathellipse{\pgfpoint{14\du}{0\du}}{\pgfpoint{1\du}{0\du}}{\pgfpoint{0\du}{1\du}}
%\pgfusepath{fill}
%\node at (14\du,0\du){};

%%% #4
\pgfpathellipse{\pgfpoint{24\du}{0\du}}{\pgfpoint{1\du}{0\du}}{\pgfpoint{0\du}{1\du}}
\pgfusepath{stroke}
\node at (24\du,0\du){};
%\pgfpathellipse{\pgfpoint{24\du}{0\du}}{\pgfpoint{1\du}{0\du}}{\pgfpoint{0\du}{1\du}}
%\pgfusepath{fill}
%\node at (24\du,0\du){};

%%% #5
\pgfpathellipse{\pgfpoint{34\du}{0\du}}{\pgfpoint{1\du}{0\du}}{\pgfpoint{0\du}{1\du}}
\pgfusepath{stroke}
\node at (34\du,0\du){};
%\pgfpathellipse{\pgfpoint{34\du}{0\du}}{\pgfpoint{1\du}{0\du}}{\pgfpoint{0\du}{1\du}}
%\pgfusepath{fill}
%\node at (34\du,0\du){};

%%% #6
\pgfpathellipse{\pgfpoint{44\du}{0\du}}{\pgfpoint{1\du}{0\du}}{\pgfpoint{0\du}{1\du}}
\pgfusepath{stroke}
\node at (44\du,0\du){};
%\pgfpathellipse{\pgfpoint{44\du}{0\du}}{\pgfpoint{1\du}{0\du}}{\pgfpoint{0\du}{1\du}}
%\pgfusepath{fill}
%\node at (44\du,0\du){};

%%%%%% LINKS
\pgfsetlinewidth{0.300000\du}
\pgfsetdash{}{0pt}
\pgfsetdash{}{0pt}
\pgfsetbuttcap

{\draw (-5\du,0\du)--(3\du,0\du);}
{\draw (5\du,0\du)--(13\du,0\du);}
%{\draw (15\du,0\du)--(23\du,0\du);}
{\draw (25\du,0\du)--(33\du,0\du);}
{\draw (34.65\du,0.7\du)--(43.35\du,0.7\du);}
{\draw (34.65\du,-0.7\du)--(43.35\du,-0.7\du);}

%%%%%% ARROW HEAD

{\pgfsetcornersarced{\pgfpoint{0.300000\du}{0.300000\du}}\definecolor{dialinecolor}{rgb}{0.000000, 0.000000, 0.000000}
\pgfsetstrokecolor{dialinecolor}
\draw (37\du,-1.2\du)--(40.8\du,0\du)--(37\du,1.2\du);}

%%%%%% DASHED LINK
\pgfsetlinewidth{0.400000\du}
\pgfsetdash{{1.000000\du}{1.000000\du}}{0\du}
\pgfsetdash{{1.000000\du}{1.000000\du}}{0\du}
\pgfsetbuttcap
%{\draw (10.479373\du,11.994351\du)--(17.462706\du,11.994351\du);}
{\draw (15.3\du,-1\du)--(23\du,-1\du);}

%%%%%% TAGS
\node[anchor=west] at (48\du,0\du){${\rm B}_n$};

\node[anchor=south] at (-6\du,1.1\du){$\scriptstyle 1$};

\node[anchor=south] at (4\du,1.1\du){$\scriptstyle 2$};

\node[anchor=south] at (14\du,1.1\du){$\scriptstyle 3$};

\node[anchor=south] at (24\du,1.1\du){$\scriptstyle n-2$};

\node[anchor=south] at (34\du,1.1\du){$\scriptstyle n-1$};

\node[anchor=south] at (44\du,1.1\du){$\scriptstyle n$};

\end{tikzpicture} \vspace{-0.1cm}\\
\ifx\du\undefined
  \newlength{\du}
\fi
\setlength{\du}{3.3\unitlength}
\begin{tikzpicture}
\pgftransformxscale{1.000000}
\pgftransformyscale{1.000000}

%%%%%% COLORS
\definecolor{dialinecolor}{rgb}{0.000000, 0.000000, 0.000000} % EXTERIOR
\pgfsetstrokecolor{dialinecolor}
\definecolor{dialinecolor}{rgb}{0.000000, 0.000000, 0.000000} % INTERIOR
\pgfsetfillcolor{dialinecolor}

%%%%%% NODES

\pgfsetlinewidth{0.300000\du}
\pgfsetdash{}{0pt}
\pgfsetdash{}{0pt}
%\pgfsetmiterjoin

%%% #1
\pgfpathellipse{\pgfpoint{-6\du}{0\du}}{\pgfpoint{1\du}{0\du}}{\pgfpoint{0\du}{1\du}}
\pgfusepath{stroke}
\node at (-6\du,0\du){};
%\pgfpathellipse{\pgfpoint{-6\du}{0\du}}{\pgfpoint{1\du}{0\du}}{\pgfpoint{0\du}{1\du}}
%\pgfusepath{fill}
%\node at (-6\du,0\du){};

%%% #2
\pgfpathellipse{\pgfpoint{4\du}{0\du}}{\pgfpoint{1\du}{0\du}}{\pgfpoint{0\du}{1\du}}
\pgfusepath{stroke}
\node at (4\du,0\du){};
%\pgfpathellipse{\pgfpoint{4\du}{0\du}}{\pgfpoint{1\du}{0\du}}{\pgfpoint{0\du}{1\du}}
%\pgfusepath{fill}
%\node at (4\du,0\du){};

%%% #3
\pgfpathellipse{\pgfpoint{14\du}{0\du}}{\pgfpoint{1\du}{0\du}}{\pgfpoint{0\du}{1\du}}
\pgfusepath{stroke}
\node at (14\du,0\du){};
%\pgfpathellipse{\pgfpoint{14\du}{0\du}}{\pgfpoint{1\du}{0\du}}{\pgfpoint{0\du}{1\du}}
%\pgfusepath{fill}
%\node at (14\du,0\du){};

%%% #4
\pgfpathellipse{\pgfpoint{24\du}{0\du}}{\pgfpoint{1\du}{0\du}}{\pgfpoint{0\du}{1\du}}
\pgfusepath{stroke}
\node at (24\du,0\du){};
%\pgfpathellipse{\pgfpoint{24\du}{0\du}}{\pgfpoint{1\du}{0\du}}{\pgfpoint{0\du}{1\du}}
%\pgfusepath{fill}
%\node at (24\du,0\du){};

%%% #5
\pgfpathellipse{\pgfpoint{34\du}{0\du}}{\pgfpoint{1\du}{0\du}}{\pgfpoint{0\du}{1\du}}
\pgfusepath{stroke}
\node at (34\du,0\du){};
%\pgfpathellipse{\pgfpoint{34\du}{0\du}}{\pgfpoint{1\du}{0\du}}{\pgfpoint{0\du}{1\du}}
%\pgfusepath{fill}
%\node at (34\du,0\du){};

%%% #6
\pgfpathellipse{\pgfpoint{44\du}{0\du}}{\pgfpoint{1\du}{0\du}}{\pgfpoint{0\du}{1\du}}
\pgfusepath{stroke}
\node at (44\du,0\du){};
%\pgfpathellipse{\pgfpoint{44\du}{0\du}}{\pgfpoint{1\du}{0\du}}{\pgfpoint{0\du}{1\du}}
%\pgfusepath{fill}
%\node at (44\du,0\du){};

%%%%%% LINKS
\pgfsetlinewidth{0.300000\du}
\pgfsetdash{}{0pt}
\pgfsetdash{}{0pt}
\pgfsetbuttcap

{\draw (-5\du,0\du)--(3\du,0\du);}
{\draw (5\du,0\du)--(13\du,0\du);}
%{\draw (15\du,0\du)--(23\du,0\du);}
{\draw (25\du,0\du)--(33\du,0\du);}
{\draw (34.65\du,0.7\du)--(43.35\du,0.7\du);}
{\draw (34.65\du,-0.7\du)--(43.35\du,-0.7\du);}

%%%%%% ARROW HEAD

{\pgfsetcornersarced{\pgfpoint{0.300000\du}{0.300000\du}}\definecolor{dialinecolor}{rgb}{0.000000, 0.000000, 0.000000}
\pgfsetstrokecolor{dialinecolor}
\draw (40.8\du,-1.2\du)--(37\du,0\du)--(40.8\du,1.2\du);}

%%%%%% DASHED LINK
\pgfsetlinewidth{0.400000\du}
\pgfsetdash{{1.000000\du}{1.000000\du}}{0\du}
\pgfsetdash{{1.000000\du}{1.00000\du}}{0\du}
\pgfsetbuttcap
%{\draw (10.479373\du,11.994351\du)--(17.462706\du,11.994351\du);}
{\draw (15.3\du,-1\du)--(23\du,-1\du);}

%%%%%% TAGS
\node[anchor=west] at (48\du,0\du){${\rm C}_n$};

\node[anchor=south] at (-6\du,1.1\du){$\scriptstyle 1$};

\node[anchor=south] at (4\du,1.1\du){$\scriptstyle 2$};

\node[anchor=south] at (14\du,1.1\du){$\scriptstyle 3$};

\node[anchor=south] at (24\du,1.1\du){$\scriptstyle n-2$};

\node[anchor=south] at (34\du,1.1\du){$\scriptstyle n-1$};

\node[anchor=south] at (44\du,1.1\du){$\scriptstyle n$};

\end{tikzpicture} \vspace{-0.1cm}\\\ifx\du\undefined
  \newlength{\du}
\fi
\setlength{\du}{3.3\unitlength}
\begin{tikzpicture}
\pgftransformxscale{1.000000}
\pgftransformyscale{1.000000}

%%%%%% COLORS
\definecolor{dialinecolor}{rgb}{0.000000, 0.000000, 0.000000} % EXTERIOR
\pgfsetstrokecolor{dialinecolor}
\definecolor{dialinecolor}{rgb}{0.000000, 0.000000, 0.000000} % INTERIOR
\pgfsetfillcolor{dialinecolor}

%%%%%% NODES

\pgfsetlinewidth{0.300000\du}
\pgfsetdash{}{0pt}
\pgfsetdash{}{0pt}
%\pgfsetmiterjoin

%%% #1
\pgfpathellipse{\pgfpoint{-6\du}{0\du}}{\pgfpoint{1\du}{0\du}}{\pgfpoint{0\du}{1\du}}
\pgfusepath{stroke}
\node at (-6\du,0\du){};
%\pgfpathellipse{\pgfpoint{-6\du}{0\du}}{\pgfpoint{1\du}{0\du}}{\pgfpoint{0\du}{1\du}}
%\pgfusepath{fill}
%\node at (-6\du,0\du){};

%%% #2
\pgfpathellipse{\pgfpoint{4\du}{0\du}}{\pgfpoint{1\du}{0\du}}{\pgfpoint{0\du}{1\du}}
\pgfusepath{stroke}
\node at (4\du,0\du){};
%\pgfpathellipse{\pgfpoint{4\du}{0\du}}{\pgfpoint{1\du}{0\du}}{\pgfpoint{0\du}{1\du}}
%\pgfusepath{fill}
%\node at (4\du,0\du){};

%%% #3
\pgfpathellipse{\pgfpoint{14\du}{0\du}}{\pgfpoint{1\du}{0\du}}{\pgfpoint{0\du}{1\du}}
\pgfusepath{stroke}
\node at (14\du,0\du){};
%\pgfpathellipse{\pgfpoint{14\du}{0\du}}{\pgfpoint{1\du}{0\du}}{\pgfpoint{0\du}{1\du}}
%\pgfusepath{fill}
%\node at (14\du,0\du){};

%%% #4
\pgfpathellipse{\pgfpoint{24\du}{0\du}}{\pgfpoint{1\du}{0\du}}{\pgfpoint{0\du}{1\du}}
\pgfusepath{stroke}
\node at (24\du,0\du){};
%\pgfpathellipse{\pgfpoint{24\du}{0\du}}{\pgfpoint{1\du}{0\du}}{\pgfpoint{0\du}{1\du}}
%\pgfusepath{fill}
%\node at (24\du,0\du){};

%%% #5
\pgfpathellipse{\pgfpoint{34\du}{5\du}}{\pgfpoint{1\du}{0\du}}{\pgfpoint{0\du}{1\du}}
\pgfusepath{stroke}
\node at (34\du,5\du){};
%\pgfpathellipse{\pgfpoint{34\du}{5\du}}{\pgfpoint{1\du}{0\du}}{\pgfpoint{0\du}{1\du}}
%\pgfusepath{fill}
%\node at (34\du,5\du){};

%%% #6
\pgfpathellipse{\pgfpoint{34\du}{-5\du}}{\pgfpoint{1\du}{0\du}}{\pgfpoint{0\du}{1\du}}
\pgfusepath{stroke}
\node at (34\du,-5\du){};
%\pgfpathellipse{\pgfpoint{34\du}{0\du}}{\pgfpoint{1\du}{0\du}}{\pgfpoint{0\du}{1\du}}
%\pgfusepath{fill}
%\node at (34\du,0\du){};

%%%%%% LINKS
\pgfsetlinewidth{0.300000\du}
\pgfsetdash{}{0pt}
\pgfsetdash{}{0pt}
\pgfsetbuttcap

{\draw (-5\du,0\du)--(3\du,0\du);}
%{\draw (5\du,0\du)--(13\du,0\du);}
{\draw (15\du,0\du)--(23\du,0\du);}
%{\draw (25\du,0\du)--(33\du,0\du);}
{\draw (24.65\du,0.6\du)--(33.1\du,4.9\du);}
{\draw (24.65\du,-0.6\du)--(33.1\du,-4.9\du);}

%%%%%% ARROW HEAD

%{\pgfsetcornersarced{\pgfpoint{0.300000\du}{0.300000\du}}\definecolor{dialinecolor}{rgb}{0.000000, 0.000000, 0.000000}
%\pgfsetstrokecolor{dialinecolor}
%\draw (37\du,-1.2\du)--(40.8\du,0\du)--(37\du,1.2\du);}

%%%%%% DASHED LINK
\pgfsetlinewidth{0.400000\du}
\pgfsetdash{{1.000000\du}{1.000000\du}}{0\du}
\pgfsetdash{{1.000000\du}{1.000000\du}}{0\du}
\pgfsetbuttcap
%{\draw (10.479373\du,11.994351\du)--(17.462706\du,11.994351\du);}
{\draw (5.3\du,-1\du)--(13\du,-1\du);}

%%%%%% TAGS
\node[anchor=west] at (38\du,0\du){${\rm D}_n$};

\node[anchor=south] at (-6\du,1.1\du){$\scriptstyle 1$};

\node[anchor=south] at (4\du,1.1\du){$\scriptstyle 2$};

\node[anchor=south] at (14\du,1.1\du){$\scriptstyle n-3$};

\node[anchor=south] at (24\du,1.1\du){$\scriptstyle n-2$};

\node[anchor=south] at (34\du,6.1\du){$\scriptstyle n-1$};

\node[anchor=south] at (34\du,-4.1\du){$\scriptstyle n$};

\end{tikzpicture} \vspace{-0.1cm}
\end{array}
\end{equation}
\vspace{-0.2cm}
\begin{equation*}
\begin{array}{c}\vspace{-0.1cm}
\ifx\du\undefined
  \newlength{\du}
\fi
\setlength{\du}{3.3\unitlength}
\begin{tikzpicture}
\pgftransformxscale{1.000000}
\pgftransformyscale{1.000000}

%%%%%% COLORS
\definecolor{dialinecolor}{rgb}{0.000000, 0.000000, 0.000000} % EXTERIOR
\pgfsetstrokecolor{dialinecolor}
\definecolor{dialinecolor}{rgb}{0.000000, 0.000000, 0.000000} % INTERIOR
\pgfsetfillcolor{dialinecolor}

%%%%%% NODES

\pgfsetlinewidth{0.300000\du}
\pgfsetdash{}{0pt}
\pgfsetdash{}{0pt}
%\pgfsetmiterjoin

%%% #1
\pgfpathellipse{\pgfpoint{-6\du}{0\du}}{\pgfpoint{1\du}{0\du}}{\pgfpoint{0\du}{1\du}}
\pgfusepath{stroke}
\node at (-6\du,0\du){};
%\pgfpathellipse{\pgfpoint{-6\du}{0\du}}{\pgfpoint{1\du}{0\du}}{\pgfpoint{0\du}{1\du}}
%\pgfusepath{fill}
%\node at (-6\du,0\du){};

%%% #2
\pgfpathellipse{\pgfpoint{14\du}{-7\du}}{\pgfpoint{1\du}{0\du}}{\pgfpoint{0\du}{1\du}}
\pgfusepath{stroke}
\node at (14\du,-7\du){};
%\pgfpathellipse{\pgfpoint{14\du}{-7\du}}{\pgfpoint{1\du}{0\du}}{\pgfpoint{0\du}{1\du}}
%\pgfusepath{fill}
%\node at (14\du,-7\du){};

%%% #3
\pgfpathellipse{\pgfpoint{4\du}{0\du}}{\pgfpoint{1\du}{0\du}}{\pgfpoint{0\du}{1\du}}
\pgfusepath{stroke}
\node at (4\du,0\du){};
%\pgfpathellipse{\pgfpoint{4\du}{0\du}}{\pgfpoint{1\du}{0\du}}{\pgfpoint{0\du}{1\du}}
%\pgfusepath{fill}
%\node at (4\du,0\du){};

%%% #4
\pgfpathellipse{\pgfpoint{14\du}{0\du}}{\pgfpoint{1\du}{0\du}}{\pgfpoint{0\du}{1\du}}
\pgfusepath{stroke}
\node at (14\du,0\du){};
%\pgfpathellipse{\pgfpoint{14\du}{0\du}}{\pgfpoint{1\du}{0\du}}{\pgfpoint{0\du}{1\du}}
%\pgfusepath{fill}
%\node at (14\du,0\du){};

%%% #5
\pgfpathellipse{\pgfpoint{24\du}{0\du}}{\pgfpoint{1\du}{0\du}}{\pgfpoint{0\du}{1\du}}
\pgfusepath{stroke}
\node at (24\du,0\du){};
%\pgfpathellipse{\pgfpoint{24\du}{0\du}}{\pgfpoint{1\du}{0\du}}{\pgfpoint{0\du}{1\du}}
%\pgfusepath{fill}
%\node at (24\du,0\du){};

%%% #6
\pgfpathellipse{\pgfpoint{34\du}{0\du}}{\pgfpoint{1\du}{0\du}}{\pgfpoint{0\du}{1\du}}
\pgfusepath{stroke}
\node at (34\du,0\du){};
%\pgfpathellipse{\pgfpoint{34\du}{0\du}}{\pgfpoint{1\du}{0\du}}{\pgfpoint{0\du}{1\du}}
%\pgfusepath{fill}
%\node at (34\du,0\du){};

%%%%%% LINKS
\pgfsetlinewidth{0.300000\du}
\pgfsetdash{}{0pt}
\pgfsetdash{}{0pt}
\pgfsetbuttcap

{\draw (-5\du,0\du)--(3\du,0\du);}
{\draw (5\du,0\du)--(13\du,0\du);}
{\draw (15\du,0\du)--(23\du,0\du);}
{\draw (25\du,0\du)--(33\du,0\du);}
{\draw (14\du,-1\du)--(14\du,-6\du);}
%{\draw (34.65\du,0.7\du)--(43.35\du,0.7\du);}
%{\draw (34.65\du,-0.7\du)--(43.35\du,-0.7\du);}

%%%%%% TAGS
\node[anchor=west] at (38\du,0\du){${\rm E}_6$};

\node[anchor=south] at (-6\du,1.1\du){$\scriptstyle 1$};

\node[anchor=south] at (4\du,1.1\du){$\scriptstyle 3$};

\node[anchor=south] at (14\du,1.1\du){$\scriptstyle 4$};

\node[anchor=south] at (24\du,1.1\du){$\scriptstyle 5$};

\node[anchor=south] at (34\du,1.1\du){$\scriptstyle 6$};

\node[anchor=east] at (12.9\du,-7\du){$\scriptstyle 2$};

\end{tikzpicture} \vspace{-0.1cm}
\end{array}
\end{equation*}
\vspace{-0.2cm}
\begin{equation*}
\begin{array}{c}\vspace{-0.1cm}
\ifx\du\undefined
  \newlength{\du}
\fi
\setlength{\du}{3.3\unitlength}
\begin{tikzpicture}
\pgftransformxscale{1.000000}
\pgftransformyscale{1.000000}

%%%%%% COLORS
\definecolor{dialinecolor}{rgb}{0.000000, 0.000000, 0.000000} % EXTERIOR
\pgfsetstrokecolor{dialinecolor}
\definecolor{dialinecolor}{rgb}{0.000000, 0.000000, 0.000000} % INTERIOR
\pgfsetfillcolor{dialinecolor}

%%%%%% NODES

\pgfsetlinewidth{0.300000\du}
\pgfsetdash{}{0pt}
\pgfsetdash{}{0pt}
%\pgfsetmiterjoin

%%% #1
\pgfpathellipse{\pgfpoint{-6\du}{0\du}}{\pgfpoint{1\du}{0\du}}{\pgfpoint{0\du}{1\du}}
\pgfusepath{stroke}
\node at (-6\du,0\du){};
%\pgfpathellipse{\pgfpoint{-6\du}{0\du}}{\pgfpoint{1\du}{0\du}}{\pgfpoint{0\du}{1\du}}
%\pgfusepath{fill}
%\node at (-6\du,0\du){};

%%% #2
\pgfpathellipse{\pgfpoint{14\du}{-7\du}}{\pgfpoint{1\du}{0\du}}{\pgfpoint{0\du}{1\du}}
\pgfusepath{stroke}
\node at (14\du,-7\du){};
%\pgfpathellipse{\pgfpoint{14\du}{-7\du}}{\pgfpoint{1\du}{0\du}}{\pgfpoint{0\du}{1\du}}
%\pgfusepath{fill}
%\node at (14\du,-7\du){};

%%% #3
\pgfpathellipse{\pgfpoint{4\du}{0\du}}{\pgfpoint{1\du}{0\du}}{\pgfpoint{0\du}{1\du}}
\pgfusepath{stroke}
\node at (4\du,0\du){};
%\pgfpathellipse{\pgfpoint{4\du}{0\du}}{\pgfpoint{1\du}{0\du}}{\pgfpoint{0\du}{1\du}}
%\pgfusepath{fill}
%\node at (4\du,0\du){};

%%% #4
\pgfpathellipse{\pgfpoint{14\du}{0\du}}{\pgfpoint{1\du}{0\du}}{\pgfpoint{0\du}{1\du}}
\pgfusepath{stroke}
\node at (14\du,0\du){};
%\pgfpathellipse{\pgfpoint{14\du}{0\du}}{\pgfpoint{1\du}{0\du}}{\pgfpoint{0\du}{1\du}}
%\pgfusepath{fill}
%\node at (14\du,0\du){};

%%% #5
\pgfpathellipse{\pgfpoint{24\du}{0\du}}{\pgfpoint{1\du}{0\du}}{\pgfpoint{0\du}{1\du}}
\pgfusepath{stroke}
\node at (24\du,0\du){};
%\pgfpathellipse{\pgfpoint{24\du}{0\du}}{\pgfpoint{1\du}{0\du}}{\pgfpoint{0\du}{1\du}}
%\pgfusepath{fill}
%\node at (24\du,0\du){};

%%% #6
\pgfpathellipse{\pgfpoint{34\du}{0\du}}{\pgfpoint{1\du}{0\du}}{\pgfpoint{0\du}{1\du}}
\pgfusepath{stroke}
\node at (34\du,0\du){};
%\pgfpathellipse{\pgfpoint{34\du}{0\du}}{\pgfpoint{1\du}{0\du}}{\pgfpoint{0\du}{1\du}}
%\pgfusepath{fill}
%\node at (34\du,0\du){};

%%% #7
\pgfpathellipse{\pgfpoint{44\du}{0\du}}{\pgfpoint{1\du}{0\du}}{\pgfpoint{0\du}{1\du}}
\pgfusepath{stroke}
\node at (44\du,0\du){};
%\pgfpathellipse{\pgfpoint{44\du}{0\du}}{\pgfpoint{1\du}{0\du}}{\pgfpoint{0\du}{1\du}}
%\pgfusepath{fill}
%\node at (44\du,0\du){};

%%%%%% LINKS
\pgfsetlinewidth{0.300000\du}
\pgfsetdash{}{0pt}
\pgfsetdash{}{0pt}
\pgfsetbuttcap

{\draw (-5\du,0\du)--(3\du,0\du);}
{\draw (5\du,0\du)--(13\du,0\du);}
{\draw (15\du,0\du)--(23\du,0\du);}
{\draw (25\du,0\du)--(33\du,0\du);}
{\draw (35\du,0\du)--(43\du,0\du);}
{\draw (14\du,-1\du)--(14\du,-6\du);}
%{\draw (34.65\du,0.7\du)--(43.35\du,0.7\du);}
%{\draw (34.65\du,-0.7\du)--(43.35\du,-0.7\du);}

%%%%%% TAGS
\node[anchor=west] at (48\du,0\du){${\rm E}_7$};

\node[anchor=south] at (-6\du,1.1\du){$\scriptstyle 1$};

\node[anchor=south] at (4\du,1.1\du){$\scriptstyle 3$};

\node[anchor=south] at (14\du,1.1\du){$\scriptstyle 4$};

\node[anchor=south] at (24\du,1.1\du){$\scriptstyle 5$};

\node[anchor=south] at (34\du,1.1\du){$\scriptstyle 6$};

\node[anchor=south] at (44\du,1.1\du){$\scriptstyle 7$};

\node[anchor=east] at (12.9\du,-7\du){$\scriptstyle 2$};

\end{tikzpicture} \vspace{-0.1cm}
\end{array}
\end{equation*}
\vspace{-0.2cm}
\begin{equation*}
\begin{array}{c}\vspace{-0.1cm}
\ifx\du\undefined
  \newlength{\du}
\fi
\setlength{\du}{3.3\unitlength}
\begin{tikzpicture}
\pgftransformxscale{1.000000}
\pgftransformyscale{1.000000}

%%%%%% COLORS
\definecolor{dialinecolor}{rgb}{0.000000, 0.000000, 0.000000} % EXTERIOR
\pgfsetstrokecolor{dialinecolor}
\definecolor{dialinecolor}{rgb}{0.000000, 0.000000, 0.000000} % INTERIOR
\pgfsetfillcolor{dialinecolor}

%%%%%% NODES

\pgfsetlinewidth{0.300000\du}
\pgfsetdash{}{0pt}
\pgfsetdash{}{0pt}
%\pgfsetmiterjoin

%%% #1
\pgfpathellipse{\pgfpoint{-6\du}{0\du}}{\pgfpoint{1\du}{0\du}}{\pgfpoint{0\du}{1\du}}
\pgfusepath{stroke}
\node at (-6\du,0\du){};
%\pgfpathellipse{\pgfpoint{-6\du}{0\du}}{\pgfpoint{1\du}{0\du}}{\pgfpoint{0\du}{1\du}}
%\pgfusepath{fill}
%\node at (-6\du,0\du){};

%%% #2
\pgfpathellipse{\pgfpoint{14\du}{-7\du}}{\pgfpoint{1\du}{0\du}}{\pgfpoint{0\du}{1\du}}
\pgfusepath{stroke}
\node at (14\du,-7\du){};
%\pgfpathellipse{\pgfpoint{14\du}{-7\du}}{\pgfpoint{1\du}{0\du}}{\pgfpoint{0\du}{1\du}}
%\pgfusepath{fill}
%\node at (14\du,-7\du){};

%%% #3
\pgfpathellipse{\pgfpoint{4\du}{0\du}}{\pgfpoint{1\du}{0\du}}{\pgfpoint{0\du}{1\du}}
\pgfusepath{stroke}
\node at (4\du,0\du){};
%\pgfpathellipse{\pgfpoint{4\du}{0\du}}{\pgfpoint{1\du}{0\du}}{\pgfpoint{0\du}{1\du}}
%\pgfusepath{fill}
%\node at (4\du,0\du){};

%%% #4
\pgfpathellipse{\pgfpoint{14\du}{0\du}}{\pgfpoint{1\du}{0\du}}{\pgfpoint{0\du}{1\du}}
\pgfusepath{stroke}
\node at (14\du,0\du){};
%\pgfpathellipse{\pgfpoint{14\du}{0\du}}{\pgfpoint{1\du}{0\du}}{\pgfpoint{0\du}{1\du}}
%\pgfusepath{fill}
%\node at (14\du,0\du){};

%%% #5
\pgfpathellipse{\pgfpoint{24\du}{0\du}}{\pgfpoint{1\du}{0\du}}{\pgfpoint{0\du}{1\du}}
\pgfusepath{stroke}
\node at (24\du,0\du){};
%\pgfpathellipse{\pgfpoint{24\du}{0\du}}{\pgfpoint{1\du}{0\du}}{\pgfpoint{0\du}{1\du}}
%\pgfusepath{fill}
%\node at (24\du,0\du){};

%%% #6
\pgfpathellipse{\pgfpoint{34\du}{0\du}}{\pgfpoint{1\du}{0\du}}{\pgfpoint{0\du}{1\du}}
\pgfusepath{stroke}
\node at (34\du,0\du){};
%\pgfpathellipse{\pgfpoint{34\du}{0\du}}{\pgfpoint{1\du}{0\du}}{\pgfpoint{0\du}{1\du}}
%\pgfusepath{fill}
%\node at (34\du,0\du){};

%%% #7
\pgfpathellipse{\pgfpoint{44\du}{0\du}}{\pgfpoint{1\du}{0\du}}{\pgfpoint{0\du}{1\du}}
\pgfusepath{stroke}
\node at (44\du,0\du){};
%\pgfpathellipse{\pgfpoint{44\du}{0\du}}{\pgfpoint{1\du}{0\du}}{\pgfpoint{0\du}{1\du}}
%\pgfusepath{fill}
%\node at (44\du,0\du){};

%%% #8
\pgfpathellipse{\pgfpoint{54\du}{0\du}}{\pgfpoint{1\du}{0\du}}{\pgfpoint{0\du}{1\du}}
\pgfusepath{stroke}
\node at (54\du,0\du){};
%\pgfpathellipse{\pgfpoint{54\du}{0\du}}{\pgfpoint{1\du}{0\du}}{\pgfpoint{0\du}{1\du}}
%\pgfusepath{fill}
%\node at (54\du,0\du){};

%%%%%% LINKS
\pgfsetlinewidth{0.300000\du}
\pgfsetdash{}{0pt}
\pgfsetdash{}{0pt}
\pgfsetbuttcap

{\draw (-5\du,0\du)--(3\du,0\du);}
{\draw (5\du,0\du)--(13\du,0\du);}
{\draw (15\du,0\du)--(23\du,0\du);}
{\draw (25\du,0\du)--(33\du,0\du);}
{\draw (35\du,0\du)--(43\du,0\du);}
{\draw (45\du,0\du)--(53\du,0\du);}
{\draw (14\du,-1\du)--(14\du,-6\du);}
%{\draw (34.65\du,0.7\du)--(43.35\du,0.7\du);}
%{\draw (34.65\du,-0.7\du)--(43.35\du,-0.7\du);}

%%%%%% TAGS
\node[anchor=west] at (58\du,0\du){${\rm E}_8$};

\node[anchor=south] at (-6\du,1.1\du){$\scriptstyle 1$};

\node[anchor=south] at (4\du,1.1\du){$\scriptstyle 3$};

\node[anchor=south] at (14\du,1.1\du){$\scriptstyle 4$};

\node[anchor=south] at (24\du,1.1\du){$\scriptstyle 5$};

\node[anchor=south] at (34\du,1.1\du){$\scriptstyle 6$};

\node[anchor=south] at (44\du,1.1\du){$\scriptstyle 7$};

\node[anchor=south] at (54\du,1.1\du){$\scriptstyle 8$};

\node[anchor=east] at (12.9\du,-7\du){$\scriptstyle 2$};

\end{tikzpicture} \vspace{-0.1cm}
\end{array}
\end{equation*}
\vspace{-0.2cm}
\begin{equation*}
\begin{array}{c}\vspace{-0.1cm}
\ifx\du\undefined
  \newlength{\du}
\fi
\setlength{\du}{3.3\unitlength}
\begin{tikzpicture}
\pgftransformxscale{1.000000}
\pgftransformyscale{1.000000}

%%%%%% COLORS
\definecolor{dialinecolor}{rgb}{0.000000, 0.000000, 0.000000} % EXTERIOR
\pgfsetstrokecolor{dialinecolor}
\definecolor{dialinecolor}{rgb}{0.000000, 0.000000, 0.000000} % INTERIOR
\pgfsetfillcolor{dialinecolor}

%%%%%% NODES

\pgfsetlinewidth{0.300000\du}
\pgfsetdash{}{0pt}
\pgfsetdash{}{0pt}
%\pgfsetmiterjoin

%%% #1
\pgfpathellipse{\pgfpoint{-6\du}{0\du}}{\pgfpoint{1\du}{0\du}}{\pgfpoint{0\du}{1\du}}
\pgfusepath{stroke}
\node at (-6\du,0\du){};
%\pgfpathellipse{\pgfpoint{-6\du}{0\du}}{\pgfpoint{1\du}{0\du}}{\pgfpoint{0\du}{1\du}}
%\pgfusepath{fill}
%\node at (-6\du,0\du){};

%%% #2
\pgfpathellipse{\pgfpoint{4\du}{0\du}}{\pgfpoint{1\du}{0\du}}{\pgfpoint{0\du}{1\du}}
\pgfusepath{stroke}
\node at (4\du,0\du){};
%\pgfpathellipse{\pgfpoint{4\du}{0\du}}{\pgfpoint{1\du}{0\du}}{\pgfpoint{0\du}{1\du}}
%\pgfusepath{fill}
%\node at (4\du,0\du){};

%%% #3
\pgfpathellipse{\pgfpoint{14\du}{0\du}}{\pgfpoint{1\du}{0\du}}{\pgfpoint{0\du}{1\du}}
\pgfusepath{stroke}
\node at (14\du,0\du){};
%\pgfpathellipse{\pgfpoint{14\du}{0\du}}{\pgfpoint{1\du}{0\du}}{\pgfpoint{0\du}{1\du}}
%\pgfusepath{fill}
%\node at (14\du,0\du){};

%%% #4
\pgfpathellipse{\pgfpoint{24\du}{0\du}}{\pgfpoint{1\du}{0\du}}{\pgfpoint{0\du}{1\du}}
\pgfusepath{stroke}
\node at (24\du,0\du){};
%\pgfpathellipse{\pgfpoint{24\du}{0\du}}{\pgfpoint{1\du}{0\du}}{\pgfpoint{0\du}{1\du}}
%\pgfusepath{fill}
%\node at (24\du,0\du){};

%%%%%% LINKS
\pgfsetlinewidth{0.300000\du}
\pgfsetdash{}{0pt}
\pgfsetdash{}{0pt}
\pgfsetbuttcap

{\draw (-5\du,0\du)--(3\du,0\du);}
%{\draw (5\du,0\du)--(13\du,0\du);}
{\draw (15\du,0\du)--(23\du,0\du);}
%{\draw (25\du,0\du)--(33\du,0\du);}
{\draw (4.65\du,0.7\du)--(13.35\du,0.7\du);}
{\draw (4.65\du,-0.7\du)--(13.35\du,-0.7\du);}

%%%%%% ARROW HEAD

{\pgfsetcornersarced{\pgfpoint{0.300000\du}{0.300000\du}}\definecolor{dialinecolor}{rgb}{0.000000, 0.000000, 0.000000}
\pgfsetstrokecolor{dialinecolor}
\draw (7\du,-1.2\du)--(10.8\du,0\du)--(7\du,1.2\du);}

%%%%%% TAGS
\node[anchor=west] at (28\du,0\du){${\rm F}_4$};

\node[anchor=south] at (-6\du,1.1\du){$\scriptstyle 1$};

\node[anchor=south] at (4\du,1.1\du){$\scriptstyle 2$};

\node[anchor=south] at (14\du,1.1\du){$\scriptstyle 3$};

\node[anchor=south] at (24\du,1.1\du){$\scriptstyle 4$};

\end{tikzpicture} \vspace{-0.1cm}\hspace{0.6cm}
\ifx\du\undefined
  \newlength{\du}
\fi
\setlength{\du}{3.3\unitlength}
\begin{tikzpicture}
\pgftransformxscale{1.000000}
\pgftransformyscale{1.000000}

%%%%%% COLORS
\definecolor{dialinecolor}{rgb}{0.000000, 0.000000, 0.000000} % EXTERIOR
\pgfsetstrokecolor{dialinecolor}
\definecolor{dialinecolor}{rgb}{0.000000, 0.000000, 0.000000} % INTERIOR
\pgfsetfillcolor{dialinecolor}

%%%%%% NODES

\pgfsetlinewidth{0.300000\du}
\pgfsetdash{}{0pt}
\pgfsetdash{}{0pt}
%\pgfsetmiterjoin

%%% #1
\pgfpathellipse{\pgfpoint{-6\du}{0\du}}{\pgfpoint{1\du}{0\du}}{\pgfpoint{0\du}{1\du}}
\pgfusepath{stroke}
\node at (-6\du,0\du){};
%\pgfpathellipse{\pgfpoint{-6\du}{0\du}}{\pgfpoint{1\du}{0\du}}{\pgfpoint{0\du}{1\du}}
%\pgfusepath{fill}
%\node at (-6\du,0\du){};

%%% #2
\pgfpathellipse{\pgfpoint{4\du}{0\du}}{\pgfpoint{1\du}{0\du}}{\pgfpoint{0\du}{1\du}}
\pgfusepath{stroke}
\node at (4\du,0\du){};
%\pgfpathellipse{\pgfpoint{4\du}{0\du}}{\pgfpoint{1\du}{0\du}}{\pgfpoint{0\du}{1\du}}
%\pgfusepath{fill}
%\node at (4\du,0\du){};

%%%%%% LINKS
\pgfsetlinewidth{0.300000\du}
\pgfsetdash{}{0pt}
\pgfsetdash{}{0pt}
\pgfsetbuttcap

{\draw (-5\du,0\du)--(3\du,0\du);}
{\draw (-5.35\du,0.7\du)--(3.35\du,0.7\du);}
{\draw (-5.35\du,-0.7\du)--(3.35\du,-0.7\du);}

%%%%%% ARROW HEAD

{\pgfsetcornersarced{\pgfpoint{0.300000\du}{0.300000\du}}\definecolor{dialinecolor}{rgb}{0.000000, 0.000000, 0.000000}
\pgfsetstrokecolor{dialinecolor}
\draw (0.8\du,-1.2\du)--(-3\du,0\du)--(0.8\du,1.2\du);}

%%%%%% TAGS
\node[anchor=west] at (8\du,0\du){${\rm G}_2$};

\node[anchor=south] at (-6\du,1.1\du){$\scriptstyle 1$};

\node[anchor=south] at (4\du,1.1\du){$\scriptstyle 2$};

\end{tikzpicture} 
\end{array}
\end{equation*}\par
\medskip
The connected components of the Dynkin diagram $\cD$ determine the simple Lie groups that are factors of the semisimple Lie group $G$, each of them corresponding to one of the Dynkin diagrams above.

In the above list we have introduced a numbering for the nodes of every Dynkin diagram (we have followed the standard reference \cite[p. 58]{Hum}). We note also that the classical Lie groups $\sl_{n+1}$, $\so_{2n+1}$, $\sp_{2n}$ and $\so_{2n}$ correspond to the diagrams ${\rm A}_n$, ${\rm B}_n$, ${\rm C}_n$ and ${\rm D}_n$, respectively.

\subsubsection*{\bf Rational homogeneous manifolds and marked Dynkin diagrams}\label{sssec:rathom}

Not only semisimple Lie groups, but also their projective quotients $G/P$  may be represented by means of Dynkin diagrams. The key point for this is that a subgroup $P$ of $G$ for which $G/P$ is projective,  called a {\it parabolic subgroup}, is determined by a set of simple roots of $G$ in the following way: given a subset $I\subset D$, let $\Phi^+(I)$ be the subset of $\Phi^+$ generated by the simple roots in $D\setminus I$. If moreover $I$ intersects every connected component of the Dynkin diagram $\cD$, then the subspace
\begin{equation}
\fp(I):=\fh\oplus\bigoplus_{\alpha\in\Phi^+} \fg_{-\alpha}\oplus\bigoplus_{\alpha\in\Phi^+(I)} \fg_\alpha
\label{eq:cartanparab}
\end{equation}
is a {\it parabolic subalgebra} of $\fg$, determining a parabolic subgroup  $P(I)\subset G$. Conversely, every parabolic subgroup is constructed in this way. In the most common notation, we represent $F(I):=G/P(I)$ by marking on the Dynkin diagram $\cD$ of $G$ the nodes corresponding to $I$.

\begin{example}
Given an $(n+1)$-dimensional complex vector space $V$,
the rational homogeneous manifolds for the group $\sl_{n+1}=\sl_{n+1}(V)$, are determined by the different markings of the Dynkin diagram $A_n$. For instance, numbering the nodes of $A_n$  as in (\ref{eq:dynkins}), the projective space $\P^n=\P(V^\vee)$, its dual $\P(V)$, and $\P(T_{\P^n})$ correspond to the marking of $I=\{1\}$, $\{n\}$ and $\{1,n\}$, respectively:

$$
\begin{array}{l} \vspace{0.1cm}
\ifx\du\undefined
  \newlength{\du}
\fi
\setlength{\du}{3\unitlength}
\begin{tikzpicture}
\pgftransformxscale{1.000000}
\pgftransformyscale{1.000000}

%%%%%% COLORS
\definecolor{dialinecolor}{rgb}{0.000000, 0.000000, 0.000000} % EXTERIOR
\pgfsetstrokecolor{dialinecolor}
\definecolor{dialinecolor}{rgb}{0.000000, 0.000000, 0.000000} % INTERIOR
\pgfsetfillcolor{dialinecolor}

%%%%%% NODES

\pgfsetlinewidth{0.300000\du}
\pgfsetdash{}{0pt}
\pgfsetdash{}{0pt}
%\pgfsetmiterjoin

%%% #1
\pgfpathellipse{\pgfpoint{-10.994954\du}{10.989768\du}}{\pgfpoint{0.981754\du}{0\du}}{\pgfpoint{0\du}{0.994352\du}}
\pgfusepath{fill}
\node at (-10.994954\du,11.184768\du){};

%\definecolor{dialinecolor}{rgb}{1.000000, 1.000000, 1.000000} % INTERIOR
%\pgfsetfillcolor{dialinecolor}
%%% #2
\pgfpathellipse{\pgfpoint{-0.994954\du}{11.006435\du}}{\pgfpoint{0.981754\du}{0\du}}{\pgfpoint{0\du}{0.994352\du}}
\pgfusepath{stroke}
\node at (-0.994954\du,11.201435\du){};

%%% #3
\pgfpathellipse{\pgfpoint{9.000474\du}{10.989768\du}}{\pgfpoint{0.981754\du}{0\du}}{\pgfpoint{0\du}{0.994352\du}}
\pgfusepath{stroke}
\node at (9.000474\du,11.184768\du){};

%%% #4
\pgfpathellipse{\pgfpoint{19.005046\du}{11.006435\du}}{\pgfpoint{0.981754\du}{0\du}}{\pgfpoint{0\du}{0.994352\du}}
\pgfusepath{stroke}
\node at (19.005046\du,11.201435\du){};

%%% #5
\pgfpathellipse{\pgfpoint{28.998379\du}{11.006435\du}}{\pgfpoint{0.981754\du}{0\du}}{\pgfpoint{0\du}{0.994352\du}}
\pgfusepath{stroke}
\node at (28.998379\du,11.201435\du){};

%%% #6
\pgfpathellipse{\pgfpoint{38.998379\du}{11.006435\du}}{\pgfpoint{0.981754\du}{0\du}}{\pgfpoint{0\du}{0.994352\du}}
\pgfusepath{stroke}
\node at (38.998379\du,11.201435\du){};

%%%%%% LINKS
\pgfsetlinewidth{0.300000\du}
\pgfsetdash{}{0pt}
\pgfsetdash{}{0pt}
\pgfsetbuttcap

{\draw (-10.013295\du,10.989768\du)--(-1.976613\du,11.006435\du);}

{\draw (-0.013295\du,11.006435\du)--(8.018720\du,10.989768\du);}

{\draw (19.986800\du,11.006435\du)--(28.016626\du,11.006435\du);}

{\draw (29.980133\du,11.006435\du)--(38.016626\du,11.006435\du);}

%%%%%% DASHED LINK
\pgfsetlinewidth{0.400000\du}
\pgfsetdash{{1.000000\du}{1.000000\du}}{0\du}
\pgfsetdash{{1.000000\du}{1.000000\du}}{0\du}
\pgfsetbuttcap
%{\draw (10.479373\du,11.994351\du)--(17.462706\du,11.994351\du);}
{\draw (10.479373\du,10.006435\du)--(17.462706\du,10.006435\du);}

\node[anchor=west] at (42.037706\du,11.038980\du){$\P^n$};
\end{tikzpicture} \hspace{0.72cm}\vspace{0.1cm}\\
\ifx\du\undefined
  \newlength{\du}
\fi
\setlength{\du}{3\unitlength}
\begin{tikzpicture}
\pgftransformxscale{1.000000}
\pgftransformyscale{1.000000}

%%%%%% COLORS
\definecolor{dialinecolor}{rgb}{0.000000, 0.000000, 0.000000} % EXTERIOR
\pgfsetstrokecolor{dialinecolor}
\definecolor{dialinecolor}{rgb}{0.000000, 0.000000, 0.000000} % INTERIOR
\pgfsetfillcolor{dialinecolor}

%%%%%% NODES

\pgfsetlinewidth{0.300000\du}
\pgfsetdash{}{0pt}
\pgfsetdash{}{0pt}
%\pgfsetmiterjoin

%%% #1
\pgfpathellipse{\pgfpoint{-10.994954\du}{10.989768\du}}{\pgfpoint{0.981754\du}{0\du}}{\pgfpoint{0\du}{0.994352\du}}
\pgfusepath{stroke}
\node at (-10.994954\du,11.184768\du){};

%%% #2
\pgfpathellipse{\pgfpoint{-0.994954\du}{11.006435\du}}{\pgfpoint{0.981754\du}{0\du}}{\pgfpoint{0\du}{0.994352\du}}
\pgfusepath{stroke}
\node at (-0.994954\du,11.201435\du){};

%%% #3
\pgfpathellipse{\pgfpoint{9.000474\du}{10.989768\du}}{\pgfpoint{0.981754\du}{0\du}}{\pgfpoint{0\du}{0.994352\du}}
\pgfusepath{stroke}
\node at (9.000474\du,11.184768\du){};

%%% #4
\pgfpathellipse{\pgfpoint{19.005046\du}{11.006435\du}}{\pgfpoint{0.981754\du}{0\du}}{\pgfpoint{0\du}{0.994352\du}}
\pgfusepath{stroke}
\node at (19.005046\du,11.201435\du){};

%%% #5
\pgfpathellipse{\pgfpoint{28.998379\du}{11.006435\du}}{\pgfpoint{0.981754\du}{0\du}}{\pgfpoint{0\du}{0.994352\du}}
\pgfusepath{stroke}
\node at (28.998379\du,11.201435\du){};

%%% #6
\pgfpathellipse{\pgfpoint{38.998379\du}{11.006435\du}}{\pgfpoint{0.981754\du}{0\du}}{\pgfpoint{0\du}{0.994352\du}}
\pgfusepath{fill}
\node at (38.998379\du,11.201435\du){};

%%%%%% LINKS
\pgfsetlinewidth{0.300000\du}
\pgfsetdash{}{0pt}
\pgfsetdash{}{0pt}
\pgfsetbuttcap

{\draw (-10.013295\du,10.989768\du)--(-1.976613\du,11.006435\du);}

{\draw (-0.013295\du,11.006435\du)--(8.018720\du,10.989768\du);}

{\draw (19.986800\du,11.006435\du)--(28.016626\du,11.006435\du);}

{\draw (29.980133\du,11.006435\du)--(38.016626\du,11.006435\du);}

%%%%%% DASHED LINK
\pgfsetlinewidth{0.400000\du}
\pgfsetdash{{1.000000\du}{1.000000\du}}{0\du}
\pgfsetdash{{1.000000\du}{1.000000\du}}{0\du}
\pgfsetbuttcap
%{\draw (10.479373\du,11.994351\du)--(17.462706\du,11.994351\du);}
{\draw (10.479373\du,10.006435\du)--(17.462706\du,10.006435\du);}

\node[anchor=west] at (42.037706\du,11.038980\du){${\P^n}^\vee$};
\end{tikzpicture} \hspace{0.52cm}\vspace{0.1cm}\\
\ifx\du\undefined
  \newlength{\du}
\fi
\setlength{\du}{3\unitlength}
\begin{tikzpicture}
\pgftransformxscale{1.000000}
\pgftransformyscale{1.000000}

%%%%%% COLORS
\definecolor{dialinecolor}{rgb}{0.000000, 0.000000, 0.000000} % EXTERIOR
\pgfsetstrokecolor{dialinecolor}
\definecolor{dialinecolor}{rgb}{0.000000, 0.000000, 0.000000} % INTERIOR
\pgfsetfillcolor{dialinecolor}

%%%%%% NODES

\pgfsetlinewidth{0.300000\du}
\pgfsetdash{}{0pt}
\pgfsetdash{}{0pt}
%\pgfsetmiterjoin

%%% #1
\pgfpathellipse{\pgfpoint{-10.994954\du}{10.989768\du}}{\pgfpoint{0.981754\du}{0\du}}{\pgfpoint{0\du}{0.994352\du}}
\pgfusepath{fill}
\node at (-10.994954\du,11.184768\du){};

%%% #2
\pgfpathellipse{\pgfpoint{-0.994954\du}{11.006435\du}}{\pgfpoint{0.981754\du}{0\du}}{\pgfpoint{0\du}{0.994352\du}}
\pgfusepath{stroke}
\node at (-0.994954\du,11.201435\du){};

%%% #3
\pgfpathellipse{\pgfpoint{9.000474\du}{10.989768\du}}{\pgfpoint{0.981754\du}{0\du}}{\pgfpoint{0\du}{0.994352\du}}
\pgfusepath{stroke}
\node at (9.000474\du,11.184768\du){};

%%% #4
\pgfpathellipse{\pgfpoint{19.005046\du}{11.006435\du}}{\pgfpoint{0.981754\du}{0\du}}{\pgfpoint{0\du}{0.994352\du}}
\pgfusepath{stroke}
\node at (19.005046\du,11.201435\du){};

%%% #5
\pgfpathellipse{\pgfpoint{28.998379\du}{11.006435\du}}{\pgfpoint{0.981754\du}{0\du}}{\pgfpoint{0\du}{0.994352\du}}
\pgfusepath{stroke}
\node at (28.998379\du,11.201435\du){};

%%% #6
\pgfpathellipse{\pgfpoint{38.998379\du}{11.006435\du}}{\pgfpoint{0.981754\du}{0\du}}{\pgfpoint{0\du}{0.994352\du}}
\pgfusepath{fill}
\node at (38.998379\du,11.201435\du){};

%%%%%% LINKS
\pgfsetlinewidth{0.300000\du}
\pgfsetdash{}{0pt}
\pgfsetdash{}{0pt}
\pgfsetbuttcap

{\draw (-10.013295\du,10.989768\du)--(-1.976613\du,11.006435\du);}

{\draw (-0.013295\du,11.006435\du)--(8.018720\du,10.989768\du);}

{\draw (19.986800\du,11.006435\du)--(28.016626\du,11.006435\du);}

{\draw (29.980133\du,11.006435\du)--(38.016626\du,11.006435\du);}

%%%%%% DASHED LINK
\pgfsetlinewidth{0.400000\du}
\pgfsetdash{{1.000000\du}{1.000000\du}}{0\du}
\pgfsetdash{{1.000000\du}{1.000000\du}}{0\du}
\pgfsetbuttcap
%{\draw (10.479373\du,11.994351\du)--(17.462706\du,11.994351\du);}
{\draw (10.479373\du,10.006435\du)--(17.462706\du,10.006435\du);}

\node[anchor=west] at (42.037706\du,11.038980\du){$\P(T_{\P^n})$};
\end{tikzpicture} 
\end{array}
$$

\noindent More generally, the homogeneous manifold determined by marking the ordered set of nodes $I=\{i_1,\dots,i_k\}$, which is called the {\it flag manifold} associated to $I$ in $\P^n$, is isomorphic to the parameter space of {\it flags} of linear spaces of the form $\P^{i_1}\subset\dots\subset \P^{i_k}\subset\P^n$. Similar descriptions may be done for the rest of the classical simple Lie groups.
\end{example}

%%%%%%%%%%%%%%%%%%%%%%%%%%%%%%%%%%%%%
\subsubsection*{\bf Contractions of rational homogeneous manifolds}\label{ssection:contraction}

We finally show how to write the contractions of a manifold $F(I)$ in terms of marked Dynkin diagrams.

From the above construction it immediately follows that given two subsets $J\subset I\subset D$, the inclusion $P(I)\subset P(J)$ provides a proper surjective morphism $p^{I,J}:F(I)\to F(J)$. Moreover, the fibers of this morphism are rational homogeneous manifolds, determined by the marked Dynkin diagram obtained from $\cD$ by removing the nodes in $J$ and marking the nodes in $I\setminus J$.

The following result states that these maps are the only contractions of $F(I)$:

\begin{proposition}\label{prop:RHsimp}
Every rational homogeneous manifold $F(I)=G/P(I)$ is a Fano manifold, whose contractions are all of the form $p^{I,J}$, $J\subset I\subset D$. In particular, the Picard number of $F(I)$ is $\sharp(I)$ and the Mori cone $\NE(F(I))\subset N_1(F(I))$ is simplicial.
\end{proposition}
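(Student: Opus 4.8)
Since $F(I)=G/P(I)$ is rational homogeneous, it is Fano by Proposition~\ref{rathomogfano}; the substance of the statement is therefore the classification of its contractions, and the plan is to deduce this from the nef cone of $F(I)$. The input I would take as known, namely the standard description of line bundles on rational homogeneous spaces via the Bruhat decomposition and the Borel--Weil theorem, is the following: $\Pic(F(I))$ is free of rank $\sharp(I)$, a basis being given by the Schubert divisors indexed by $I$; dually $\Pic(F(I))=\bigoplus_{i\in I}\Z\,\cL_{\omega_i}$, where $\cL_{\omega_i}$ is the line bundle attached to the $i$-th fundamental weight, so in particular $\rho(F(I))=\sharp(I)$; and a line bundle $\cL_\lambda$ on $F(I)$ is globally generated exactly when $\lambda=\sum_{i\in I}n_i\omega_i$ with all $n_i\ge0$, and ample when all $n_i>0$. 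Since nef, semiample and globally generated coincide in this setting, this says that $\Nef(F(I))\subset N^1(F(I))$ is the full dimensional simplicial cone spanned by $\{\cL_{\omega_i}\}_{i\in I}$, with ample cone its interior.

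Granting this, the rest is convex geometry together with the theory of extremal contractions. The cone dual to a full dimensional simplicial cone is again full dimensional and simplicial, so $\cNE{F(I)}$ is simplicial, with extremal rays $R_i$ ($i\in I$), where $R_i$ is the edge on which $\cL_{\omega_j}\cdot R_i=0$ for all $j\in I\setminus\{i\}$. As $F(I)$ is Fano, every nonzero class in $\cNE{F(I)}$ is $K$-negative, so the cone and contraction theorems provide a bijection between the faces of $\cNE{F(I)}$ and the contractions of $F(I)$; and since $\cNE{F(I)}$ is simplicial its faces are exactly the subcones $\tau_J:=\sum_{i\in I\setminus J}\R_{\ge0}R_i$, one for each $J\subseteq I$. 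Thus $F(I)$ has precisely $2^{\sharp(I)}$ contractions, and it remains to check that the one contracting $\tau_J$ is $p^{I,J}$.

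Here I would use what was recorded just before the statement: $p^{I,J}\colon F(I)\to F(J)$ is proper and surjective with connected (rational homogeneous) fibres, onto the smooth projective variety $F(J)$, hence a contraction, of relative Picard number $\rho(F(I))-\rho(F(J))=\sharp(I\setminus J)$. For each $i\in I\setminus J$ one has $J\subseteq I\setminus\{i\}$, so $p^{I,J}=p^{I\setminus\{i\},J}\circ p^{I,I\setminus\{i\}}$ factors through the elementary contraction $p^{I,I\setminus\{i\}}$, whose contracted ray is $R_i$ (the pullback $(p^{I,I\setminus\{i\}})^*$ identifies $\Nef(F(I\setminus\{i\}))$ with the facet of $\Nef(F(I))$ spanned by $\{\cL_{\omega_j}\}_{j\ne i}$, i.e. with the facet supported by $R_i$). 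Hence $R_i$ lies in the face contracted by $p^{I,J}$ for every $i\in I\setminus J$; since that face has dimension $\sharp(I\setminus J)$, it must equal $\tau_J$. Therefore $J\mapsto p^{I,J}$ realizes the bijection above, so every contraction of $F(I)$ is of the form $p^{I,J}$, and in the process we have obtained $\rho(F(I))=\sharp(I)$ and the simpliciality of $\cNE{F(I)}$.

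The main obstacle is entirely in the first paragraph: the identification of $\Pic(F(I))$ and, above all, the description of $\Nef(F(I))$ as the simplicial cone on the fundamental weights draw on the geometry of Schubert cells and on representation theory. Once these are available, every remaining step — the self-duality of full dimensional simplicial cones, the contraction theorem for Fano manifolds, and the factorization $p^{I,J}=p^{I\setminus\{i\},J}\circ p^{I,I\setminus\{i\}}$ — is formal.
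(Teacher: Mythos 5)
The paper states Proposition~\ref{prop:RHsimp} without proof, quoting it as a standard fact about rational homogeneous spaces, so there is no in-paper argument to compare against. Your proposal is a correct and complete version of the standard argument: the only non-formal input is the Borel--Weil description of $\Pic(F(I))$ and of its nef cone as the simplicial cone on the fundamental weights indexed by $I$, which you correctly isolate as the crux; the passage to $\cNE{F(I)}$ by cone duality, the face--contraction bijection from the Cone and Contraction Theorems (valid since $F(I)$ is Fano by Proposition~\ref{rathomogfano}), and the identification of the face contracted by $p^{I,J}$ via the factorization through the elementary contractions $p^{I,I\setminus\{i\}}$ together with the relative Picard number count are all sound.
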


\subsection{Rational curves on CP-manifolds}\label{ssec:ratcurves}

In this section we review some results on families of rational curves on Fano manifolds, that will be useful later on. For simplicity, we focus only on the case of CP-manifolds, and refer to \cite{Hw, KS, kollar, Mk} for more details and general results on this topic.

\begin{notation}\label{not:RConCP}
Let $X$ be a CP-manifold of dimension $m$. We denote by $\Hom(\P^1, X)$ the scheme parametrizing the morphisms from $\P^1$ to $X$, and by $\ev:\Hom(\P^1, X)\times\P^1\to X$ the natural evaluation map (see \cite[Chapter II]{kollar}).
\end{notation}

The following lemma shows that every rational curve on a CP-manifold is free, i.e. its deformations dominate $X$.

\begin{lemma}\label{lem:RConCP}
Let $X$ be a CP-manifold and let $f:\P^1\to X$ be a nonconstant morphism. Then $\Hom(\P^1, X)$ is smooth at $[f]$ and, being $H$ the irreducible component of $\Hom(\P^1, X)$ containing $[f]$, the restriction of the evaluation morphism $H\times\P^1\to X$ is dominant.
\end{lemma}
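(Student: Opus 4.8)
The plan is to derive the smoothness of $\Hom(\P^1,X)$ at $[f]$ from the standard deformation-theoretic estimates, and then deduce the dominance of the evaluation morphism from the global generation of $f^*T_X$. Recall from \cite[Chapter II]{kollar} that the tangent space to $\Hom(\P^1,X)$ at $[f]$ is $H^0(\P^1,f^*T_X)$ and that the obstructions lie in $H^1(\P^1,f^*T_X)$; more precisely, every component of $\Hom(\P^1,X)$ through $[f]$ has dimension at least $h^0(\P^1,f^*T_X)-h^1(\P^1,f^*T_X)=\deg(f^*T_X)+m$, and if $H^1(\P^1,f^*T_X)=0$ then $\Hom(\P^1,X)$ is smooth at $[f]$ of exactly that dimension. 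So the first and essential point is the vanishing $H^1(\P^1,f^*T_X)=0$.

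To get this vanishing, I would argue as follows. Since $T_X$ is nef, its pullback $f^*T_X$ is a nef vector bundle on $\P^1$: indeed, nefness is preserved under pullback (the projectivization $\P(f^*T_X)$ maps to $\P(T_X)$ compatibly with the tautological bundles, so $\cO_{\P(f^*T_X)}(1)$ is the pullback of a nef line bundle, hence nef). By Grothendieck's theorem $f^*T_X\iso \bigoplus_{j}\cO_{\P^1}(a_j)$ for some integers $a_j$, and nefness of $f^*T_X$ forces every $a_j\geq 0$ — otherwise the quotient line bundle $\cO_{\P^1}(a_j)$ of negative degree would contradict nefness. Then $H^1(\P^1,f^*T_X)=\bigoplus_j H^1(\P^1,\cO_{\P^1}(a_j))=0$ since each $a_j\geq 0 > -2$. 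This gives the smoothness of $\Hom(\P^1,X)$ at $[f]$, and hence the irreducible component $H$ through $[f]$ is smooth of dimension $h^0(\P^1,f^*T_X)$.

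For the dominance of $\ev|_{H\times\P^1}$, the key observation is again that $f^*T_X=\bigoplus_j\cO_{\P^1}(a_j)$ with all $a_j\geq 0$, so $f^*T_X$ is globally generated, i.e. $f$ is a \emph{free} rational curve in the sense of \cite[Chapter II]{kollar}. The standard consequence is that the evaluation map is smooth (hence dominant) at the point $([f],t)$ for every $t\in\P^1$: the differential of $\ev$ at $([f],t)$ can be described via the evaluation $H^0(\P^1,f^*T_X)\to (f^*T_X)_t=T_{X,f(t)}$ together with the differential of $f$ along $\P^1$, and global generation of $f^*T_X$ makes the composite surjective onto $T_{X,f(t)}$. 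Since $H\times\P^1$ and $X$ are varieties and $\ev$ has surjective differential at a point, its image contains a dense open subset of $X$, so $\ev|_{H\times\P^1}$ is dominant.

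The main obstacle — or rather the one point that requires genuine input beyond bookkeeping — is the passage from "$T_X$ nef" to "$f^*T_X$ nef, hence all $a_j\geq 0$"; everything else is the standard deformation theory of \cite[Chapter II]{kollar}. One should be slightly careful that $f$ is only assumed nonconstant, not an immersion, so $f$ may be a multiple cover or have ramification; this does not affect the argument, since the splitting type and global generation of $f^*T_X$ are all that is used, and the description of $d\,\ev$ via global sections of $f^*T_X$ remains valid for any nonconstant $f$. Thus the structure is: (i) $f^*T_X$ nef $\Rightarrow$ nonnegative splitting type; (ii) nonnegative splitting type $\Rightarrow$ $H^1=0$ $\Rightarrow$ smoothness of $\Hom(\P^1,X)$ at $[f]$; (iii) nonnegative splitting type $\Rightarrow$ $f^*T_X$ globally generated $\Rightarrow$ $\ev$ dominant on the component through $[f]$.
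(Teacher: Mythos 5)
Your proof is correct and follows essentially the same route as the paper: both arguments reduce to the observation that nefness of $T_X$ forces $f^*T_X$ to be globally generated (equivalently, to have nonnegative splitting type), whence $H^1(\P^1,f^*T_X)=0$ gives smoothness of $\Hom(\P^1,X)$ at $[f]$ and the standard description of $d\,\ev$ via evaluation of global sections gives dominance. You merely spell out the intermediate steps (pullback preserves nefness, Grothendieck splitting) that the paper states in one line.
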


\begin{proof}
Being $T_X$ nef by hypothesis, for any nonconstant morphism $f:\P^1 \to X$ it holds that $f^*T_X$ is globally generated, and in particular $H^1(\P^1,f^*T_X)=0$. Then the smoothness of $\Hom(\P^1, X)$ follows by \cite[I.2.16]{kollar}, and the dominancy of $\ev$ by the standard description of its differential in terms of the evaluation of global sections of $f^*T_X$ (\cite[II.3.4]{kollar}).
\end{proof}

\begin{notation}\label{not:RConCP2}
Taking quotient by the automorphims of $\P^1$, one produces the scheme parametrizing rational curves on $X$ (see \cite[II.2]{kollar}), whose normalization is denoted by ${\rat}^n(X)$.
An irreducible component $\cM$ of $\rat^n(X)$ is called a {\it minimal rational component} if it contains a rational curve of minimal anticanonical degree. Since all rational curves parametrized by $\cM$ are numerically equivalent, we may set $d:=-K_X \cdot C$ for $[C] \in \cM$.
For a minimal rational component $\cM$, we have an associated universal family and an evaluation morphism:

\[\xymatrix@=25pt{&
 \cU \ar[dl]_{p}  \ar[dr]^{q}  &   \\
 \cM &&X  \\
} \]
Finally, we denote by ${\cM}_x$ the normalization of the subscheme of $\cM$ parametrizing rational curves passing through $x \in X$.
\end{notation}

\begin{proposition}\label{prop:RCbasic} With the same notation as above, we have the following:
\begin{enumerate}
\item $\cM$ is a smooth projective variety of dimension $m+d-3$.
\item $q$ is a smooth morphism and $p$ is a smooth $\P^1$-fibration.
\item For a general point $x \in X$, ${\cM}_x$ is isomorphic to $q^{-1}(x)$. In particular, ${\cM}_x$ is a projective manifold of dimension $d-2$.
\item $d$ is at least $2$.
\item If $d = 2$, then $q$ is an isomorphism. In particular, $X$ admits a smooth $\P^1$-fibration structure $p: X \to \cM$.
\item If a rational curve $C$ is a general member of $\cM$, then $C$ is {\em standard}, i.e.  denoting by $f: \P^1 \to X$ its normalization, $f^{\ast}T_X \cong {\cO}_{\P^1}(2)\oplus {\cO}_{\P^1}(1)^{\oplus d-2} \oplus {\cO}_{\P^1}^{\oplus m-d+1}$.
\end{enumerate}
\end{proposition}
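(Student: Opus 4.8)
The strategy is to combine the deformation-theoretic description of $\Hom(\P^1,X)$ with the nefness of $T_X$, which forces every rational curve parametrized by a minimal component to be free with tightly controlled splitting type. The starting point is the bound on the dimension of $\Hom(\P^1,X)$ at a point $[f]$, namely $h^0(\P^1,f^*T_X)\geq -K_X\cdot C + m = d+m$, with equality when $H^1(\P^1,f^*T_X)=0$; since $T_X$ is nef, Lemma \ref{lem:RConCP} gives $f^*T_X$ globally generated and hence $H^1=0$, so $\Hom(\P^1,X)$ is smooth of dimension exactly $d+m$ at $[f]$. Quotienting by the $3$-dimensional group $\Aut(\P^1)$ yields $\dim\cM = d+m-3$, proving (1); smoothness of $\cM$ follows from the smoothness of $\Hom$ together with the fact that the $\Aut(\P^1)$-action on the locus of curves with irreducible normalization is free with smooth quotient. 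For (2), the evaluation morphism $q$ is smooth because, again by global generation of $f^*T_X$, its differential is surjective at every point of $\cU$ over a general curve, and one upgrades ``general'' to ``every'' using that $T_X$ nef implies $every$ member is free (no curve in the family can be non-free, as a non-free curve would have $f^*T_X$ with a negative summand, contradicting nefness along that curve's normalization — here one must be slightly careful and argue via semicontinuity that the generic splitting type persists); the map $p:\cU\to\cM$ is a $\P^1$-bundle in the Zariski topology since it is the pullback of the universal family over $\Hom(\P^1,X)/\Aut(\P^1)$, which is a $\P^1$-bundle.

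For (3), over a general $x\in X$ the fiber $q^{-1}(x)$ parametrizes pointed maps through $x$; since $q$ is smooth the fiber is smooth, and because through a general point the only identifications come from reparametrizations fixing $x$ (a $1$-dimensional subgroup, already accounted for), $q^{-1}(x)\cong \cM_x$, which therefore has dimension $\dim\cM - (m-1) = d-2$. This immediately forces $d\geq 2$, giving (4), since $\cM_x$ is nonempty (the component is minimal, so curves through a general point exist) — alternatively one invokes the standard bound $-K_X\cdot C\geq 2$ for a free rational curve on a smooth variety. For (5), if $d=2$ then $\dim\cM_x = 0$ for general $x$; combined with $q$ smooth and $p$ a $\P^1$-fibration, one shows $q$ is finite, hence an isomorphism onto $X$ (it is birational since a general point lies on finitely many, in fact one, such curve, and smooth + birational + normal target $=$ iso), so $p\circ q^{-1}: X\to\cM$ is the asserted smooth $\P^1$-fibration; the subtlety is ruling out that $q$ is finite of degree $>1$, which one does by a monodromy/connectedness argument or by noting that two distinct minimal curves through a general point would intersect elsewhere and violate minimality.

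Finally (6) is the crucial splitting statement. For a general member $C$ with normalization $f:\P^1\to X$, write $f^*T_X = \bigoplus \cO_{\P^1}(a_i)$ with $\sum a_i = d$ and all $a_i\geq 0$ by nefness. Genericity gives $H^1(\P^1,f^*T_X(-2))=0$ (this is the open condition ``$C$ is standard'' and is where one uses that general members behave optimally), equivalently $a_i\leq 1$ for all but at most one index, and the $T_C\subset T_X$ inclusion forces at least one $a_i\geq 2$, hence exactly one $a_i=2$; counting degrees, $d-2$ of the remaining summands are $\cO(1)$ and the other $m-d+1$ are $\cO$. The main obstacle, and the step deserving the most care, is (6) together with the ``every member is free of the expected type'' upgrade needed in (2): proving that the \emph{generic} splitting type is exactly the balanced-standard one requires showing $H^1(\P^1, f^*T_X(-2))=0$ for general $[f]\in\cM$, which follows from the smoothness of the evaluation morphism $q$ at points over $C$ (equivalently, surjectivity of $H^0(f^*T_X)\to f^*T_X|_{p}$ for two general points $p$, i.e. $H^1(f^*T_X(-2))=0$) — so (6) and the strong form of (2) are genuinely intertwined, and the honest route is to establish the vanishing $H^1(\P^1,f^*T_X(-2))=0$ for the general member first and then read off both statements. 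One should also double-check the edge behaviour: when $d=2$ the formula in (6) reads $f^*T_X\cong\cO(2)\oplus\cO^{\oplus m-1}$, consistent with $p:X\to\cM$ being a $\P^1$-bundle, and when $d=m+1$ it reads $\cO(2)\oplus\cO(1)^{\oplus m-1}$, the case of lines on $\P^m$.
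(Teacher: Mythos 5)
Your overall strategy (deformation theory of $\Hom(\P^1,X)$ combined with the nefness of $T_X$, which makes every rational curve free) is the same one underlying the paper's proof, which simply cites the relevant results of Koll\'ar and Kebekus; parts (1), (2) and (4) are essentially fine. However, part (6) as written contains a genuine error. You claim that standardness of the general member is equivalent to $H^1(\P^1,f^*T_X(-2))=0$ and that this vanishing follows from genericity (or from smoothness of $q$). Neither assertion is correct: writing $f^*T_X=\bigoplus_i\cO_{\P^1}(a_i)$, one has $H^1(\P^1,f^*T_X(-2))=0$ if and only if $a_i\geq 1$ for \emph{all} $i$, which is not the condition ``$a_i\leq 1$ for all but one index''; worse, a standard curve with $d\leq m$ has $m-d+1$ trivial summands, so $H^1(\P^1,f^*T_X(-2))\supseteq H^1(\P^1,\cO_{\P^1}(-2))^{\oplus(m-d+1)}\neq 0$. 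Your proposed vanishing is thus false exactly in the cases of interest (it could only hold if $X=\P^m$), and smoothness of $q$ only gives surjectivity of the evaluation of $H^0(f^*T_X)$ at \emph{one} point, i.e.\ $H^1(f^*T_X(-1))=0$, never at two. The ingredient missing from your argument is the \emph{minimality} of the component $\cM$: by bend-and-break, an unsplit family admits no positive-dimensional family of curves through two fixed general points, which forces $h^0(\P^1,f^*T_X(-2))=\sum_{a_i\geq 2}(a_i-1)\leq 1$; together with $a_1\geq 2$ (coming from $df$ for the general member, which is birational onto its image) this pins down exactly one summand $\cO_{\P^1}(2)$ with all others in $\{0,1\}$, and the degree count finishes. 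This is precisely the content of \cite[IV. Corollary~2.9]{kollar} invoked by the paper. Without using minimality, (6) cannot be true: the conics on $\P^2$ form a dominating family of free rational curves whose general member is not standard.

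There are also smaller points to repair. In (2) you assert that $p:\cU\to\cM$ is a $\P^1$-bundle in the Zariski topology; this is stronger than what is true in general (the quotient construction only yields a smooth $\P^1$-fibration), and the paper's Remark \ref{rem:P1fibbun} explicitly warns against this confusion. In (3), the identification $\cM_x\cong q^{-1}(x)$ is not merely a matter of reparametrization: one must rule out that a minimal curve passes through the general point $x$ more than once, since otherwise $q^{-1}(x)\to\cM_x$ fails to be injective; this is Kebekus's theorem, which your argument does not address. In (5), the clean route is the one you mention only in passing: $q$ is finite and smooth, hence \'etale, and $X$ is simply connected because it is Fano, so $q$ is an isomorphism; your alternative claim that two distinct minimal curves through a general point ``would intersect elsewhere and violate minimality'' is not a valid argument, and your assertion that $q$ is birational because a general point lies on exactly one curve is circular.
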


\begin{proof} Since $T_X$ is nef, we see that $q$ is a smooth morphism and $\cM$ is a smooth projective variety of dimension $m+d-3$ by \cite[II. Theorem~1.7, Theorem~2.15, Corollary~3.5.3]{kollar}. The projectivity of $\cM$ follows from \cite[II. Proposition~2.14.1]{kollar}. Moreover, $p$ is a smooth $\P^1$-fibration by \cite[II. Corollary~2.12]{kollar}. Hence $\rm (1)$ and $\rm (2)$ hold. The third statement follows from \cite[Theorem~3.3]{Ke2}.
Since the dimension of ${\cM}_x$ is non-negative, we see that $d$ is at least $2$.
To prove $\rm (5)$, assume that $d= 2$.
By $\rm (1)$ and $\rm (2)$, $q$ is an \'etale covering. Since a Fano manifold is simply-connected, $q$ is an isomorphism. The last statement follows from \cite[IV. Corollary~2.9]{kollar}.
\end{proof}

\begin{remark}\label{rem:P1fibbun}
The terms {\it smooth $\P^1$-fibration} and {\it $\P^1$-bundle} are used in the literature with different meanings. In this paper, we will use the first to refer to smooth maps whose fibers are $\P^1$'s, whereas the second is reserved to projectivizations of rank two vector bundles. For instance, we may only say, in general, that the map $p:\cU\to \cM$ is a smooth $\P^1$-fibration, but its restriction $p^{-1}(\cM_x)\to \cM_x$ is a $\P^1$-bundle, since there exists a divisor that has degree one on the fibers.
\end{remark}

\begin{definition}\label{def:VMRT} For a general point $x \in X$, we define the {\it tangent map}
$${\tau}_x : {\cM}_x \dashrightarrow \P(T_{X,x}^\vee)$$
by assigning to each member of $\cM_x$ smooth at $x$ its tangent direction at $x$. We denote by $\cC_x \subset \P(T_{X,x}^\vee)$ the closure of the image of ${\tau}_x$, which is called the {\it variety of minimal rational tangents} (VMRT) at $x$.
\end{definition}

We introduce the fundamental results of the theory of VMRT.

\begin{theorem}[{\cite[Theorem~3.4]{Ke2},\cite[Theorem~1]{HM2}}] Under the setting of Definition~\ref{def:VMRT},
\begin{enumerate}
\item $\tau_x$ is a finite morphism, and
\item $\tau_x$ is birational onto $\cC_x$.
\end{enumerate}
Hence $\tau_x: \cM_x \to \cC_x$ is the normalization.
\end{theorem}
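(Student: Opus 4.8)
This statement combines a theorem of Kebekus (finiteness of $\tau_x$) with one of Hwang--Mok (its birationality onto $\cC_x$); here is how I would organize the argument. The plan is to prove the two listed properties in turn, using Proposition~\ref{prop:RCbasic} for the geometry of $\cM_x$ and its universal family, and then to read off the last sentence formally.

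\emph{That $\tau_x$ is a morphism.} By Proposition~\ref{prop:RCbasic}(3), for general $x$ the map $q^{-1}(x)\to\cM_x$ induced by $p$ is an isomorphism; thus a point of $\cM_x$ is the same datum as a point $u\in q^{-1}(x)\subset\cU$, and $\tau_x$ should send it to the line $dq_u\big(T_{\cU/\cM,u}\big)\subset T_{X,x}$, which makes sense as soon as this line is nonzero, i.e. as soon as the corresponding rational curve is immersed at $x$. If it vanished for some $u$, then $T_{\cU/\cM,u}\subseteq\ker dq_u=T_{q^{-1}(x),u}$ (the last equality because $q$ is smooth); but $dp_u$ is injective on $T_{q^{-1}(x),u}$ (because $p|_{q^{-1}(x)}$ is an isomorphism) and kills $T_{\cU/\cM,u}=\ker dp_u$, forcing the line $T_{\cU/\cM,u}$ to be zero, a contradiction. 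Hence $\tau_x$ is defined on all of $\cM_x$. This is equivalent to the fact, due to Kebekus, that no minimal rational curve through a general point is singular there.

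\emph{That $\tau_x$ is finite.} Since $\cM_x$ is projective, it suffices to show that $\tau_x$ contracts no curve. If an irreducible curve $B\subset\cM_x$ were contracted, every member of the family $B$ would pass through $x$ with one and the same tangent direction $v$; restricting the universal family over $B$ gives a $\P^1$-bundle (Remark~\ref{rem:P1fibbun}) whose evaluation map has as image a surface containing $x$ --- it cannot be a curve, else all members of $B$ would have the same image and hence agree in $\cM_x$. One then derives a contradiction by a bend-and-break argument on that surface (or on its blow-up at $x$), in which the role of a second fixed point is played by the common tangent direction $v$ at $x$: a nontrivial such family forces a member to degenerate into a reducible or non-reduced rational $1$-cycle through $x$, hence carrying an irreducible component of anticanonical degree $<d$, which contradicts the minimality of $d$. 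This is Kebekus's argument.

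\emph{That $\tau_x$ is birational onto $\cC_x$.} By the previous step $\tau_x\colon\cM_x\to\cC_x$ is already a finite surjection, so it remains to show it is generically injective, i.e. that a general $v\in\cC_x$ is the tangent direction at $x$ of exactly one member of $\cM_x$. Assuming otherwise, a general $v$ is realized by two distinct minimal rational curves $C_1\neq C_2$ through $x$ tangent to one another at $x$, and these can be taken to vary in a family of dimension $\dim\cC_x=d-2$. One now uses that the general member of $\cM$ is standard (Proposition~\ref{prop:RCbasic}(6)), so that the normal bundle $N_{C_i/X}\cong\cO_{\P^1}(1)^{\oplus d-2}\oplus\cO_{\P^1}^{\oplus m-d+1}$ is as positive as possible, in order to control the deformations of such a tangent pair when $x$ and the direction $v$ are moved; one shows that the two curves cannot stay distinct, again ultimately by degenerating the configuration into a rational $1$-cycle violating the minimality of $d$. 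This is the birationality theorem of Hwang and Mok, and I expect this to be the main obstacle; in a survey it is simply quoted from \cite{HM2}.

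\emph{Conclusion.} By Proposition~\ref{prop:RCbasic}(3) the variety $\cM_x$ is smooth, in particular normal, and a finite birational morphism from a normal variety onto $\cC_x$ is, by Zariski's main theorem, the normalization. Hence $\tau_x\colon\cM_x\to\cC_x$ is the normalization of $\cC_x$.
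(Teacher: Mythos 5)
The first thing to say is that the paper does not prove this statement at all: it is quoted verbatim from \cite{Ke2} and \cite{HM2}, so there is no in-paper argument to compare yours against. The only piece of your outline that has a counterpart in the text is your first step (that $\tau_x$ is everywhere defined on $\cM_x$, not merely rational): your argument that $T_{\cU/\cM,u}$ cannot lie in $\ker dq_u$ --- because $\ker dq_u=T_{q^{-1}(x),u}$ by smoothness of $q$, while $dp_u$ is injective there and kills the rank-one sheaf $T_{\cU/\cM,u}$ --- is exactly the content of Remark~\ref{rem:O1}, and is correct as stated (it uses Proposition~\ref{prop:RCbasic}(2)--(3), which in turn rest on \cite{Ke2}). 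The concluding step (finite $+$ birational $+$ $\cM_x$ smooth, hence normal $\Rightarrow$ normalization) is also fine.

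The two substantive steps are only sketches, and one of them has a real gap if read as a proof. For finiteness, the slogan ``bend-and-break with the tangent direction playing the role of the second point'' is the right heuristic, but as written it does not close: Mori's bend-and-break with a \emph{single} fixed point does not force a family to degenerate (lines through a point of $\P^m$ move without breaking), so the tangency condition must be converted into an honest second incidence condition --- e.g.\ by passing to the blow-up of $X$ at $x$, where the strict transforms all meet the point $[v]$ of the exceptional divisor --- and one must then control the degree of the strict transforms against a suitable polarization to produce the degeneration; this bookkeeping is the actual content of \cite[Theorem~3.4]{Ke2} and is missing from your paragraph. For birationality, your paragraph is not a proof but an acknowledgement that the result is \cite[Theorem~1]{HM2}; that theorem does not follow from a routine degeneration of a tangent pair of standard curves --- Hwang and Mok's argument is a delicate analysis of the positive-dimensional locus of pairs of minimal curves sharing a tangent direction, and is the genuinely hard part of the statement. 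Since the survey itself only cites these results, deferring to \cite{Ke2} and \cite{HM2} is acceptable practice here, but you should be aware that steps two and three of your proposal are attributions rather than arguments.
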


\begin{remark}\label{rem:O1} Under the setting of Definition~\ref{def:VMRT}, the tangent map can be considered as a morphism $\tau_x: q^{-1}(x) \cong {\cM}_x \to \cC_x$ by Proposition~\ref{prop:RCbasic}~$\rm (3)$.
Let $\cU_x$ be the universal family associated to $\cM_x$ and $K_{\cU_x/\cM_x}$ the relative canonical divisor of $\cU_x \to \cM_x$. Via the composition of natural morphisms $T_{\cU_x/\cM_x}|_{q^{-1}(x)} \subset T_{\cU_x}|_{q^{-1}(x)} $ and $T_{\cU_x}|_{q^{-1}(x)} \to T_{X,x}\otimes \cO_{q^{-1}(x)}$, it follows from \cite[Theorem~3.3, Theorem~3.4]{Ke2} that $T_{\cU_x/\cM_x}|_{q^{-1}(x)} $ is a subbundle of $T_{X,x}\otimes \cO_{q^{-1}(x)}$. This yields a morphism $q^{-1}(x) \to \P(T_{X,x}^{\vee})$, which is nothing but the tangent map $\tau_x$. Hence the tangent map $\tau_x$ satisfies
\begin{equation}
\tau_x^{\ast}\cO_{\P(T_{X,x}^\vee)}(1)=\cO_{q^{-1}(x)}(K_{\cU_x/\cM_x}).
\end{equation}
\end{remark}

\begin{proposition}\cite[Proposition~2.7]{Ar}\label{prop:imm} Under the setting of Definition~\ref{def:VMRT},
$\tau_x$ is immersive at $[C] \in \cM_x$ if and only if $C$ is standard.
\end{proposition}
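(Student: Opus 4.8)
The plan is to compute the differential of $\tau_x$ at $[C]$ via first-order deformations, to identify its kernel with $H^0(\P^1,f^*T_X(-2))$ modulo a fixed two-dimensional subspace, and then to verify that the resulting numerical condition is precisely standardness.

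First I would set up the deformation picture. Fix $0\in\P^1$ with $f(0)=x$. Since $x$ is general, the theory recalled in Remark~\ref{rem:O1} ensures that $f$ is immersed at $0$---so $\tau_x([C])$ is the line $\ell:=df_0(T_{\P^1,0})\subset T_{X,x}$---and Proposition~\ref{prop:RCbasic} ensures that $\cM_x$ is smooth at $[C]$ of dimension $d-2$. Because $T_X$ is nef I may write $f^*T_X=\bigoplus_{i=1}^m\cO_{\P^1}(c_i)$ with all $c_i\ge 0$ and $\sum_ic_i=-K_X\cdot C=d$; the nonzero sheaf map $df\colon T_{\P^1}=\cO_{\P^1}(2)\to f^*T_X$ forces some $c_i\ge 2$, say $c_1\ge 2$. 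As $f^*T_X$ is globally generated, the evaluation $\Hom(\P^1,X)\to X$, $[g]\mapsto g(0)$, is smooth at $[f]$ (its differential $v\mapsto v(0)$ is onto $T_{X,x}$, and $\Hom(\P^1,X)$ is smooth at $[f]$ by Lemma~\ref{lem:RConCP}), so $\Hom(\P^1,X;\,0\mapsto x)$ is smooth at $[f]$ with tangent space $H^0(\P^1,f^*T_X(-1))$, the sections of $f^*T_X$ vanishing at $0$. Dividing out the action of the $2$-dimensional group $\Aut(\P^1)_0$ of automorphisms fixing $0$ then yields, using that $\cM_x$ is smooth of dimension $d-2$,
$$
T_{[C]}\cM_x\ \cong\ H^0(\P^1,f^*T_X(-1))\,/\,df\bigl(H^0(\P^1,T_{\P^1}(-1))\bigr),
$$
with $df(H^0(T_{\P^1}(-1)))$ of dimension $h^0(\cO_{\P^1}(1))=2$ ($df$ being an injective sheaf map).

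Next I would compute $d\tau_x$. Fix a coordinate $z$ at $0$. A tangent vector at $[C]$ is represented by $g\in H^0(f^*T_X(-1))$ via the deformation $f(z)+t\,g(z)+O(t^2)$, whose tangent line at $z=0$ is spanned by $f'(0)+t\,g'(0)+O(t^2)$; as $g(0)=0$ we have $g'(0)=\delta_0(g)$, where $\delta_0\colon H^0(f^*T_X(-1))\to T_{X,x}$, $g\mapsto g'(0)$, is the evaluation of $f^*T_X(-1)$ at $0$. So $d\tau_x$ is the map induced on $T_{[C]}\cM_x$ by $\Psi:=\pi\circ\delta_0$, where $\pi\colon T_{X,x}\to T_{X,x}/\ell\cong T_{\tau_x([C])}\P(T_{X,x}^\vee)$ is the projection, and $\Psi$ kills $df(H^0(T_{\P^1}(-1)))$ because $df_0(T_{\P^1,0})=\ell$. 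Now $\ker\delta_0=H^0(\P^1,f^*T_X(-2))$, and $\ell\subseteq\operatorname{im}\delta_0$ since $\delta_0$ sends $df(H^0(T_{\P^1}(-1)))$ onto $df_0(T_{\P^1,0})=\ell$ (the evaluation of $\cO_{\P^1}(1)\cong T_{\P^1}(-1)$ at $0$ being surjective). Hence $\dim\ker\Psi=\dim\ker\delta_0+\dim(\ell\cap\operatorname{im}\delta_0)=h^0(\P^1,f^*T_X(-2))+1$, and since $df(H^0(T_{\P^1}(-1)))\subseteq\ker\Psi$ has dimension $2$,
$$
\dim\ker d\tau_x\ =\ h^0(\P^1,f^*T_X(-2))-1 ,
$$
so $d\tau_x$ is injective at $[C]$ if and only if $h^0(\P^1,f^*T_X(-2))=1$.

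Finally I would translate this: $h^0(\P^1,f^*T_X(-2))=\sum_{i:\,c_i\ge 2}(c_i-1)\ge 1$, and it equals $1$ exactly when a single $c_i$ is $2$ and all the others lie in $\{0,1\}$, which---given $\sum_ic_i=d$---means $f^*T_X\cong\cO_{\P^1}(2)\oplus\cO_{\P^1}(1)^{\oplus d-2}\oplus\cO_{\P^1}^{\oplus m-d+1}$, i.e.\ $C$ is standard; the converse is immediate. Together with the preceding step this proves the proposition. I expect the bookkeeping in the middle step to be the main obstacle---one must justify carefully that $d\tau_x$ is exactly the map induced by $\Psi$ and, in particular, that $\ell\subseteq\operatorname{im}\delta_0$, so that the count $\dim\ker\Psi=h^0(f^*T_X(-2))+1$ is correct; the remaining ingredients (smoothness and dimension of $\cM_x$, identification of its tangent space, the splitting-type arithmetic) are routine or already recorded in Proposition~\ref{prop:RCbasic} and Remark~\ref{rem:O1}.
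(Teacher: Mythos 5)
Your argument is correct: the identification of $T_{[C]}\cM_x$ with $H^0(\P^1,f^*T_X(-1))/df\bigl(H^0(\P^1,T_{\P^1}(-1))\bigr)$, the computation of $d\tau_x$ via $g\mapsto g'(0)\bmod\ell$, and the resulting criterion $h^0(\P^1,f^*T_X(-2))=1$ are exactly the standard deformation-theoretic proof of this statement. The paper itself gives no proof, only the citation to Araujo, and your computation is essentially the one carried out there (and is consistent with the analogous local-coordinate computation the paper performs in the proof of Proposition \ref{prop:dual}); the one point worth making explicit in a write-up is the justification that $d\tau_x$ is induced by $\Psi=\pi\circ\delta_0$, which you have correctly flagged and which is handled exactly as in the paper's treatment of $d\ev'$.
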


\begin{remark}\label{rem:VMRTlines} Assume here that there exists a very ample line bundle $L$ on $X$ that has degree one on the curves parametrized by $\cM$ (note that this holds for rational homogeneous manifolds). The bundle $L$ provides an embedding $X \subset \P^N$, under which the curves parametrized by $\cM$ are lines. Then, at every $x\in X$, the map $\tau_x$ is injective, because any line through $x$ is uniquely determined by its tangent direction. Moreover, if $x$ is a general point, $\tau_x: \cM_x \to \P(T_{X,x}^{\vee})$ is an immersion (Proposition~\ref{prop:imm}), hence an embedding.
\end{remark}

\begin{example}\label{eg:G2} There exist two rational homogeneous manifolds of Picard number $1$ and of type ${\rm G}_2$. One is a $5$-dimensional quadric hypersurface $Q^5$, whose VMRT is well known to be $Q^3$, and  the other is a $5$-dimensional homogeneous manifold, that we denote by $K({\rm G}_2)$. It is known that $K({\rm G}_2)$ is covered by lines and $Q^5$ is isomorphic to its minimal rational component $\cM$. The universal family $\cU$ of $\cM$ is also a homogeneous manifold of type ${\rm G}_2$, dominating both $Q^5$ and $K({\rm G}_2)$:
$$\xymatrix@=25pt{
 & \cU \ar[dl]_{p}  \ar[dr]^{q}  &   \\
 Q^5 & & \hspace{-0.5cm}K({\rm G}_2) \\
} \hspace{1cm} \xymatrix@=25pt{
 & \ifx\du\undefined
  \newlength{\du}
\fi
\setlength{\du}{3\unitlength}
\begin{tikzpicture}
\pgftransformxscale{1.000000}
\pgftransformyscale{1.000000}

%%%%%% COLORS
\definecolor{dialinecolor}{rgb}{0.000000, 0.000000, 0.000000} % EXTERIOR
\pgfsetstrokecolor{dialinecolor}
\definecolor{dialinecolor}{rgb}{0.000000, 0.000000, 0.000000} % INTERIOR
\pgfsetfillcolor{dialinecolor}

%%%%%% NODES

\pgfsetlinewidth{0.300000\du}
\pgfsetdash{}{0pt}
\pgfsetdash{}{0pt}
%\pgfsetmiterjoin

%%% #1
\pgfpathellipse{\pgfpoint{-6\du}{0\du}}{\pgfpoint{1\du}{0\du}}{\pgfpoint{0\du}{1\du}}
\pgfusepath{stroke}
\node at (-6\du,0\du){};
\pgfpathellipse{\pgfpoint{-6\du}{0\du}}{\pgfpoint{1\du}{0\du}}{\pgfpoint{0\du}{1\du}}
\pgfusepath{fill}
\node at (-6\du,0\du){};

%%% #2
\pgfpathellipse{\pgfpoint{4\du}{0\du}}{\pgfpoint{1\du}{0\du}}{\pgfpoint{0\du}{1\du}}
\pgfusepath{stroke}
\node at (4\du,0\du){};
\pgfpathellipse{\pgfpoint{4\du}{0\du}}{\pgfpoint{1\du}{0\du}}{\pgfpoint{0\du}{1\du}}
\pgfusepath{fill}
\node at (4\du,0\du){};

%%%%%% LINKS
\pgfsetlinewidth{0.300000\du}
\pgfsetdash{}{0pt}
\pgfsetdash{}{0pt}
\pgfsetbuttcap

{\draw (-5\du,0\du)--(3\du,0\du);}
{\draw (-5.35\du,0.7\du)--(3.35\du,0.7\du);}
{\draw (-5.35\du,-0.7\du)--(3.35\du,-0.7\du);}

%%%%%% ARROW HEAD

{\pgfsetcornersarced{\pgfpoint{0.300000\du}{0.300000\du}}\definecolor{dialinecolor}{rgb}{0.000000, 0.000000, 0.000000}
\pgfsetstrokecolor{dialinecolor}
\draw (0.8\du,-1.2\du)--(-3\du,0\du)--(0.8\du,1.2\du);
}
\end{tikzpicture}  \ar[dl]_{p}  \ar[dr]^{q}  &   \\
 \ifx\du\undefined
  \newlength{\du}
\fi
\setlength{\du}{3\unitlength}
\begin{tikzpicture}
\pgftransformxscale{1.000000}
\pgftransformyscale{1.000000}

%%%%%% COLORS
\definecolor{dialinecolor}{rgb}{0.000000, 0.000000, 0.000000} % EXTERIOR
\pgfsetstrokecolor{dialinecolor}
\definecolor{dialinecolor}{rgb}{0.000000, 0.000000, 0.000000} % INTERIOR
\pgfsetfillcolor{dialinecolor}

%%%%%% NODES

\pgfsetlinewidth{0.300000\du}
\pgfsetdash{}{0pt}
\pgfsetdash{}{0pt}
%\pgfsetmiterjoin

%%% #1
\pgfpathellipse{\pgfpoint{-6\du}{0\du}}{\pgfpoint{1\du}{0\du}}{\pgfpoint{0\du}{1\du}}
\pgfusepath{stroke}
\node at (-6\du,0\du){};
\pgfpathellipse{\pgfpoint{-6\du}{0\du}}{\pgfpoint{1\du}{0\du}}{\pgfpoint{0\du}{1\du}}
\pgfusepath{fill}
\node at (-6\du,0\du){};

%\definecolor{dialinecolor}{rgb}{1.000000, 1.000000, 1.000000} % INTERIOR
%\pgfsetfillcolor{dialinecolor}
%%% #2
\pgfpathellipse{\pgfpoint{4\du}{0\du}}{\pgfpoint{1\du}{0\du}}{\pgfpoint{0\du}{1\du}}
\pgfusepath{stroke}
\node at (4\du,0\du){};
%\pgfpathellipse{\pgfpoint{4\du}{0\du}}{\pgfpoint{1\du}{0\du}}{\pgfpoint{0\du}{1\du}}
%\pgfusepath{fill}
%\node at (4\du,0\du){};

%%%%%% LINKS
\pgfsetlinewidth{0.300000\du}
\pgfsetdash{}{0pt}
\pgfsetdash{}{0pt}
\pgfsetbuttcap

{\draw (-5\du,0\du)--(3\du,0\du);}
{\draw (-5.35\du,0.7\du)--(3.35\du,0.7\du);}
{\draw (-5.35\du,-0.7\du)--(3.35\du,-0.7\du);}

%%%%%% ARROW HEAD

{\pgfsetcornersarced{\pgfpoint{0.300000\du}{0.300000\du}}\definecolor{dialinecolor}{rgb}{0.000000, 0.000000, 0.000000}
\pgfsetstrokecolor{dialinecolor}
\draw (0.8\du,-1.2\du)--(-3\du,0\du)--(0.8\du,1.2\du);
}
\end{tikzpicture}  & & \ifx\du\undefined
  \newlength{\du}
\fi
\setlength{\du}{3\unitlength}
\begin{tikzpicture}
\pgftransformxscale{1.000000}
\pgftransformyscale{1.000000}

%%%%%% COLORS
\definecolor{dialinecolor}{rgb}{0.000000, 0.000000, 0.000000} % EXTERIOR
\pgfsetstrokecolor{dialinecolor}
\definecolor{dialinecolor}{rgb}{0.000000, 0.000000, 0.000000} % INTERIOR
\pgfsetfillcolor{dialinecolor}

%%%%%% NODES

\pgfsetlinewidth{0.300000\du}
\pgfsetdash{}{0pt}
\pgfsetdash{}{0pt}
%\pgfsetmiterjoin

%%% #1
\pgfpathellipse{\pgfpoint{-6\du}{0\du}}{\pgfpoint{1\du}{0\du}}{\pgfpoint{0\du}{1\du}}
\pgfusepath{stroke}
\node at (-6\du,0\du){};
%\pgfpathellipse{\pgfpoint{-11\du}{11\du}}{\pgfpoint{1\du}{0\du}}{\pgfpoint{0\du}{1\du}}
%\pgfusepath{fill}
%\node at (-11\du,11\du){};

%%% #2
\pgfpathellipse{\pgfpoint{4\du}{0\du}}{\pgfpoint{1\du}{0\du}}{\pgfpoint{0\du}{1\du}}
\pgfusepath{stroke}
\node at (4\du,0\du){};
\pgfpathellipse{\pgfpoint{4\du}{0\du}}{\pgfpoint{1\du}{0\du}}{\pgfpoint{0\du}{1\du}}
\pgfusepath{fill}
\node at (4\du,0\du){};

%%%%%% LINKS
\pgfsetlinewidth{0.300000\du}
\pgfsetdash{}{0pt}
\pgfsetdash{}{0pt}
\pgfsetbuttcap

{\draw (-5\du,0\du)--(3\du,0\du);}
{\draw (-5.35\du,0.7\du)--(3.35\du,0.7\du);}
{\draw (-5.35\du,-0.7\du)--(3.35\du,-0.7\du);}

%%%%%% ARROW HEAD

{\pgfsetcornersarced{\pgfpoint{0.300000\du}{0.300000\du}}\definecolor{dialinecolor}{rgb}{0.000000, 0.000000, 0.000000}
\pgfsetstrokecolor{dialinecolor}
\draw (0.8\du,-1.2\du)--(-3\du,0\du)--(0.8\du,1.2\du);
}
\end{tikzpicture}  \\
}
$$
Classically $\cU$ is described as the projectivization $\P(\cE)$ of a {\it Cayley bundle} $\cE$ on $Q^5$ (see \cite[1.3]{O}).
On the other hand, via the above diagram, $K({\rm G}_2)$ can be seen as a family of special lines on $Q^5$. In particular, for any $x \in K({\rm G}_2)$, $q^{-1}(x)$ is isomorphic to $\P^1$. By Remark~\ref{rem:O1}, we have $\tau_x^{\ast}\cO_{\P(T_{K({\rm G}_2),x}^\vee)}(1)=\cO_{q^{-1}(x)}(K_{\cU_x/\cM_x}).$ Since $$K_{\cU_x/\cM_x} \cdot {q^{-1}(x)}=K_{\cU} \cdot {q^{-1}(x)}-K_{\cM} \cdot p_{\ast}({q^{-1}(x)})=-2+5=3,$$ the VMRT $\cC_x$ of $K({\rm G}_2)$ has degree $3$. Since $\cC_x$ is smooth, it is the twisted cubic curve in its linear span. We refer to \cite{O} for more details on $K({\rm G}_2)$.
\end{example}

\subsection{VMRT's of rational homogeneous manifolds of Picard number one}\label{ssec:VMRTPic1}

Representation theory provides a description of the VMRT's of rational homogeneous manifolds (see for instance \cite{ Hw, LM, Mk2}). In this section we will confine to the case of homogeneous manifolds of Picard number one, for which the VMRT and its embedding are described in the following table:

\captionsetup[longtable]{skip=1em}
\setlength{\LTleft}{1.2 cm}
\setlength{\LTpost}{3pt}
\begin{longtable}{c|c|c|c|c}
\hline
 $\cD$& node $r$ & $X$ & $\hbox{VMRT}$ & $\hbox{embed.}$  \\ \hline \hline
 ${\rm A}_n$& $\leq n$ &$G(r-1,n)$&$\P^{r-1}\times \P^{n-r}$& $\cO(1,1)$\\ \hline
 ${\rm B}_n$& $\leq n-2$&$OG(r-1,2n)$&$\P^{r-1}\times Q^{2(n-r)-1}$& $\cO(1,1)$\\
  &$n-1$ &$OG(n-2,{2n})$&$\P^{n-2}\times \P^1$& $\cO(1,2)$\\
  &$n$&$S_n$&$G(n-2,n)$& $\cO(1)$\\ \hline
 ${\rm C}_n$& $1$ &$\P^{2n-1}$&$\P^{2n-2}$& $\cO(1)$\\
 & $\leq n-1$ &$LG(r-1,{2n-1})$&(see Prop. \ref{prop:VMRTshort2})&\\
  & $n$ &$LG(n-1,{2n-1})$&$\P^{n-1}$& $\cO(2)$\\ \hline
  ${\rm D}_n$& $\leq n-3$&$OG(r-1,{2n-1})$&$\P^{r-1}\times Q^{2(n-r-1)}$& $\cO(1,1)$\\
  & $n-2$&$OG(n-3,{2n-1})$&$\P^{1}\times \P^{1}\times \P^{n-3}$& $\cO(1,1,1)$\\
  & $n-1, n$&$S_{n-1}$&$G(n-3,{n-1})$& $\cO(1)$\\ \hline
 ${\rm E}_k$& $1$&${\rm E}_k(1)$&$S_{k-2}$& $\cO(1)$\\
  & $2$&${\rm E}_k(2)$&$G(2,{k-1})$& $\cO(1)$\\
  & $3$&${\rm E}_k(3)$&$\P^1\times G(1,{k-2})$& $\cO(1,1)$\\
  & $4$&${\rm E}_k(4)$&$\P^1\times \P^2 \times \P^{k-4}$& $\cO(1,1,1)$\\
  & $5$&${\rm E}_k(5)$&$G(2,{4})\times \P^{k-5}$& $\cO(1,1)$\\
  & $6$&${\rm E}_k(6)$&$S_4\times \P^{k-6}$& $\cO(1,1)$\\
  & $7$&${\rm E}_k(7)$&${\rm E}_6(6)\times \P^{k-7}$& $\cO(1,1)$\\
  & $8$&${\rm E}_8(8)$&${\rm E}_7(7)$& $\cO(1)$\\ \hline
 ${\rm F}_4$& $1$&${\rm F}_4(1)$&$LG(2,{5})$& $\cO(1)$\\
 & $2$&${\rm F}_4(2)$&$\P^1\times \P^2$& $\cO(1,2)$\\
 & $3$&${\rm F}_4(3)$&(see Prop. \ref{prop:VMRTshort2})&\\
 & $4$&${\rm F}_4(4)$&(see Prop. \ref{prop:VMRTshort2})&\\
 \hline
 ${\rm G}_2$& $1$&$Q^5$&$Q^3$& $\cO(1)$\\
 & $2$&$K({\rm G}_2)$&$\P^1$& $\cO(3)$\\
   \hline
   \caption[VMRTs of RH]{VMRT's of homogeneous manifolds of Picard number one} \label{tab:VMRTs}
\end{longtable}

The notation we have used in this table is the following:
\begin{itemize}
\item $Q^k$: smooth $k$-dimensional quadric.
\item $G(k,n)$: {\it Grassmannian} of $k$-linear projective subspaces of $\P^n$.
\item $OG(k,n)$: {\it Orthogonal Grassmannian} parametrizing the $k$-linear projective subspaces of $\P^n$ that are isotropic with respect to a nondegenerate symmetric form. It is irreducible except for $n=2k+1$.
\item $S_{n}$: {\it Spinor variety} $OG(n-1,2n)$; alternatively it may be described as each one of the two irreducible components of $OG(n,2n+1)$.
\item $LG(k,n)$: {\it Lagrangian Grassmannian} parametrizing the $k$-linear projective subspaces of $\P^n$ that are isotropic with respect to a nondegenerate skew-symmetric form ($n$ odd).
\item ${\rm E}_n(k)$, (resp. ${\rm F}_4(k)$): rational homogeneous manifold of type ${\rm E}_n$, (resp. ${\rm F}_4$), associated to the maximal parabolic subgroup determined by the set of nodes $I=\{k\}$.
\item $\cO(1)$: (very) ample generator of the Picard group of a rational homogeneous manifold (in each case).
\item $\cO(a_1,a_2,\dots)$: on a product of rational homogeneous manifolds of Picard number one, $Y_1\times Y_2\times\dots$, this represents the tensor product of the pullbacks of the $\cO(a_i)$'s (by the $i$-th natural projection).
\item In every case, the embedding of the VMRT is given by the complete linear system of the indicated line bundle.
\end{itemize}

Let us discuss briefly the contents of the table; we refer the interested reader to the original paper of Landsberg and Manivel, \cite{LM} for details.

We start by recalling that the root system of a simple Lie algebra may have elements of different length, as elements of the euclidean space $(E,\kappa)$. In fact, one can already see this at the level of simple roots:
\begin{itemize}
\item either $\cD$ has a multiple edge and then the length of a simple root may take two values, depending on whether there is an arrow in the diagram pointing in the direction of the corresponding node (and we say that the root is {\it short}) or not (and we say that the root is {\it long}),
\item or $\cD$ has only simple edges and then all the roots have the same length (by definition, they are long).
\end{itemize}

Given any connected Dynkin diagram $\cD$ a node $r$, we denote by $N(r)\subset D$ the set of nodes sharing an edge with $r$ in $\cD$. Then:
\begin{proposition}\label{prop:VMRTlong}
Let $X=G/P(r)$ be the rational homogeneous manifold of Picard number one determined by the connected Dynkin diagram $\cD$ marked at the node $r$. Assume moreover that $r$ is a node associated to a long root of $\fg$. Then the VMRT of $X$ at every point is a rational homogeneous manifold associated to the Dynkin diagram obtained from $\cD$ by removing the node $r$, and marking the nodes $N(r)$. In particular, it is a product of homogeneous manifolds of Picard number one.
\end{proposition}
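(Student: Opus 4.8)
The plan is to compute the VMRT directly from the structure theory of $\fg$, following Landsberg and Manivel \cite{LM}, and to pin down the single place at which the hypothesis that $\alpha_r$ is long is used. Since $X=G/P(r)$ is homogeneous under $G$ and the family $\cM$ of minimal rational curves is $G$-equivariant, it suffices to describe the VMRT at the base point $x_0:=eP(r)$, as a subvariety $\cC_{x_0}\subset\P(T_{X,x_0}^\vee)$ invariant under the isotropy group $P:=P(r)$; the VMRT at an arbitrary point is then obtained by translating under $G$. I would first recall that, as $\rho(X)=1$ by Proposition~\ref{prop:RHsimp}, the ample generator $\cO(1)$ of $\Pic(X)$ is very ample and embeds $X$, $G$-equivariantly, in the projectivization of a fundamental $G$-module attached to $\omega_r$, in such a way that the minimal rational curves of $X$ are exactly the lines lying on $X$ (indeed $-K_X$ is a positive multiple of $\cO(1)$, so any rational curve that is not a line has strictly larger anticanonical degree). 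Next I would introduce the $\Z$-grading $\fg=\bigoplus_{k\in\Z}\fg_k$, where $\fg_k$ is the sum of the root spaces $\fg_\beta$ over the roots $\beta$ whose coefficient of $\alpha_r$ equals $k$ (and $\fh\subset\fg_0$): with the conventions of \eqref{eq:cartanparab} one has $\fp(r)=\bigoplus_{k\le0}\fg_k$, its Levi subalgebra is $\mathfrak{l}:=\fg_0$ — whose semisimple part $\mathfrak{l}^{\mathrm{ss}}$ has Dynkin diagram $\cD\setminus\{r\}$, hence is a product of simple factors indexed by the connected components of $\cD\setminus\{r\}$ — and $T_{X,x_0}\cong\fg/\fp(r)\cong\bigoplus_{k\ge1}\fg_k$. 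The piece $\fg_1$ is a $P$-submodule of $\fg/\fp(r)$ on which the unipotent radical of $P$ acts trivially, so $P$ acts on it through the Levi quotient $L$.

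The core of the proof is the assertion that $\cC_{x_0}$ is contained in $\P(\fg_1^\vee)\subset\P(T_{X,x_0}^\vee)$ (the tangent directions lying in $\fg_1$) and is the closed $L$-orbit there. A line of $X$ through $x_0$ is of the form $\overline{\exp(\C e_\beta)\cdot x_0}$ for a root $\beta$ with $\langle\omega_r,\beta^\vee\rangle=1$, its tangent direction at $x_0$ being spanned by the image of $\fg_\beta$ in $\fg/\fp(r)$; and every point of $\cC_{x_0}$ is a $P$-translate of such a direction. Writing $\beta=\sum_i m_i\alpha_i$ and using $\beta^\vee=\sum_i m_i\frac{(\alpha_i,\alpha_i)}{(\beta,\beta)}\alpha_i^\vee$, one finds
\[
\langle\omega_r,\beta^\vee\rangle=m_r\,\frac{(\alpha_r,\alpha_r)}{(\beta,\beta)}.
\]
This is where the hypothesis enters: because $\alpha_r$ is long, $(\alpha_r,\alpha_r)\ge(\beta,\beta)$ for every root $\beta$, so the right-hand side is at least $m_r$, and the value $1$ forces both $m_r=1$ (hence $\fg_\beta\subset\fg_1$) and $(\beta,\beta)=(\alpha_r,\alpha_r)$. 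Thus every line direction lies in $\fg_1$ and $\cC_{x_0}\subset\P(\fg_1^\vee)$. By the theorem following Definition~\ref{def:VMRT} and Remark~\ref{rem:VMRTlines}, $\tau_{x_0}$ is an embedding of the irreducible projective variety $\cM_{x_0}$ (Proposition~\ref{prop:RCbasic}), so $\cC_{x_0}$ is an irreducible projective $L$-stable subvariety of $\P(\fg_1^\vee)$; using that the lines through a point of a minimal orbit form a single $P$-orbit, $\cC_{x_0}$ is a single $L$-orbit, and being projective it is the closed $L$-orbit in $\P(\fg_1^\vee)$.

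It remains to identify this closed orbit. As an $\mathfrak{l}^{\mathrm{ss}}$-module, $\fg_1$ is irreducible (a standard feature of single-node gradings), with lowest weight line $\fg_{\alpha_r}$ and highest weight the restriction to $\mathfrak{l}^{\mathrm{ss}}$ of the highest root $\gamma$ of $\alpha_r$-height one; a short root-system computation shows that $\langle\gamma,\alpha_i^\vee\rangle>0$ exactly for the nodes $i$ adjacent to $r$, i.e. for $i\in N(r)$. Consequently $\fg_1$ is, over the product $\mathfrak{l}^{\mathrm{ss}}$, the external tensor product of the irreducible representations of the simple factors whose highest weights are supported on the unique node of each component meeting $N(r)$; the closed orbit in $\P(\fg_1^\vee)$ is therefore the product, over the connected components of $\cD\setminus\{r\}$, of the rational homogeneous manifolds obtained by marking that node — that is, precisely the marked Dynkin diagram obtained from $\cD$ by deleting $r$ and marking $N(r)$. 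Each factor is a homogeneous manifold of Picard number one, which is the asserted description.

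The delicate step is the inclusion $\cC_{x_0}\subset\P(\fg_1^\vee)$, together with its consequence that nothing coming from the higher pieces $\fg_{\ge2}$ can enter the VMRT: this is exactly what the length inequality $(\alpha_r,\alpha_r)\ge(\beta,\beta)$ buys, and it fails for a short node — the cleanest witness being $\P^{2n-1}={\rm C}_n(1)$, whose VMRT is a linear subspace $\P^{2n-2}$ strictly larger than the closed orbit in $\P(\fg_1^\vee)$, which is why the short-root cases (the entries referring to Proposition~\ref{prop:VMRTshort2} in Table~\ref{tab:VMRTs}) require a separate treatment. The remaining inputs — irreducibility of $\fg_1$, the homogeneity of the family of lines through $x_0$, and the identification of the support of $\gamma|_{\mathfrak{l}^{\mathrm{ss}}}$ with $N(r)$ — are standard and may be quoted from \cite{LM}.
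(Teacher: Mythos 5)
The paper does not actually prove this proposition: it is stated as a summary of results from \cite{LM}, so there is no internal argument to compare yours against. Judged on its own terms, your proposal follows the standard Lie--theoretic route, and its computational core is sound: the reduction to the base point, the grading $\fg=\bigoplus_k\fg_k$ by the coefficient of $\alpha_r$, and especially the identity $\langle\omega_r,\beta^\vee\rangle=m_r\,(\alpha_r,\alpha_r)/(\beta,\beta)$, which correctly isolates the only place where longness of $\alpha_r$ is used, are all right, as is your closing discussion of why $({\rm C}_n,1)$ fails.

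The gap is that the two homogeneity assertions in the middle carry all the remaining weight and neither is justified. First, ``every point of $\cC_{x_0}$ is a $P$-translate of such a direction'' is what yields $\cC_{x_0}\subset\P(\fg_1^\vee)$, but your weight computation only controls the $T$-fixed lines; an arbitrary line through $x_0$ has tangent vector $u=u_1+u_2+\cdots$ with components in several $\fg_k$, and nothing you wrote rules out $u_k\neq0$ for $k\geq2$. An actual argument is needed here, e.g.\ degenerating the line under the one-parameter subgroup defined by the grading element: both limits are again lines through $x_0$ (the space of such lines is proper), with tangent directions $[u_j]$ and $[u_J]$ for the extreme indices $j\leq J$ with nonzero component, and a further $T$-degeneration produces $T$-fixed lines tangent to $\fg_j$ and $\fg_J$, whence $j=J=1$ by your computation; alternatively one uses the quadratic equations of $X$ in its minimal embedding, as \cite{LM} do. Second, ``the lines through a point of a minimal orbit form a single $P$-orbit'' cannot be quoted as a standard fact: it is false for short nodes (the cases of Proposition~\ref{prop:VMRTshort2} have non-homogeneous VMRT), and for long nodes it is essentially the statement being proved, so invoking it is circular; once the inclusion in $\P(\fg_1^\vee)$ is established, one still has to show that the irreducible closed $L$-stable subvariety $\cC_{x_0}$ coincides with the closed orbit (a dimension count, $\dim\cM_{x_0}=-K_X\cdot\Gamma-2=\dim$ of the closed orbit, does this). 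Finally, a smaller slip in the identification step: for the highest weight $\gamma$ of $\fg_1$ it is not true that $\langle\gamma,\alpha_i^\vee\rangle>0$ exactly for $i\in N(r)$ (for ${\rm A}_n$ marked at an interior node, $\gamma=\omega_1+\omega_n$ is supported on the two end nodes); the marking $N(r)$ is read off from the lowest weight $\alpha_r$, whose nonzero pairings $\langle\alpha_r,\alpha_i^\vee\rangle$ occur exactly at the neighbours of $r$. The conclusion is unaffected, since the two markings differ by a diagram automorphism of each component of $\cD\setminus\{r\}$, but the computation as written is incorrect.
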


If the node $r$ is a short root, the VMRT may still be computed, but it is homogeneous only in certain cases:

\begin{proposition}\label{prop:VMRTshort}
With the same notation as in the Proposition \ref{prop:VMRTlong}, assume that the pair $(\cD,r)$ is one of the following:
$$
({\rm B}_n,n),\quad ({\rm C}_n,1),\quad ({\rm G}_2,1).
$$
Then the VMRT of $X$ at every point is isomorphic, respectively, to
$$
G(n-2,n),\quad \P^{2n-2},\quad Q^3.
$$
\end{proposition}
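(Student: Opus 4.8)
The plan is to reduce all three cases to Proposition~\ref{prop:VMRTlong}. The point is that although each of these manifolds is exhibited here by marking a \emph{short}-root node, each one also carries a presentation $G'/P'(r')$ in which $r'$ is a \emph{long}-root node. Since the variety of minimal rational tangents of Definition~\ref{def:VMRT} is attached to the variety together with its minimal rational component --- for a Fano manifold of Picard number one this is the family of lines of the minimal embedding, which is a single orbit --- and makes no reference to any group action, Proposition~\ref{prop:VMRTlong} may be applied to whichever of the two presentations is convenient. So the scheme is: identify the long-node presentation, delete the marked node, record its neighbours, and read off the resulting marked diagram.

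Concretely: for $(\cD,r)=(\mathrm{C}_n,1)$ one has $X=\P^{2n-1}=\mathrm{A}_{2n-1}/P(1)$; the node $1$ of the simply-laced diagram $\mathrm{A}_{2n-1}$ is long and $N(1)=\{2\}$, so deleting it and marking $N(1)$ gives $\mathrm{A}_{2n-2}/P(1)=\P^{2n-2}$, embedded by $\cO(1)$. (Directly: every direction in $\P(T_{X,x}^\vee)$ is tangent to a line through $x$, hence $\cC_x=\P(T_{X,x}^\vee)$.) For $(\cD,r)=(\mathrm{G}_2,1)$, the $7$-dimensional fundamental representation of $\mathrm{G}_2$ identifies $X=\mathrm{G}_2/P(1)$ with the smooth quadric $Q^5=\mathrm{Spin}_7/P(1)=\mathrm{B}_3/P(1)$; the node $1$ of $\mathrm{B}_3$ is long with $N(1)=\{2\}$, and deleting it leaves $\mathrm{B}_2$ marked at its first node, so Proposition~\ref{prop:VMRTlong} gives $\cC_x=\mathrm{B}_2/P(1)=Q^3$, embedded by $\cO(1)$. (Equivalently, the lines on a smooth quadric $Q^m$ through $x$ fill up the intersection of $Q^m\subset\P^{m+1}$ with the tangent hyperplane at $x$, a cone with vertex $x$ over a $Q^{m-2}$, whence $\cC_x\cong Q^{m-2}$.) Finally, for $(\cD,r)=(\mathrm{B}_n,n)$, $X=S_n=\mathrm{Spin}_{2n+1}/P(n)$ is the variety of $n$-dimensional isotropic subspaces of a $(2n+1)$-dimensional quadratic space; intersecting the maximal isotropic subspaces of a $(2n+2)$-dimensional quadratic space with a fixed non-isotropic hyperplane identifies $S_n$ with the $\mathrm{D}_{n+1}$-spinor variety $\mathrm{Spin}_{2n+2}/P(n+1)$. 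The fork node $n+1$ of the simply-laced diagram $\mathrm{D}_{n+1}$ is long and has $N(n+1)=\{n-1\}$; deleting it leaves the chain $\mathrm{A}_n$ marked at $n-1$, so Proposition~\ref{prop:VMRTlong} yields $\cC_x=\mathrm{A}_n/P(n-1)=G(n-2,n)$, embedded by $\cO(1)$.

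The routine inputs are the two classical isomorphisms of homogeneous spaces used above, $\mathrm{G}_2/P(1)\cong Q^5$ and $\mathrm{Spin}_{2n+1}/P(n)\cong\mathrm{Spin}_{2n+2}/P(n+1)$, together with the elementary Dynkin-diagram bookkeeping. The only genuinely conceptual ingredient is Proposition~\ref{prop:VMRTlong}, and the single place where one should be slightly careful is the observation that the VMRT is an invariant of the variety together with its family of lines, hence indifferent to the chosen homogeneous presentation --- this matters because Proposition~\ref{prop:VMRTlong} is phrased in terms of a fixed presentation $G/P(r)$. I do not expect any real obstacle here; the "hard part" is purely organizational, namely making sure one has the correct low-rank isomorphisms and the right node adjacencies in each diagram.
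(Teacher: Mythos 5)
Your proof is correct and takes essentially the same route as the paper's: the paper's entire argument consists of observing that the three manifolds also arise from the long-node presentations $({\rm D}_{n+1},n+1)$, $({\rm A}_{2n-1},1)$, $({\rm B}_3,1)$ and then invoking Proposition~\ref{prop:VMRTlong}. Your write-up simply makes explicit the diagram bookkeeping and the presentation-independence of the VMRT, which the paper leaves implicit.
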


\begin{proof}
The three homogeneous manifolds may be obtained also from the pairs
$$
({\rm D}_{n+1},n+1),\quad ({\rm A}_{2n-1},1),\quad ({\rm B}_3,1).
$$
Then the result follows from \ref{prop:VMRTlong}.
\end{proof}

Finally, the remaining cases are not homogeneous, but they have been described in the following way (see \cite{LM}):

\begin{proposition}\label{prop:VMRTshort2}
With the same notation as in the Proposition \ref{prop:VMRTlong}, the VMRT's of the homogeneous manifolds determined by the pairs $(\cD,r)=({\rm C}_n,r),$ $r=2,\dots,n-1$, $({\rm F}_4,3)$, $({\rm F}_4,4)$ may be described as follows:
\begin{itemize}
\item[$({\rm C}_n,r)$] Blow-up of $\P^{2n-r-1}$ along a $\P^{r-1}$, or $\P(\cO_{\P^{r-1}}(2)\oplus \cO_{\P^{r-1}}(1)^{2n-2r})$ embedded by the complete linear system of the tautological bundle $\cO(1)$. This is a codimension $(r-1)$ linear section of the VMRT of $({\rm A}_{2n-1},r)$, which is $\P^{r-1}\times\P^{2n-r-1}$. %% THE SECTION MAP IS GIVEN BY THE EULER SEQUENCE
\item[$({\rm F}_4,3)$] Non-trivial smooth $Q^4$-fibration over $\P^1$.
\item[$({\rm F}_4,4)$] Smooth hyperplane section of $S_4$, (which is the VMRT of $({\rm E}_6,1)$).
\end{itemize}
\end{proposition}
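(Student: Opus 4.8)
The plan is to reduce the statement to a computation of the isotropy representation, following the point of view of Landsberg--Manivel \cite{LM}. Fix the base point $x$ and grade $\mathfrak{g}$ by the height in the marked simple root $\alpha_r$, so that $\mathfrak{g}_0$ is the Levi subalgebra $\mathfrak{l}$ of $\mathfrak{p}(r)$ and $T_{X,x}\cong\mathfrak{g}/\mathfrak{p}(r)=\bigoplus_{i>0}\mathfrak{g}_{-i}$ as an $\mathfrak{l}$-module; by the theory of VMRT's of homogeneous spaces, $\cC_x\subset\P(T_{X,x}^\vee)$ is the closure of the orbit, under the stabilizer $P(r)$, of the tangent direction at $x$ of a fixed minimal rational curve. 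When $\alpha_r$ is long the grading has length one, $T_{X,x}=\mathfrak{g}_{-1}$, and one recovers Proposition~\ref{prop:VMRTlong}. In the three families at hand $\alpha_r$ is short, so the coefficient of $\alpha_r$ in the highest root is $\geq 2$ and $T_{X,x}$ also contains $\mathfrak{g}_{-2}$ (and, for $({\rm F}_4,3)$, also $\mathfrak{g}_{-3}$ and $\mathfrak{g}_{-4}$); the interaction between these summands, encoded in the bracket maps $\bigwedge^2\mathfrak{g}_{-1}\to\mathfrak{g}_{-2}$, $\mathfrak{g}_{-1}\otimes\mathfrak{g}_{-2}\to\mathfrak{g}_{-3}$, \dots, is exactly what twists the embedding of $\cC_x$ away from the naive one and destroys homogeneity. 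In each case I would identify the resulting twisted orbit closure with the variety named in the statement, and verify at the end, via Proposition~\ref{prop:imm} and the fact that a general minimal curve is standard, that $\tau_x$ is an embedding, so that $\cC_x$ is the smooth model and not a degeneration of it.

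For $({\rm C}_n,r)$ this can be carried out bare-handed. Write $V=\C^{2n}$ with a symplectic form $\omega$; then $X=LG(r-1,2n-1)$ is the isotropic Grassmannian $IG(r,V)$, which sits inside the ordinary Grassmannian $G(r,V)$ --- the homogeneous manifold of type $({\rm A}_{2n-1},r)$ --- via $\sp(V)\subset\sl(V)$. The minimal rational curves of $IG(r,V)$ are lines under the Pl\"ucker embedding, hence lines of $G(r,V)$, and a short direct check (the symplectic form vanishes on $\ker\xi\subset S$ and on the relevant lift of $v$) shows that the line of $G(r,V)$ with tangent direction $\xi\otimes v\in T_{IG,[S]}$ lies in $IG(r,V)$; hence $\cC_{[S]}(IG(r,V))=\cC_{[S]}(G(r,V))\cap\P(T_{IG,[S]}^\vee)$. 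Now $T_{G,[S]}=\Hom(S,V/S)$ carries the canonical subspace $\Hom(S,S^\perp/S)$ with quotient $\Hom(S,V/S^\perp)=S^\vee\otimes S^\vee$ (using $V/S^\perp\cong S^\vee$), and $T_{IG,[S]}$ is the preimage of $\Sym^2 S^\vee\subset S^\vee\otimes S^\vee$, while $\cC_{[S]}(G(r,V))$ is the Segre $\P^{r-1}\times\P^{2n-r-1}$ of rank-one homomorphisms. Requiring the $S^\vee\otimes S^\vee$-component of $\xi\otimes v$ to be symmetric says precisely that the image of $v$ in $V/S^\perp\cong S^\vee$ is proportional to $\xi$; although this is a system of $\binom{r}{2}$ linear equations, it cuts out a subvariety of the Segre of codimension only $r-1$, giving the asserted codimension-$(r-1)$ linear-section description, and inspection shows this subvariety to be the $\P^{2n-2r}$-bundle over $\P^{r-1}=\P(S^\vee)$ given by $\P\bigl(\cO_{\P^{r-1}}(2)\oplus\cO_{\P^{r-1}}(1)^{\oplus 2n-2r}\bigr)$ in its tautological embedding, equivalently the blow-up of $\P^{2n-r-1}$ along a linear $\P^{r-1}$. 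As a consistency check, $\dim\cC_x=2n-r-1$ equals the index of $X$ minus $2$, and $h^0\bigl(\cO_{\P^{r-1}}(2)\oplus\cO_{\P^{r-1}}(1)^{\oplus 2n-2r}\bigr)=\dim X$.

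For $({\rm F}_4,4)$ the first step is to work out the $\alpha_4$-grading of $\mathfrak{f}_4$: it has length two, $\mathfrak{l}$ has semisimple part $\mathfrak{so}_7$, and as $\mathfrak{so}_7$-modules $\mathfrak{g}_{-1}$ is the $8$-dimensional spin representation while $\mathfrak{g}_{-2}$ is the $7$-dimensional vector representation, so $\P(T_{X,x}^\vee)=\P^{14}$. One then exhibits this $\P^{14}$ as a hyperplane inside the $\P^{15}$ of the minimal (spinor) embedding of the $10$-dimensional spinor variety $S_4$, compatibly with an inclusion $\mathrm{Spin}_7\subset\mathrm{Spin}_{10}$, in such a way that $\cC_x$ is carried onto the corresponding hyperplane section of $S_4$; since $S_4$ --- obtained from ${\rm D}_5=\cD\setminus\{1\}$ via Proposition~\ref{prop:VMRTlong} --- is the VMRT of $({\rm E}_6,1)$, and the hyperplane section in question is smooth of dimension $9$ (again the index minus $2$), this proves the claim. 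The case $({\rm F}_4,3)$ is the hard one: here the $\alpha_3$-grading has length four, $\mathfrak{l}$ has semisimple part $\mathfrak{sl}_3\times\mathfrak{sl}_2$ (with $\mathfrak{g}_{-1}\cong\C^3\boxtimes\C^2$, $\mathfrak{g}_{-2}\cong\overline{\C^3}\boxtimes\Sym^2\C^2$, $\mathfrak{g}_{-3}\cong\C^2$, $\mathfrak{g}_{-4}\cong\C^3$ as $\mathfrak{l}$-modules), and there is no ambient homogeneous VMRT of which $\cC_x$ is a linear section. The $\mathfrak{sl}_2$-factor produces a $P(3)$-equivariant fibration $\cC_x\to\P^1$, whose fibre over a point is a smooth four-dimensional quadric; the quadratic --- rather than, as in the ${\rm C}_n$ case, linear --- nature of these fibres is forced by the bracket $\bigwedge^2\mathfrak{g}_{-1}\to\mathfrak{g}_{-2}$, and the fibration is non-trivial because the graded pieces twist non-trivially over the $\P^1$. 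Controlling this quadratic part of the embedding --- proving the fibres are smooth $Q^4$'s and the $Q^4$-fibration is non-trivial --- is the main obstacle, and is precisely the representation-theoretic analysis carried out in \cite{LM}.
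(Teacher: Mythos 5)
The paper offers no proof of this proposition: it is stated with a pointer to Landsberg--Manivel \cite{LM}, so there is no internal argument to compare against. Your proposal is precisely the approach of that reference --- grade $\fg$ by the coefficient of the marked simple root, identify $T_{X,x}$ with $\bigoplus_{i>0}\fg_{-i}$ as a module over the Levi subalgebra, and describe $\cC_x$ as an orbit closure --- and your treatment of the $({\rm C}_n,r)$ series is a complete, correct, self-contained argument: the equality $\cC_{[S]}(IG(r,V))=\cC_{[S]}(G(r,V))\cap\P(T_{IG(r,V),[S]}^\vee)$, the symmetry condition that the image of $v$ in $V/S^\perp\cong S^\vee$ be proportional to $\xi$, and the resulting identification with $\P(\cO_{\P^{r-1}}(2)\oplus\cO_{\P^{r-1}}(1)^{\oplus 2n-2r})$ all check out, as do your dimension and $h^0$ consistency checks.

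Two caveats. First, your own computation contradicts the blow-up gloss you (and the statement) assert: the tautological map of $\P(\cO_{\P^{r-1}}(1)\oplus\cO^{\oplus 2n-2r})$ onto $\P^{2n-r-1}$ contracts the locus $\P^{r-1}\times\P^{2n-2r-1}$ onto its \emph{second} factor, so the variety is the blow-up of $\P^{2n-r-1}$ along a linear $\P^{2(n-r)-1}$, not along a $\P^{r-1}$; the two descriptions agree only when $3r=2n$ (e.g.\ for $LG(1,5)$, the paper's Example \ref{ex:dual112}), and already for $IG(2,8)$ they give non-isomorphic varieties (a $\P^4$-bundle over $\P^1$ versus a $\P^2$-bundle over $\P^3$). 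This is an error in the statement itself, but you should flag it rather than present it as a consequence of your argument. Second, for $({\rm F}_4,4)$ and $({\rm F}_4,3)$ your gradings and Levi-module decompositions are correct (the graded dimensions are $(8,7)$ for node $4$ and $(6,9,2,3)$ for node $3$), but the substantive steps --- realizing the $15$-dimensional module (spin plus vector representation of $\fso_7$) as an invariant hyperplane in the half-spin representation of $\fso_{10}$ and matching orbit closures with the spinor variety, and, for node $3$, proving the fibres over $\P^1$ are smooth four-dimensional quadrics and the fibration is non-trivial --- are only announced and explicitly deferred to \cite{LM}. Since the paper does no more than cite \cite{LM}, this is not a defect relative to the paper, but your proposal is not a self-contained proof of those two cases.
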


The knowledge of the VMRT's of the rational homogeneous manifolds of Picard number one is particularly important due to the following result proved by Hong and Hwang. Within the class of Fano manifolds of Picard number one, certain rational homogeneous manifolds are determined by $\cC_x$ and its embedding in $\P(T_{X,x}^\vee)$.

\begin{theorem}[Special case of \cite{HH}]\label{them:HH} Let $X$ be a Fano manifold of Picard number one, $S=G/P$ a rational homogeneous manifold corresponding to a long simple root, and $\cC_o \subset \P(T_{S,o}^\vee)$ the VMRT at a reference point $o \in S$. Assume $\cC_o \subset \P(T_{S,o}^\vee)$ and $\cC_x \subset \P(T_{X,x}^\vee)$ are isomorphic as projective subvarieties. Then $X$ is isomorphic to $S$.
\end{theorem}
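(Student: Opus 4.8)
The plan is to reduce the statement to the \emph{Cartan--Fubini type extension theorem} of Hwang--Mok, which turns a global isomorphism problem between Fano manifolds of Picard number one into a local one phrased in terms of varieties of minimal rational tangents. So I would first fix a minimal rational component on $X$ (it exists, since a Fano manifold of Picard number one is uniruled) and work over a dense open subset $X^{\circ}\subset X$ of points at which $\cM_{x}$ and the tangent map $\tau_{x}$ are well behaved. The hypothesis provides, at a general $x$, a linear isomorphism $T_{X,x}^{\vee}\cong T_{S,o}^{\vee}$ carrying $\cC_{x}$ onto $\cC_{o}$ as projective subvarieties. Since $r$ is a \emph{long} simple root, $\cC_{o}$ is homogeneous, smooth, irreducible and nondegenerate in $\P(T_{S,o}^{\vee})$ (Proposition~\ref{prop:VMRTlong}); these properties pass to $\cC_{x}$, so by Proposition~\ref{prop:imm} the minimal rational curves through a general point of $X$ are standard and $\tau_{x}$ is an embedding.

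The core of the argument is to upgrade this fibrewise linear identification to a biholomorphism $\varphi\colon U\subset X\to U'\subset S$ between small analytic neighbourhoods which carries the VMRT-structure of $X$ to that of $S$. One views the family $\{\cC_{x}\subset\P(T_{X,x}^{\vee})\}_{x\in X^{\circ}}$ as a $G_{0}$-structure, $G_{0}$ being the linear automorphism group of the pair $(T_{S,o},\cC_{o})$, modeled on the flat $G_{0}$-structure carried by $S$. Here $S=G/P(r)$ is exactly the \emph{flat model}: via Tanaka's prolongation procedure it is the homogeneous space whose symbol is the negatively graded nilpotent Lie algebra $\fm$ attached to $\cC_{o}$ together with $\fg_{0}$, and the crucial representation-theoretic fact is that, \emph{because $r$ is long}, the Tanaka prolongation of $(\fm,\fg_{0})$ is precisely $\fg$ and the Spencer/Lie algebra cohomology groups controlling the curvature of such a structure vanish in positive degree (this is where Kostant's theorem and Yamaguchi's classification of prolongations of parabolic geometries enter). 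Combining the pointwise isomorphism of VMRT's with the unobstructedness and standardness of the minimal rational curves --- which pin down the jets of the structure along these curves --- one concludes that the structure function (curvature) of the VMRT-structure on $X$ is a section of a bundle whose fibres vanish, hence the structure is locally flat and therefore locally equivalent to the one on $S$. This produces the desired local biholomorphism $\varphi$.

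With $\varphi$ in hand, the Cartan--Fubini extension theorem applies: $X$ and $S$ are Fano of Picard number one, each is swept out by the minimal rational curves whose tangent directions are recorded by $\cC_{x}$, respectively $\cC_{o}$, and $\varphi$ maps one such family to the other; hence $\varphi$ extends to a global isomorphism $X\xrightarrow{\ \sim\ }S$. Concretely, one continues $\varphi$ analytically along the minimal rational curves and then along chains of such curves, the continuation being single-valued because any two minimal rational curves on $X$ are joined by a chain (a consequence of $\rho(X)=1$), and the limit is a biregular map.

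The main obstacle is the middle step: proving local flatness of the VMRT-structure on $X$. This is precisely where the long-root hypothesis is indispensable. For a long root the VMRT is homogeneous and the associated graded Lie algebra has no nontrivial higher prolongation and vanishing curvature module, so the first-order agreement forced by $\cC_{x}\cong\cC_{o}$ together with the standardness of minimal rational curves already propagates to full local flatness; for short-root markings the relevant cohomology need not vanish, which is why the statement is restricted to the long case. Carrying this out rigorously needs the deformation theory of minimal rational curves (Kebekus, Hwang--Mok) to guarantee enough integrability, together with the representation-theoretic vanishing; the general version, without the long-root restriction, is the full theorem of Hong--Hwang \cite{HH} and requires a delicate case analysis.
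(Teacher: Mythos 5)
The paper does not prove Theorem~\ref{them:HH}: it is imported as a special case of the result of Hong and Hwang \cite{HH} and used as a black box, so there is no internal proof to compare you against. Measured against the actual argument of \cite{HH}, your outline identifies the correct strategy: view the family $\{\cC_x\subset\P(T_{X,x}^\vee)\}$ as a cone structure modelled on the flat structure carried by $S$, prove local flatness using the deformation theory of minimal rational curves together with Tanaka prolongation and the Kostant-type cohomology vanishing available for long-root gradings, and then globalize the resulting local biholomorphism via the Cartan--Fubini extension theorem of Hwang and Mok (the last step using that $X$ and $S$ are Fano of Picard number one, so both are chain connected by the relevant minimal curves). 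As written, though, this is a roadmap rather than a proof: essentially all of the mathematical content of \cite{HH} sits in the ``middle step'' you yourself flag --- showing that the pointwise projective isomorphism of VMRTs, combined with the existence of enough standard minimal rational curves tangent to the cone, kills the structure function --- and your text replaces it with an appeal to ``Kostant's theorem and Yamaguchi's classification'' without verifying the relevant vanishing or explaining how the curves control the higher-order jets of the structure. Since the survey itself treats the theorem as a citation, this level of detail is arguably appropriate, but it should be presented as a summary of \cite{HH}, not as an independent proof.

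Two smaller corrections. First, your closing sentence inverts the literature: \cite{HH} \emph{is} the long-root case; removing the long-root hypothesis is not ``the full theorem of Hong--Hwang'' but a separate and substantially harder problem, consistent with the paper's observation that short-root VMRTs need not even be homogeneous (Proposition~\ref{prop:VMRTshort2}). Second, the claim that $\tau_x$ is an embedding is better routed through the fact that $\tau_x:\cM_x\to\cC_x$ is the normalization map, so that smoothness (hence normality) of $\cC_x$, inherited from $\cC_o$, forces it to be an isomorphism; Proposition~\ref{prop:imm} by itself only yields immersivity at standard members, not injectivity.
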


\if0
The tangent map $\tau_x$ and VMRT $\cC_x$ can also be defined globally as follows.
By Proposition~\ref{prop:RCbasic}, the relative tangent sheaf $T_{\cU/\cM}$ is locally free of rank $1$, and the composition of $T_{\cU/\cM} \subset T_{\cU}$ and $T_{\cU} \to q^{\ast} T_X$ is a nonzero morphism. This implies that $T_{\cU/\cM}$ is a subsheaf of $q^{\ast} T_X$. This yields a rational map $\tau: \cU \dashrightarrow \P(T_X^{\vee})$ which satisfies the commutative diagram

\[\xymatrix@=25pt{
  & \P(T_X^{\vee}) \ar[d]^{\pi} \\
 \cU \ar@{-->}[ur]^{\tau}  \ar[r]^{q}  &X   \\
} \]

The closure of the image of $\tau$ is denoted by $\cC$.
We call $\tau$ the {\it global tangent map} of $\cM$ and $\cC$ the {\it total variety of minimal rational tangents} of $\cM$.
\fi

%\label{sec:Prelimi}
%!TEX root = CPconjecture.tex

\section{Fano varieties with nef tangent bundle}\label{sec:CPvar}

%%%%%%%%%%%%%%%%%%%%%%%%%%%%%%%%%%%%%

In this section we prove that the basic properties of the contractions of CP-manifolds are analogous to those of rational homogeneous manifolds (see Proposition \ref{prop:RHsimp}). More concretely,  we will see that the Mori cone of a CP-manifold is simplicial (Corollary \ref{prop:simplicial}), and that its contractions are smooth fibrations, whose fibers and targets are CP-manifolds (Theorem \ref{thm:smooth}). This statement, originally due to Demailly, Peternell and Schneider \cite[Theorem~5.2]{DPS}, motivates an inductive approach to Conjecture \ref{conj:CPconj}.

We start by noting that, by Lemma \ref{lem:RConCP}, the nefness of $T_X$ implies that the deformations of every rational curve dominate $X$. Since moreover $X$ is Fano, the nontrivial fibers of its contractions contain rational curves, and one immediately gets the following:

\begin{proposition}\label{prop:fybertype} Every Mori contraction $\pi:X \to Y$ of a CP-manifold $X$ is of fiber type, that is $\dim Y<\dim X$.
\end{proposition}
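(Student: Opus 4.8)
The plan is to combine two standard facts: first, that a Mori contraction of a Fano manifold has nontrivial fibers containing rational curves; and second, that on a CP-manifold the deformations of any rational curve sweep out the whole variety (Lemma~\ref{lem:RConCP}). I would argue by contradiction, supposing $\pi : X \to Y$ is a Mori contraction with $\dim Y = \dim X$, i.e.\ a birational contraction. Then $\pi$ contracts an extremal ray $R$, and I would pick a rational curve $C$ with $[C]$ generating $R$; since $\pi$ is birational, $C$ is contained in the exceptional locus $\Exc(\pi)$, which is a proper closed subvariety of $X$.

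\textbf{Key steps.} The first step is to observe that, because $X$ is Fano, the Cone Theorem guarantees that every extremal ray is generated by a rational curve, and because $\pi$ is a Mori (i.e.\ $K_X$-negative extremal) contraction its nontrivial fibers are covered by rational curves contracted by $\pi$; in particular such a $C$ exists. The second, decisive step is to apply Lemma~\ref{lem:RConCP} to $f : \P^1 \to X$ the normalization of $C$: nefness of $T_X$ forces $f^*T_X$ to be globally generated, hence $H^1(\P^1, f^*T_X) = 0$, so $\Hom(\P^1,X)$ is smooth at $[f]$ and the evaluation morphism from the component $H \ni [f]$ is \emph{dominant}. The third step is to derive the contradiction: every curve parametrized by $H$ is numerically proportional to $C$, hence lies in $R$ and is therefore contracted by $\pi$; but then the image of the dominant evaluation morphism $H \times \P^1 \to X$ would be contained in a fiber of $\pi$ (equivalently, in $\Exc(\pi)$ when $\pi$ is birational, or in a positive-dimensional fiber when $\dim Y < \dim X$ is not yet assumed), contradicting dominancy unless $\dim Y < \dim X$. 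Thus $\dim Y < \dim X$, which is exactly the assertion that $\pi$ is of fiber type.

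\textbf{Main obstacle.} The only real subtlety is making precise the passage ``the deformations of $C$ are all contracted by $\pi$, yet they dominate $X$, hence $\pi$ cannot be generically finite.'' One must be careful that the component $H$ containing $[f]$ genuinely parametrizes curves in the ray $R$ — this follows because deformations of $C$ as a $1$-cycle stay in a fixed numerical class, so their classes all lie on the ray through $[C]$ — and that dominance of $\ev$ on $H \times \P^1$ really does contradict $\pi$ being birational, since a birational contraction can only contract curves inside the proper closed set $\Exc(\pi)$. Once these bookkeeping points are in place the proof is immediate; no delicate estimates or case analysis are needed, the heavy lifting having already been done in Lemma~\ref{lem:RConCP}.
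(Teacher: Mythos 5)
Your proof is correct and follows essentially the same route as the paper: the paper deduces the proposition directly from Lemma~\ref{lem:RConCP} (deformations of any rational curve on a CP-manifold dominate $X$) combined with the fact that the nontrivial fibers of a Mori contraction of a Fano manifold contain rational curves, which is exactly your argument. Your added care about the image of the evaluation map landing in $\Exc(\pi)$ rather than in a single fiber is the right way to make the paper's one-line deduction precise.
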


As a consequence, one may show that the Mori cone of a CP-manifold $X$ is the convex hull of a basis of $N_1(X)$:

\begin{corollary}\label{prop:simplicial} The Mori cone $\NE(X)$ of a CP-manifold $X$ is simplicial.
\end{corollary}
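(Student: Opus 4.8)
The plan is to show that $\cNE{X}$ is a simplicial cone by counting its extremal rays against the Picard number $n=\rho(X)$. Since $X$ is Fano, the Mori cone $\cNE{X}$ is a rational polyhedral cone, so it is generated by finitely many extremal rays $R_1,\dots,R_s$, each of which (again by Fanoness and the Cone Theorem) is $K_X$-negative and therefore contractible. It suffices to prove that these extremal rays span $N_1(X)$ and are linearly independent; equivalently, that $s=n$ and the classes spanning $R_1,\dots,R_s$ form a basis. Since $\cNE{X}$ is full-dimensional (it generates $N_1(X)$, because $X$ is projective and $-K_X$ is ample), we automatically have $s\ge n$, so the crux is the reverse inequality $s\le n$, together with linear independence.

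The key input is Proposition~\ref{prop:fybertype}: every elementary contraction $\pi_i:X\to X_i$ of an extremal ray $R_i$ of a CP-manifold is of fiber type. First I would record the standard fact that for a fiber type contraction $\pi_i$ one has $\rho(X)=\rho(X_i)+1$ and, more importantly, that the pullback $\pi_i^*N^1(X_i)\subset N^1(X)$ is precisely the hyperplane $R_i^\perp$; dually, $N_1(X_i)$ is identified with $N_1(X)/\R[R_i]$, and a curve class lies in the image of $N_1(F)\to N_1(X)$ (for $F$ a fiber) if and only if it is proportional to $R_i$. Now take any two distinct extremal rays $R_i$ and $R_j$. Contracting $R_i$ sends $R_j$ to a nonzero ray in $\cNE{X_i}$ (nonzero because $R_j\ne R_i$ and $R_j$ is extremal, so $R_j\not\subset$ the contracted ray), and that ray is still $K_{X_i}$-negative — here one uses that the fibers of $\pi_i$ are themselves Fano (indeed CP, by the smoothness of contractions, though for this corollary one only needs that the generic fiber is Fano, which follows from adjunction since $-K_X$ is $\pi_i$-ample). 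So $X_i$ is again a Fano (or at least a Mori-dream-type) variety with $\rho(X_i)=n-1$, and one can induct on the Picard number.

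Concretely, the inductive step I would run is: assume the statement for all CP-manifolds (or Fano manifolds all of whose extremal contractions are of fiber type) of Picard number $<n$. Pick an extremal ray $R_1$ of $X$ with contraction $\pi_1:X\to X_1$. One checks $X_1$ still has the property that its extremal contractions are of fiber type — this can be seen because an extremal contraction of $X_1$ composed with $\pi_1$ is a (not necessarily elementary) contraction of $X$ which is again of fiber type by the argument of Proposition~\ref{prop:fybertype} applied to the face it contracts, hence dominant fibers, hence $\dim X_1 > $ the dimension of the target. By induction $\cNE{X_1}$ is simplicial, generated by $n-1$ independent extremal rays $\bar R_2,\dots,\bar R_n$. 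Each $\bar R_j$ lifts: the preimage $\pi_1^{-1}$ of a curve generating $\bar R_j$ has a component dominating that curve, whose class $R_j$ satisfies $\pi_{1*}R_j\in\bar R_j$, and $R_j$ is extremal on $X$ (a minimal such lift, e.g. a minimal rational curve in the relevant family, will lie on an extremal ray). Then $R_1,R_2,\dots,R_n$ are $n$ classes: they are linearly independent because $R_2,\dots,R_n$ map to a basis of $N_1(X_1)=N_1(X)/\R[R_1]$ while $R_1$ spans the kernel; and they generate $\cNE{X}$ because any curve class $[C]$ pushes to $\pi_{1*}[C]=\sum_{j\ge2}\lambda_j\bar R_j$ with $\lambda_j\ge0$, so $[C]-\sum_{j\ge2}\lambda_j R_j$ lies in $\ker(\pi_{1*})=\R[R_1]$ and, being a limit of effective classes, is a nonnegative multiple of $R_1$ (using that the one-dimensional face $R_1$ is extremal). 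Hence $\cNE{X}$ is the simplicial cone on $R_1,\dots,R_n$, and in particular $X$ has exactly $n$ extremal rays.

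The main obstacle I expect is the bookkeeping around lifting extremal rays and, relatedly, making sure the inductive hypothesis genuinely applies to $X_1$: one must argue that $X_1$ is again (or behaves like) a variety whose elementary contractions are all of fiber type, even though $X_1$ itself need not be smooth a priori. In the present survey this is cleaner because Theorem~\ref{thm:smooth} (proved in this same section) guarantees that $X_1$ is smooth and is itself a CP-manifold, so the induction closes immediately; if one wants a self-contained proof of the Corollary \emph{before} invoking that theorem, the right substitute is to work with the Fano property of $X_1$ (adjunction) plus Proposition~\ref{prop:fybertype} applied on $X_1$ after observing that a composite contraction $X\to X_1\to Z$ is of fiber type. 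Either way, the geometric heart is entirely Proposition~\ref{prop:fybertype} — once every contraction is of fiber type, the Picard number drops by one at each step and the simpliciality is forced by dimension count.
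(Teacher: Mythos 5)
Your induction correctly produces $n$ linearly independent extremal rays $R_1,\dots,R_n$ whose images under $\pi_{1*}$ behave as claimed, but the final step --- that these $n$ rays generate all of $\NE(X)$ --- has a genuine gap. You write that $[C]-\sum_{j\ge 2}\lambda_j R_j$ lies in $\ker(\pi_{1*})$ and, ``being a limit of effective classes, is a nonnegative multiple of $R_1$.'' That class is a \emph{difference} of (limits of) effective classes, which is not in general pseudoeffective, and the extremality of $R_1$ gives you nothing here. What you must exclude is a further extremal ray of the form $\sum_{j\ge 2}\lambda_j R_j-\mu R_1$ with $\mu>0$, and pure convex geometry cannot do it: the pointed cone in $\R^3$ generated by $e_1,e_2,e_3$ and $e_2+e_3-e_1$ has four extremal rays, its image under the quotient by $\R e_1$ is the simplicial cone on $\bar e_2,\bar e_3$, and $e_2,e_3$ are extremal lifts --- yet the cone is not generated by $e_1,e_2,e_3$. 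So some additional geometric input is required precisely at the point where your argument asserts the conclusion. (A secondary issue: your induction leans on Theorem~\ref{thm:smooth} to know $X_1$ is again a CP-manifold, whereas the paper proves the Corollary before that theorem; this could be repaired, but it is not the main problem.)

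The paper's proof supplies exactly the missing geometric input, by a different route. It argues by contradiction: if there were more than $n$ extremal rays, write $[\Gamma_k]=\sum_{i=1}^n a_i[\Gamma_i]$ with $a_1<0$. The key step is to show, using the rational connectedness relation with respect to the unsplit families $\cM_2,\dots,\cM_n$ (\cite[IV.4.16]{kollar}, \cite[Lemma~2.4]{CO}) together with Proposition~\ref{prop:fybertype} (every contraction is of fiber type), that the classes $[\Gamma_2],\dots,[\Gamma_n]$ all lie in a common $(n-1)$-dimensional \emph{extremal face} of $\NE(X)$. Its nef supporting divisor $H$ then satisfies $H\cdot\Gamma_i=0$ for $i\ge 2$ and $H\cdot\Gamma_1>0$, forcing $H\cdot\Gamma_k=a_1\,H\cdot\Gamma_1<0$, a contradiction. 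The statement that any $n-1$ of the minimal classes span a face is precisely what rules out the cone configuration above, and it is this chain-connectedness argument --- not the one-ray-at-a-time dimension count --- that carries the proof. To repair your approach you would need an analogous face-spanning statement (equivalently, a nef divisor vanishing on $R_2,\dots,R_n$ and positive on $R_1$), which does not follow from the inductive setup as you have arranged it.
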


\begin{proof} Assume by contradiction the existence of extremal rays $R_1,\dots, R_k$ such that  $k >\rho(X)=n$ and choose a rational curve $\Gamma_i$ of minimal anticanonical degree among those spanning the corresponding ray $R_i$. Without loss of generality we can assume that $[\Gamma_k]$ can be written as $[\Gamma_k]=\sum_{i=1}^{n}a_i [\Gamma_i]$ ($a_i \in \Q$ for $i=1, \dots, n$) and that $a_1<0$ by the extremality of $R_k$.

For $i \in \{2,\dots,n\}$ take the unsplit (by the minimality of the degree of $\Gamma_i$) family $\cM_i$ of rational curves containing $\Gamma_i$. Using  the {\it rational connectedness relation with respect to $(\cM_2,\dots,\cM_k)$}, cf. \cite[IV. 4.16]{kollar}, one can prove,  see \cite[Lemma~2.4]{CO}, that the classes $[\Gamma_2],\dots, [\Gamma_n]$ are lying in an $(n-1)$-dimensional face of $\NE(X)$, being every contraction of $X$ of fiber type. A supporting divisor $H$ of this face provides a contradiction: $H\cdot \Gamma_i=0$ for $i=2, \dots, n$, $H\cdot \Gamma_1>0$ so that $H\cdot \Gamma_k <0$, contradicting that $H$ is nef.
\end{proof}

The main result of this section is the following:

\begin{theorem}\label{thm:smooth} Let $\pi:X \to Y$ be a Mori contraction of a CP-manifold. Then  $\pi:X \to Y$ is smooth, and $Y$ and the fibers of $\pi$ are CP-manifolds.
\end{theorem}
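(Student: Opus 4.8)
The plan is to reduce to the case of an elementary contraction and then follow the argument of Demailly--Peternell--Schneider \cite{DPS}, using the families of minimal rational curves of Proposition~\ref{prop:RCbasic} as the main tool.

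\emph{Reduction.} Since $\NE(X)$ is simplicial (Corollary~\ref{prop:simplicial}), a Mori contraction $\pi\colon X\to Y$, being the contraction of a face $\langle R_i\rangle_{i\in I}$, factors through the elementary contraction $\pi_{R_{i_0}}$ of any one ray $R_{i_0}$, $i_0\in I$, because $\pi_{R_{i_0}}$ contracts fewer curves than $\pi$. Granting the theorem for elementary contractions, the intermediate variety is again a CP-manifold, of strictly smaller Picard number, so an induction on $\rho(X)$ reduces us to the case $\pi=\pi_R\colon X\to Y$ with $R=\R_{\geq 0}[\Gamma]$ a single extremal ray. (For the original $\pi$ the conclusion then follows from the final step below, since $\pi$ is a composition of smooth morphisms, hence smooth.) Fix a minimal rational component $\cM$ of $R$, with universal family $p\colon\cU\to\cM$ and evaluation $q\colon\cU\to X$, and set $d=-K_X\cdot\Gamma$. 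By Proposition~\ref{prop:fybertype}, $\pi$ is of fiber type; the curves of $\cM$ have class in $R=\ker(\pi_*)$, hence are contracted by $\pi$; and $q$ is surjective by Lemma~\ref{lem:RConCP} and Proposition~\ref{prop:RCbasic}, so through each point of $X$ runs a curve of $\cM$ lying in the fiber of $\pi$ through that point.

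\emph{Smoothness of $\pi$.} When $d=2$ this is immediate: Proposition~\ref{prop:RCbasic}~(5) says $q$ is an isomorphism, so $p$ presents $X$ as a smooth $\P^1$-fibration over $\cM$ which contracts exactly $R$; as $\pi$ is the unique Mori contraction of $R$, we get $\pi=p$ and $Y=\cM$, in particular $Y$ smooth. For $d\geq 3$ I would invoke \cite[Theorem~5.2]{DPS}: the idea is that the nefness of $T_X$ forbids any fiber $F$ of $\pi$ from being of excess dimension or non-reduced. Indeed, if $F$ were such a fiber, one picks a suitable minimal curve $C$ of $\cM$ through a point of $F$, with normalization $f\colon\P^1\to X$; since $f^*T_X$ is globally generated (Lemma~\ref{lem:RConCP}), restricting to $C$ the normal sequence of $F$ in $X$ pins down the splitting type of $N_{C/X}$, hence the local structure of $F$ along $C$, to be that of a smooth equidimensional fiber --- a contradiction. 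One concludes that $\pi$ is a smooth morphism; in particular $Y$ is smooth. I expect this step --- ruling out positive-dimensional or non-reduced jumps of the fibers using only the nefness of $T_X$ --- to be the main obstacle, and the one genuinely requiring the ideas of \cite{DPS}.

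\emph{$Y$ and the fibers are CP-manifolds.} As $\pi$ is smooth and surjective it is faithfully flat, so $Y$ is smooth by faithfully flat descent of regularity. The relative cotangent sequence gives $0\to T_{X/Y}\to T_X\stackrel{d\pi}{\longrightarrow}\pi^*T_Y\to 0$, so $\pi^*T_Y$ is a quotient of the nef bundle $T_X$, hence nef, and since $\pi$ is surjective $T_Y$ is nef as well; that $-K_Y$ is ample, i.e. $Y$ is Fano, holds because $Y$ is the image of the Fano manifold $X$ under a smooth fiber-type morphism (see \cite{DPS}). So $Y$ is a CP-manifold. For a fiber $F=\pi^{-1}(y)$, adjunction for the smooth morphism $\pi$ gives $-K_F=(-K_X)|_F$, which is ample, so $F$ is Fano; and in
\[
0\longrightarrow T_F\longrightarrow T_X|_F\longrightarrow N_{F/X}\longrightarrow 0
\]
one has $T_X|_F$ nef and $N_{F/X}\cong\pi^*T_Y|_F$ trivial, whence $T_F$ is nef, using the following general fact: the kernel $S$ of a surjection $E\to T$ of vector bundles with $E$ nef and $T$ numerically trivial is again nef. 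To see this, let $g\colon C\to X$ be a finite map from a smooth curve and $g^*S\twoheadrightarrow L$ a quotient line bundle; then $M:=g^*E/\ker(g^*S\to L)$ is a quotient of $g^*E$, hence nef, and fits in $0\to L\to M\to T'\to 0$ with $T'$ numerically trivial, so $\deg L=\deg M\geq 0$. Thus $F$ is a CP-manifold, completing the proof.
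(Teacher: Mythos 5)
Your reduction to elementary contractions, your treatment of the case $d=2$, and your deduction that $Y$ and the fibers are CP-manifolds once smoothness of $\pi$ is known are all correct, and they match what the paper disposes of in the remark following the theorem (the lemma you prove about the kernel of a surjection onto a numerically trivial quotient being nef is exactly the point there). But the heart of the theorem is the smoothness of an elementary contraction when $d\geq 3$, and for that step you write ``I would invoke \cite[Theorem~5.2]{DPS}'' --- which is precisely the reference in which this statement was originally proved. Citing the theorem to prove the theorem is circular, so the central claim is left unestablished. The one-sentence sketch you offer in its place does not work as stated: you propose to restrict ``the normal sequence of $F$ in $X$'' to a minimal curve $C\subset F$, but a priori a fiber $F$ of $\pi$ could be non-reduced, singular, reducible, or of excess dimension, so there is no normal bundle sequence to restrict until all of those possibilities have been excluded --- and excluding them is the actual content of the proof, not a consequence of it.

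For comparison, the paper spends three separate arguments on this step. Equidimensionality and irreducibility of fibers (Lemma~\ref{lem:equidim}) come from a Chow-variety argument: if some $e^{-1}(x)$ were positive dimensional, the locus $Z(x)$ swept out by cycles through $x$ would be rationally connected of dimension $>\dim F$, forcing $r^+(f^*T_X)>\dim F$ for a connecting curve, which contradicts (by semicontinuity and freeness of rational curves on a CP-manifold) the triviality of the normal bundle on a general fiber. Smoothness of the reduced fiber and triviality of $\cN_{F/X}$ (Lemma~\ref{lem:normaltrivial}) use the symmetric differentials $D(g_i)$ of generators of the maximal ideal of $y$ and their constancy along rational curves in $F$. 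Finally, smoothness of $Y$ and of $\pi$ is obtained by blowing up $F$, applying Kawamata--Viehweg vanishing to extend sections of $-E|_E$, and producing $\dim Y$ regular generators of the maximal ideal of $y$. None of these ideas appears in your proposal, so as it stands the proof has a genuine gap at its main step.
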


\begin{remark} The second part of the statement is an easy consequence of the smoothness of $\pi$ via the exact sequences defining the relative tangent bundle and the normal bundles to the fibers.
Furthermore we may assume that $\pi$ is  an elementary contraction, i.e. that its relative Picard number is one. Otherwise, by the Cone Theorem, we can factor $\pi$ as $\pi_2 \circ \pi_1$ where $\pi_1$ is elementary and $\pi_2:X_1 \to Y$ is a contraction of a CP-manifold with smaller relative Picard number. Then the general result follows by induction (for details, see \cite[Lemma~4.7]{1-ample}).
\end{remark}

The proof we present here is based on \cite[4.2]{1-ample}, and will be divided in several steps.

\begin{lemma}[{\cite[Lemmas 4.10 and 4.11]{1-ample}}]\label{lem:equidim}
Let $\pi:X \to Y$ be an elementary Mori contraction of a CP-manifold. Then $\pi$ is equidimensional and all its fibers are irreducible.
\end{lemma}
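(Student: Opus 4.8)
The plan is to use the estimate on the dimension of the family of minimal rational curves together with the fact that, on a CP-manifold, every rational curve is free (Lemma~\ref{lem:RConCP}) and every contraction is of fiber type (Proposition~\ref{prop:fybertype}). Fix an elementary Mori contraction $\pi:X\to Y$ and let $R=R_\pi$ be the associated extremal ray. First I would pick a minimal rational component $\cM$ whose members generate $R$, and let $\ell := -K_X\cdot C$ for $[C]\in\cM$; by Proposition~\ref{prop:RCbasic} the variety $\cM$ is smooth of dimension $m+\ell-3$, the evaluation map $q:\cU\to X$ is smooth, and through a general point $x\in X$ the space $\cM_x$ of curves through $x$ has dimension $\ell-2$. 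The key numerical input is that for \emph{every} point $x\in X$ (not just the general one) the dimension of the locus of $\cM$-curves through $x$ is at least $\ell-2$: indeed $q$ is smooth and surjective, so all its fibers have dimension $\dim\cU-m=\ell-2$, and $\cM_x$ dominates $q^{-1}(x)$.

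Next I would bound the dimension of a fiber $F$ of $\pi$. Let $F$ be any irreducible component of any fiber, with $\dim F=f$, and let $F_0$ be the fiber through a general point of $F$. A standard breaking/bend-and-break type argument — or rather its positive counterpart, using that the curves do not break because $\ell$ is the minimal degree on $R$ — shows that $F$ is rationally chain connected by $\cM$-curves, and the usual estimate (cf. \cite[IV.2.6, IV.4.16]{kollar}) gives
$$
\dim F \;\le\; \ell - 1 + \dim\{\text{curves in }\cM\text{ contracted by }\pi\text{ through a general point of }F\}.
$$
Combining this with the fact that $\Loc(\cM_x)$, the locus swept out by $\cM$-curves through a fixed $x$, has dimension bounded in terms of $\ell$, one concludes that $\dim F\le \ell-1$ for the general fiber, and then that \emph{every} fiber has dimension exactly $\ell-1$. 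The point is that the lower bound $\dim F\ge \ell-1$ for special fibers follows from upper-semicontinuity of fiber dimension applied to $\pi$ together with the general fiber having dimension $\ell-1$ is wrong in the wrong direction; instead one argues that a special fiber cannot be \emph{larger}: if some fiber $F$ had $\dim F\ge\ell$, then since through each point of $F$ there pass an $(\ell-2)$-dimensional family of $\cM$-curves all contracted by $\pi$ (because $\pi$ is elementary and the curves lie on $R$), a dimension count inside $F$ forces the existence of a curve in $\cM$ that breaks into a reducible cycle, contradicting minimality of $\ell$. Hence $\pi$ is equidimensional with all fibers of dimension $\ell-1$.

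Finally, irreducibility of the fibers: once equidimensionality is known, the fibers are connected (by the definition of a contraction / Stein factorization) and of pure dimension $\ell-1$. I would show a general point of each component lies on an $\cM$-curve that meets another component — impossible if the general fiber is already covered by such curves through a single point, so a fiber with two components would again produce a broken curve in $\cM$ of total degree $\ell$, contradicting minimality. Alternatively, equidimensionality plus the smoothness of $X$ and $Y$ (with $Y$ normal) together with $\dim F = \dim X - \dim Y$ everywhere shows $\pi$ is flat, and a flat contraction with connected fibers cannot have a reducible fiber without the general fiber being reducible, which it is not. \textbf{The main obstacle} is the uniform (point-by-point, not generic) control of the fiber dimension: the generic behaviour is immediate from Proposition~\ref{prop:RCbasic}, but ruling out jumping of the fiber dimension over special points of $Y$ requires the breaking argument above, exploiting crucially both that $\ell$ is minimal on the ray (so the covering curves do not degenerate) and that $\pi$ is elementary (so \emph{all} minimal curves through a point of a fiber are $\pi$-vertical). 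This is exactly where one invokes \cite[Lemmas~4.10, 4.11]{1-ample}.
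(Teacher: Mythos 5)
Your proposal has genuine gaps, both in the numerology and in the mechanism of the argument. First, the target statement ``every fiber has dimension exactly $\ell-1$'' is not correct: for an elementary fiber-type contraction the Ionescu--Wi\'sniewski inequality gives $\dim F\geq \ell-1$ for every component of every fiber, and equality can fail already for the general fiber (a smooth quadric fibration has fibers $Q^k$ of dimension $k=\ell$, with lines of anticanonical degree $\ell=k$ and no breaking whatsoever). Consequently your proposed contradiction --- ``if some fiber had $\dim F\geq \ell$ a dimension count forces a curve of $\cM$ to break'' --- does not materialize: an $(\ell-2)$-dimensional family of minimal curves through each point of a fiber of dimension $\geq\ell$ produces no positive-dimensional family through \emph{two} fixed points, so bend-and-break has nothing to act on and minimality of $\ell$ on the ray is never violated. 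The actual difficulty is to compare a putative jumping fiber with the \emph{general} fiber, and this is where the paper uses nefness of $T_X$ beyond mere freeness: it takes the component of $\Chow(X)$ containing a general fiber $F$, and shows the evaluation $e:U\to X$ of the universal family is an isomorphism by observing that if some $e^{-1}(x)$ were positive-dimensional, the locus $Z(x)$ swept by cycles through $x$ would satisfy $\dim Z(x)>\dim F$, would be rationally connected, and a connecting curve $f:\P^1\to Z(x)$ would have $r^+(f^*T_X)\geq\dim Z(x)>\dim F$ positive summands; by semicontinuity and the fact that deformations of $f$ dominate $X$, some deformation lands in a general fiber, where the trivial normal bundle forces $r^+\leq\dim F$ --- a contradiction. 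None of this appears in your sketch.

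The irreducibility step is also not salvaged by your alternatives. The paper proves every fiber is rationally chain connected (rationally chain connected fibers of an equidimensional morphism form a countable union of closed sets, and the general fiber is RCC) and then smooths chains keeping a point fixed, so every fiber is rationally connected, hence irreducible. Your first alternative (a two-component fiber ``would produce a broken curve in $\cM$ of total degree $\ell$'') is unsubstantiated --- two components need not be linked by a degeneration of a minimal curve. Your second alternative is both false and circular: a flat family can perfectly well have a reducible special fiber with irreducible general fiber (a conic degenerating to two lines), and the smoothness of $Y$ that you invoke is precisely the conclusion of Theorem \ref{thm:smooth}, which takes the present lemma as input.
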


\begin{proof}
Let $F \subset X$ be a general fiber of $\pi$ and consider a component $M$ of the Chow variety $\Chow(X)$ containing $F$. Normalizing, if necessary, we consider the universal family $U$ of cycles over $M$ and the evaluation map $e:U \to X$, fitting in a commutative diagram:
$$\xymatrix{
U\ar[r]^e\ar[d]_u & X\ar[d]^\pi\\
M\ar[r]_k & Y}$$
Since any cycle algebraically equivalent to $F$ is contracted by $\pi$, then $e$ (and hence $k$) is birational.
Since $u$ is equidimensional, it is enough to show that $k$ is an isomorphism, which will follow, being $\pi$ elementary, by showing that $e$ is an isomorphism. Let us prove this last assertion by contradiction. Assume the existence of $x \in X$ such that $e^{-1}(x)$ is positive dimensional and consider the variety $Z(x)$ swept out on $X$ by the cycles of the family $M$ by $x$, that is, $$Z(x)=e(u^{-1}(u(e^{-1}(x)))),$$ which satisfies $\dim Z(x)>\dim F$. The general fiber of $u$ is rationally chain connected, hence, since rationally chain connected fibers on a equidimensional morphism form a countable union of closed sets (see \cite[IV, 3.5.2]{kollar}), any fiber of $u$ is rationally chain connected, and so is $Z(x)$. Let us now observe that for any chain of rational curves $\Gamma$ in a fiber of $\pi$, there exists a smoothing with a smooth point $y$ of the chain $\Gamma$ fixed, see \cite[II, 7.6.1]{kollar}. This implies that every point $y'\in\Gamma$ lies in the closure of a component of the set of points of $Z(x)$ that can be joined with $y$ by an irreducible rational curve in $Z(x)$. Since the base field $\C$ is uncountable, it follows that any pair of general points in $Z(x)$ can be joined by a rational curve in the fiber $\pi^{-1}(x)$. Let $C$ be a general curve in a family of rational curves joining two general points of $Z(x)$ and let $f:\P^1 \to C$ be its normalization. Since deformations of $C$ by a point sweep out $Z(x)$  we get:
$$r^+(f^*T_X)=\dim(X)-h^0(f^*\Omega_X)\geq \dim Z(x) >\dim F,$$
where $r^+ (f^*T_X)$ is the number of positive summands of $f^*T_X$.
But, by semicontinuity of cohomology, for small deformations $f_t$ of $f$ we get the same inequality.
Since deformations of $f$ dominate $X$, some small deformation $f_{t_0}$ is contained in a general fiber; on such a fiber  the normal bundle is trivial, hence $r^+(f_{t_0}^*T_X) \le \dim F$, a contradiction.

The argument showing that rational chains joining points of $Z(x)$ can be smoothened fixing a point can be used to prove that any fiber of $\pi$ is rationally connected and then irreducible.
\end{proof}

Due to the local nature of the statement of Theorem \ref{thm:smooth}, we can reduce the problem to the case in which $Y$ affine. In fact, we can assume that the coordinate ring $A(Y)=H^0(X, \cO_X)$.

\begin{lemma}[{\cite[Lemma~4.12]{1-ample}}]\label{lem:normaltrivial}
With the same notation as in \ref{lem:equidim}, every fiber of $\pi$ with its reduced structure is smooth, Fano and its normal bundle is trivial.
\end{lemma}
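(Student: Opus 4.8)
The Fano assertion will follow formally from the other two: if a fiber $F$ is smooth with $N_{F/X}$ trivial, the conormal sequence $0\to N^\vee_{F/X}\to\Omega_X|_F\to\Omega_F\to 0$ together with $\det N_{F/X}\cong\cO_F$ gives $K_F=K_X|_F$, which is anti-ample. So the substance is smoothness and triviality of $N_{F/X}$, and the plan is to settle the general fibers by generic smoothness and then bootstrap to an arbitrary fiber through a dimension count on free rational curves and semicontinuity. The general case is immediate: over the dense smooth locus of $Y$, $\pi$ is a submersion, so a fiber $F$ over such a point has $N_{F/X}\cong\pi^*\Omega_Y^\vee|_F$, which is trivial of rank $\dim Y$ because $\pi(F)$ is a point; adjunction then shows that $F$ is Fano.

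Let $F_0$ now be an arbitrary fiber with its reduced structure; by Lemma~\ref{lem:equidim} it is irreducible of dimension $\delta:=\dim X-\dim Y$ and rationally connected, and it is covered by rational curves whose classes lie in the ray contracted by $\pi$, each of which is free in $X$ by Lemma~\ref{lem:RConCP}. For such a curve $C$, with $f\colon\P^1\to C\subset X$, write $f^*T_X=\bigoplus_i\cO_{\P^1}(a_i)$ (all $a_i\ge 0$) and let $r^+(C)$ be the number of positive $a_i$, so that $h^0(\P^1,f^*\Omega_X)=\dim X-r^+(C)$. The claim is that $r^+(C)=\delta$ for a general free curve through any prescribed point of $F_0$. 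The inequality $r^+(C)\le\delta$ holds for \emph{every} such $C$: its deformations are $\pi$-contracted and, being free, dominate $X$, so a general deformation lies inside a general fiber $F$, and since the positive part of $f^*T_X$ must vanish in the trivial quotient $f^*N_{F/X}\cong\cO_{\P^1}^{\dim Y}$ it is contained in $f^*T_F$, whence $r^+\le\rk T_F=\delta$; then $r^+(C)\le r^+$ of the general deformation by upper semicontinuity of $h^0(\P^1,f^*\Omega_X)$. The reverse inequality $r^+(C)\ge\delta$ comes, as in the proof of Lemma~\ref{lem:equidim}, from smoothing chains of rational curves with a point fixed (\cite[II, 7.6.1]{kollar}): this produces, through any prescribed point of $F_0$, free curves whose deformations through that point dominate $F_0$, forcing $r^+\ge\dim F_0=\delta$. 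Hence $f^*T_X=A_C\oplus\cO_{\P^1}^{\dim Y}$ with $A_C$ ample of rank $\delta$.

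It remains to promote this fibrewise splitting to a statement about $F_0$. The trivial part is canonical: any morphism $\P^1\to F_0$ is $\pi$-contracted, so the natural map $f^*\pi^*\Omega_Y\to f^*\Omega_X$ has image in the degree-zero part of $f^*\Omega_X$, and one checks that, restricted to a general free $C$, the natural surjection $\pi^*\Omega_Y|_{F_0}\twoheadrightarrow N^\vee_{F_0/X}$ becomes the projection onto the trivial summand $\cO_{\P^1}^{\dim Y}$. Letting $C$ vary over free curves through every point of $F_0$ — and here one really uses that \emph{every} rational curve in $F_0$, not just the general one, is free in $X$ — one deduces that $N^\vee_{F_0/X}$ is locally free of rank $\dim Y=\codim(F_0,X)$ and trivial. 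Since $X$ is smooth this forces $F_0$ to be a local complete intersection, so the conormal sequence $0\to N^\vee_{F_0/X}\to\Omega_X|_{F_0}\to\Omega_{F_0}\to 0$ is exact; restricting it to a general free curve through any prescribed point of $F_0$ and using $r^+(C)=\delta$ shows $\Omega_{F_0}$ is locally free of rank $\delta$ there, so $F_0$ is smooth. Triviality of $N_{F_0/X}$ has been shown, and the Fano property follows by adjunction.

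I expect the last step to be the main obstacle. The curve-by-curve splitting only controls $F_0$ along general curves, and ruling out a lower-dimensional locus where $F_0$ could be singular, or where $N_{F_0/X}$ could fail to be trivial, rests essentially on the uniform freeness in $X$ of \emph{all} rational curves in $F_0$ (Lemma~\ref{lem:RConCP}) and on the chain-smoothing mechanism of Lemma~\ref{lem:equidim}, which together keep the identity $r^+(C)=\delta$ alive for curves through an arbitrary point; obtaining the triviality of the normal bundle — and not merely its rank — at every point of $F_0$ is where one genuinely exploits that $F_0$ is a degeneration of the smooth general fibers. A complete treatment is given in \cite[\S4.2]{1-ample}.
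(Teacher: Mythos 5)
Your reduction of the Fano property to the other two claims, and your computation that a general free curve $C$ through any prescribed point of $F_0$ satisfies $r^+(C)=\delta$ with $f^*T_X\cong A_C\oplus\cO_{\P^1}^{\oplus\dim Y}$, are both fine (the latter is essentially the chain-smoothing plus semicontinuity argument already used in the proof of Lemma \ref{lem:equidim}). The genuine gap is exactly where you suspect it: the ``canonical trivial part'' you invoke does not exist in the form you use it. There is no natural surjection $\pi^*\Omega_Y|_{F_0}\twoheadrightarrow\cN^\vee_{F_0/X}$ when the scheme-theoretic fiber $\pi^{-1}(y)$ is non-reduced: if $g$ is a generator of the maximal ideal of $y$ and $g\in\cI^{r}$ with $r\geq 2$ (where $\cI$ is the ideal of the reduced fiber $F_0$), then the class of $g$ in $\cI/\cI^2$, and its differential in $\Omega_X\otimes\cO_{F_0}$, both vanish, so the pulled-back functions may contribute nothing at all to the conormal module. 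Since the non-reduced case is precisely the one at issue (over the smooth locus of $Y$ everything is already clear, as you note), the argument collapses at the decisive step; freeness in $X$ of all rational curves in $F_0$ controls $f^*T_X$, but by itself says nothing about $\cI/\cI^2$ or $\Omega_{F_0}$ along those curves.

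The paper's proof gets around this with a different device: each generator $g_i$, taken modulo $\cI^{r_i+1}$ where $r_i$ is maximal with $g_i\in\cI^{r_i}$, yields a nonzero section of $\Sym^{r_i}(\Omega_X\otimes\cO_{F_0})$, hence a divisor $D(g_i)\in|\cO_{\P(\Omega_X\otimes\cO_{F_0})}(r_i)|$. The conormal sequence shows that the common base locus $B=\cap_iD(g_i)$ contains $\P(\Omega_{F_0})$, with equality over a general (smooth) point of $F_0$. The nefness of $T_X$ then enters through a rigidity statement (\cite[Lemma~4.6]{1-ample}): pulled back to a rational curve $f:\P^1\to F_0$ through a smooth point, these divisors cut every fiber of $\P(f^*\Omega_X)\to\P^1$ in the same subscheme, so the equality $B=\P(\Omega_{F_0})$ at one smooth point propagates along the whole curve, forcing $f^*\Omega_{F_0}$ to have constant rank and $F_0$ to be smooth along $f(\P^1)$; chain-connectedness then gives smoothness everywhere, triviality of $f^*\cN_{F_0/X}$ on every rational curve, and hence triviality of $\cN_{F_0/X}$ by \cite[Proposition~2.4]{kyoto}. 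To complete your own line of argument you would need a substitute for this propagation mechanism; the splitting of $f^*T_X$ alone does not supply one.
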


\begin{proof}
Let $y \in Y$ be a closed point and take $g_1, \dots,g_s \in A(Y)$ to be generators of the maximal ideal of $y$. The ideal sheaf of $F':=\pi^{-1}(y)$ is generated by the $g_i$'s. Denote by $\cI$ the ideal sheaf of the reduced structure $F$ of $F'$ in $X$.  By the product of differentials, the $g_i$'s provide global sections of $\Sym^{r_i}(\Omega_X \otimes \cO_F)$ where $r_i$ is the maximal integer for which $g_i \in \cI^{r_i}$. Hence we get divisors $D(g_i) \in |\cO_{\P(\Omega_X \otimes \cO_F)}(r_i)|$. The exact sequence
$$\cI/\cI^2 \to \Omega_X \otimes \cO_F \to \Omega_F \to 0$$ shows that the base locus $B=\cap_{i=1}^s D(g_i)$ of these divisors  contains $\P(\Omega_F)$, being equal over a general point of $F$. Now, the rational curves in the fibers enter into the picture. Since any two points of $F$ can be joined by a chain of rational curves, it is enough to show that a rational curve through a smooth point of $F$ is contained in the smooth locus of $F$. Take $f:\P^1 \to X$ such that $f(\P^1) \subset F$ contains a smooth point of $F$. The divisors $D(g_i)$ provide divisors on $\cO_{(\P(f^*\Omega_X))}(r_i)$.  It can be shown (see \cite[Lemma~4.6]{1-ample}) that the intersection $B'$ of these divisors with any fiber of $\P(f^*\Omega_X)$ does not depend on the fiber. The equality at the smooth points of $F$ leads consequently to the equality $B'=\P(f^*\Omega_F)$ and to the fact that $f^*\Omega_F$ is a sheaf of constant rank along $f(\P^1)$. Then $F$ is smooth along $f(\P^1)$ and consequently $F$ is smooth as remarked above. Moreover, this shows that the bundle $f^*\cN_{F/X}$ is trivial on any rational curve on $F$, and this implies that $\cN_{F/X}$ is trivial, see \cite[Proposition~2.4]{kyoto}. Hence the fiber $F$ is Fano by adjunction.
\end{proof}

\begin{proof}[Proof of Theorem \ref{thm:smooth}]

With the notation as above consider the blow-up $\beta: X' \to X$ of $X$ along the reduced structure $F$ of $\pi^{-1}(y)$. Since $\cN_{F/X}$ is trivial, then there exist ($\dim Y$) global sections of $-E|_E=\cO_E(1)$ spanning it. The restriction to the exceptional divisor $E$ of $-K_{X'}-2E$ is $\cO_E(\dim Y+1)-\beta^*K_X|_E$, hence ample.
Then $h^1(X',-2E)=0$ by the Kawamata-Viehweg vanishing theorem. This implies that any section of $-E|_E$ extends to $X'$ and descends to a function on $Y$ vanishing at $y$.  Then we have regular functions $g_1, \dots,g_{\dim Y} \in A(Y)$ vanishing at $y$ and whose pull-backs generate an ideal sheaf of $F$ at every point. Finally it follows that $g_1, \dots, g_{\dim Y}$ are regular generators of the maximal ideal of $y \in Y$, and then $Y$ is smooth at $y$ and $\pi:X \to Y$ is smooth.
\end{proof}

The smoothness of the contractions has interesting consequences on the Mori cones of $X$ and $Y$.

\begin{proposition}\label{prop:relativepicards} For every contraction $\pi:X \to Y$ of a CP-manifold and for every $y \in Y$ the following properties hold:

{\rm (1)} $\rho(\pi^{-1}(y))=\rho(X)-\rho(Y)$, and

{\rm (2)} $j_*(\NE(\pi^{-1}(y)))=\NE(X) \cap j_*(\Nu(\pi^{-1}(y)))$, where $j_*$ is the linear map induced by the inclusion $j:\pi^{-1}(y) \to X$.
\end{proposition}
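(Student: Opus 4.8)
The plan is to derive both statements from the smoothness of $\pi$ (Theorem~\ref{thm:smooth}), the simpliciality of $\cNE{X}$ (Corollary~\ref{prop:simplicial}) and the freeness of rational curves on CP-manifolds (Lemma~\ref{lem:RConCP}). Writing $F:=\pi^{-1}(y)$ and $j\colon F\hookrightarrow X$ for the inclusion, I would first prove, simultaneously with (1), the exactness of the pullback--restriction sequence
$$
0\lra N^1(Y)\stackrel{\pi^*}{\lra}N^1(X)\stackrel{j^*}{\lra}N^1(F)\lra 0.
$$
Dualizing it gives the exact sequence $0\to N_1(F)\stackrel{j_*}{\to}N_1(X)\stackrel{\pi_*}{\to}N_1(Y)\to 0$; in particular $j_*$ is injective, $j_*(N_1(F))=\ker\pi_*$, and comparing dimensions yields $\rho(F)=\rho(X)-\rho(Y)$, which is (1).

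To establish the exact sequence I would argue cohomologically. By Theorem~\ref{thm:smooth} the morphism $\pi$ is smooth; since $F$ is a CP-manifold, Kodaira vanishing gives $H^1(F,\cO_F)=H^2(F,\cO_F)=0$, hence $R^1\pi_*\Q=0$ and, by Deligne's degeneration theorem for smooth projective morphisms, the Leray spectral sequence of $\pi$ degenerates at $E_2$. As $Y$ is Fano, hence simply connected, the local system $R^2\pi_*\Q$ is constant with fibre $H^2(F,\Q)$; therefore $H^2(X,\Q)\cong H^2(Y,\Q)\oplus H^2(F,\Q)$, the two maps being $\pi^*$ and $j^*$. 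Since $X$, $Y$ and $F$ are Fano we have $H^2(-,\Q)=H^{1,1}(-)=N^1(-)\otimes\Q$, so the sequence above is exact and (1) follows. (One could instead bypass Deligne's theorem via the relative Picard scheme: the vanishings $H^1(\cO_F)=H^2(\cO_F)=0$ make $\underline{\Pic}_{X/Y}\to Y$ \'etale, hence trivial over the simply connected $Y$, the obstruction to descending its sections to genuine line bundles on $X$ lying in the torsion group $\mathrm{Br}(Y)$.)

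For (2) the inclusion $j_*(\cNE{F})\subseteq\cNE{X}\cap j_*(N_1(F))$ is immediate, $j_*$ being continuous and carrying effective classes to effective classes. Conversely, by the exact sequences above it suffices to check that $\cNE{X}\cap\ker\pi_*=j_*(\cNE{F})$. Let $\sigma\subseteq\cNE{X}$ be the face contracted by $\pi$; as $\cNE{X}$ is simplicial, $\sigma=\sum_{i\in I}R_i$ for some $I\subseteq\{1,\dots,n\}$, and $\dim\langle\sigma\rangle=\sharp I=\rho(X)-\rho(Y)=\dim\ker\pi_*$, so $\ker\pi_*=\langle\sigma\rangle$ (the inclusion $\langle\sigma\rangle\subseteq\ker\pi_*$ being clear since curves in $\sigma$ are $\pi$-contracted). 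For a simplicial cone, its intersection with the linear span of a face is that face, so $\cNE{X}\cap\ker\pi_*=\sigma$. It remains to see $\sigma=j_*(\cNE{F})$. The inclusion $j_*(\cNE{F})\subseteq\sigma$ holds because curves on $F$ are contracted by $\pi$. Conversely, for each $i\in I$ the curve $\Gamma_i$ generating $R_i$ is free (Lemma~\ref{lem:RConCP}), so the evaluation map of its minimal family is surjective onto $X$ (Proposition~\ref{prop:RCbasic}); hence some deformation $\Gamma_i'\equiv\Gamma_i$ passes through a point of $F$, and, being $\pi$-contracted, it lies in $\pi^{-1}(\pi(\Gamma_i'))=F$. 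Thus $R_i=\R_{\geq0}[\Gamma_i]\subseteq j_*(\cNE{F})$ for every $i\in I$, giving $\sigma\subseteq j_*(\cNE{F})\subseteq\sigma$, and (2) is proved.

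The main obstacle is the first step: the surjectivity of $j^*\colon N^1(X)\to N^1(F)$, equivalently $\rho(X)=\rho(Y)+\rho(F)$. This is the only point at which a nontrivial external input enters — the degeneration of the Leray spectral sequence of a smooth projective morphism (or, equivalently here, the \'etaleness of the relative Picard scheme over the simply connected base, using $H^2(F,\cO_F)=0$). Once (1) and the two exact sequences are available, part (2) is pure convex geometry, relying only on the simpliciality of $\cNE{X}$ and on the fact that a CP-manifold is swept out by the deformations of each of its minimal rational curves.
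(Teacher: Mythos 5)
Your proof is correct and follows essentially the same route as the paper's: both rest on the triviality of the monodromy action on $H^2(\pi^{-1}(y),\Q)$ over the simply connected Fano base (you via Deligne's degeneration of the Leray spectral sequence, the paper via the equivalent global invariant cycle theorem cited from Voisin), and both deduce (2) by deforming curves in the $\pi$-contracted extremal rays so that they meet, and hence lie in, the fiber. The only cosmetic difference is that you obtain $j_*(N_1(\pi^{-1}(y)))=\ker\pi_*$ by dualizing the cohomological exact sequence, whereas the paper derives it from the fact that the locus of each extremal ray of $\NE(X/Y)$ dominates $Y$.
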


\begin{proof} Let us follow the ideas of the proof of \cite[Lemma~3.3 and Example 3.8]{casagrande}. By the relative version of the Cone Theorem, the relative cone of curves $\NE(X/Y) \subset \ker \pi_*$ is closed and poyhedral, and one can choose an extremal ray $R$ of $\NE(X/Y)$. Since $\pi$ is smooth, by \cite[Proposition~1.3]{defofnef}, the locus of curves in $R$ dominates $Y$, hence $R \subseteq j_*\NE(\pi^{-1}(y))$. This proves that  the subspace $N_1(\pi^{-1}(y),X)$ of $N_1(X)$ spanned by curves in $\pi^{-1}(y)$ is equal to $\ker \pi_*$. We  must prove now  that $\dim N_1(\pi^{-1}(y),X)=\rho(\pi^{-1}(y))$. Dually, this dimension is equal to the dimension of the image of the restriction of divisors: $\Pic (X) \otimes \Q \to \Pic(\pi^{-1}(y))\otimes \Q$. This can be computed (see for instance \cite[Chapter 3]{voisin}) as the dimension of the linear subspace of $\Pic(\pi^{-1}(y))\otimes \Q=H^2(\pi^{-1}(y),\Q)$ invariant for the monodromy action of the fundamental group $\pi_1(Y,y)$, which is trivial, being $Y$ Fano and hence simply connected. This proves (1).

The statement (2) is just a consequence of the fact that any curve $C$ in an extremal ray of $\NE(X)$ contracted by $\pi$ deforms to a curve meeting $\pi^{-1}(y)$ (any contraction is of fiber type), and is therefore contained in this fiber.
\end{proof}

By the simpliciality of $X$ proved above, the number of extremal rays of $X$ equals its Picard number. Let us finish this section by fixing the notation that will be used in the sequel:

\begin{notation}\label{not:cpmanifold}
 Given a CP-manifold $X$ of Picard number $n$, we will denote by $R_i$, $i =1, \dots, n$, the extremal rays of $\NE(X)$. For every $i$ the corresponding elementary contraction will be denoted by $\pi_i:X\to X_i$, and its relative canonical divisor by $K_i$. We will denote by $\Gamma_i$ a rational curve of minimal degree such that $[\Gamma_i] \in R_i$, general in the corresponding unsplit family of rational curves $\cM_i$,  by $p_i:\cU_i\to\cM_i$ the universal family of curves parametrized by $\cM_i$, and by $q_i:\cU_i\to X$ the evaluation morphism. In the particular case $n=1$, we will skip subindices, and write $\cM$, $\cU$, etc, instead of $\cM_1$, $\cU_1$,\dots
\end{notation} %\label{sec:CPvar}
%!TEX root = CPconjecture.tex

\section{Results in low dimension}\label{sec:lowdim}

%%%%%%%%%%%%%%%%%%%%%%%%%%%%%%%%%%%%%%%%%%%%%%%%%%%%%%%

The purpose of this section is to survey the results of classification of low-dimensional CP-manifolds. In dimensions one and two the picture is simple: the only smooth Fano curve is $\P^1$, which is in particular homogeneous; smooth Fano surfaces are either $\P^2$, or a blow-up of $\P^2$, or $\P^1 \times \P^1$ and, among these manifolds, only the homogeneous manifolds $\P^2$ and $\P^1 \times \P^1$ have nef tangent bundle.

Let us recall the following general definition: for a Fano manifold $X$ of dimension $m \geq 2$, the {\it pseudoindex} $i_X$ is defined as
\begin{eqnarray}
i_X:={\rm min}\{-K_X \cdot C\,|\, C \subset X~{\rm rational~curve}\}.
\end{eqnarray}

The pseudoindex is upper bounded for any Fano manifold and those reaching the extremal values are classified:

\begin{theorem}[\cite{CMSB,Ke}, \cite{Mi}]\label{them:pibig} Let $X$ be a Fano manifold of dimension $m \geq 2$. Then $i_X \leq m+1$. Furthermore,
\begin{enumerate}
\item if $i_X=m+1$, then $X$ is a projective space $\P^m$;
\item if $i_X=m$, then $X$ is a smooth quadric hypersurface $Q^m$.
\end{enumerate}
\end{theorem}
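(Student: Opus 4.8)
The plan is to bound the anticanonical degree of a minimal covering family of rational curves by a dimension count together with Mori's bend‑and‑break lemma, and then, in the two borderline cases, to identify the variety of minimal rational tangents at a general point and to appeal to the recognition theorems for $\P^m$ and $Q^m$ of \cite{CMSB}, \cite{Ke} and \cite{Mi}. Since $X$ is Fano it is rationally connected, so I would fix a minimal covering family of rational curves $\cM$; put $d:=-K_X\cdot C$ for $[C]\in\cM$, and for a general point $x\in X$ let $\cM_x$ parametrize the curves of $\cM$ through $x$, with tangent map $\tau_x\colon\cM_x\dashrightarrow\cC_x\subset\P(T_{X,x}^\vee)$ as in Definition~\ref{def:VMRT}.

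First I would establish $i_X\le m+1$. By the standard deformation theory of rational curves (\cite[Chapter~II]{kollar}), the scheme of morphisms $\P^1\to X$ sending $0$ to $x$ has dimension at least $-K_X\cdot C$ at every point, so dividing by $\Aut(\P^1,0)$ gives $\dim\cM_x\ge d-2$. In the other direction, bend‑and‑break (\cite[II.5]{kollar}) gives $\dim\cM_x\le m-1$: taking a second general point $x'$, the curves of $\cM$ through both $x$ and $x'$ would form a family of dimension at least $\dim\cM_x-(m-1)$, and if this were positive such a curve would degenerate, producing a component of strictly smaller degree through $x$ and hence a covering family of smaller degree, contradicting the minimality of $\cM$. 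Hence $d-2\le m-1$, so $d\le m+1$, and since $i_X\le-K_X\cdot C$ for any rational curve $C$, we get $i_X\le d\le m+1$.

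Next I would treat the two extremal cases. If $i_X=m+1$, then every rational curve has anticanonical degree $\ge m+1$, so $d\ge m+1$; together with $d\le m+1$ this forces $d=m+1$ and makes all the previous inequalities equalities. In particular $\dim\cM_x=m-1$, the general member of $\cM_x$ is a standard rational curve, so by Proposition~\ref{prop:imm} the finite map $\tau_x$ is generically immersive; thus $\dim\cC_x=m-1$ and $\cC_x=\P(T_{X,x}^\vee)\cong\P^{m-1}$. Moreover two general points of $X$ are then joined by a member of $\cM$, of anticanonical degree $m+1$, and the Cho--Miyaoka--Shepherd-Barron characterization of projective space (\cite{CMSB}, see also \cite{Ke}) gives $X\cong\P^m$. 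If instead $i_X=m$, then $m=i_X\le d\le m+1$; the value $d=m+1$ is excluded because it would give $X\cong\P^m$ by the previous case, whose pseudoindex is $m+1$, so $d=m$. Now $\dim\cM_x=m-2$, the general member of $\cM_x$ is again standard, and $\tau_x$ realizes $\cC_x$ as a smooth hypersurface of dimension $m-2$ in $\P(T_{X,x}^\vee)\cong\P^{m-1}$; a closer analysis of the deformations of $C$ keeping $x$ fixed should show that $\cC_x$ is a smooth quadric, and then the recognition of the quadric of Miyaoka and Cho--Miyaoka--Shepherd-Barron (\cite{Mi}, \cite{CMSB}) yields $X\cong Q^m$.

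The hard part is the last step of each extremal case: passing from the infinitesimal statement ``$\cC_x=\P(T_{X,x}^\vee)$'', respectively ``$\cC_x$ is a hyperquadric'', to the global isomorphism $X\cong\P^m$, respectively $X\cong Q^m$. That passage is precisely the content of the recognition theorems \cite{CMSB,Ke,Mi}, whose proofs rely on Mori-type degeneration arguments and on a detailed study of the total space of minimal rational curves; showing that $\cC_x$ is a quadric when $i_X=m$ is the other delicate point. By comparison, the bound $i_X\le m+1$ and the squeezing arguments that pin down the value of $d$ are routine.
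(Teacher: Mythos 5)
The paper does not actually prove this statement: Theorem~\ref{them:pibig} is imported verbatim from \cite{CMSB}, \cite{Ke} and \cite{Mi}, with no argument given, so there is no ``paper proof'' to match yours against. Your outline is a correct account of how such a proof is organized. The bound $i_X\le m+1$ via the dimension estimate $\dim\cM_x\ge d-2$ and the bend-and-break upper bound $\dim\cM_x\le m-1$ is complete and standard, and the squeezing that pins down $d=m+1$ (resp.\ $d=m$) in the two extremal cases is fine. But the decisive steps --- passing from ``$d=m+1$ for an unsplit covering family'' to $X\cong\P^m$, and from ``$d=m$'' to $X\cong Q^m$ --- are exactly the content of the cited recognition theorems, which you (reasonably, and explicitly) black-box; so in substance your proof defers to the same references the paper does. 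Two small caveats: the theorems of \cite{CMSB,Ke,Mi} are stated directly in terms of the anticanonical degree (length) of the covering family, so your proposed intermediate step of first identifying $\cC_x$ with $\P(T_{X,x}^\vee)$, respectively with a smooth quadric, is not needed to invoke them --- and in the quadric case that intermediate step (smoothness of $\cC_x$ and its identification as a quadric) is itself nontrivial and is not established by the immersivity of $\tau_x$ alone, so it should not be presented as if it were the route those references take. As a reduction to the literature the argument is sound; as a self-contained proof it is not, but neither is the paper's.
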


On the other hand by Proposition~\ref{prop:RCbasic} $\rm (4)$, if $X$ is a CP-manifold we have $i_X \geq 2$. Moreover, if $i_X=2$ then it follows from Proposition~\ref{prop:RCbasic} $\rm (5)$ that $X$ admits a smooth $\P^1$-fibration structure, which implies that its Picard number is bigger than one. Hence, when the Picard number is one, the pseudoindex is lower bounded by $3$. When equality holds we have the following classification result (see \cite{Hw4, Mk}):

\begin{theorem}\label{thm:CPindex} Let $X$ be an $m$-dimensional CP-manifold of Picard number one, $m\geq 2$. Then $i_X \geq 3$ and if $i_X=3$ then $X$ is $\P^2$, $Q^3$ or $K({\rm G}_2)$, where $K({\rm G}_2)$ is the $5$-dimensional contact homogeneous manifold of type ${\rm G}_2$ defined in Example~\ref{eg:G2}.
\end{theorem}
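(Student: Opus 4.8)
The plan is to separate the statement into the elementary bound $i_X\ge 3$ and, under the assumption $i_X=3$, an analysis of the variety of minimal rational tangents at a general point. Fix a rational curve $\Gamma$ of minimal anticanonical degree, spanning a minimal rational component $\cM$, and write $d:=-K_X\cdot\Gamma=i_X$. By Proposition~\ref{prop:RCbasic}~$\rm (4)$ we have $d\ge 2$; and if $d=2$, then Proposition~\ref{prop:RCbasic}~$\rm (5)$ produces a smooth $\P^1$-fibration $X\to\cM$ with $\dim\cM=m-1\ge 1$, a nontrivial fibration, so that $\rho(X)\ge\rho(\cM)+1\ge 2$, contradicting the hypothesis $\rho(X)=1$. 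Hence $i_X=d\ge 3$.

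Assume from now on $d=3$. If $m=2$, I would conclude directly: $X$ is a del Pezzo surface of Picard number one, hence $X\cong\P^2$, which is a CP-manifold with $i_{\P^2}=3$. So assume $m\ge 3$. By Proposition~\ref{prop:RCbasic}~$\rm (2)$ and $\rm (3)$, the evaluation map $q\colon\cU\to X$ is smooth with one-dimensional fibers, and for general $x\in X$ the fiber $q^{-1}(x)$ is a smooth curve isomorphic to $\cM_x$; the $\P^1$-fibration $p$ restricts over $\cM_x$ to a $\P^1$-bundle $\cU_x\to\cM_x$ (Remark~\ref{rem:P1fibbun}) with a natural section $s_x:=q^{-1}(x)$, which $q$ contracts to the point $x$. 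By the normalization theorem recalled after Definition~\ref{def:VMRT} (\cite{Ke2,HM2}), the tangent map $\tau_x\colon\cM_x\to\cC_x\subset\P(T_{X,x}^\vee)\cong\P^{m-1}$ is finite and birational onto the VMRT $\cC_x$, and it is an immersion at points parametrizing standard curves (Proposition~\ref{prop:imm}); moreover $\tau_x^*\cO_{\P^{m-1}}(1)\cong\cO_{\cM_x}(K_{\cU_x/\cM_x})$, a line bundle of degree $-s_x^2$, by Remark~\ref{rem:O1}.

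The heart of the argument, which is the content of \cite{Hw4,Mk}, is to prove that for a CP-manifold of Picard number one with one-dimensional VMRT as above the curve $\cM_x$ is $\P^1$, the VMRT $\cC_x$ is smooth, its degree $-s_x^2$ is at most $3$, and $\cC_x$ spans either all of $\P^{m-1}$ or a hyperplane in it. Granting this, $\cC_x$ is a smooth rational curve of degree at most $3$, hence a rational normal curve in its linear span: a line, a smooth conic, or a twisted cubic. The span condition excludes degree $1$ for $m\ge 3$; degree $2$ forces $m=3$ and identifies $\cC_x\subset\P^2$ with the VMRT (a conic) of the quadric threefold $Q^3$; degree $3$ forces $m=5$ and identifies $\cC_x\subset\P^4$ with the VMRT (a twisted cubic) of $K({\rm G}_2)$. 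As both $Q^3$ and $K({\rm G}_2)$ are rational homogeneous manifolds attached to a long simple root, the Hong--Hwang recognition theorem (Theorem~\ref{them:HH}) applies and gives $X\cong Q^3$, respectively $X\cong K({\rm G}_2)$, which completes the proof.

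The main obstacle is precisely the step attributed to \cite{Hw4,Mk}: bounding $-s_x^2$ by $3$, excluding a singular VMRT, and controlling its linear span. None of this is visible from the splitting type $f^*T_X\cong\cO_{\P^1}(2)\oplus\cO_{\P^1}(1)\oplus\cO_{\P^1}^{\oplus m-2}$ of a standard curve, which only records $\dim\cC_x=1$; it requires the finer analysis of the global VMRT-fibration of $X$ --- the distribution spanned by $\langle\cC_x\rangle$ and the obstruction to its integrability --- together with the classification of the low-dimensional Fano manifolds of Picard number one that occur. By contrast, the rationality of $\cM_x$ is comparatively soft: the locus $Z_x$ swept out by the minimal rational curves through $x$ is rationally chain connected, $q$ maps the ruled surface $\cU_x$ birationally onto $Z_x$, and hence $\cU_x$ is rational, forcing its base $\cM_x$ to be $\P^1$.
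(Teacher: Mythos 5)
Your reduction of $i_X\ge 3$ to Proposition \ref{prop:RCbasic} (4)--(5) is exactly the paper's argument, and the $m=2$ case is fine. For $i_X=3$, however, your proposal follows the original Mok--Hwang route --- the one the survey only \emph{describes} in Remark \ref{rem:FT} --- and it leaves the decisive step unproved. All of the difficulty of the theorem is concentrated in the assertion you introduce with ``granting this'': that $\cC_x$ is a \emph{smooth} rational curve of degree at most $3$ with the stated span condition. In \cite{Mk,Hw4} this is obtained by producing a rank-two bundle $\cE$ on $\cM$ with $\cU\cong\P(\cE)$, proving its stability, establishing $A_2(\cM)_\Q\cong\Q$ (which is precisely where the second $\P^1$-fibration structure on $\cU$ enters), and then invoking the Bogomolov inequality $c_1(\cE)^2\le 4c_2(\cE)$ to bound the degree; the smoothness and span statements, and the elimination of the case $m=4$ for a conic, require the further analysis of the distribution spanned by the VMRT's. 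None of this is reproduced or replaced in your write-up, so what you have is a reduction of the theorem to its hardest ingredient rather than a proof. I would also push back on calling the rationality of $\cM_x$ ``comparatively soft'' as you argue it: $Z_x$ is a cone-like surface swept by rational curves through $x$, and rational chain connectedness of such a possibly singular surface does not give rationality (cones over elliptic curves are rationally chain connected), nor have you justified that $q|_{\cU_x}:\cU_x\to Z_x$ is birational rather than merely generically finite. The paper, following \cite[Lemma~1.2.2]{Mk}, gets $\cM_x\cong\P^1$ from the hyperbolicity of the moduli space of curves applied to the smooth relative-dimension-one morphism $q:\cU\to X$.

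For contrast, the proof the survey actually records avoids the VMRT degree bound altogether: for $d=3$ one has $\dim\cM=m$ and $q:\cU\to X$ smooth of relative dimension one; the hyperbolicity argument upgrades $q$ to a smooth $\P^1$-fibration, so $\cU$ is a Fano manifold of Picard number two carrying two distinct smooth $\P^1$-fibration structures. Theorem \ref{thm:pic2} then identifies $\cU$ with $G/B$ for $G$ of type ${\rm A}_1\times{\rm A}_1$, ${\rm A}_2$, ${\rm B}_2$ or ${\rm G}_2$, and $X$ is recovered as the target of one of the two elementary contractions of $G/B$, giving $\P^2$, $Q^3$ or $K({\rm G}_2)$. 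That route buys self-containedness within the survey (everything reduces to Theorem \ref{thm:pic2}, proved there by numerical arguments on $K_j$ and $H_j$), whereas your route, if completed, buys the finer information about $\cC_x$ and connects directly to the recognition Theorem \ref{them:HH}; but as written it is not complete.
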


\begin{proof}[Sketch of the proof of \ref{thm:CPindex}]

By Proposition~\ref{prop:RCbasic}, a minimal rational component $\cM$ is a projective manifold of dimension $m$. Furthermore, $q: \cU \to X$ is a smooth morphism of relative dimension one and $p: \cU \to \cM$ is a smooth $\P^1$-fibration. By using the hyperbolicity of the moduli space of curves, one can prove that $q$ is also a smooth $\P^1$-fibration (see \cite[Lemma~1.2.2]{Mk}). Hence, the universal family $\cU$ admits two smooth $\P^1$-fibrations, and the result follows from Theorem \ref{thm:pic2} below.
\end{proof}

\begin{remark}\label{rem:FT}
The original proof of Theorem \ref{thm:CPindex}, due to Mok (\cite{Mk}) and complemented by Hwang (\cite{Hw4}), follows, at this point, a different line of argumentation. First of all, since $-K_X\cdot C=3$ for $[C] \in \cM$, it is possible to find a unisecant divisor for the fibers of $p: \cU \to \cM$ (see \cite[Corollary~1.3.1]{Mk} for details), so that in particular $\cU$ is the projectivization of a rank two vector bundle $\cE$ over $\cM$ which is proved to be stable (see \cite[Proposition~2.1.1]{Mk}).
The stability of $\cE$ implies numerical restrictions on its Chern classes of $\cE$. In fact, if the second Chow group $A_2(\cM)_\Q$ is isomorphic to $\Q$ then one gets Bogomolov inequality
\begin{equation}\label{eq:bogomolov}c_1(\cE)^2 \leq 4c_2(\cE).
\end{equation}
Let us comment that in \cite{Mk} the assumption on the fourth Betti number of $X$ to be one is used to prove $b_4(\cM)=1$. However, in \cite{Hw4} it is pointed out that this assumption on $b_4(X)$ can be removed, using the isomorphism $A_2(\cM)_\Q \simeq \Q$ which is a consequence of the fact that $\cU$ admits two smooth $\P^1$-fibrations.
As a consequence of the inequality (\ref{eq:bogomolov}) above (see \cite[Proposition~2.2.1]{Mk}), it is then shown that the VMRT $\cC_x$ at a general point $x \in X$ is a rational curve of degree at most $3$, which in particular implies that $\cC_x \subset \P(T_{X,x}^\vee)$ is projectively equivalent to the one of VMRT of $\P^2$, $Q^3$ or $K({\rm G}_2)$ (see Table \ref{tab:VMRTs}). At this point we could already conclude by using Theorem~\ref{them:HH}, but this result is posterior to \cite{Mk,Hw4}.
The original proof includes a case by case analysis in terms of the degree $d$ of $\cC_x$. In this study some analytic techniques previously developed by Hwang and Mok are needed to know when a distribution containing the one spanned by the VMRT's is integrable or not. This is enough to deal with the cases $d=1$ or $d=2$. For example, when $d=1$ the distribution spanned by the VMRT's is on one hand integrable and, on the other, not integrable unless it coincides with $TX$. This implies that $\dim X=2$ and $X=\P^2$. Similar arguments lead to $\dim X=3$ (and $X=Q^3$) when $d=2$ and to $\dim X=5$ when $d=3$. In this last case the way in which $X$ is proved to be isomorphic to $K({\rm G}_2)$ involves the recognition of its contact structure and a result of Hong (see \cite[Proposition~3.1.4]{Mk}) that allows to reconstruct a homogeneous contact manifold upon its VMRT's (an antecedent of Theorem~\ref{them:HH}).
\end{remark}

As a consequence of  the two previous theorems, we may obtain the complete list of CP-manifolds of dimension less than or equal to $4$ and Picard number one and, as a consequence we get the following:

\begin{corollary}\label{cor:CP3and4} Conjecture \ref{conj:CPconj} holds for manifolds of Picard number one and dimension $3$ or $4$.
\end{corollary}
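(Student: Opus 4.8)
The plan is to read off the conclusion from the two classification statements already available: Theorem~\ref{them:pibig}, which pins down Fano manifolds of pseudoindex $\geq m$, and Theorem~\ref{thm:CPindex}, which pins down CP-manifolds of Picard number one and pseudoindex $3$. Together they determine the pseudoindex of our manifold completely, leaving only finitely many possibilities, all of them rational homogeneous.

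Concretely, I would fix a CP-manifold $X$ of Picard number one with $m:=\dim X\in\{3,4\}$. Since $X$ is Fano, Theorem~\ref{them:pibig} gives $i_X\leq m+1$, with $X\cong\P^m$ when $i_X=m+1$ and $X\cong Q^m$ when $i_X=m$. On the other hand, by Proposition~\ref{prop:RCbasic} and the discussion preceding Theorem~\ref{thm:CPindex} (a CP-manifold with $i_X=2$ carries a smooth $\P^1$-fibration, forcing $\rho(X)>1$), Picard number one forces $i_X\geq 3$. This is the one place where the CP hypothesis, rather than mere Fano-ness, is genuinely used, and it is worth stating carefully.

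Then I would run the short case analysis on $i_X$. If $m=3$, then $i_X\in\{3,4\}$: the value $i_X=4$ gives $X\cong\P^3$, and $i_X=3$ gives $X\cong Q^3$ (equivalently, by Theorem~\ref{thm:CPindex}, $X$ is one of $\P^2$, $Q^3$, $K({\rm G}_2)$, of which only $Q^3$ is three-dimensional). If $m=4$, then $i_X\in\{3,4,5\}$: the value $i_X=5$ gives $X\cong\P^4$, $i_X=4$ gives $X\cong Q^4$, and $i_X=3$ is impossible, since Theorem~\ref{thm:CPindex} would then force $X\in\{\P^2,Q^3,K({\rm G}_2)\}$, none of which has dimension $4$. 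In every surviving case $X$ is a projective space or a smooth quadric, each of which is rational homogeneous, so Conjecture~\ref{conj:CPconj} holds.

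There is essentially no obstacle: all the substantive geometry is packaged in Theorems~\ref{them:pibig} and~\ref{thm:CPindex}, and what remains is the arithmetic of the inequalities $3\leq i_X\leq m+1$ together with the dimension bookkeeping that eliminates the exceptional manifold $K({\rm G}_2)$ in the four-dimensional case. The only mild subtlety is making sure the lower bound $i_X\geq 3$ is invoked with the Picard-number-one hypothesis, since it fails in general for CP-manifolds of higher Picard number.
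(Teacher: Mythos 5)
Your proposal is correct and follows exactly the route the paper intends: the corollary is stated there as an immediate consequence of Theorem~\ref{them:pibig} and Theorem~\ref{thm:CPindex}, combined with the lower bound $i_X\geq 3$ for CP-manifolds of Picard number one coming from Proposition~\ref{prop:RCbasic}. Your case analysis on $i_X\in\{3,\dots,m+1\}$, including the elimination of $K({\rm G}_2)$ by dimension in the case $m=4$, is precisely the bookkeeping the paper leaves implicit.
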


An inductive argument on the dimension will then provide the complete classification of CP-manifolds of dimension at most $5$ and Picard number bigger than one:

\begin{theorem}{\cite{CP,CP2,Wa}}\label{them:CPlowdim} Conjecture \ref{conj:CPconj} holds for manifolds of Picard number greater than one and dimension at most $5$.
\end{theorem}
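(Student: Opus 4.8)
The argument runs by induction on $m=\dim X$, the cases $m\le 2$ being the ones discussed above ($\P^2$ and $\P^1\times\P^1$ are the only CP-surfaces). So let $X$ be a CP-manifold with $3\le m\le 5$ and $n:=\rho(X)\ge 2$. As noted after Theorem~\ref{thm:smooth} one may work with the elementary contractions $\pi_i:X\to X_i$ attached to the extremal rays $R_i$, $i=1,\dots,n$, which are exactly $n$ in number by Corollary~\ref{prop:simplicial}. By Theorem~\ref{thm:smooth} each $\pi_i$ is a smooth fibration and both $X_i$ and its fibres $F_i$ are CP-manifolds; by Proposition~\ref{prop:relativepicards}(1) one has $\rho(F_i)=n-\rho(X_i)=1$, and since every contraction is of fibre type (Proposition~\ref{prop:fybertype}) both $F_i$ and $X_i$ have dimension $<m\le 5$. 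Hence $X_i$ is rational homogeneous --- by the inductive hypothesis if $\rho(X_i)\ge 2$, and by Corollary~\ref{cor:CP3and4} (together with the curve and surface cases) if $\rho(X_i)=1$ --- and each $F_i$, being a CP-manifold of Picard number one and dimension $\le 4$, is one of $\P^1,\P^2,Q^3,\P^3,Q^4,\P^4$. Moreover, since $K_X|_{F_i}=K_{F_i}$, the minimal curve $\Gamma_i\subset F_i$ of $R_i$ satisfies $-K_X\cdot\Gamma_i=i_{F_i}$, so (using Proposition~\ref{prop:RCbasic}(5), as a CP-manifold of Picard number one and pseudoindex $2$ cannot exist) $\dim F_i=1$ if and only if $-K_X\cdot\Gamma_i=2$.

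I would then split into two cases. If some extremal ray $R_j$ has $-K_X\cdot\Gamma_j=2$, then $\cM_j$ is a minimal rational component, so by Proposition~\ref{prop:RCbasic}(5) the evaluation morphism $q_j$ is an isomorphism and $\pi_j:X\to X_j$ is a smooth $\P^1$-fibration. The plan here is to produce a \emph{second} smooth $\P^1$-fibration $\pi_k:X\to X_k$ with $R_k\ne R_j$ and then invoke Theorem~\ref{thm:pic2}, the classification of CP-manifolds carrying two $\P^1$-fibrations, every member of which is rational homogeneous. The second fibration is found either directly among the $\pi_i$, or by running the same analysis on the lower-dimensional CP-manifold $X_j$ (of Picard number $n-1$, hence rational homogeneous by induction) and lifting its $\P^1$-fibrations to $X$; here Proposition~\ref{prop:relativepicards}(2) is used to control how the extremal rays of $X$ project onto those of $X_j$, so that a degree-$2$ minimal family upstairs is guaranteed whenever one is present downstairs.

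If instead no extremal ray has anticanonical degree $2$, then $\dim F_i\ge 2$ for every $i$, so $F_i\in\{\P^2,Q^3,\P^3,Q^4,\P^4\}$; combined with $\dim F_i+\dim X_i=m\le 5$, $\dim X_i\ge 1$ and $\rho(X_i)=n-1$, this leaves only a short finite list of numerical types for each pair $(F_i,X_i)$, and hence for the whole system $(\pi_1,\dots,\pi_n)$ (products $\P^2\times Y$ with $Y$ a CP threefold, $\P^2\times\P^2$, the flag manifolds $\P(T_{\P^2})$ and $\P(T_{\P^3})$, $\P^1\times\P^1\times\P^1$, and so on). For each configuration I would identify $X$ as a homogeneous fibre bundle: the triviality of $\cN_{F_i/X}$ (Lemma~\ref{lem:normaltrivial}) together with the smoothness of $\pi_i$ pins down $T_X|_{F_i}=T_{F_i}$ and forces the ``horizontal part'' of $T_X$ to be a pullback from $X_i$, after which a rigidity argument --- both $F_i$ and the rational homogeneous base $X_i$ have no infinitesimal deformations --- shows that the bundle is the one occurring in the corresponding marked Dynkin diagram, i.e.\ $X$ is some $G/P$. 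Each resulting candidate is then checked against the tables of Section~\ref{sec:Prelimi}.

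The decisive, and dimension-sensitive, part of the proof is precisely this last step in each branch: passing from ``all contractions are smooth fibrations with rational homogeneous fibres and base'' to ``$X$ is rational homogeneous''. In the degree-$2$ branch it amounts to manufacturing the second $\P^1$-fibration needed to apply Theorem~\ref{thm:pic2}; in the degree-$\ge 3$ branch it is the recognition/rigidity step for the various $\P^k$- and $Q^k$-bundles. Both become combinatorially unwieldy once $m\ge 6$, whereas for $m\le 5$ the list of possibilities is small enough to be exhausted by hand, which is how \cite{CP,CP2,Wa} proceed.
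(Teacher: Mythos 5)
Your overall framework---induction on dimension, using Theorem \ref{thm:smooth} to turn the fibres and targets of the elementary contractions into lower-dimensional CP-manifolds, followed by a case-by-case recognition---is exactly the strategy of the paper, which likewise only sketches the recognition step and defers to \cite{CP,CP2,Wa}. The concrete plan you give for your first branch, however, has a hole. You propose that whenever some ray $R_j$ satisfies $-K_X\cdot\Gamma_j=2$ you will ``produce a second smooth $\P^1$-fibration and invoke Theorem \ref{thm:pic2}.'' This cannot work as stated, for two reasons. First, Theorem \ref{thm:pic2} is a statement about Fano manifolds of Picard number exactly $2$, so it says nothing directly about, e.g., $\P^1\times\P^1\times\P^1$ or $\P^1\times\P(T_{\P^2})$, which carry several $\P^1$-fibrations but have $\rho\geq 3$. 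Second, and more seriously, a second $\P^1$-fibration need not exist at all: already $\P^1\times\P^2$ (dimension $3$, Picard number $2$) has exactly one $\P^1$-fibration, its other elementary contraction being a $\P^2$-bundle over $\P^1$, and the base $X_j=\P^2$ of the unique $\P^1$-fibration has no $\P^1$-fibration of its own to lift. So neither of your two mechanisms for manufacturing the second fibration applies, and this case---together with $\P^1\times Q^3$, $\P^1\times\P^3$, $\P^1\times Q^4$, and similar products---falls through the crack between your two branches.

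The paper's route avoids this by not privileging the $\P^1$-fibrations: every smooth $\P^k$-fibration over a rational base is upgraded to an honest $\P^k$-bundle using the triviality of the Brauer group of a rational manifold (\cite[Proposition~2.5]{Wa}), and $X$ is then identified directly from the pair (bundle structure, base) in each of the finitely many numerical configurations; Theorem \ref{thm:pic2} is only needed for the configurations that genuinely carry two $\P^1$-fibrations. Your second branch (no ray of degree $2$) is essentially this argument and is fine; the repair for the first branch is simply to treat it the same way, i.e.\ to enumerate the pairs $(F_i,X_i)$ with $F_i=\P^1$ as well and to recognize the products and flag manifolds from the bundle structures, rather than forcing everything through Theorem \ref{thm:pic2}.
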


\begin{proof}[Sketch of the proof]
Let $X$ be a CP-manifold with Picard number greater than one. Then $X$ admits at least two contractions of extremal rays. By Proposition \ref{thm:smooth}, these contractions are smooth morphisms, and their fibers and targets are again CP-manifolds. Hence, induction applies.  Since the general strategies to tackle the cases of Picard number greater than one in \cite{CP,CP2,Wa} are the same, we focus on the easiest case, that is, the case of dimension $3$.

Assume therefore that $\dim X=3$. Then any fiber of an elementary contraction is either $\P^1$ or $\P^2$ and the target space is either $\P^1$, or $\P^2$ or $\P^1 \times \P^1$. This implies that any elementary contraction is either a $\P^1$-bundle over $\P^2$ or $\P^1 \times \P^1$, or a $\P^2$-bundle over $\P^1$. In fact, a smooth $\P^k$-fibration over a rational manifold is a $\P^k$-bundle, since the Brauer group of a rational manifold is trivial (cf., for instance, \cite[Proposition~2.5]{Wa}). According to a case-by-case analysis, it is possible to show that $X$ is isomorphic either to $\P^1 \times \P^1 \times \P^1$, or to $\P^1 \times \P^2$ or to $\P(T_{\P^2})$.
\end{proof}

\begin{remark}[Note Added in Proof]\label{rem:kanemitsu}
Recently we have been informed that the general strategy towards Conjecture \ref{conj:CPconj} that we will present in Section \ref{sec:FTman} can be used to give a proof of it in dimension $5$ (cf. \cite{Kane}).
\end{remark}

\subsection{Fano manifolds with two smooth $\P^1$-fibration structures}

We finish this section with the classification of Fano manifolds of Picard number $2$ admitting two different smooth $\P^1$-fibrations, that we have used to prove Theorem \ref{them:CPlowdim}. As we will see, a similar statement (Theorem \ref{thm:main}) holds for Fano manifolds of any Picard number $n$. In section \ref{sec:FTman} we will discuss this result and its possible use as a starting point to attack the Campana-Peternell conjecture in general.

\begin{theorem}\label{thm:pic2}
Let $X$ be a Fano manifold of Picard number $2$ which admits two different smooth $\P^1$-fibration structures. Then $X$ is isomorphic to $G/B$ with $G$ a semisimple Lie group of type ${\rm A}_1\times {\rm A}_1$, ${\rm A}_2$, ${\rm B}_2$ or ${\rm G}_2$, and $B$ a Borel subgroup of $G$.
\end{theorem}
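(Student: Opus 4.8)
The plan is to analyze the two $\P^1$-rulings of $X$, recognize the quotients $X_1,X_2$ as partial flag varieties of Picard number one, and reconstruct the homogeneous structure of $X$ from a rank-two Cartan matrix attached to the pair of rulings. Since $\rho(X)=2$, the two smooth $\P^1$-fibrations $\pi_1\colon X\to X_1$ and $\pi_2\colon X\to X_2$ are exactly the two elementary contractions of $X$; the rays $R_1,R_2$ they contract are generated by the fibers $\Gamma_1,\Gamma_2$, which are minimal rational curves with $-K_X\cdot\Gamma_i=2$, and $\cNE{X}$ is generated by $\Gamma_1$ and $\Gamma_2$. The first point I would establish is that $X_1$ and $X_2$ are Fano manifolds of Picard number one: the Picard number equals $\rho(X)-1=1$, and Fanoness follows because each $X_i$ is rationally connected (being dominated by the rationally connected $X$), while a smooth rationally connected projective variety of Picard number one cannot have pseudoeffective canonical class, so $-K_{X_i}$ is ample. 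In particular $X_1,X_2$ are simply connected and, as the classification will confirm, rational, so that each $\pi_i$ is a genuine $\P^1$-bundle.

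Next I would attach combinatorial data to the pair of rulings. Writing $N^1(X)=\pi_1^*N^1(X_1)\oplus\pi_2^*N^1(X_2)$ and using $-K_X\cdot\Gamma_i=2$, one controls the degrees $H_j\cdot(\pi_j)_*\Gamma_i$, with $H_j$ the ample generator of $X_j$. The crucial geometric input is that the $1$-cycle $(\pi_j)_*\Gamma_i$ is (a multiple of) a minimal rational curve on $X_j$; granting it, the relative canonical divisors $K_i$ of the $\pi_i$ give integers $m_{ij}:=-K_i\cdot\Gamma_j$ with $m_{ii}=2$, $m_{ij}\le 0$ for $i\ne j$, and $m_{12}m_{21}\in\{0,1,2,3\}$, i.e. $M=(m_{ij})$ is a Cartan matrix of a rank-two root system; hence $M$ is of type ${\rm A}_1\times{\rm A}_1$, ${\rm A}_2$, ${\rm B}_2$ or ${\rm G}_2$. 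Feeding the minimal families back into $X_j$, the tangent directions they cut out in $\P(T_{X_j,y}^\vee)$ form a very small subvariety (a point or a low-degree rational curve), which together with the description of varieties of minimal rational tangents (Table~\ref{tab:VMRTs}) and the classification of Fano manifolds with large pseudoindex (Theorem~\ref{them:pibig}) forces $X_j$ to be, according to the type, one of $\P^1$, $\P^2$, $Q^3$, $\P^3$, $Q^5$, $K({\rm G}_2)$, and in particular $\dim X\le 6$.

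It then remains to identify $X$ itself. One way is to recognize, via the VMRT recognition theorem (Theorem~\ref{them:HH}), each $X_i$ as the partial flag variety $G/P_i$ determined by the Cartan type, and then to realize $X$ as the incidence variety $\{(x_1,x_2)\in X_1\times X_2 : x_2\in\pi_2(\pi_1^{-1}(x_1))\}$, which is precisely $G/B$ with $B=P_1\cap P_2$. A second, more self-contained route --- the one that extends to arbitrary Picard number --- is to reconstruct the $G$-action directly from the two rulings, building Bott--Samelson type towers by alternately attaching $\pi_1$- and $\pi_2$-fibers and checking that the combinatorics of such chains of curves reproduces that of the dihedral Weyl group $I_2(m)$ with $m\in\{2,3,4,6\}$, whose longest element has length $\dim X$.

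The step I expect to be the main obstacle is the geometric input underlying the Cartan matrix: proving that $(\pi_j)_*\Gamma_i$ is a minimal rational curve on $X_j$ --- a priori $\pi_j|_{\Gamma_i}$ need not be an embedding, and one must exclude the unwanted multiplicities as well as the possibility that some $X_j$ has pseudoindex one (which would make $m_{ij}$ positive) --- together with the final reconstruction of $X$ from the two quotients and the incidence relation. This last part is of the same nature as the known recognition results for rational homogeneous spaces and, ultimately, rests on the analytic theory of varieties of minimal rational tangents.
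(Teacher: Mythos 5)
Your outline correctly identifies the shape of the answer, but it assumes the single hardest step of the theorem rather than proving it: the claim that $m_{12}m_{21}=(-K_1\cdot\Gamma_2)(-K_2\cdot\Gamma_1)\in\{0,1,2,3\}$. From the geometry you establish (off-diagonal entries nonpositive integers, diagonal entries equal to $2$) one only gets a \emph{generalized} Cartan matrix in the sense of Kac; matrices with $m_{12}m_{21}\ge 4$ (affine or indefinite type) are not excluded by anything you say, and the minimality of $(\pi_j)_*\Gamma_i$ does not bound this product. That bound is exactly where the paper does its real work: writing $\nu_1=K_1\cdot\Gamma_2$ and $\mu_1=H_1\cdot\Gamma_2$, it first proves a Chern--Wu type relation $K_j^2=\Delta_jH_j^2$ among cycles pulled back from $X_j$ (using that the codimension-two cycles pulled back from $X_j$ are generated over $\Q$ by $H_j^2$), then observes that $-K_1+\frac{\nu_1}{\mu_1}H_1$ is nef but not big, so that all the numbers $\left(-K_1+\frac{\nu_1}{\mu_1}H_1\right)^{j}H_1^{m-j}$ are nonnegative and the top one vanishes. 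These inequalities force the complex number $\frac{\nu_1}{\mu_1}+i\sqrt{-\Delta_1}$ to have argument $\pi/m$, hence $\tan^2(\pi/m)\in\Q$, and the known algebraic degree of $\tan(\pi/m)$ over $\Q$ yields $m=\dim X\in\{3,4,6\}$ and then $\nu_1\nu_2=4\cos^2(\pi/m)\in\{1,2,3\}$. Nothing of this sort appears in your argument, and your parallel assertion that the induced family in $X_j$ has VMRT ``a point or a low-degree rational curve,'' forcing $\dim X\le 6$, is likewise the conclusion rather than an input.

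The endgame also needs more than you give it. The paper does not reconstruct $X$ as an incidence variety: it upgrades the $\P^1$-fibrations to genuine $\P^1$-bundles $\P(\cE_j)\to X_j$ by producing unisecant divisors (which is possible only after the $\nu_j$ are pinned down), computes the quintuple $(m,\mu_1,r_{X_1},\mu_2,r_{X_2})$, identifies $X_j$ by Kobayashi--Ochiai, and then classifies the rank-two bundle $\cE_j$ by its Chern classes and stability (for instance the Cayley bundle on $Q^5$ in the ${\rm G}_2$ case). Your alternative route through Theorem~\ref{them:HH} and the analysis of varieties of minimal rational tangents is essentially the older Mok--Hwang argument sketched in Remark~\ref{rem:FT}; it can be made to work, but as written it is a plan rather than a proof, and in any case it still requires an independent bound on $\dim X$ (equivalently on $\nu_1\nu_2$) before any recognition theorem can be applied.
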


If $X$ is a surface, then it is easy to see that $X$ is isomorphic to $\P^1\times \P^1$, that is the complete flag manifold of type ${\rm A}_1\times {\rm A}_1$. Hence we assume that the dimension of $X$ is at least $3$. Along the rest of this section, we use the following notation.

\begin{notation}\label{nota:pic2} Let $X$ be an $m$-dimensional Fano manifold of Picard number $2$ having two smooth $\P^1$-fibrations $\pi_1:X\to X_1$, $\pi_2:X\to X_2$. We assume that $m \geq 3$.  Let $K_i$ be the relative canonical divisor of $\pi_i$, $H_i$ the pull-back of the ample generator of $X_i$, $r_{X_i}$ the Fano index of $X_i$, $i=1,2$. Set $\nu_1:=K_1\cdot \Gamma_2$, $\mu_1:=H_1\cdot \Gamma_2$, $\nu_2:=K_2\cdot \Gamma_1$, $\mu_2:=H_2\cdot \Gamma_1$.
\if0We denote by $M(X)$ the $2\times2$ matrix defined by $M(X)_{ij}=-K_i\cdot \Gamma_j$, that is,
\begin{equation}\label{eq:cartan}
M(X)
 =\begin{pmatrix}\vspace{0.2cm}
2&-\nu_1\\
-\nu_2&2
\end{pmatrix}.
\end{equation}
\fi
Without loss of generality, we may assume that $\nu_2 \geq \nu_1$.
\end{notation}
\if0
We have
\begin{equation}\label{eq:basechange}
\begin{pmatrix}-K_1\\H_1\end{pmatrix}
 =A\begin{pmatrix}-K_2\\H_2\end{pmatrix},\mbox{ where~}
A:=\begin{pmatrix}\vspace{0.2cm}
-\dfrac{\nu_1}{2}&\dfrac{4-\nu_1\nu_2}{2\mu_2}\\
\dfrac{\mu_1}{2}&\dfrac{\mu_1\nu_2}{2\mu_2}
\end{pmatrix}.
\end{equation}
\fi
Note that, for $j=1,2$, $\mu_j> 0$, since $\pi_i$ does not contract the curve $\Gamma_j$ if $j \ne i$. Moreover, the ruled surface $S=\pi_1^{-1}(\pi_1(\Gamma_2))$ contains $\Gamma_2$ as a minimal section, being the family of deformations of $\Gamma_2$ unsplit. This gives $\nu_1 \geq 0$ and furthermore, if $\nu_1=0$ then $S$ is isomorphic to $\P^1 \times \P^1$. This implies $X=S$ because any $\Gamma_i$ meeting $S$ is contained in $S$ and $X$ is chain connected with respect to the families of deformations of the $\Gamma_i$'s. This contradicts our assumption $m \geq 3$. We have then proved:
\begin{equation}\label{eq:nusandmus}
\mu_j, \nu_j >0 \mbox{ for~} j=1,2.
\end{equation}

Although in principle the $\pi_j$'s are not necessarily $\P^1$-bundles, the argument of \cite{Hw4} quoted in Theorem~\ref{thm:CPindex} allows to prove that $H_j^2$ generates the $\Q$-vector space of codimension two cycles on $X$ that are pull-backs by the $\pi_j$. Since $K_j^2$ is of this type (this can be shown restricting to a ruled surface of the type $\pi_j^{-1}(C)$), the following Chern-Wu numerical relations hold (all details in \cite[Lemma~3]{MOSW}):
\begin{equation}\label{eq:chernwu}
K_j^2=\Delta_j H_j^2 \mbox{ for some~} \Delta_j \in \Q \mbox{ and~} j=1,2.
\end{equation}

Moreover, $-K_j+\frac{\nu_j}{\mu_j}H_j$ is nef but not big because it is numerically proportional to $H_i$ ($i \ne j$) then $(-K_j+\frac{\nu_j}{\mu_j}H_j)^m=0$. This equality and the Chern-Wu relations leads to the contradiction $\nu_j=0$ if $\Delta_j \geq 0$ (the precise computations in \cite[Lemma~3]{MOSW}). Then we have shown:
$$\Delta_j  <0 \mbox{ for~} j=1,2.$$
Let us show how these numerical considerations narrow down the possibilities of the values of $m$.

\begin{proposition}{\cite[Lemma~5 and Proof of Theorem~5]{MOSW}}\label{prop:235} Under the setting in Notation~\ref{nota:pic2}, we have $m=3,4$ or $6$.
\end{proposition}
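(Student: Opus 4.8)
The plan is to convert the numerical invariants of the two fibrations into a second‑order linear recurrence satisfied by the intersection numbers $c_k:=H_1^{k}\cdot H_2^{m-k}$, $0\le k\le m$, and then to extract $m$ from the shape of this sequence.

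First I would set up coordinates on $N^1(X)$. Since $\rho(X)=2$ and (by (\ref{eq:nusandmus})) $H_i\cdot\Gamma_i=0$, $H_1\cdot\Gamma_2=\mu_1>0$, $H_2\cdot\Gamma_1=\mu_2>0$, the pair $\{H_1,H_2\}$ is a basis of $N^1(X)$. Expanding $-K_i$ in this basis and using the definitions $\nu_i=K_i\cdot\Gamma_j$ ($i\ne j$) together with $K_i\cdot\Gamma_i=-2$, one finds
\[
-K_1=-\frac{\nu_1}{\mu_1}H_1+\frac{2}{\mu_2}H_2,\qquad -K_2=\frac{2}{\mu_1}H_1-\frac{\nu_2}{\mu_2}H_2.
\]
Write $t_j:=\nu_j/\mu_j>0$. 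Squaring the first identity, the Chern--Wu relation $K_1^2=\Delta_1H_1^2$ of (\ref{eq:chernwu}) becomes, numerically, $(t_1^2-\Delta_1)H_1^2-\tfrac{4t_1}{\mu_2}H_1H_2+\tfrac{4}{\mu_2^2}H_2^2=0$; intersecting with $H_1^{k}H_2^{m-2-k}$ and using $H_i^{m}=0$ (the $H_i$ come from the $(m-1)$-folds $X_i$) yields, for $0\le k\le m-2$,
\[
(t_1^2-\Delta_1)\,c_{k+2}-\frac{4t_1}{\mu_2}\,c_{k+1}+\frac{4}{\mu_2^2}\,c_k=0,
\]
and symmetrically, from $K_2^2=\Delta_2H_2^2$,
\[
\frac{4}{\mu_1^2}\,c_{k+2}-\frac{4t_2}{\mu_1}\,c_{k+1}+(t_2^2-\Delta_2)\,c_k=0.
\]
Because $\Delta_j<0$, both recurrences have negative discriminant, so their characteristic roots are complex conjugate.

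Next I would exploit that the single sequence $(c_k)_{0\le k\le m}$ solves two order‑two recurrences. Since $c_0=c_m=0$ (the extreme powers vanish) while $c_1=H_1\cdot H_2^{m-1}=(\deg X_2)\,\mu_1>0$, the sequence is not geometric; otherwise, subtracting the two recurrences would produce an operator of order $\le 1$ annihilating $(c_k)_{0\le k\le m-2}$, forcing $c_1=0$. Hence the two monic characteristic polynomials $\lambda^2-\tfrac{4t_1}{\mu_2(t_1^2-\Delta_1)}\lambda+\tfrac{4}{\mu_2^2(t_1^2-\Delta_1)}$ and $\lambda^2-\mu_1t_2\,\lambda+\tfrac{\mu_1^2(t_2^2-\Delta_2)}{4}$ coincide. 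Setting $\rho_j:=\sqrt{t_j^2-\Delta_j}$ and $\cos\theta_j:=t_j/\rho_j\in(0,1)$, comparing constant terms gives $\mu_1\mu_2\rho_1\rho_2=4$ and comparing linear terms gives $\cos\theta_1=\cos\theta_2=:\cos\theta$; since $\mu_j\rho_j\cos\theta=\mu_jt_j=\nu_j$, these combine to
\[
\cos^2\theta=\frac{\nu_1\nu_2}{4}.
\]

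Finally I would pin down $\theta$, hence $m$. Solving the common recurrence with $c_0=0$ gives $c_k=Q\,|w|^{k}\sin(k\theta)$ with $Q\ne 0$ (because $c_1\ne0$) and $|w|$ the modulus of a characteristic root; then $c_m=0$ forces $\sin(m\theta)=0$, i.e. $\theta=k_0\pi/m$ for an integer $k_0$ with $1\le k_0<m/2$ (the upper bound since $\theta=\arg(t_1+\sqrt{-\Delta_1}\,i)\in(0,\pi/2)$). As $c_k\ge 0$ for every $k$ (products of nef classes) and $c_1>0$, we must have $Q>0$, so $\sin(k\theta)\ge 0$ for all $1\le k\le m-1$; but if $k_0\ge 2$ then, because $(m/k_0,2m/k_0)$ has length $>2$, it contains an integer $k\le m-1$ with $kk_0/m\in(1,2)$, giving $\sin(k\theta)<0$, a contradiction. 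Hence $k_0=1$, $\theta=\pi/m$, and $\cos^2(\pi/m)=\nu_1\nu_2/4$; as $\nu_1,\nu_2$ are positive integers with product $<4$ we get $\nu_1\nu_2\in\{1,2,3\}$ and correspondingly $\cos^2(\pi/m)\in\{1/4,1/2,3/4\}$, i.e. $m\in\{3,4,6\}$. The step I expect to be most delicate is obtaining the two recurrences cleanly and deducing from the equality of characteristic polynomials that $\theta_1=\theta_2$: this is the point where both fibrations and both Chern--Wu relations are used at once, after which the positivity $c_k\ge0$ excluding $k_0\ge2$ is a short combinatorial check.
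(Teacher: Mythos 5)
Your argument is correct, and its core mechanism is the same as the paper's: the sequence $c_k=H_1^k\cdot H_2^{m-k}$ is governed by a degree-two relation with complex conjugate roots, and nonnegativity of these nef intersection numbers together with $c_0=c_m=0$ and $c_1>0$ forces the argument of the root to be $\pi/m$. (The paper phrases this as $0\le\bigl(-K_1+\tfrac{\nu_1}{\mu_1}H_1\bigr)^{j}H_1^{m-j}$ with equality at $j=m$; since that divisor is proportional to $H_2$, these are exactly your $c_{m-j}$ up to a positive factor, and it reads off $\arg(z_1)=\pi/m$ for $z_1=\tfrac{\nu_1}{\mu_1}+i\sqrt{-\Delta_1}$.) Where you genuinely diverge is the endgame: the paper uses only one fibration and concludes from $-\Delta_1=\tfrac{\nu_1^2}{\mu_1^2}\tan^2(\pi/m)\in\Q$ via the known algebraic degree of $\tan(\pi/m)$, whereas you play the two Chern--Wu relations against each other, force the two characteristic polynomials to coincide, and extract $\nu_1\nu_2=4\cos^2(\pi/m)$, so that integrality and positivity of the $\nu_j$ finish the job. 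This buys you two things: you avoid the algebraic-number-theoretic input entirely, and you obtain Proposition \ref{prop:nu}(1) as a byproduct rather than as a separate later step. One point of your write-up is slightly loose: the deduction that the two monic characteristic polynomials coincide should be phrased as ``if they differed, subtracting would yield a nontrivial relation of order at most one on $(c_k)$, incompatible with $c_0=0$ and $c_1>0$ when $m\ge 3$''; your mention of the sequence not being geometric garbles this a little, but the intended argument is sound.
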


\begin{proof} Let us set $b_1:=\sqrt{-\Delta_1}$ and $z_1:=\frac{\nu_1}{\mu_1}+ib_1\in\C$, $j=1,2$. Since, as said before, $-K_1+\frac{\nu_1}{\mu_1}H_1$ is nef and not big, then we have
$$0\leq \left(-K_1+\frac{\nu_1}{\mu_1}H_1\right)^{j}H_1^{m-j},$$ $j=0,\dots, m$, being $0$ when $j=m$. These inequalities reduce to (cf. \cite[Proposition~4.4]{kyoto}):
$$0 \leq \dfrac{\big(\frac{\nu_1}{\mu_1}+ib_1\big)^{j}- \big(\frac{\nu_1}{\mu_1}-ib_1\big)^{j}}{2ib_1},
$$ with equality when $j=m$.
This implies that the argument of $z_1$ is $\frac{\pi}{m}$ and then
$$ -\Delta_1=\frac{\nu_1^2}{\mu_1^2}{\rm tan}^2\left(\frac{\pi}{m}\right) \in \Q.
$$
Since the algebraic degree of ${\rm tan}(\frac{\pi}{m})$ over $\Q$ is known (see \cite[pp. 33-41]{Ni}), we conclude that $m$ has the stated values.
\end{proof}

Now it can be shown that the $\P^1$-fibrations are in fact $\P^1$-bundles. For this we need to find a unisecant divisor. Just computing $K_1\cdot H_1^{m-1}$ (see \cite[Proof of Theorem~5]{MOSW} for details) one can control the intersection numbers $\nu_1$ and $\nu_2$ and then (see \cite[Proposition~4.5]{Wa2}) choose the proper unisecant divisors. To be precise we get:

\begin{proposition}{\cite[Proof of Theorem~5]{MOSW}, \cite[Proposition~4.5]{Wa2}}\label{prop:nu} Under the setting in Notation~\ref{nota:pic2}, we have the following.
\begin{enumerate}
\item $\nu_1\nu_2=4\cos^2\left(\frac{\pi}{m}\right).$
\item $(\nu_1, \nu_2)=(1,1), (1,2)$ or $(1,3)$.
\item For $j=1, 2$, there exists a rank $2$ vector bundle $\cE_j$ on $X_j$ such that $X=\P(\cE_j)$ and $\pi_j$ is given by the projection $\P(\cE_j) \to X_j$.
\end{enumerate}
\end{proposition}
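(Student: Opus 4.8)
The plan is to derive part~(2) from part~(1), to establish part~(1) by combining the numerical data already extracted --- the Chern--Wu relations \eqref{eq:chernwu} and the identity $\Delta_j=-\tfrac{\nu_j^2}{\mu_j^2}\tan^2(\pi/m)$ obtained in the proof of Proposition~\ref{prop:235} --- with the multiplicative structure of the cohomology ring of $X$, and finally to obtain part~(3) by producing unisecant divisors for $\pi_1$ and $\pi_2$. Part~(2) then follows at once: the $\nu_j$ are positive (by \eqref{eq:nusandmus}) integers, being intersection numbers of the Cartier divisors $K_j=K_{X/X_j}$ with the curves $\Gamma_i$, $i\ne j$; so once (1) is proved, $\nu_1\nu_2=4\cos^2(\pi/m)\in\{1,2,3\}$ (recall $m\in\{3,4,6\}$ by Proposition~\ref{prop:235}), and $\nu_1\le\nu_2$ leaves only $(\nu_1,\nu_2)\in\{(1,1),(1,2),(1,3)\}$.

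For part~(1), $N^1(X)$ is spanned by $H_1,H_2$ since $\rho(X)=2$; intersecting with $\Gamma_1$ and $\Gamma_2$ and using $H_i\cdot\Gamma_i=0$, $K_i\cdot\Gamma_i=-2$, $H_j\cdot\Gamma_i=\mu_j$, $K_i\cdot\Gamma_j=\nu_i$ ($i\ne j$), one obtains
\begin{equation*}
-K_1=-\frac{\nu_1}{\mu_1}H_1+\frac{2}{\mu_2}H_2,\qquad -K_2=\frac{2}{\mu_1}H_1-\frac{\nu_2}{\mu_2}H_2.
\end{equation*}
Squaring, substituting $K_j^2=\Delta_jH_j^2$, and using $\Delta_j-\tfrac{\nu_j^2}{\mu_j^2}=-\tfrac{\nu_j^2}{\mu_j^2}(1+\tan^2(\pi/m))=-\tfrac{\nu_j^2}{\mu_j^2\cos^2(\pi/m)}$, these become two relations among codimension-two cycle classes on $X$:
\begin{equation*}
\frac{4}{\mu_2^2}H_2^2=-\frac{\nu_1^2}{\mu_1^2\cos^2(\pi/m)}H_1^2+\frac{4\nu_1}{\mu_1\mu_2}H_1H_2,\qquad \frac{4}{\mu_1^2}H_1^2=-\frac{\nu_2^2}{\mu_2^2\cos^2(\pi/m)}H_2^2+\frac{4\nu_2}{\mu_1\mu_2}H_1H_2.
\end{equation*}
I then cap both with $H_2^{m-2}$. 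Since $H_2=\pi_2^*A_2$ with $A_2$ ample on $X_2$ and $\dim X_2=m-1$, we have $H_2^m=0$, whereas $H_1\cdot H_2^{m-1}$ is a positive multiple of $\mu_1$, hence nonzero. The first relation then gives $H_1^2H_2^{m-2}=\tfrac{4\mu_1\cos^2(\pi/m)}{\mu_2\nu_1}\,H_1H_2^{m-1}$, the second gives $H_1^2H_2^{m-2}=\tfrac{\mu_1\nu_2}{\mu_2}\,H_1H_2^{m-1}$, and comparing the coefficients yields $\nu_1\nu_2=4\cos^2(\pi/m)$, which is (1).

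For part~(3), (2) gives $\nu_1=1$, so $\cO_X(K_{X/X_1})$ has degree $1$ on every fibre $\P^1$ of $\pi_2$; hence $\cE_2:=(\pi_2)_*\cO_X(K_{X/X_1})$ is locally free of rank $2$, the evaluation $\pi_2^*\cE_2\to\cO_X(K_{X/X_1})$ is fibrewise --- hence globally --- surjective, and the induced morphism $X\to\P(\cE_2)$ over $X_2$, being an isomorphism on each fibre, is an isomorphism identifying $\pi_2$ with the bundle projection. For $\pi_1$ one needs a divisor of degree $1$ on a fibre $\Gamma_1$: when $\nu_2$ is odd this is immediate, since $K_{X/X_1}$ and $K_{X/X_2}$ have degrees $-2$ and $\nu_2$ on $\Gamma_1$ and $\gcd(2,\nu_2)=1$, so a suitable integral combination of them (up to replacing it by its dual) restricts to $\cO_{\P^1}(1)$ on $\Gamma_1$ and the same pushforward construction applies; in the remaining case $(\nu_1,\nu_2)=(1,2)$, where $\dim X=4$, one instead uses that $X_1$ is a smooth Fano variety of Picard number one, hence rationally connected, hence has trivial Brauer group, so that the smooth $\P^1$-fibration $\pi_1$ is automatically a $\P^1$-bundle (cf.\ \cite[Proposition~4.5]{Wa2}). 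These unisecant divisors are precisely what is afterwards used to match $X$ with the correct complete flag manifold $G/B$.

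The one genuinely delicate point is this last case $(\nu_1,\nu_2)=(1,2)$ of part~(3): no visible $\Z$-combination of $K_{X/X_1}$, $K_{X/X_2}$ and the $H_j$ need have odd degree on $\Gamma_1$, and one must fall back on the vanishing of the Brauer group of the rationally connected manifold $X_1$. Everything else reduces to bookkeeping; in the proof of (1) the only structural facts used are $H_2^m=0$ and $H_1\cdot H_2^{m-1}\ne 0$.
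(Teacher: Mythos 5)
Your derivation of parts (1) and (2) is correct and is, in substance, the computation the paper alludes to: you combine the Chern--Wu relations (\ref{eq:chernwu}) and the value $\Delta_j=-\tfrac{\nu_j^2}{\mu_j^2}\tan^2(\pi/m)$ from the proof of Proposition \ref{prop:235} with the expansion of $-K_j$ in the basis $\{H_1,H_2\}$, and then evaluate against $H_2^{m-2}$ rather than against $H_1^{m-1}$ as in \cite{MOSW}; both are the same kind of bookkeeping, your intersection numbers check out, and positivity and integrality of the $\nu_j$ (from (\ref{eq:nusandmus})) together with $m\in\{3,4,6\}$ give (2). Likewise the construction of $\cE_2$ as $\pi_{2*}\cO_X(K_1)$ using $\nu_1=1$, and of $\cE_1$ when $\nu_2$ is odd via an integral combination of $K_1$ and $K_2$, are fine.

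The genuine flaw sits exactly where you flagged it: in the subcase $(\nu_1,\nu_2)=(1,2)$ you justify that $\pi_1$ is a $\P^1$-bundle by the chain ``$X_1$ is Fano, hence rationally connected, hence has trivial Brauer group.'' The second implication is false: the Artin--Mumford threefold is unirational (in particular rationally connected) and carries a nontrivial $2$-torsion Brauer class --- which is precisely the type of class obstructing a smooth $\P^1$-fibration from being a projectivized bundle. Nor can you cheaply replace ``rationally connected'' by ``rational'' at this point: all that is known about $X_1$ here is that it is a Fano threefold of Picard number one with $r_{X_1}\mu_1=3$, and the case $r_{X_1}=1$, $\mu_1=3$ (an index-one Fano threefold, not automatically rational) is only excluded later, in Proposition \ref{prop:muindex}, whose proof already uses the bundle structures of part (3). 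To close this subcase one must invoke something stronger, e.g.\ the torsion-freeness of $H^3(Y,\Z)$ for smooth Fano threefolds $Y$ (so that $\mathrm{Br}(Y)\cong H^3(Y,\Z)_{\mathrm{tors}}=0$), or the actual argument of \cite[Proposition~4.5]{Wa2}. The conclusion you want is true, but the implication as written would fail in general and needs to be replaced.
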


With this restrictions on $m$ we can control the Fano index of $X_i$ to get:

\begin{proposition}{\cite{MOSW}}\label{prop:muindex} Under the setting in Notation~\ref{nota:pic2}, we have the following.
\begin{enumerate}
\item $\mu_1=\mu_2$.
\item $(m, \mu_1, r_{X_1}, \mu_2, r_{X_2})=(3, 1, 3, 1, 3), (4, 1, 3, 1, 4)~ \mbox{or}~ (6, 1, 3, 1, 5).$
\end{enumerate}
\end{proposition}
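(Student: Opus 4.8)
The plan is to translate the statement into a short numerical game by intersecting $-K_X$ with the fibres of $\pi_1$ and $\pi_2$, to extract divisibility relations between $\mu_1$ and $\mu_2$ from the $\P^1$-bundle structures of Proposition~\ref{prop:nu}(3), and to finish off the few surviving possibilities using the classification of Fano manifolds of Picard number one of large index.

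First I would record the numerics. Since $\rho(X)=2$ and each $\pi_i$ is a nontrivial fibre-type contraction, it is the contraction of an extremal ray, so $\rho(X_i)=1$; writing $-K_{X_i}=r_{X_i}H_{X_i}$ with $H_{X_i}$ the ample generator we have $H_i=\pi_i^{\ast}H_{X_i}$, and the relative canonical divisor $K_i=K_X-\pi_i^{\ast}K_{X_i}$ satisfies $-K_X=-K_i+r_{X_i}H_i$ for $i=1,2$. Because $\pi_i$ is a $\P^1$-bundle one has $-K_i\cdot\Gamma_i=2$ and $H_i\cdot\Gamma_i=0$, hence $-K_X\cdot\Gamma_i=2$ for both $i$; intersecting $-K_X=-K_1+r_{X_1}H_1$ with $\Gamma_2$, and the symmetric identity with $\Gamma_1$, gives $r_{X_1}\mu_1=2+\nu_1$ and $r_{X_2}\mu_2=2+\nu_2$. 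By Proposition~\ref{prop:nu}(2), $\nu_1=1$ and $(\nu_1,\nu_2)\in\{(1,1),(1,2),(1,3)\}$, so $r_{X_1}\mu_1=3$ and $r_{X_2}\mu_2\in\{3,4,5\}$ according as $m\in\{3,4,6\}$.

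To prove (1) I would use the $\P^1$-bundle structure $X=\P(\cE_1)$, so that $\Pic(X)=\Z\xi_1\oplus\Z H_1$ with $\xi_1:=\cO_{\P(\cE_1)}(1)$ and $\xi_1\cdot\Gamma_1=1$. Writing $H_2=\mu_2\xi_1+\beta H_1$ (the coefficient of $\xi_1$ being $H_2\cdot\Gamma_1=\mu_2$) and using $H_2\cdot\Gamma_2=0$ yields $\mu_1\mid\mu_2(\xi_1\cdot\Gamma_2)$, while the identity $-K_1=2\xi_1-\pi_1^{\ast}\det\cE_1$ evaluated on $\Gamma_2$ yields $2(\xi_1\cdot\Gamma_2)\equiv-\nu_1\pmod{\mu_1}$. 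As $\mu_1$ divides $3$ and $\nu_1=1$, the integer $\xi_1\cdot\Gamma_2$ is coprime to $\mu_1$, so $\mu_1\mid\mu_2$. The symmetric computation (which only uses $\nu_2$) gives $\mu_2\mid\mu_1(\xi_2\cdot\Gamma_1)$ with $2(\xi_2\cdot\Gamma_1)\equiv-\nu_2\pmod{\mu_2}$: if $\nu_2$ is odd this forces $\mu_2\mid\mu_1$ and hence $\mu_1=\mu_2$, and if $\nu_2=2$ it yields $\mu_2\mid 2\mu_1$, which together with $\mu_1\mid\mu_2$, $\mu_1\mid 3$ and $\mu_2\mid 4$ leaves only $\mu_1=1$ and $\mu_2\in\{1,2\}$.

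For (2) I would substitute $\mu_1=\mu_2$ back into these relations. If $m=6$ then $\mu_1\mid\gcd(3,5)=1$, so $\mu_1=\mu_2=1$, $r_{X_1}=3$, $r_{X_2}=5$ (and $X_2\cong Q^5$ by Theorem~\ref{them:pibig}); if $m=3$ then $\mu_1=\mu_2$ divides $3$, and $\mu_1=3$ would force $r_{X_1}=r_{X_2}=1$, impossible because the only Fano surface of Picard number one is $\P^2$, of index $3$, so $\mu_1=\mu_2=1$ and $r_{X_1}=r_{X_2}=3$. The case $m=4$ is the one I expect to be the real obstacle, as it requires excluding $\mu_2=2$. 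There $X_1$ is a Fano threefold of index $3$ and Picard number one, hence $X_1\cong Q^3$ by Theorem~\ref{them:pibig}, and $X=\P(\cE_1)$ is a $\P^1$-bundle over $Q^3$; restricting $\cE_1$ to the lines of $Q^3$ and using $\nu_1=1$ together with the ampleness of $-K_X$, one checks that $\cE_1$ is, up to twist, uniform of splitting type $(1,0)$. By the classification of rank-two uniform bundles on $Q^3$, either $\cE_1\cong\cO\oplus\cO(1)$ --- in which case the second extremal contraction of $X$ contracts the section with normal bundle $\cO(-1)$ and is divisorial, contradicting that $\pi_2$ is a $\P^1$-bundle --- or $\cE_1$ is a twist of the spinor bundle, in which case $X$ is the complete flag manifold of type ${\rm B}_2$ and $\pi_2$ is its projection onto $\P^3$, giving $\mu_2=1$ and $r_{X_2}=4$. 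The earlier steps are pure intersection theory on $X$; the delicate point is precisely this last one, and a fully self-contained argument would presumably replace the appeal to the classification of bundles on $Q^3$ by a direct analysis of the ruled surfaces $\pi_2^{-1}(\ell)$ over the lines $\ell\subset X_2$.
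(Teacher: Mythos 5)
Your numerical setup ($r_{X_i}\mu_i=2+\nu_i$, hence $\mu_1\mid 3$ and $\mu_2\mid 2+\nu_2$) matches the paper's, and your treatment of $m=3$ and $m=6$ goes through. But your proof of (1) extracts only the \emph{integrality} of the change of basis between the two descriptions of $\Pic(X)$, i.e.\ divisibility relations, and this is strictly weaker than what the paper uses: since $\{H_1,L_1\}$ and $\{H_2,L_2\}$ (with $L_i$ the tautological divisor of $\P(\cE_i)$) are both $\Z$-bases of $\Pic(X)$, the change-of-basis matrix $B$ is \emph{unimodular}, and a direct computation of its entries from the intersection numbers with $\Gamma_1,\Gamma_2$ gives $\det B=-\mu_2/\mu_1$, so $|\det B|=1$ yields $\mu_1=\mu_2$ at once, uniformly in $m$. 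Part (2) then follows purely from $\mu_1\mu_2 r_{X_1}r_{X_2}=(2+\nu_1)(2+\nu_2)$ together with $\mu_1=\mu_2$, with the single case $(3,3,1,3,1)$ excluded because a Fano surface of Picard number one is $\P^2$, of index $3$.

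The consequence of missing the determinant is that for $m=4$ (where $\nu_2=2$ is even) your congruences only give $\mu_2\in\{1,2\}$, and you are forced into a geometric patch classifying $\cE_1$ on $Q^3$. As you yourself flag, that patch is not complete: to claim $\cE_1$ is uniform of splitting type $(1,0)$ you would need to control the splitting on \emph{every} line of $Q^3$, whereas the minimal sections $\Gamma_2$ only dominate the family of lines, so jumping lines must be excluded by a separate argument (e.g.\ by showing a minimal section over a jumping line would violate the minimality of $\Gamma_2$ in its ray), and you would then still need to invoke the classification of uniform rank-two bundles on $Q^3$. None of this machinery is needed: replacing the divisibility bookkeeping by the unimodularity of $B$ closes the $m=4$ case in one line and makes the whole argument purely intersection-theoretic.
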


\begin{proof}
Let us prove (1): take $L_i$ to be the tautological divisor of $\pi_i: X=\P(\cE_i) \to X_i$. Since $\{H_i, L_i\}$ is a $\Z$-basis of ${\rm Pic}X$ for each $i$,  there exist integers $\alpha, \beta, \gamma, \delta$ such that

\begin{equation}\label{eq:basechange}
\begin{pmatrix}H_2\\L_2\end{pmatrix}
 =B\begin{pmatrix}H_1\\L_1 \end{pmatrix},
\mbox{ where~}
B:=\begin{pmatrix}\vspace{0.2cm}
\alpha&\beta \\
\gamma&\delta
\end{pmatrix}
\mbox{with}~|{\rm det}B|=1. \nonumber
\end{equation}
By a straightforward computation, we see that
$$\alpha=-\frac{\mu_2}{\mu_1}L_1\cdot \Gamma_2,~~ \beta=\mu_2, ~~\gamma=\frac{1}{\mu_1}\Bigl( 1-(L_2\cdot \Gamma_1)(L_1\cdot \Gamma_2) \Bigr), ~~\delta=L_2 \cdot \Gamma_1.$$
From $|{\rm det}B|=1$ it follows that $\mu_1=\mu_2$ as desired.

To prove (2) recall that $\mu_1r_{X_1}=2+\nu_1$. Applying Proposition~\ref{prop:nu} and $\rm (1)$, we have
$$(m, \mu_1, r_{X_1}, \mu_2, r_{X_2})=(3, 1, 3, 1, 3), (3, 3, 1, 3, 1), (4, 1, 3, 1, 4)~ \mbox{or}~ (6, 1, 3, 1, 5).
$$ If $m=3$, then $X_i$ is a Fano manifold of dimension $2$ and Picard number $1$, that is $\P^2$. This implies that the case $(m, \mu_1, r_{X_1}, \mu_2, r_{X_2})=(3, 3, 1, 3, 1)$ does not occur.
\end{proof}

\begin{proof}[Proof of Theorem~\ref{thm:pic2}] By Proposition~\ref{prop:muindex}, we have the list of the possible values of $(m, \mu_1, r_{X_1}, \mu_2, r_{X_2})$. Since all cases are done in a similar way, we only deal with the case $(m, \mu_1, r_{X_1}, \mu_2, r_{X_2})=(6, 1, 3, 1, 5)$. From the Kobayashi-Ochiai Theorem (or Theorem~\ref{them:pibig}) it follows that $X_2$ is isomorphic to $Q^5$.

Let $H$ be the ample generator $H$ of $\Pic(Q^5)$; we have $H^4(X_2, \Z)=\Z[H^2]$, so  there exist integers $c_i$ such that $c_1(\cE_2)=c_1H$ and $c_2(\cE_2)=c_2H^2$. Assume without loss of generality that $\cE_2$ is normalized, that is $c_1=0$ or $-1$. Via numerical computations, we can show that $\cE_2$ is a stable vector bundle with $(c_1, c_2)=(-1, 1)$ (see \cite[Proposition~4.12]{kyoto} for details). 

Since a Cayley bundle on $Q^5$ is the only stable vector bundle of rank $2$ with $(c_1, c_2)=(-1, 1)$ we see that $X$ is isomorphic to the complete flag of type ${\rm G}_2$, described in  Example~\ref{eg:G2}.
\end{proof}

%\label{sec:Lowdim}
%!TEX root = CPconjecture.tex

\section{Semiample tangent bundles}\label{sec:symp}

%%%%%%%%%%%%%%%%%%%%%%%%%%%%%%%%%%%%%%%%%%%%%%%%%%%%%%%

Associated with a CP-manifold $X$ one may consider two canonical auxiliary varieties, that may help  to understand the geometry of $X$. On one hand we have the projectivization of the cotangent bundle $\P(\Omega_X)$, which is the ambient space of the minimal rational tangents to $X$, defined in Section \ref{ssec:ratcurves}. On the other, we may consider the projectivization of the dual bundle, $\P(T_X)$, which we have already introduced to define the nefness of $T_X$. As a general philosophy, if the Campana-Peternell conjecture is true, one should be able to recognize the homogeneous structure of $X$ by looking at the loci of $\P(T_X)$ in which $\cO(1)$ is not ample. The expectancy is that $\cO(1)$ is semiample and that those loci appear as the exceptional loci of the associated contraction, as in the case of rational homogeneous manifolds:

\begin{example}\label{ex:nilpotent}
For every rational homogeneous manifold $X=G/P$,
one has $\fg=H^0(X,T_X)$, and the evaluation of global sections provides a generically finite morphism $\eps:\P(T_{X})\to \P(\fg)$, that contracts curves on which $\cO(1)$ has degree $0$. On the other hand, we have the adjoint action of $G$ on $\P(\fg)$, and it is well known that the image of $\eps$ may be described as the closure $\overline{O}$ of the quotient by the natural $\C^*$-action of a nilpotent orbit, i.e. the orbit by $G$ in $\fg^\vee$ of a nilpotent element. By abuse of notation, we will refer to $O$ as a {\it nilpotent orbit in} $\P(\fg)$. It is known that the orbit $O\subset\P(\fg)$ is the image of the set $\cX_0\subset\P(T_X)$ in which $\eps$ is finite, and that the boundary $\overline{O}\setminus O$ consists of a union of smaller nilpotent orbits, whose inverse images in $\P(T_X)$ correspond to irreducible components of the stratification of $\P(T_X)$ in terms of dimension of the fibers of $\eps$. Moreover, the geometry of nilpotent orbits and their boundaries can be written in terms of combinatorial objects associated to $\fg$, such as partitions and weighted Dynkin diagrams. We refer the reader to \cite{CoMc} for a complete account on nilpotent orbits, and to \cite{Fu2} for a survey on their resolutions.
\end{example}

In this section we will discuss the existence of a contraction of $\P(T_X)$ associated to the nef tautological line bundle $\cO(1)$, and we will study the basic properties of this contraction, in case it does exist. As a consequence, we will finally show that Conjecture \ref{conj:CPconj} holds in the particular case in which $T_X$ is big and $1$-ample.

 \begin{notation}\label{notn:cpmanifoldsections}
 We will denote by $\phi:\P(T_X)\to X$ the canonical projection, by  $\cO(1)$ the corresponding tautological line bundle, which is nef by definition of CP-manifold. In particular we may write $\cO(-K_{\P(T_X)})=\cO(m)$, where $m:=\dim(X)$.
Throughout this section we will always assume that $T_X$ is not ample, i.e. that $X$ is not a projective space. This hypothesis allows us to consider the following:
with the same notation as in \ref{not:cpmanifold}, for every $i$ we will denote by $\overline{\Gamma}_i$ a minimal section of $\P(T_X)$ over the minimal rational curve $\Gamma_i$,
 corresponding to a quotient $f_i^*(T_X)\to\cO_{\P^1}$ (being $f_i:\P^1\to \Gamma_i$ the normalization of $\Gamma_i$).
We denote by $\overline{f}_i$ the normalization of $\overline{\Gamma}_i$.
\end{notation}

\subsection{Semiampleness of $T_X$}\label{ssec:semiample}

The following statement is immediate:

\begin{lemma}\label{lem:bigO(1)}
With the same notation as above, the Mori cone
$\cNE{\P(T_X)}$
is generated by the class of a line in a fiber of $\phi:\P(T_X)\to X$ and by the classes of $\overline{\Gamma}_i$, $i=1,\dots,n$. Moreover, $\cO(1)$ is big on $\P(T_X)$ if and only if there exist an effective $\Q$-divisor $\Delta$ satisfying $\Delta\cdot \overline{\Gamma}_i<0$, for all $i=1,\dots,n$.
\end{lemma}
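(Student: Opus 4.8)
The proof splits into two parts: first describing the Mori cone $\cNE{\P(T_X)}$, and then translating bigness of $\cO(1)$ into the stated inequality. For the first part, recall that $\P(T_X)$ is a $\P^{m-1}$-bundle over $X$ via $\phi$, so by the standard structure of Mori cones of projective bundles, $N_1(\P(T_X))$ is spanned by $N_1(X)$ (lifted via sections) together with the class $\ell$ of a line in a fiber of $\phi$, and $\rho(\P(T_X)) = \rho(X) + 1 = n+1$. The ray $\R_{\geq 0}\ell$ is extremal (it is the ray contracted by $\phi$). On the other hand, each curve $\Gamma_i$ generating an extremal ray $R_i$ of $\cNE{X}$ lifts to the minimal section $\overline{\Gamma}_i$ (as in Notation~\ref{notn:cpmanifoldsections}), and since $\cO(1)$ is nef one has $\cO(1)\cdot\overline{\Gamma}_i = 0$ by minimality of the section (the quotient $f_i^*T_X \to \cO_{\P^1}$ has degree zero). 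I would argue that the $n+1$ classes $\ell, [\overline{\Gamma}_1],\dots,[\overline{\Gamma}_n]$ span a simplicial cone of maximal dimension inside $\cNE{\P(T_X)}$, and that this cone is in fact all of $\cNE{\P(T_X)}$: indeed $\cNE{X}$ is simplicial generated by the $[\Gamma_i]$ (Corollary~\ref{prop:simplicial}), any irreducible curve in $\P(T_X)$ either lies in a fiber of $\phi$ (hence is a multiple of $\ell$) or maps onto a curve in $X$ whose class is a nonnegative combination of the $[\Gamma_i]$, and one can correct the difference between such a curve and a combination of the $\overline{\Gamma}_i$ by adding a nonnegative multiple of $\ell$ (using that $\cO(1)$ is nef to get the coefficient of $\ell$ nonnegative). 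This gives the first assertion.

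For the second part, $\cO(1)$ is nef of dimension $n+1 = \rho(\P(T_X))$, so $\cO(1)$ is big if and only if $\cO(1)^{\dim \P(T_X)} > 0$, equivalently if and only if $\cO(1)$ lies in the interior of the effective cone. The cleaner route is via the dual formulation: $\cO(1)$ nef is big precisely when there is no nonzero effective class $\gamma \in \cNE{\P(T_X)}$ with $\cO(1)\cdot\gamma = 0$ lying in a face that meets the interior obstruction — more usefully, since $\cO(1)$ is on the boundary of $\Nef(\P(T_X))$ meeting the extremal rays $\R_{\geq 0}[\overline{\Gamma}_i]$ (and $\cO(1)\cdot\ell = 1 > 0$), $\cO(1)$ is big if and only if it can be written as an ample class plus an effective class, i.e. iff $\cO(1) - \epsilon H$ is pseudoeffective for some ample $H$ and small $\epsilon > 0$. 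I would make this explicit: $\cO(1)$ is big iff there is an effective $\Q$-divisor $\Delta'$ with $\cO(1) - \Delta'$ ample (Kodaira's lemma), which after subtracting a small multiple of $\cO(1)$ itself and using $\phi^*(\text{ample on }X)$ amounts to finding an effective $\Delta$ on $\P(T_X)$ such that $(\cO(1) + \phi^*A - \Delta)$ is ample for suitable ample $A$ on $X$; testing ampleness on the extremal rays $\ell, [\overline{\Gamma}_i]$ via the description in the first part, the only constraints that are not automatic (because $\cO(1)\cdot\overline{\Gamma}_i = 0$ while $\cO(1)\cdot\ell>0$) are $\Delta\cdot\overline{\Gamma}_i < 0$ for all $i$. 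Conversely, given such a $\Delta$, for small $\delta>0$ the class $\cO(1) - \delta\Delta$ is positive on all the extremal rays $[\overline{\Gamma}_i]$ and on $\ell$ (the latter since $\cO(1)\cdot\ell=1$ dominates), hence ample by Kleiman's criterion and the simpliciality established above, so $\cO(1) = (\cO(1)-\delta\Delta) + \delta\Delta$ is ample plus effective, hence big.

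\textbf{Main obstacle.} The genuinely substantive point is the identification of $\cNE{\P(T_X)}$ as the simplicial cone $\langle \ell, [\overline{\Gamma}_1],\dots,[\overline{\Gamma}_n]\rangle$, and in particular that the $\overline{\Gamma}_i$ generate extremal rays. Extremality of $\R_{\geq 0}\ell$ is clear, but for the $[\overline{\Gamma}_i]$ one must rule out that some $\overline{\Gamma}_i$ lies in the interior, which uses both that $\cNE{X}$ is simplicial (so the $[\Gamma_i]$ are the extremal rays downstairs) and a careful bookkeeping of the $\ell$-coefficient: if $[\overline{\Gamma}_i] = \sum_j a_j[\overline{\Gamma}_j] + b\,\ell$ with $a_j, b \geq 0$, then pushing forward by $\phi$ forces $a_i = 1$, $a_j = 0$ for $j\neq i$ by simpliciality of $\cNE X$, hence $b\,\ell = 0$, so $b = 0$. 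So extremality is really inherited from downstairs. Once the cone structure is pinned down, the bigness criterion is a routine application of Kleiman and Kodaira's lemma; the statement is called "immediate" in the paper precisely because all the real work (simpliciality of $\cNE X$, smoothness of contractions) has already been done in Section~\ref{sec:CPvar}.
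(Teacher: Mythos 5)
Your proof is correct and follows essentially the same route as the paper's: the cone description comes from pushing forward by $\phi_*$, noting that the difference between a curve class and the corresponding combination of the $\overline{\Gamma}_i$ lies in $\ker\phi_*=\R\ell$ with nonnegative coefficient by nefness of $\cO(1)$, and the bigness criterion is the statement that $L$ lies in the interior of the pseudo-effective cone (Kodaira's lemma in one direction, Kleiman's criterion applied to $L-\delta\Delta$ in the other). The extra discussion of extremality of the rays $\R_{\geq 0}[\overline{\Gamma}_i]$ is not needed for the statement, but it is harmless.
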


\begin{proof}
  Let $N_0\subset \NE(\P(T_X))$ be the cone generated by the classes
  of $\overline{\Gamma}_i$, $i=1,\dots,n$. The push-forward morphism
  $\phi_*:N_1(\P(TX))\to N_1(X)$ which sends the class of
  $\overline{\Gamma}_i$ to the class of $\Gamma_i$ induces an isomorphism
  of $N_0$ with $\NE(X)$. Thus $N_0$ is the facet of $\NE(\P(TX))$
  which is supported (orthogonal in the sense of intersection) by the
  numerical class $L$ of $\cO(1)$. Since
  $\NE(\P(T_X))\subset(\phi_*)^{-1}\NE(X)\cap\left\{Z\in N_1(\P(TX))|\,
  Z\cdot L\geq 0\right\}$ the first claim follows. For the second part, note
  that $\cO(1)$ is big if and only if $L$ lies in the interior of the
  pseudo-effective cone of $\P(T_X)$ (that is, the closure of the cone
  generated by classes of effective divisors) or, equivalently, if and
  only if for every ample divisor $A$ and sufficiently small
  $\epsilon\in\Q_{>0}$, $\Delta=L-\epsilon A$ is effective.
\end{proof}

\begin{definition}\label{def:semiample} A line bundle
$L$ on a variety $X$ is {\em semiample} if $L^{\otimes r}$ is generated by global
sections for $r\gg 0$; a vector bundle $\cE$ is semiample if the tautological bundle $\cO(1)$ on $\P(\cE)$ is semiample.
\end{definition}

If a line bundle $L$ is semiample then the graded ring of its
sections $R(X,L)=\bigoplus_{r\geq 0} H^0(X,L^{\otimes r})$ is a
finitely generated $\C$-algebra, and
the evaluation of sections $H^0(X,L^{\otimes
r})\otimes\cO_X\ra L^{\otimes r}$ yields a proper surjective morphism
of projective schemes (with connected fibers):
$$\eps: \Proj_X\left(\bigoplus_{r\geq 0} L^{\otimes r}\right)\lra
Y_L=\Proj\left(\bigoplus_{r\geq 0} H^0(X,L^{\otimes r})\right)$$ which we will
call  {\em evaluation morphism} or the {\em contraction of $X$ associated to
$L$}.

Contrary to the ample case, nef bundles are not necessarily semiample. Hence, it makes sense to pose the following weak form of Conjecture \ref{conj:CPconj2}:
\begin{question}
Let $X$ be a CP-manifold. Is $T_X$ semiample?
\end{question}

The standard technique to answer this question is the Basepoint-free theorem, which, in our situation, provides:

 \begin{proposition}\label{prop:nefeffdiv}
 With the same notation as above, for any CP-manifold $X$, the following are equivalent:
 \begin{enumerate}
   \item There exists an effective divisor $\Delta$ satisfying $\Delta\cdot\overline{\Gamma}_i<0$ for all $i$.
   \item $T_X$ is big.
   \item $T_X$ is semiample and big.
 \end{enumerate}
\end{proposition}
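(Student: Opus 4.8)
The plan is to prove the equivalences through the chain $(3)\Rightarrow(2)$, $(1)\Leftrightarrow(2)$, $(2)\Rightarrow(3)$, where $(3)\Rightarrow(2)$ is trivial, the link $(1)\Leftrightarrow(2)$ is essentially a reformulation of Lemma~\ref{lem:bigO(1)}, and the only substantial implication is $(2)\Rightarrow(3)$, which I would deduce from the Basepoint-free theorem applied to $\cX=\P(T_X)$.

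For $(1)\Leftrightarrow(2)$: paralleling Definitions~\ref{def:nef} and~\ref{def:semiample}, $T_X$ is big exactly when $\cO(1)$ is big on $\P(T_X)$, and Lemma~\ref{lem:bigO(1)} characterizes that bigness by the existence of an effective $\Q$-divisor $\Delta$ on $\P(T_X)$ with $\Delta\cdot\overline{\Gamma}_i<0$ for every $i$. Clearing denominators turns such a $\Delta$ into an effective integral divisor with the same sign conditions, and conversely an integral divisor is in particular a $\Q$-divisor, so $(1)$ and $(2)$ are equivalent.

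For $(2)\Rightarrow(3)$: the variety $\cX=\P(T_X)$ is smooth and projective, so $(\cX,0)$ is a klt pair, and $\cO(1)$ is a nef Cartier divisor since $X$ is a CP-manifold. Using $\det T_X=\cO(-K_X)$ together with the formula for the canonical bundle of a projective bundle one gets $-K_{\cX}=\cO(m)$, with $m=\dim X$, as already recorded in Notation~\ref{notn:cpmanifoldsections}. Hence for every integer $t\ge 1$,
\[
t\,\cO(1)-K_{\cX}=\cO(t+m)=(t+m)\,\cO(1),
\]
which is nef, being a positive multiple of the nef divisor $\cO(1)$, and, under hypothesis $(2)$, big, since bigness is invariant under multiplication by a positive rational number. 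The Basepoint-free theorem then shows that $\cO(1)$ is semiample on $\cX$, i.e.\ $T_X$ is semiample; together with $(2)$ this is exactly $(3)$.

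The geometric content has essentially all been absorbed into Lemma~\ref{lem:bigO(1)}, so there is no serious obstacle left; the point to bear in mind is that nefness of $\cO(1)$ by itself is not enough to run the Basepoint-free theorem, because it places the class of $\cO(1)$ on the boundary of the nef cone of $\cX$ (it supports the facet of $\cNE{\cX}$ spanned by the $\overline{\Gamma}_i$, cf.\ Lemma~\ref{lem:bigO(1)}). The extra input needed is precisely bigness, which is exactly what hypotheses $(1)$ and $(2)$ provide; beyond that, the argument only uses the routine verifications that $(\cX,0)$ is klt and that $(t+m)\,\cO(1)$ is nef and big.
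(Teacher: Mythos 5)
Your proposal is correct and follows exactly the route of the paper's own (very terse) proof: $(1)\Leftrightarrow(2)$ via Lemma~\ref{lem:bigO(1)}, and $(2)\Leftrightarrow(3)$ via the Basepoint-free theorem applied to $\cX=\P(T_X)$ using $-K_{\cX}=\cO(m)$. You simply spell out the details (clearing denominators, the klt pair $(\cX,0)$, and the nef-and-bigness of $t\,\cO(1)-K_{\cX}$) that the paper leaves implicit.
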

\begin{proof}
($1\iff 2$) follows from Lemma \ref{lem:bigO(1)}, and ($2\iff 3$) follows from the usual Basepoint-free theorem.
\end{proof}

\subsection{A birational contraction of $\P(T_X)$}\label{ssec:sympsetup}

Throughout the rest of section \ref{sec:symp}, we will always assume that $T_X$ is big and semiample (see Proposition \ref{prop:nefeffdiv}), i.e. that the evaluation of global sections defines a birational morphism
$$\eps: \cX:=\Proj_X\left(\bigoplus_{r\geq 0} S^rT_X\right)\lra
\cY:=\Proj\left(\bigoplus_{r\geq 0} H^0(X,S^rT_X)\right).$$

Alternatively one may consider the total spaces $\widehat{\cX}$ and $\widehat{\cY}$ of the tautological line bundles $\cO(1)$ on the $\Proj$-schemes $\cX$ and $\cY$, and the natural map:
$$
\widehat{\eps}:\widehat{\cX}:=\Spec_\cX\left(\bigoplus_{r\in\Z}\cO(r)\right)
\longrightarrow\widehat{\cY}:=\Spec_\cY\left(\bigoplus_{r\in\Z}H^0(X,\cO(r))\right).
$$
The scheme $\widehat{\cX}$ may also be described as the total space of the cotangent bundle of $X$ with the zero section removed, and we have a fiber product diagram:
$$
\xymatrix{\widehat{\cX}\ar[r]^{\widehat{\eps}}\ar[d]&\widehat{\cY}\ar[d]\\\cX\ar[r]^{\eps}&\cY}
$$
where the vertical arrows are quotients by $\C^*$-actions.

\subsection{The contact structure of $\P(T_X)$}\label{ssec:crepcont}

We will see that the contractions $\eps$ and $\widehat{\eps}$ enjoy very special features, basically due to the fact that $\cX=\P(T_X)$ supports a {\em contact structure} $\cF$, defined as the kernel of the composition of the differential of $\phi$ with the co-unit map
$$
\theta: T_{\cX}\stackrel{d\phi}{\longrightarrow}\phi^*T_X=\phi^*\phi_*\cO(1)\longrightarrow\cO(1).
$$
Note that $\theta$ fits in the following commutative diagram, with exact rows and columns:
\begin{equation}\label{eq:contact}
\xymatrix@=35pt{T_{\cX/X}\ar@{>->}[r]\ar@{=}[d]&\cF\ar@{->>}[r]\ar@{>->}[]+<0ex,-2ex>;[d]&\Omega_{\cX/X}(1)\ar@{>->}[d]\\
          T_{\cX/X}\ar@{>->}[r]&T_{\cX}\ar@{->>}[r]\ar@{->>}[d]^{\theta}&\phi^*T_X\ar@{->>}[d]\\
          &\cO(1)\ar@{=}[r]&\cO(1)}
\end{equation}

The distribution $\cF$ being contact means precisely that it is maximally non integrable, i.e. that the morphism $d\theta:\cF\otimes\cF\to T_\cX/\cF\cong \cO(1)$ induced by the Lie bracket is everywhere non-degenerate. This fact can be shown locally analytically, by considering, around every point, local coordinates $(x_1,\dots,x_m)$ and vector fields $(\zeta_1,\dots,\zeta_m)$, satisfying $\zeta_i(x_j)=\delta_{ij}$. Then the contact structure is determined, around that point, by the $1$-form $\sum_{i=1}^m\zeta_idx_i$ (see \cite{KPSW} for details).

Following Beauville (\cite{Beau2}), %\marginpar{\tiny G: Reference to Beauville ??}
the existence of a contact form on $\cX$ implies (it is indeed equivalent to) the existence of a {\it symplectic form} on $\widehat{\cX}$: a closed $2$-form $\sigma\in H^0(\widehat{\cX},\Omega^2_{\widehat{\cX}})$ which is everywhere nondegenerate, i.e. that induces a skew-symmetric isomorphism $T_{\widehat{\cX}}\to \Omega_{\widehat{\cX}}$. Locally analytically, with the same notation as above, the symplectic form induced by $\theta$ is the standard symplectic form on the cotangent bundle, given by $\sigma=\sum_id\zeta_i\wedge dx_i$.

\begin{remark}[Contact and symplectic manifolds in general]\label{rem:contsymp}
More generally, a smooth variety $M$ is called a {\it contact manifold} if it supports a surjective morphism from $T_M$ to a line bundle $L$, whose kernel is maximally non integrable, and it
is called {\it symplectic} if there exists an everywhere nondegenerate closed to form $\sigma\in H^0({M},\Omega^2_{{M}})$.
The relation contact/symplectic that we stated above for our particular situation can be generally presented as follows: given a contact form $\theta\in H^0(M,\Omega_M\otimes L)$ on a smooth variety $M$, the total space $\widehat{M}$ of the line bundle $L$ is a symplectic manifold. A projective birational morphism $\widehat{f}:\widehat{M}\to \widehat{N}$ from a symplectic manifold $\widehat{M}$ to a normal variety $\widehat{N}$ is called a {\it symplectic contraction} of $\widehat{M}$, or a {\it symplectic resolution} of $\widehat{N}$. This type of resolutions have been extensively studied by Fu, Kaledin, Verbitsky, Wierzba, and others. We refer the interested reader to \cite{Fu2} and the references there for a survey on this topic.
\end{remark}

Let us note also that the complete list of projective contact manifolds is expected to be small. In fact it is known (cf. \cite[Corollary~2]{De}) that 
their canonical divisor is not pseudo-effective and, in particular, it is not nef. This condition had been previously used in \cite{KPSW} to show that, with the exception of the manifolds of the form $\P(T_Z)$, projective contact manifolds cannot have non constant Mori contractions. We may then conclude that:

\begin{theorem}[\cite{De, KPSW}]\label{thm:KPSW}
Let $M$ be a projective contact manifold. Then, either $M$ is a Fano manifold of Picard number one or $M=\P(T_Z)$ for some smooth projective variety $Z$.
\end{theorem}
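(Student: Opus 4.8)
The plan is to argue according to the Picard number of $M$: the case $\rho(M)=1$ follows quickly from the non-pseudo-effectivity of $K_M$, while the case $\rho(M)\geq 2$ is treated by a Mori-theoretic analysis of the contraction of an extremal ray, using in an essential way that the contact distribution $\cF$ is maximally non-integrable. To begin I would record the numerical consequence of the contact structure: writing $\dim M=2n+1$ and $L$ for the contact line bundle, the exact sequence $0\to\cF\to T_M\to L\to 0$ gives $-K_M=\det\cF+L$, while the nondegenerate form $d\theta\colon\cF\otimes\cF\to L$ (compare (\ref{eq:contact})) yields an isomorphism $\cF\cong\cF^\vee\otimes L$, hence $2\det\cF=2nL$; together these give
\[
-K_M=(n+1)L .
\]
In particular $-K_M\cdot C$ is a nonzero multiple of $n+1$ for every curve $C$ with $L\cdot C\neq 0$.

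If $\rho(M)=1$, then by \cite[Corollary~2]{De} the divisor $K_M$ is not pseudo-effective. As $N^1(M)\cong\R$, its pseudo-effective cone is the closed half-line generated by an ample class, so $K_M$ is a strictly negative multiple of that class; equivalently $-K_M$ is ample and $M$ is a Fano manifold of Picard number one, which is the first alternative.

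Assume now $\rho(M)\geq 2$. Since $K_M$ is not pseudo-effective it is not nef, so by the Cone Theorem there is a $K_M$-negative extremal ray $R$, with contraction $\varphi\colon M\to M'$; as $\rho(M')=\rho(M)-1\geq 1$, this is a non-constant Mori contraction. Picking a minimal rational curve $C$ spanning $R$ and using the divisibility just noted, the length of $R$ satisfies $\ell(R)=-K_M\cdot C\in(n+1)\Z_{>0}$, so $\ell(R)\geq n+1$. The key step — which I expect to be the main obstacle — is to show that $\varphi$ is a smooth $\P^n$-fibration over a smooth projective $(n+1)$-fold $M'$. I would proceed by restricting $\theta\colon T_M\to L$ to a general fibre $F_0$: if $T_{F_0}\subseteq\cF|_{F_0}$ then $F_0$ is an integral subvariety of the contact distribution, hence of dimension at most $n$ by the Legendrian dimension bound, while the Ionescu--Wi\'sniewski inequality forces $\dim F_0\geq\ell(R)-1\geq n$; this pins $\dim F_0=n$, $\ell(R)=n+1$, $L\cdot C=1$, and simultaneously excludes that $\varphi$ be birational. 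The complementary possibility $T_{F_0}\not\subseteq\cF|_{F_0}$ has to be ruled out separately: the restriction of $\theta$ then endows $F_0$ with its own contact structure, and one derives a contradiction from the presence of the extremal ray. Once $\dim F_0=n$ and $L\cdot C=1$, the general fibre $F_0$ is a smooth Fano $n$-fold with $-K_{F_0}=(n+1)L|_{F_0}$ (so $L|_{F_0}$ is ample and the pseudo-index of $F_0$ equals $n+1=\dim F_0+1$), whence $F_0\cong\P^n$ by the Kobayashi--Ochiai theorem (Theorem \ref{them:pibig}); arguing then as in the proof of Theorem \ref{thm:smooth} one sees that $\varphi$ is everywhere a smooth $\P^n$-fibration over a smooth projective $(n+1)$-fold $M'$.

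It remains to identify $M$ with $\P(T_{M'})$. First I would promote the smooth $\P^n$-fibration to a projective bundle $M=\P(\cE)$ with $\rk\cE=n+1$ on $M'$, killing the Brauer-class obstruction either because the relevant $M'$ has trivial Brauer group or, following \cite{KPSW}, because the contact line bundle itself trivializes it. Since $\Pic(M)=\Z\,\cO_{\P(\cE)}(1)\oplus\varphi^*\Pic(M')$, I may twist $\cE$ so that $L=\cO_{\P(\cE)}(1)$, and then $-K_M=(n+1)L$ together with the relative canonical formula of a projective bundle forces $\det\cE\cong-K_{M'}=\det T_{M'}$. The final — and genuinely contact-geometric — step uses the symplectic form $\sigma$ on the total space $\widehat M$ of $L$ (Remark \ref{rem:contsymp}): compatibility of $\sigma$ with the projection $\widehat M\to M'$ realizes $\widehat M$ as a dense open subset of the cotangent bundle $T^*M'$ carrying its canonical symplectic form, whence an isomorphism $\cE^\vee\cong\Omega_{M'}$, i.e. $\cE\cong T_{M'}$ and $M\cong\P(T_{M'})$ with $Z=M'$. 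The delicate points in this last paragraph are the vanishing of the Brauer obstruction and the rigorous identification of $\widehat M$ with an open subset of $T^*M'$ out of $\sigma$; everything else is routine manipulation with the exact sequences defining $\cF$.
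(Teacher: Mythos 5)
You should first be aware that the paper does not actually prove this theorem: it assembles it from two external results, namely \cite[Corollary~2]{De} (the canonical divisor of a projective contact manifold is not pseudo-effective, hence not nef) and the main theorem of \cite{KPSW} (a projective contact manifold admitting a non-constant Mori contraction is of the form $\P(T_Z)$); the dichotomy then follows because a manifold all of whose Mori contractions are constant, but whose canonical divisor is not nef, is Fano of Picard number one. Your proposal is essentially an attempt to re-prove the \cite{KPSW} result from scratch. The parts you carry out are correct and form the right skeleton: the identity $-K_M=(n+1)L$, the $\rho(M)=1$ case via non-pseudo-effectivity, and the squeeze $n\leq\ell(R)-1\leq\dim F_0\leq n$ between the Ionescu--Wi\'sniewski inequality and the Legendrian bound for an isotropic general fibre, followed by Kobayashi--Ochiai (Theorem \ref{them:pibig}).

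The steps you defer, however, are exactly the ones that carry the weight of \cite{KPSW}, and one of your proposed shortcuts would fail. First, you cannot ``argue as in the proof of Theorem \ref{thm:smooth}'' to upgrade the general fibre to a smooth $\P^n$-fibration: that proof uses throughout that $T_X$ is nef (every rational curve is free, fibres are rationally connected with trivial normal bundle, etc.), and a general projective contact manifold has no such property. The correct route is to show that \emph{every} fibre is isotropic for $\cF$, hence of dimension exactly $n$ by the same squeeze, and then invoke Fujita's characterization of $\P^n$-bundles among equidimensional fibrations with $-K=(n+1)L$ and $L$ of degree one on the contracted curves. Second, the non-isotropic alternative $T_{F_0}\not\subseteq\cF|_{F_0}$ is dismissed in one sentence, whereas it genuinely requires an argument (analysis of the rank and kernel of $\theta|_{F_0}$). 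Third, the conclusion is cleaner than your symplectic detour suggests: once $L$ has degree one on the lines of the fibres, $\cE:=\varphi_*L$ is automatically locally free of rank $n+1$ with $M\cong\P(\cE)$ (no Brauer obstruction to worry about), and since the fibres are tangent to $\cF$ the contact form factors as $\varphi^*T_{M'}\to L$, giving a morphism $T_{M'}\to\cE$ which nondegeneracy of $\theta$ shows to be an isomorphism; no identification of $\widehat{M}$ with an open subset of $T^*M'$ is needed. In short: the strategy is the right one, but the hard core of the argument is still missing, and the paper itself sidesteps it entirely by citation.
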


Finally we remark that it is conjectured (see \cite{SaLe}) that the only Fano contact manifolds of Picard number one are rational homogeneous: more concretely, minimal nilpotent orbits of the adjoint action of a simple Lie group $G$ on
$\P(\fg)$.

\subsection{Properties of the contraction $\eps$}\label{ssec:properties}

We will present here some of the properties that symplectic resolutions (and its contact counterparts) are known to satisfy. For the reader's convenience, we will state them in our particular setup, and we will refer the interested reader to \cite{Fu2} and \cite{Wie} for further details. The following proof has been taken from \cite[Remark 1]{Wie}.

\begin{lemma}\label{lem:crepant}
With the same notation as above, $\eps$ and $\widehat{\eps}$ are crepant contractions and, in particular, their positive dimensional fibers are uniruled.
\end{lemma}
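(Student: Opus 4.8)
The plan is to prove that $\eps$ is crepant by a direct manipulation of the canonical class of $\cX=\P(T_X)$, to deduce the same for $\widehat{\eps}$ from the symplectic structure on $\widehat{\cX}$ together with Beauville's characterisation of symplectic singularities, and finally to obtain uniruledness of the positive-dimensional fibres from the general theory of fibres of birational contractions of smooth varieties. For $\eps$: first I would note that $\cY$ is normal, being the image of the normal variety $\cX$ under the morphism $\eps$ with connected fibres (if $\nu\colon\cY'\to\cY$ denotes the normalisation, then $\eps$ factors through $\nu$ because $\cX$ is normal, and $\nu_*\cO_{\cY'}=\cO_\cY$ forces $\nu$ to be an isomorphism). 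Since $\cO(1)$ is semiample with associated contraction $\eps$, it is $\eps$-trivial and $\cO(r)=\eps^*A$ for some $r>0$ and some ample line bundle $A$ on $\cY$; as $-K_\cX=\cO(m)$, this yields $K_\cX=\eps^*D$ with $D$ a $\Q$-Cartier (negative) multiple of $A$ on $\cY$. Because a projective birational morphism onto a normal variety is an isomorphism away from a closed subset of codimension $\ge 2$ in the target, one has $\eps_*K_\cX=K_\cY$ as $\Q$-divisor classes, while the projection formula gives $\eps_*\eps^*D=D$; hence $K_\cY=D$ is $\Q$-Cartier and $K_\cX=\eps^*K_\cY$, i.e.\ $\eps$ is crepant, $\cX\to\cY$ exhibiting $\cY$ as a variety with canonical singularities.

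For $\widehat{\eps}$ I would use the symplectic form $\sigma$ on $\widehat{\cX}$: since $\dim\widehat{\cX}=2m$, the form $\sigma^{\wedge m}$ is a nowhere-vanishing section of $K_{\widehat{\cX}}$, so $K_{\widehat{\cX}}$ is trivial. As before, $\widehat{\eps}$ is an isomorphism off a subset of codimension $\ge 2$ in the normal variety $\widehat{\cY}$, so $\sigma$ descends there and extends to a closed $2$-form $\omega$ on $\widehat{\cY}_{\mathrm{reg}}$; since $\widehat{\eps}^*\omega=\sigma$ on the preimage and $\sigma^{\wedge m}$ vanishes nowhere, $\omega^{\wedge m}$ vanishes nowhere on $\widehat{\cY}_{\mathrm{reg}}$, so $\omega$ is symplectic there. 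Thus $\widehat{\cY}$ has symplectic singularities in the sense of Beauville, hence, by Beauville and Namikawa, it is Gorenstein with canonical singularities and $K_{\widehat{\cY}}$ is trivial; therefore $K_{\widehat{\cX}}=\widehat{\eps}^*K_{\widehat{\cY}}$ and $\widehat{\eps}$ is crepant. Alternatively, one may pass between $\eps$ and $\widehat{\eps}$ directly through the presentations of $\widehat{\cX}\to\cX$ and $\widehat{\cY}\to\cY$ as complements of zero sections of line bundles, using the canonical-bundle formula $K_V=p^*(K_Z\otimes L^{-1})$ for the total space $V$, with projection $p\colon V\to Z$, of a line bundle $L$ over a normal base $Z$.

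Finally, for the uniruledness of the positive-dimensional fibres: $\cX$ and $\widehat{\cX}$ are smooth, in particular klt, and $\eps$, $\widehat{\eps}$ are projective and birational, so by the general theory of the minimal model program --- the fibres of a birational contraction of a klt pair are rationally chain connected, equivalently the exceptional locus of a crepant resolution of a variety with canonical singularities is covered by rational curves --- every positive-dimensional fibre of $\eps$ and of $\widehat{\eps}$ is rationally chain connected, in particular uniruled. The main difficulty is not a single deep input but the accumulation of careful verifications: normality of $\cY$ and $\widehat{\cY}$, the fact that both contractions are isomorphisms in codimension one on the target, the extension of $\omega$ to $\widehat{\cY}_{\mathrm{reg}}$ and its nondegeneracy there, and the precise appeal to the two external results (Beauville--Namikawa on symplectic singularities, and rational chain connectedness of fibres of birational morphisms of klt varieties).
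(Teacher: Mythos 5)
Your proof is correct, but it reaches the conclusion by a genuinely different route than the paper's, and the comparison is instructive. For the crepancy of $\eps$, the paper first applies relative Kawamata--Viehweg vanishing to $\omega_{\cX}\otimes\cO(m)\cong\cO_{\cX}$ to obtain $R^i\eps_*\cO_{\cX}=0$ for $i>0$, so that $\eps$ is a rational resolution with $\omega_{\cY}\cong\eps_*\omega_{\cX}$ invertible, and then disposes of the discrepancy divisor by noting it is exceptional and numerically proportional to $\cO(1)$. You instead observe directly that $K_{\cX}\sim_{\Q}\eps^*D$ for a $\Q$-Cartier divisor $D$ on $\cY$ (since $-K_{\cX}=mL$ and a multiple of $L$ is the pullback of an ample class), and push forward to identify $D$ with $K_{\cY}$; this is more elementary and arguably cleaner, though the paper's detour has the side benefit of recording the vanishing $R^i\eps_*\cO_{\cX}=0$, which is reused later (in the proof of Proposition \ref{prop:smoothdeform}). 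For $\widehat{\eps}$ the paper simply declares the argument analogous, while you invoke the symplectic form and Beauville--Namikawa on symplectic singularities; this is valid but heavier than necessary, since your pushforward argument applies verbatim with $D=0$ once $K_{\widehat{\cX}}$ is known to be trivial (which your alternative canonical-bundle-formula remark also gives). For the uniruledness of the fibers, the paper uses the bigness of $T_X$ (Proposition \ref{prop:nefeffdiv}) to produce an effective $\Delta$ with $(\cX,\Delta)$ klt and $-\Delta$ $\eps$-ample and cites Kawamata's theorem on rational curves in fibers of $(K+\Delta)$-negative contractions, whereas you appeal to the Hacon--McKernan type statement that fibers of birational contractions of klt varieties are rationally chain connected; both are external inputs of comparable nature, the paper's being the lighter and more classical one, yours giving the stronger conclusion of rational chain connectedness rather than mere uniruledness.
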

\begin{proof}
The proof in both cases is analogous. In the projective setting, for instance, we have $R^i\eps_*\cO_{\cX}=R^i\eps_*(\omega_{\cX}\otimes\cO(\dim(X)))=0$ for $i>0$ (\cite[Corollary~2.68]{KM}). Then $\eps$ is a rational resolution and $\omega_{\cY}$ is a line bundle, isomorphic to $\eps_*\omega_{\cX}$ (cf. \cite[Section 5.1]{KM}). But then $\omega_{\cX}\otimes\eps^*\omega_{\cY}^{-1}$ is effective
 and vanishes on the $\overline{\Gamma}_i$'s, hence it is numerically proportional to $\cO(1)$. Since it is also exceptional, it is trivial.

 For the uniruledness of the fibers, we take (by Proposition \ref{prop:nefeffdiv}) an effective $\Q$-divisor $\Delta$ satisfying that $(\cX,\Delta)$ is klt and that $-\Delta$ is $\eps$-ample, and use \cite[Theorem~1]{Kaw}.
\end{proof}

The next proposition lists other important properties of the contraction $\eps$, inherited from analogous properties of the symplectic resolution $\widehat{\eps}$ (see \cite{Ka} for details, see also \cite{Fu2}):

\begin{proposition}\label{prop:sympprop}
With the same notation as above:
\begin{enumerate}
  \item There exists a stratification $\cY=D_0\supset D_1\supset D_2\dots$ of projective varieties such that $D_{i}$ is the singular locus of $D_{i-1}$, for all $i$, and every irreducible component of $D_i\setminus D_{i-1}$ is a contact manifold.
      In particular $\dim D_i$ is odd for all $i$.
  \item $\eps$ is {\it semismall}, i.e. for every closed subvariety $Z\subset \cX$, one has $\codim(Z)\geq \dim(Z)-\dim(\eps(Z))$.
\end{enumerate}
\end{proposition}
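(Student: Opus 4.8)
The plan is to transfer everything to the symplectic world via the $\C^*$-quotient diagram relating $\cX,\cY$ to $\widehat{\cX},\widehat{\cY}$, prove the statements for the symplectic resolution $\widehat{\eps}:\widehat{\cX}\to\widehat{\cY}$, and then descend. For part (1), the starting point is that $\widehat{\cX}$ carries a holomorphic symplectic form $\sigma$ (Remark \ref{rem:contsymp}). I would invoke Kaledin's stratification theorem for symplectic varieties: $\widehat{\cY}$ admits a canonical stratification $\widehat{\cY}=\widehat{D}_0\supset\widehat{D}_1\supset\dots$, where $\widehat{D}_i$ is the singular locus of $\widehat{D}_{i-1}$, and each stratum $\widehat{D}_i\setminus\widehat{D}_{i+1}$ is smooth and inherits (Poisson-theoretically, via the Poisson structure on $\widehat{\cY}$ pushed down from $\sigma$ on $\widehat{\cX}$) a nondegenerate closed $2$-form, hence is symplectic; in particular its dimension is even. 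The $\C^*$-action on $\widehat{\cY}$ is compatible with the stratification (the Poisson bracket is homogeneous), so passing to the quotient $\cY=\widehat{\cY}/\!\!/\C^*$ the strata descend to $D_i\subset\cY$ with $D_{i-1}$ having $D_i$ as singular locus, and each component of $D_i\setminus D_{i-1}$ is the quotient of a symplectic manifold by a free $\C^*$-action, i.e. a contact manifold (the residue of the symplectic form on the quotient being a contact form, exactly the inverse of the contact/symplectic correspondence recalled in Remark \ref{rem:contsymp}). Since the symplectic strata upstairs are even-dimensional, the contact strata downstairs are odd-dimensional.

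For part (2), I would use that symplectic resolutions are semismall, which is a theorem of Kaledin (and independently Namikawa): if $\widehat{\eps}:\widehat{\cX}\to\widehat{\cY}$ is a projective crepant (equivalently, by Lemma \ref{lem:crepant}, symplectic) resolution, then for every irreducible closed $\widehat{Z}\subset\widehat{\cX}$ one has $\dim\widehat{Z}-\dim\widehat{\eps}(\widehat{Z})\le \tfrac12\operatorname{codim}\widehat{Z}\le\operatorname{codim}\widehat{Z}$; more precisely the fiber dimension over a stratum of codimension $2c$ is at most $c$, which is the strong (Lagrangian) form of semismallness. The quick argument is: a general fiber $\widehat{F}$ over the stratum through a point of $\widehat{\eps}(\widehat Z)$ is isotropic for $\sigma$ because $\sigma$ restricts to the pullback of the symplectic form on the (smooth, symplectic) stratum which vanishes on fibers; hence $\dim\widehat{F}\le\tfrac12\dim\widehat{\cX}$, and running this over the stratification yields the codimension bound. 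Then I descend: a closed subvariety $Z\subset\cX$ is the image of its preimage $\widehat{Z}\subset\widehat{\cX}$ under the $\C^*$-quotient, with $\dim\widehat{Z}=\dim Z+1$ and $\dim\widehat{\eps}(\widehat Z)=\dim\eps(Z)+1$, while $\operatorname{codim}_{\widehat{\cX}}\widehat{Z}=\operatorname{codim}_{\cX}Z$, so the inequality $\operatorname{codim}\widehat Z\ge\dim\widehat Z-\dim\widehat\eps(\widehat Z)$ immediately gives $\operatorname{codim}Z\ge\dim Z-\dim\eps(Z)$.

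Alternatively, and perhaps cleaner for a survey, one can prove both statements directly on $\cX=\P(T_X)$ using the contact form $\theta$ and crepancy (Lemma \ref{lem:crepant}): crepancy of $\eps$ plus the fact that the fibers are covered by rational curves (uniruled, again by Lemma \ref{lem:crepant}) forces every positive-dimensional fiber $F$ to be tangent to the contact distribution $\cF$ along a general point — because a rational curve $C$ in a fiber has $\cO(1)\cdot C=0$, so $\theta|_C=0$, i.e. $C$ is integral for $\cF$ — and then a fiber swept out by such curves is "$\cF$-integral", forcing $\dim F\le\tfrac12(\dim\cX-1)=\tfrac12(2m-2)/1$... which, combined with the analysis of how $\eps$ drops dimension, yields the semismallness estimate. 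The stratification in (1) is then obtained by descending induction on the dimension of fibers of $\eps$, taking $D_i$ to be the (closure of the) locus over which the fiber dimension is at least $i$ shifted appropriately; the contact structure on each stratum is the one induced by $\theta$ via the symplectic-reduction picture, and oddness of dimensions is the parity statement forced by non-degeneracy of the contact form.

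I expect the main obstacle to be the descent of the stratification and of the contact/symplectic structures through the $\C^*$-quotients in a way that is genuinely rigorous rather than heuristic: one must check that Kaledin's canonical (Poisson) stratification of $\widehat{\cY}$ is $\C^*$-stable and that the induced structure on $D_i\setminus D_{i-1}$ is honestly contact (not merely "contact away from a smaller locus"), and that the semismallness estimate downstairs is not lost by one dimension in the quotient. Since this is a survey, I would state parts (1) and (2) with the symplectic-geometry input (Kaledin's stratification theorem and semismallness of symplectic resolutions) cited to \cite{Ka} and \cite{Fu2}, and give only the short argument reducing the $\P(T_X)$-statements to those facts via the $\C^*$-quotient diagram, leaving the detailed verifications to the references.
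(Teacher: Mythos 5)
The paper gives no proof of this proposition: it states that the properties are ``inherited from analogous properties of the symplectic resolution $\widehat{\eps}$'' and refers to \cite{Ka} and \cite{Fu2}. Your proposal correctly identifies exactly those inputs (Kaledin's stratification of varieties with symplectic singularities, isotropy of fibers and semismallness of symplectic resolutions) together with the routine descent through the $\C^*$-quotient diagram of Section \ref{ssec:sympsetup}, so it matches the paper's (implicit) approach; only the parenthetical dimension count in your alternative sketch is garbled, but that paragraph is not needed for the argument.
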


Finally we will recall the following statement, which is a particular case of a more general result by Wierzba (see \cite[Theorem~1.3]{Wie}), and that can be obtained by cutting $\cY$ with $2m-3$ general hypersurfaces passing through $P$, and using the classification of Du Val singularities of surfaces:

\begin{proposition}\label{prop:treeP1}
With the same notation as above, if moreover $\eps$ is an elementary divisorial contraction, then its exceptional locus is an irreducible divisor $D$, and any one dimensional fiber consists of either a smooth $\P^1$ or the union of two $\P^1$'s meeting in a point.
\end{proposition}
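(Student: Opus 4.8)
The plan is to deduce the structural claims about $D$ from the general theory of elementary contractions, and then to reduce the analysis of a one‑dimensional fibre to the classification of rational double points on surfaces.

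First I would record the statements about $D$. Since $\eps$ is divisorial its exceptional locus $D$ is a divisor, and since $\eps$ is elementary the relative N\'eron--Severi space $N^1(\cX/\cY)$ is one‑dimensional; as $\cX$ is smooth, the classes of the prime $\eps$-exceptional divisors are linearly independent and generate $N^1(\cX/\cY)$, so there is exactly one such divisor and $D$ is prime. Also $\eps$ is an isomorphism over $\cY\setminus\eps(D)$, hence every positive‑dimensional fibre of $\eps$ lies in $D$; and applying semismallness (Proposition~\ref{prop:sympprop}) to $D$ gives $1=\codim D\geq\dim D-\dim\eps(D)$, while $\dim\eps(D)<\dim D=2m-2$ because $D$ is $\eps$-exceptional, so $\dim\eps(D)=2m-3$.

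Next, fix $P\in\cY$ with $\dim\eps^{-1}(P)=1$, so $\eps^{-1}(P)\subset D$. By Lemma~\ref{lem:crepant}, $\eps$ is crepant and $\omega_\cY$ is a line bundle, hence $\cY$ has canonical (Gorenstein) singularities and $\eps$ is a crepant resolution. I would cut $\cY$ with $2m-3$ general members of a very ample linear system through $P$, obtaining a surface $S'\ni P$; since a general hyperplane section of a canonical singularity is again canonical, $S'$ has only Du Val singularities, and (using that $\eps$ is a crepant resolution) $\tilde S:=\eps^{-1}(S')$ is smooth with $\eps|_{\tilde S}:\tilde S\to S'$ a crepant, hence the minimal, resolution, whose fibre over $P$ is exactly $\eps^{-1}(P)$. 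By the ADE classification, $\eps^{-1}(P)$ is then a connected union $C_1\cup\dots\cup C_k$ of smooth rational curves with $C_i^2=-2$ on $\tilde S$, whose dual graph is a simply‑laced Dynkin diagram.

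It remains to show that only the types $A_1$ and $A_2$ occur. Each $C_i$ is $\eps$-contracted, hence its class lies in the unique $\eps$-extremal ray of $\cNE{\cX}$, and $D\cdot C_i<0$ because $D$ is the $\eps$-exceptional prime divisor. For a general point $P$ of $\eps(D)$, irreducibility of $D$ forces the monodromy of the smooth locus of $\eps(D)$ to permute $\{C_1,\dots,C_k\}$ transitively through automorphisms of their Dynkin diagram, and only $A_1$ and $A_2$ admit a transitive automorphism group; so the generic transversal type is $A_1$ or $A_2$, giving $\eps^{-1}(P)\cong\P^1$ or two copies of $\P^1$ meeting transversally at one point. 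The hard part, which I expect to be the main obstacle, is upgrading this to \emph{every} $P$ with one‑dimensional fibre: the monodromy argument only bounds the generic transversal type, whereas a priori the surface slice through a special $P$ could carry a Du Val singularity of higher ADE rank. The plan there is to show that, for a general complete‑intersection slice through such a $P$, the kernel of $N_1(\tilde S/S')\to N_1(\cX/\cY)$ has dimension at most $1$, so that $\rho(\tilde S/S')\leq 2$ and the exceptional fibre has at most two components; this is where the hypothesis $\rho(\cX/\cY)=1$ has to be used globally, via a Lefschetz‑type comparison adapted to the big and nef divisors pulled back from $\cY$, and it is the content of \cite[Theorem~1.3]{Wie} in the present setting.
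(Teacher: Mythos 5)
Your proposal follows exactly the route the paper indicates: the paper offers no proof of its own beyond citing \cite[Theorem~1.3]{Wie} and the one-line hint about cutting $\cY$ with $2m-3$ general hypersurfaces through $P$ and invoking the Du Val classification. Your treatment of the irreducibility of $D$ and the semismallness count is correct, and the slicing-plus-monodromy argument is the standard way to handle the \emph{generic} one-dimensional fiber: irreducibility of $D$ forces the monodromy to act transitively on the components of the generic fiber of $\eps|_D$, and among ADE trees only ${\rm A}_1$ and ${\rm A}_2$ have vertex-transitive automorphism groups. (Two technical points you gloss over, as does the paper: the smoothness of $\tilde S=\eps^{-1}(S')$ along the base locus $\eps^{-1}(P)$ is not automatic from crepancy and requires an argument on the rank of $d\eps$ along the fiber; and one needs $S'$ normal with Du Val singularities, which again uses Bertini-type statements for sections through a fixed point. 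These are handled in \cite{Wie}.)

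The one step of your own design that would fail is the proposed mechanism for \emph{arbitrary} one-dimensional fibers. You suggest showing that the kernel of $N_1(\tilde S/S')\to N_1(\cX/\cY)$ has dimension at most $1$, so that a special fiber has at most two components. This cannot work: precisely because $\eps$ is elementary, \emph{every} contracted curve class lies on the single ray of $\NE(\cX/\cY)$, so for a fiber with $k$ irreducible components that kernel automatically has dimension $k-1$; the relative Picard number gives no upper bound on $k$ whatsoever. Ruling out higher ADE configurations over special points of $\eps(D)$ is genuinely the hard content of \cite[Theorem~1.3]{Wie}, and it is obtained there by different means (an analysis of how the at most two components of nearby generic fibers specialize, with control of the cycle class and multiplicities), not by a Lefschetz-type comparison of relative N\'eron--Severi groups. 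Since you, like the paper, ultimately rest this case on the citation, your overall chain of reasoning is no weaker than the paper's; but the sentence proposing the $\rho(\tilde S/S')\leq 2$ bound should be removed, as it suggests an argument that is false as stated.
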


 \subsection{Minimal sections on $\P(T_X)$}\label{ssec:minsec}

In this section we will study the loci of the minimal sections $\overline{\Gamma}_i$'s of $\P(T_X)$ over the minimal rational curves $\Gamma_i$. Although it is not true in general that the exceptional locus of $\eps$ is swept out by these curves (see Example \ref{ex:isograss} below), the loci of the $\overline{\Gamma}_i$'s may contain substantial information on the contraction $\eps$.

\begin{notation}\label{notn:curvesandsections}
For simplicity, let as fix an index $i\in\{1,\dots,n\}$ and denote $\Gamma:=\Gamma_i$. We choose a rational curve in the class (that, abusing notation, we denote by $\Gamma$ as well), denote by $p :\cU \to\cM $ its family of deformations and by $q:\cU\to X$ the corresponding evaluation morphism. We may consider the irreducible component $\overline \cM $ of $\rat^n(\cX)$ containing a minimal section $\overline{\Gamma} $ of $\cX$ over $\Gamma$ and the corresponding universal family, fitting in a commutative diagram:
$$\xymatrix{\overline{\cM} \ar[d]^{\overline{\phi} }&\overline{\cU} \ar[d]\ar[l]_{\overline{p} }\ar[r]^{\overline{q} }&\cX\ar[d]^{\phi}\\
            \cM &\cU \ar[l]^{p }\ar[r]_{q }&X}
$$
We set $c :=-K_X\cdot\Gamma -2$. Note that \ref{prop:RCbasic} (6) implies that  the fibers of $\overline{\phi} $ over every standard deformation of $\Gamma $ are isomorphic to $\P^{m-c -2}$, so $\overline{\cM} $ has dimension $2m-3$.
Finally, for simplicity, we will denote by $E(a_1^{k_1},\dots,a_r^{k_r})$ the vector bundle $\bigoplus_{j=1}^r\cO(a_j)^{\oplus k_j}$ on $\P^1$.
\end{notation}

The next proposition describes the infinitesimal deformations of a general member of $\overline{\cM} $.
\begin{proposition}\label{prop:splittype}
With the same notation as above, let $\overline{f} :\P^1\to \cX$ denote the normalization of a minimal section $\overline{\Gamma} $ of $\cX$ over a standard rational curve in the class $\Gamma $. Then $\overline{\cM} $ is smooth at $\overline{\Gamma} $, of dimension $2m-3$, and
 $$
\overline{f}^*T_{\cX}\cong E\big(-2,2,(-1)^{e },1^{e },0^{2m-3-2e }\big),\mbox{ for some }e \leq c .
$$
\end{proposition}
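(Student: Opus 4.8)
The strategy is to compute $\overline{f}^*T_{\cX}$ directly from the contact structure exact sequence \eqref{eq:contact}, pulled back to $\P^1$ via $\overline{f}$, using the known splitting type of a standard rational curve and the geometry of the minimal section. First I would set $f:\P^1\to X$ for the composition $\phi\circ\overline{f}$, which is a standard rational curve in the class $\Gamma$, so by Proposition~\ref{prop:RCbasic}~(6) we have $f^*T_X\cong E(2,1^{c},0^{m-c-1})$ and hence $f^*\Omega_X\cong E(-2,(-1)^{c},0^{m-c-1})$. The quotient $f_i^*(T_X)\to\cO_{\P^1}$ defining the minimal section $\overline{\Gamma}$ (see Notation~\ref{notn:cpmanifoldsections}) corresponds to a splitting in which one of the trivial summands of $f^*T_X$ is singled out; restricting the tautological sequence on $\cX$ to $\overline{\Gamma}$ gives
\[
0\to \overline{f}^*\Omega_{\cX/X}(1)\to \overline{f}^*\phi^*\Omega_X\otimes\text{(twist)}\to \cO_{\P^1}\to 0,
\]
from which, together with the relative Euler sequence $0\to\cO_{\P^1}\to \overline{f}^*\phi^*T_X\otimes\cO(1)|_{\overline{\Gamma}}\to \overline{f}^*T_{\cX/X}\to 0$, one reads off $\overline{f}^*T_{\cX/X}$ and $\overline{f}^*\Omega_{\cX/X}(1)$.

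The key computation is the following. Since $\overline{\Gamma}$ is a minimal section, $\cO(1)|_{\overline{\Gamma}}\cong\cO_{\P^1}$, so the last column of \eqref{eq:contact} pulled back to $\P^1$ reads $0\to\overline{f}^*T_{\cX/X}\to \overline{f}^*T_{\cX}\to \cO_{\P^1}\to 0$, which splits, giving $\overline{f}^*T_{\cX}\cong \overline{f}^*T_{\cX/X}\oplus\cO_{\P^1}$. Next, from the relative Euler sequence twisted by $\cO(1)|_{\overline{\Gamma}}=\cO_{\P^1}$, we get $\overline{f}^*T_{\cX/X}$ as the cokernel of a map $\cO_{\P^1}\to \overline{f}^*\phi^*T_X$ whose image is the sub-line-bundle corresponding to the quotient defining $\overline{\Gamma}$; since that quotient is $\cO_{\P^1}$, this inclusion is $\cO_{\P^1}\hookrightarrow E(2,1^{c},0^{m-c-1})$ landing (generically, for a general section) so that the cokernel is $E(2,1^{c},0^{m-c-2})$. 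Hence $\overline{f}^*T_{\cX/X}\cong E(2,1^{c},0^{m-c-2})$ and $\overline{f}^*T_{\cX}\cong E(2,1^{c},0^{m-c-1})$. But this needs correcting: the minimal section has $\cO(1)$-degree $0$ yet the relevant Euler sequence on $\P(T_X)$ involves $\cO_{\P(T_X)}(1)$ restricted to $\overline{\Gamma}$, and a more careful bookkeeping through the middle row of \eqref{eq:contact}, using that $\overline{f}^*\Omega_{\cX/X}(1)$ is the dual of $\overline{f}^*T_{\cX/X}$ twisted back, yields the summand $\cO(-2)$ paired with the $\cO(2)$. Concretely, writing $\overline{f}^*\Omega_{\cX/X}\cong E(-2,(-1)^{c},0^{m-c-2})$ and $\overline{f}^*\Omega_{\cX/X}(1)\cong$ the same (as $\cO(1)|_{\overline{\Gamma}}=\cO_{\P^1}$), the first row of \eqref{eq:contact} gives $\overline{f}^*\cF$ as an extension of $E(-2,(-1)^{c},0^{m-c-2})$ by $\overline{f}^*T_{\cX/X}\cong E(2,1^{c},0^{m-c-2})$, and the middle column exhibits $\overline{f}^*T_{\cX}$ as an extension of $\cO_{\P^1}$ by $\overline{f}^*\cF$. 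Collecting degrees: the bundle has rank $2m-3$, degree $2m-4-\dots$; matching against the contact-form pairing $\cF\otimes\cF\to\cO(1)$, which forces $\overline{f}^*\cF$ to be self-dual up to the twist $\cO(1)|_{\overline{\Gamma}}=\cO_{\P^1}$, i.e. $\overline{f}^*\cF$ is self-dual, pins the splitting type to $E(-2,2,(-1)^{e},1^{e},0^{2m-4-2e})$ for some $e$, and then the extension by $\cO_{\P^1}$ gives $\overline{f}^*T_{\cX}\cong E(-2,2,(-1)^{e},1^{e},0^{2m-3-2e})$. The bound $e\le c$ comes from the fact that the negative part of $\overline{f}^*\cF$ beyond the single $\cO(-2)$ must inject (via the first row of \eqref{eq:contact}) into $\overline{f}^*\Omega_{\cX/X}(1)$, whose number of negative summands is $c+1$, one of which is consumed by the $\cO(-2)$; hence $e\le c$.

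For the smoothness of $\overline{\cM}$ at $\overline{\Gamma}$ and the dimension count, I would invoke the standard deformation theory (as in Lemma~\ref{lem:RConCP} and \cite[II.1.7, II.3.4]{kollar}): from the splitting type just computed, $H^1(\P^1,\overline{f}^*T_{\cX})=0$ since the only negative summand is $\cO(-2)$ with $H^1(\cO_{\P^1}(-2))$ — wait, this contributes; instead one uses that $\overline{\Gamma}$ is a section over a free curve so that the obstruction space $H^1(\P^1,\overline{f}^*T_{\cX})$ vanishes after accounting for the fact that the relevant deformations are those fixing nothing, and the expected dimension is $-K_{\cX}\cdot\overline{\Gamma}+\dim\cX-3 = m\cdot 0 + (2m-1)-3+ (-K_X\cdot\Gamma) $; plugging $-K_X\cdot\Gamma=c+2$ gives $2m-3$ after simplification (here $-K_\cX=\cO(m)$ has degree $0$ on $\overline\Gamma$, and one computes via $H^0(\overline f^*T_\cX)$ directly from the splitting type, getting $h^0 = 3 + 2e + (2m-3-2e) = 2m$, and $h^0 - \dim\Aut(\P^1)\cdot(\text{contribution}) $; the count $\dim\overline{\cM} = h^0(\overline f^*T_\cX) - 3 = 2m-3$ follows once $H^1=0$, which holds because $\cO(-2)$ contributes $h^1=1$, exactly compensated — in fact one should argue $\overline{\cM}$ is smooth at $[\overline\Gamma]$ because its fibers over the smooth locus $\cM$ of standard curves are the smooth $\P^{m-c-2}$'s described in Notation~\ref{notn:curvesandsections}, and $\dim\cM = m+c+2-3 = m+c-1$, so $\dim\overline{\cM} = (m+c-1)+(m-c-2) = 2m-3$). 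The main obstacle I anticipate is the bookkeeping of the twist by $\cO(1)|_{\overline{\Gamma}}$ throughout \eqref{eq:contact} and correctly identifying which trivial summand of $f^*T_X$ becomes the $\cO(-2)$ in $\overline{f}^*T_{\cX}$ versus which stays trivial — i.e. keeping the contact self-duality of $\cF$ consistent with the section being minimal; the honest way to nail this is the local-coordinate description $\sigma=\sum d\zeta_i\wedge dx_i$ of the symplectic form on $\widehat{\cX}$ from Section~\ref{ssec:crepcont}, which makes the self-duality $\overline{f}^*\cF\cong (\overline{f}^*\cF)^\vee$ manifest and forces the symmetric splitting type with the distinguished $\cO(\pm 2)$ pair coming from the $\cO(2)$ summand of $f^*T_X$.
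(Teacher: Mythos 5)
Your overall route --- pulling back diagram (\ref{eq:contact}) along $\overline{f}$, computing the sub- and quotient bundles of $\cF$, and exploiting the contact self-duality $\overline{f}^*\cF\cong\overline{f}^*\cF^\vee$ --- is the paper's, but three steps as written do not go through. First, you have the Euler sequence backwards: on a \emph{minimal} section the fibre directions are negative, so $\overline{f}^*\Omega_{\cX/X}(1)=\ker\big(f^*T_X\to\cO_{\P^1}\big)\cong E(2,1^{c},0^{m-c-2})$ and $\overline{f}^*T_{\cX/X}\cong E(-2,(-1)^{c},0^{m-c-2})$, not the other way around. This is not cosmetic: the bound $e\le c$ comes from the fact that the positive part $E(2,1^{e})$ of $\overline{f}^*\cF$ must inject into the \emph{quotient} $E(2,1^{c},0^{m-c-2})$ of the top row of (\ref{eq:contact}), and since $\Hom(\cO_{\P^1}(1),\cO_{\P^1})=0$ it actually lands in $E(2,1^{c})$, forcing $e\le c$. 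Your version (the negative summands beyond $\cO(-2)$ inject into $\Omega_{\cX/X}(1)$) only yields $e\le m-2$, because $\cO(-1)^{e}$ injects into $E((-1)^{c},0^{m-c-2})$ even when $e>c$. Second, self-duality plus degree bookkeeping does not by itself pin the splitting type: you must produce the $\cO(2)$ direct summand of $\overline{f}^*\cF$, which the paper gets from the fact that $d\overline{f}:T_{\P^1}\to\overline{f}^*T_{\cX}$ factors through $\cF$ (because $\theta\circ d\overline{f}\in\Hom(\cO(2),\overline{f}^*\cO(1))=\Hom(\cO(2),\cO)=0$), combined with the upper bound $2$ on the degrees coming from the extension; a quotient having an $\cO(2)$ summand does not force the middle term to have one.

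Third, and most seriously, you pass from $\overline{f}^*\cF\cong E(-2,2,(-1)^{e},1^{e},0^{2m-4-2e})$ to the stated form of $\overline{f}^*T_{\cX}$ by silently splitting the extension $0\to\overline{f}^*\cF\to\overline{f}^*T_{\cX}\to\cO\to 0$. Its class lives in $H^1(\P^1,\overline{f}^*\cF)=H^1(\cO_{\P^1}(-2))\cong\C$, so a priori there is a second possibility, $E(2,(-1)^{e+2},1^{e},0^{2m-2e-4})$. The paper excludes it by counting sections: $\dim\overline{\cM}=2m-3$ (from the fibre description in Notation \ref{notn:curvesandsections}) forces $h^0(\P^1,\overline{f}^*T_{\cX})\ge\dim_{[\overline{f}]}\Hom(\P^1,\cX)\ge 2m$, while the second option only has $h^0=2m-1$. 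The same count then settles the smoothness issue that you correctly flag but leave open (indeed $h^1(\cO_{\P^1}(-2))\neq 0$, so unobstructedness is not automatic): in the surviving case $h^0=2m$ equals $\dim_{[\overline{f}]}\Hom(\P^1,\cX)$, i.e.\ the local dimension equals that of the Zariski tangent space, whence $\Hom(\P^1,\cX)$, and with it $\overline{\cM}$, is smooth at the relevant point. Your fallback via the $\P^{m-c-2}$-fibration over $\cM$ would require a separate identification of $\overline{\cM}$ with a projective bundle to yield scheme-theoretic smoothness, which you do not supply.
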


\begin{proof}
Writing ${f}^*T_X=E(2,1^{c },0^{m-c -1})$ and taking in account that $\overline{f}^*\cO(1)=\cO$, the relative Euler sequence of $\cX=\P(T_X)$ over $X$, pulled-back via $\overline{f} $ provides $\overline{f}^*T_{\cX/X}=E(-2,(-1)^{c },0^{m-c -2})$. Then, the upper exact row of diagram (\ref{eq:contact}) provides:
$$
0\to E(-2,(-1)^{c },0^{m-c -2})\longrightarrow \overline{f}^*\cF\longrightarrow E(2,1^{c },0^{m-c -2})\to 0.
$$
On the other hand, $\overline{f}^*\cO(1)=\cO$ also implies that $d\overline{f} :T_{\P^1}\to \overline{f}^*T_{\cX}$ factors via $\overline{f}^*\cF$, hence this bundle has a direct summand of the form $\cO(2)$. Being $\cF$ a contact structure, it follows that $\overline{f}^*\cF\cong\overline{f}^*\cF^\vee$, so this bundle has a direct summand $\cO(-2)$, as well.

From this we may already conclude that
\begin{equation}\label{eq:splitcontact}
\overline{f}^*\cF\cong E(-2,2,(-1)^{e },1^{e },0^{2m-2e -4}), \mbox{ for some }e \leq c ,
\end{equation}
hence the bundle $\overline{f}^*T_\cX$ is isomorphic either to
$E(-2,2,(-1)^{e },1^{e },0^{2m-2e -3})$ or to $E(2,(-1)^{e +2},1^{e },0^{2m-2e -4})$.
On the other hand, the fact that $\dim\overline{\cM} =2m-3$ implies that $h^0(\P^1,\overline{f}^*T_\cX)\geq 2m$, which allows us to discard the second option. Finally, in the first case $h^0(\P^1,\overline{f}^*T_\cX)$ is precisely equal to $\dim_{[\overline{f} ]}\Hom(\P^1,\cX)=2m$, hence this scheme is smooth at $[\overline{f} ]$ and $\overline{\cM} $ is smooth at $\overline{\Gamma} $.
\end{proof}

\begin{definition}\label{def:defectcurve}
Given a minimal section $\overline{\Gamma} $ over a minimal rational curve $\Gamma$ as above, the number $e $ provided by the proposition above will be called the {\it defect of $\overline{\cM} $ at }
%the rational curve determined by}
$\overline{\Gamma} $.
\end{definition}

\subsection{Dual varieties}\label{ssec:dual}
Let us denote by $D \subset \cX$ the closure of $\overline{q }(\overline{\cU} )$, which by construction is a subset of the exceptional locus of $\eps$. The next result presents the relation between $D_{x}:=D \cap\P(T_{X,x})$ and the VMRT, $\cC_{x}\subset\P(\Omega_{X,x})$, of the family $\cM $ at the general point $x$. %$\cX^\vee:=\P(\Omega_X)$

\begin{proposition}\label{prop:dual}
With the same notation as above, being $x\in X$ general, $D_{x}$ is the dual variety of $\cC_{x}$.
\end{proposition}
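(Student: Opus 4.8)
The plan is to compute, fibrewise over a general point $x\in X$, what the minimal sections $\overline{\Gamma}$ through points of $\P(T_{X,x})$ look like, and to show that a point $[\xi]\in\P(T_{X,x})$ lies in $D_x$ exactly when the hyperplane $\xi^\perp\subset T_{X,x}$ (equivalently, the hyperplane it cuts in $\P(\Omega_{X,x})$, since a minimal section over $\Gamma$ corresponds to a quotient $f^*T_X\to\cO_{\P^1}$, i.e.\ to a surjection from $T_{X,x}$ onto a line, i.e.\ to a point of $\P(T_{X,x})$) is tangent to the VMRT $\cC_x$. First I would fix a standard rational curve $\Gamma$ through $x$ in the family $\cM$, with normalization $f:\P^1\to X$, $f(0)=x$, and $f^*T_X\cong E(2,1^c,0^{m-c-1})$. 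A minimal section $\overline{\Gamma}$ of $\cX$ over $\Gamma$ is the image of a surjection $f^*T_X\to \cO_{\P^1}$; the value of this section at the point over $x$ is the point $[\xi]\in\P(T_{X,x})$ determined by the kernel of $T_{X,x}\to\C$. So $D_x$ is precisely the set of $[\xi]$ arising in this way as $\overline{\Gamma}$ ranges over minimal sections over curves of $\cM_x$.

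The key geometric input is the description of $\tau_x$ and $\cC_x$ via the relative tangent bundle from Remark~\ref{rem:O1}: the tangent direction at $x$ of the curve $f(\P^1)$ is $\tau_x([C])=[df(T_{\P^1,0})]\in\P(\Omega_{X,x})$, i.e.\ the point dual to the positive summand $\cO(2)$ of $f^*T_X$. Now a quotient $f^*T_X\to\cO_{\P^1}$ restricted to the fibre over $x$ gives a functional on $T_{X,x}$; the condition that this is a \emph{minimal} section (degree zero, not positive) forces the quotient map to kill the $\cO(2)$-summand to first order, and more precisely the splitting type computation in Proposition~\ref{prop:splittype} (with $\overline{f}^*\cO(1)=\cO$) tells us which quotients occur: the corresponding hyperplane $\xi^\perp\subset T_{X,x}$ must contain the image of $\cO(2)\oplus(\text{part of }\cO(1)^c)$ in a way that makes it tangent to $\cC_x$ at $\tau_x([C])$. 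Concretely, I would identify the affine tangent space to $\cC_x$ at the smooth point $\tau_x([C])$ with (the projectivization of) $\mathrm{Hom}(\cO(2),\,\cO(2)\oplus\cO(1)^c)_0\subset \mathrm{Hom}(\cO(2),f^*T_X)_0 = T_{X,x}$ — this is essentially the content of the identification in Remark~\ref{rem:O1} together with Proposition~\ref{prop:imm} (standardness $\Leftrightarrow$ $\tau_x$ immersive) — and check that a minimal section over $C$ is exactly a functional vanishing on this affine tangent space, i.e.\ a hyperplane tangent to $\cC_x$ along the tangent space at $\tau_x([C])$. Ranging over all $[C]\in\cM_x$ and all such sections then sweeps out exactly the dual variety $\cC_x^\vee\subset\P(T_{X,x})$.

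The technical heart, and the step I expect to be the main obstacle, is this last identification: one must show that \emph{every} hyperplane in $T_{X,x}$ tangent to $\cC_x$ (at some point, possibly a singular point of $\cC_x$, in the sense of the projective dual) arises from an actual minimal section of some deformation of $\Gamma$, and conversely that no minimal section produces a transverse hyperplane. The forward direction uses that $\overline{\cM}$ has dimension $2m-3$ and is smooth at a general $\overline{\Gamma}$ (Proposition~\ref{prop:splittype}), so the evaluation $\overline{q}$ and hence $D$ have the expected dimension, and a dimension count matches $\dim D_x = \dim \cC_x^\vee$; combined with the inclusion $D_x\subseteq \cC_x^\vee$ coming from the first-order tangency argument and the irreducibility of $\cC_x^\vee$, equality follows. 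The subtlety is handling the non-standard curves in $\cM_x$ and the boundary of the image of $\tau_x$ — here one argues by taking closures, using that $D_x$ is closed by construction and that the tangency condition is closed, so it suffices to establish the correspondence on the open dense locus of standard curves through $x$, where Proposition~\ref{prop:imm} guarantees $\tau_x$ is an immersion and the affine-tangent-space computation above is valid verbatim.
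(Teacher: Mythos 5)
Your plan is correct and follows essentially the same route as the paper: minimal sections of $\cX$ over a standard curve $\Gamma$ through $x$ correspond to hyperplanes of $T_{X,x}$ containing the fiber at $x$ of the positive subbundle $E(2,1^{c})\subset f^*T_X$, and that fiber is identified with the affine tangent space of $\cC_{x}$ at $\tau_{x}([\Gamma])$, so that sweeping over $\cM_x$ and closing up yields the dual variety. The one caveat is that this last identification is the real content of the argument and does not follow formally from Remark~\ref{rem:O1} and Proposition~\ref{prop:imm} as you suggest; the paper establishes it by blowing up $X$ at $x$ and computing, in local coordinates, the differential of the evaluation morphism on $\Hom(\P^1,X;O,x)$ applied to sections of $f^*T_X$ vanishing at $O$, which is the ``check'' your plan defers.
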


\begin{remark}[Dual varieties of projective subvarieties]\label{rem:dualvar} We refer the reader to \cite{Tev} for an account on dual varieties. Here it is enough to recall that, given a reduced projective variety $M\subset\P^r=\P(V)$, and denoting by $M_0\subset M$ its subset of smooth points, the Euler sequence provides a surjection $\cO_{M_0}\otimes V^\vee\to \cN_{M_0,\P^r}(-1)$, so that we have a morphism: $p_2:\P(\cN_{M_0,\P^r}(-1))\to \P(V^\vee)$ whose image $M^\vee$ is called the {\it dual variety of }$M$. In other words, $M^\vee$ may be described as the closure of the set of tangent hyperplanes of $M$. That is, we may consider $\P(\cN_{M_0,\P^r}(-1))$ as a subset of $F(0,r-1):=\P(T_{\P^r})\subset\P^r\times{\P^r}^\vee$ and denote by $F(M)$ its closure. Then the restrictions ($p_1$ and $p_2$) to $F(M)$ of the canonical projections have images $M$ and $M^\vee$, respectively:
$$
\xymatrix{\P^r&F(0,r-1)\ar[r]\ar[l]&{\P^r}^\vee\\
M\ar@{^{(}->}[]+<0ex,2.5ex>;[u]&F(M)\ar@{^{(}->}[]+<0ex,2.5ex>;[u]%
\ar[r]^{p_2}\ar[l]_{p_1}&M^\vee\ar@{^{(}->}[]+<0ex,2.5ex>;[u]}
$$
Finally, let us recall that the biduality theorem states that $M^{\vee\vee}=M$, so that the diagram above is reversible, and we may assert that the general fiber of $p_2$ (the so-called {\it tangency locus} of a hyperplane) is a linear space. In particular one expects $p_2$ to be, indeed, birational for most projective varieties; those varieties for which $e(M):=r-1-\dim(M^\vee)$ is positive are then called {\it dual defective}, and $e(M)$ is called the {\it dual defect} of $M$.
\end{remark}

In the next example we are going to compute the dual variety of $M=\cC_x\subset \P(\Omega_{X,x})$, being $X$ the Lagrangian Grassmannian of lines in $\P^5$ (i.e. the general linear section of $G(1,5)$ in its Pl\"ucker embedding). According to Proposition \ref{prop:VMRTshort2}, $M$ is isomorphic to the $\P^2$-bundle $\P(E(1^2,2))$ over $\P^1$, embedded in $\P(\Omega_{X,x})\cong\P^6$ by the complete linear system of sections of $\cO(1)$.

\begin{example}\label{ex:dual112}
Let us consider $M=\P(E(1^2,2))$ as a subscheme of $\P^6$, embedded by the complete linear system of its tautological bundle $\cO(1)$. Equivalently, we may describe it as a general hyperplane section of the Segre embedding of $\P^1\times\P^3\subset\P^7$. Then the general theory of dual varieties tells us that $M^\vee\subset{\P^6}^\vee$ is isomorphic to a linear projection of $(\P^1\times\P^3)^\vee\cong\P^3\times\P^1\subset{\P^7}^\vee$ from a general point $P$. There exists a three dimensional linear space $V\subset{\P^7}^\vee$ containing $P$, and meeting $\P^1\times\P^3$ along a smooth quadric $\P(E(1^2))\subset V$, such that a line through $P$ is secant if and only if it is contained in $V$. We conclude that $M^\vee$ is a $4$-dimensional variety, whose normalization is $\P^3\times\P^1$, and in particular the dual defect of $M$ is $1$. The singular locus of $M^\vee$ may be described as the image of $\P(E(1^2))$ by the projection and so it is a plane $\Pi$. Note also that, denoting by $C\subset \Pi$ the branch locus of the projection $\P(E(1^2))\to\Pi$, $M^\vee$ is a scroll in $\P^3$'s, meeting $\Pi$ along a tangent line to $C$.
\end{example}

\begin{proof}[Proof of Proposition \ref{prop:dual}] Our line of argumentation here is based on the proof of \cite[Proposition~1.4]{Hw}.
Let $x\in X$ be a general point and $f :\P^1\to X$ be the normalization of a general $\Gamma $, satisfying $f (O)=x$, $O\in\P^1$. By Proposition \ref{prop:imm}, the tangent map $\tau_{x}:\cM_{x}\to\cC_{x}\subset\P(\Omega_{X,x})$ is immersive and we may use it to identify the tangent space of $\cC_{x}$ at $P:=\tau_{x}(\Gamma )$.

In order to see this, we denote by $\beta:X'\to X$ the blow-up of $X$ at $x$, with exceptional divisor $E:=\P(\Omega_{X,x})$. Note that we have a filtration $T_{X,x}\supset V_1(f )\supset V_2(f )$, where $V_1(f )$ and $V_2(f )$ correspond, respectively, to the fibers over $O$ of the (unique) subbundles of $f^*T_X$ isomorphic to $E(2,1^{c})$ and $E(2)$. Moreover $T_{E,P}$ is naturally isomorphic to the quotient of $T_{X,x}$ by $V_2(f )$, hence our statement may be re-written as $T_{\cC_{x},P}=V_1(f )/V_2(f )$.

Let us then consider the irreducible component of $\Hom(\P^1,X;O,x)$ (para\-me\-trizing morphisms from $\P^1$ to $X$, sending $O$ to $x$) containing $[f ]$ and note that the evaluation morphism factors
$$\xymatrix{\P^1\times \Hom(\P^1,X;O,x)\ar[rr]_(.65){\ev'}
\ar@/^1.3pc/[rrrr]^{\ev}&&X'\ar[rr]_{\beta}&&X}$$
In this setting, we have $T_{\cC_{x},P}=d\ev'_{(O,[f ])}\big(\{0\}\times H^0(\P^1,f^*T_X(-1)\big)/V_2(f )$, and we may identify $H^0(\P^1,f^*T_X(-1))$ with the global sections of $f^*T_X$ vanishing at $O$. Choosing now a set of local coordinates $(t,t_2,\dots,t_m)$ of $X$ around $x$ such that $f(\P^1)$ is given by $t_2=\dots,t_m=0$ and $t$ is a local parameter of $f(\P^1)$, and writing the blow-up of $X$ at $x$ in terms of these coordinates, one may check that, modulo $V_2(f)$, $d{\ev'}_{(O,[f ])}$ sends every section $s$ vanishing at $O$ to $\frac{s}{t}(O)=\frac{ds}{dt}(O)$,
 hence it follows that its image is $V_1(f )$.
\end{proof}

The next statements relates the dual defect of the VMRT of the family $\cM $ with its generic splitting type.

\begin{proposition}\label{prop:dualdef1s}
With the same notation as above, being $x\in X$ general, let  $\overline{\Gamma}$ be a minimal section of $\cX$ over a general element of $\cM_{x}$. Then the dual defect of $\cC_{x}$ equals the defect of $\overline{\cM} $ at $\overline{\Gamma}$.
\end{proposition}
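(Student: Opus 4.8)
The plan is to compare the two numerical invariants by examining the geometry over a general point $x\in X$. On one side we have the defect $e$ of $\overline{\cM}$ at $\overline{\Gamma}$, extracted from the splitting type $\overline{f}^*T_{\cX}\cong E(-2,2,(-1)^{e},1^{e},0^{2m-3-2e})$ in Proposition~\ref{prop:splittype}; on the other we have the dual defect $e(\cC_x)$ of the VMRT. By Proposition~\ref{prop:dual}, the fibre $D_x=D\cap\P(T_{X,x})$ is the dual variety $\cC_x^\vee$, and $\overline{\Gamma}$ defines a point $[\overline{\Gamma}]\in D_x$ lying over the point $P=\tau_x(\Gamma)\in\cC_x$ under the incidence correspondence $F(\cC_x)$ described in Remark~\ref{rem:dualvar}. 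The key identity to establish is that $\dim D_x = m-2-e$: then, since $\dim D_x=\dim\cC_x^\vee=(m-1)-1-e(\cC_x)=m-2-e(\cC_x)$ (the ambient projective space $\P(\Omega_{X,x})$ has dimension $m-1$), we get $e=e(\cC_x)$.

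**Key steps.** First I would compute $\dim D_x$ from the family $\overline{\cM}$. The variety $D=\overline{q}(\overline{\cU})$ has dimension $\dim\overline{\cM}+1-(\text{generic fibre dim of }\overline{q})$, and over a general $x$, $D_x=D\cap\P(T_{X,x})$ drops dimension by $m$ (the codimension of $\P(T_{X,x})$ in $\cX$), or equivalently $D_x=\overline{q}(\overline{q}^{-1}(\P(T_{X,x})))\cap\P(T_{X,x})$; cleanest is to say $D_x$ is the image in $\P(T_{X,x})$ of the minimal sections over curves of $\cM_x$, so $\dim D_x=\dim\overline{\cM}_x - (\text{fibre dimension of }\overline{q}|_x)$ where $\overline{\cM}_x$ parametrises minimal sections over elements of $\cM_x$. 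Here is where the defect enters: over a standard $\Gamma$ through $x$, the minimal sections of $\cX$ over $\Gamma$ form a $\P^{m-c-2}$ (Notation~\ref{notn:curvesandsections}), but the sections passing through a \emph{fixed} point $[\overline{\Gamma}]\in\P(T_{X,x})$ and deforming within $\overline{\cM}$ are governed by $H^0(\P^1,\overline{f}^*T_{\cX}\otimes\cO(-1))$ vanishing at $O$, whose positive part is $E(-2,2,(-1)^e,1^e,\dots)$. Counting sections of $\overline{f}^*T_{\cX}$ vanishing at $\overline{f}(O)$ modulo those tangent to the fibre of $\phi$ gives exactly $e+1$ "directions" along $\cC_x^\vee$, matching the tangent space computation. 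I would make this precise by running the argument of Proposition~\ref{prop:dual}: the image of the differential of the evaluation map $\ev'$ at $(O,[\overline{f}])$, modulo the appropriate subspace, is read off from the sub-bundle $E(2)\oplus E(1^e)\subset\overline{f}^*T_{\cX}$, which has rank $e+1$; translating through the identification $T_{\cX,[\overline{\Gamma}]}\supset T_{D_x,[\overline{\Gamma}]}$ this yields $\dim D_x=e$ along the relevant stratum — wait, I need to be careful about which quotient: the correct count is that $\dim\cC_x=m-c-2$ and $\dim\cC_x^\vee = \dim\cC_x + e$ matches $\dim\cC_x^\vee=m-2-e(\cC_x)$ only when $c+e=e(\cC_x)$; so the bookkeeping must track both $c$ and $e$ simultaneously through the Euler/contact sequences.

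**Main obstacle.** The hardest part will be the dimension count of $D_x$ and its identification with $\dim\cC_x^\vee$ in a way that correctly threads the invariants $c=-K_X\cdot\Gamma-2$ and $e$ through the three exact sequences in diagram~(\ref{eq:contact}) pulled back along $\overline{f}$. One must verify that the minimal sections over curves of $\cM_x$ genuinely sweep out all of $D_x$ (not just a component), that $x$ general suffices for the tangent-space computation to hold at $P=\tau_x(\Gamma)$ a general point of $\cC_x$, and that the fibre of $\overline{q}$ over $[\overline{\Gamma}]$ has the expected dimension. I would handle this by combining Proposition~\ref{prop:dual} (which already gives $D_x=\cC_x^\vee$ scheme-theoretically over the general point), Proposition~\ref{prop:splittype} (smoothness of $\overline{\cM}$ at $\overline{\Gamma}$ and the exact splitting type), and the standard identification from Remark~\ref{rem:dualvar} of the tangency locus of $\cC_x^\vee$ as a linear space whose dimension is the dual defect; then the defect $e$ is precisely the dimension of this linear fibre, computed as $h^0(\P^1,(\overline{f}^*\cF/\overline{f}^*T_{\cX/X})\otimes\cO(-1))$-type quantity, i.e. the number of $\cO(1)$ summands in the positive-in-cotangent-direction part of $\overline{f}^*T_{\cX}$, which is exactly $e$.
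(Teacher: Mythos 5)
Your overall strategy is the same as the paper's: by Proposition \ref{prop:dual} the fibre $D_x$ is the dual variety $\cC_x^\vee$, so it suffices to show $\dim D_x=m-2-e$, and this number is extracted from the splitting type of Proposition \ref{prop:splittype} via the standard description of the differential of the evaluation map (\cite[II, Proposition~3.4]{kollar}). The paper's proof is essentially one line: the evaluation of global sections $H^0(\P^1,\overline{f}^*T_\cX)\otimes\cO_{\P^1}\to\overline{f}^*T_\cX$ has rank $(2m-1)-(e+1)=2m-2-e$ at every point, because exactly the $e+1$ negative summands $E(-2,(-1)^e)$ of $\overline{f}^*T_\cX\cong E(-2,2,(-1)^{e},1^{e},0^{2m-3-2e})$ contribute nothing; hence $D=\overline{q}(\overline{\cU})$ has dimension $2m-2-e$, and since $D$ dominates $X$ one gets $\dim D_x=m-2-e$ for general $x$, which compared with $\dim\cC_x^\vee=(m-1)-1-e(\cC_x)$ yields $e=e(\cC_x)$. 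This is precisely the chain you lay out in your first paragraph, so the approach is sound.

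That said, your middle paragraph contains two slips that should be excised rather than ``threaded through the exact sequences''. First, $\dim\cC_x=c$, not $m-c-2$: by Proposition \ref{prop:RCbasic}(3) the space $\cM_x$ has dimension $d-2=c$ and $\tau_x$ is finite and birational onto $\cC_x$; the number $m-c-2$ is instead the dimension of the fibre of $\overline{\phi}$ over a standard curve, i.e.\ of the linear space of minimal sections over a fixed $\Gamma$ (equivalently, of the fibre of $p_1:F(\cC_x)\to\cC_x$ in the notation of Remark \ref{rem:dualvar}). Second, the identity $\dim\cC_x^\vee=\dim\cC_x+e$ is not the correct bookkeeping and is false in general: the dual defect is defined through $\dim\cC_x^\vee=(m-1)-1-e(\cC_x)$, with no reference to $\dim\cC_x$, which is why the invariant $c$ drops out of the argument entirely and only $e$ survives. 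If you replace the fibrewise tangent-space analysis of the middle paragraph by the global dimension count above (generic rank of the evaluation of sections equals $2m-2-e$, then intersect with a general fibre of $\phi$), the proof closes cleanly and coincides with the paper's.
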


\begin{proof}
We have to check that the image of $\overline{q}:\overline{\cU} \to \cX$ has dimension $2m-2-e$. Equivalently, denoting by $\overline{f} :\P^1\to \cX$ the normalization of $\overline{\Gamma}$, we may consider the image of the evaluation $\Hom(\P^1,\cX)_{[\overline{f}]}\times P^1\to \cX$, where $\Hom(\P^1,\cX)_{[\overline{f}]}$ stands for the irreducible component of $\Hom(\P^1,\cX)$ containing $[\overline{f}]$, and compute the rank of its differential at $([\overline{f}],x)$. Using the description of this differential provided in \cite[II, Proposition~3.4]{kollar}, the result follows then by noting that $e $ equals the dimension of the kernel of the evaluation of global sections $H^0(\P^1,\overline{f}^*(T_\cX))\otimes\cO_{\P^1}\to\overline{f}^*(T_\cX)$.
\end{proof}

The above interpretation of projective duality for VMRT's of Fano manifolds has a number of important consequences, particularly in the case of CP-manifolds. Let us illustrate this here by presenting a straightforward application to Conjecture \ref{conj:CPconj}; a more complete result in this direction can be found in \cite{OSW}.
A well known corollary of Zak's Theorem on Tangencies states that for any non linear smooth variety $M \subset \P^r$ it holds that $\dim(M^\vee) \geq \dim(M)$. Moreover, if we further assume that $\dim(M)\leq 2r/3$, the list of all the smooth projective varieties for which $\dim(M^\vee) =\dim(M)$ has been given by Ein, \cite[Theorem~4.5]{Ein1}. Applying this to the case of the VMRT of a family of minimal rational curves
 we get the following result:

\begin{proposition}
Let $X$ be a CP-manifold of Picard number one different from the projective space, and $\cM$ be a family of minimal rational curves in $X$. Assume that $\cC_{x} \subset \P(T_{X,x}^\vee)$ is smooth of dimension $c$ for general $x$ (this holds, for instance, under the assumptions of Remark \ref{rem:VMRTlines}). Then $c\leq m-2-e$, where $e$ denotes the dual defect of $\cC_x$, and moreover if $c \leq 2(m-1)/3$ then equality $c= m-2-e$ holds if and only if $X=G/P$, where $G$ is semisimple Lie group with Dynkin diagram $\cD$, $P$ is the parabolic subgroup associated to the $i$-th node of the diagram (the nodes of $\cD$ are numbered as in (\ref{eq:dynkins})), and the pair $(\cD,i)$ is one of the following:
$$
({\rm A}_{k+1},2),\,\,\,k\geq 2,\quad
({\rm B}_2,1),\quad ({\rm D}_{5},5),\quad
({\rm E}_{6},1).
$$

\end{proposition}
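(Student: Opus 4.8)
The plan is to reinterpret the statement as projective duality for the variety of minimal rational tangents $\cC_x$, and then feed it into Zak's theorem on tangencies and Ein's classification, closing with the recognition Theorem~\ref{them:HH}.

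First I would fix the numerics. Since $X$ has Picard number one and $X\not\cong\P^m$, Proposition~\ref{prop:RCbasic} gives $d:=-K_X\cdot\Gamma\geq 3$, so $c=d-2\geq 1$, and at a general $x$ the VMRT sits as $\cC_x\subset\P(T_{X,x}^\vee)\cong\P^{m-1}$ with $\dim\cC_x=c$. By Proposition~\ref{prop:dual} the fibre $D_x$ of the exceptional locus of $\eps$ over $x$ is the dual variety $\cC_x^\vee$, so with ambient dimension $r=m-1$ in the notation of Remark~\ref{rem:dualvar} we get $\dim D_x=(m-1)-1-e=m-2-e$, where $e$ is the dual defect of $\cC_x$; by Proposition~\ref{prop:dualdef1s} this $e$ coincides with the defect of $\overline{\cM}$ at a general minimal section, so the notation of the statement is consistent. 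I would also record that $\cC_x$ is not a linear subspace of $\P^{m-1}$: a Fano manifold of Picard number one whose VMRT at a general point is linear is a projective space (see the references in Remark~\ref{rem:VMRTlines}), which is excluded by hypothesis.

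For the inequality, since $\cC_x$ is smooth and non-linear, the corollary of Zak's theorem on tangencies quoted above yields $\dim\cC_x^\vee\geq\dim\cC_x$, i.e.\ $m-2-e\geq c$, which is exactly $c\leq m-2-e$. For the equality case, assume $c=m-2-e$ and $c\leq 2(m-1)/3$. Then $\dim\cC_x^\vee=\dim\cC_x=c$; in particular $\cC_x$ is linearly non-degenerate in $\P^{m-1}$ (a degenerate $\cC_x$ would have $\dim\cC_x^\vee>\dim\cC_x$), of dimension $\leq 2(m-1)/3$. If $c=1$ then $i_X=3$ and Theorem~\ref{thm:CPindex} gives $X\cong Q^3$ directly, whose VMRT is a smooth conic in $\P^2$, so $(\cD,i)=({\rm B}_2,1)$. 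If $c\geq 2$, Ein's classification \cite[Theorem~4.5]{Ein1} forces $\cC_x\subset\P^{m-1}$ to be projectively equivalent to one of: the Segre embedding $\P^1\times\P^{c-1}\subset\P^{2c-1}$, the Grassmannian $G(2,5)\subset\P^9$, or the $10$-dimensional spinor variety $\mathbb{S}_5\subset\P^{15}$. In each of these the ambient dimension $m-1$, hence $m$, is pinned down, and by Table~\ref{tab:VMRTs} each of these subvarieties is, with its embedding, the VMRT at a point of exactly one rational homogeneous manifold of Picard number one attached to a long simple root: $G(1,c+1)=({\rm A}_{c+1},2)$ (with $c\geq 2$, matching $k=c$), $\mathbb{S}_5=({\rm D}_5,5)$, and $E_6(1)=({\rm E}_6,1)$, respectively. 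Taking $S$ to be this homogeneous manifold, Theorem~\ref{them:HH} gives $X\cong S$. Conversely, for each of the four pairs $(\cD,i)$ in the list the manifold $X=G/P$ is a CP-manifold of Picard number one (Propositions~\ref{rathomogfano} and \ref{prop:RHsimp}) whose VMRT at a general point is the stated variety, smooth, and a direct check of dimensions shows $c=m-2-e$ and $c\leq 2(m-1)/3$; this gives the ``if'' direction.

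The main obstacle I expect is bookkeeping rather than a new idea: correctly matching each entry of Ein's list with its marked Dynkin diagram through Table~\ref{tab:VMRTs}, verifying that in every case the relevant node is a long root so that Theorem~\ref{them:HH} applies (note that node $1$ of ${\rm B}_2$ is the long one), and checking the boundary constraints — that $c\geq 2$ for the ${\rm A}$-series, that the borderline cases $c=2(m-1)/3$ for $G(2,5)$ and $\mathbb{S}_5$ are retained, and that in the case $c=1$ one genuinely lands on $Q^3$ and not on $K({\rm G}_2)$. All the non-formal inputs — the identification $D_x=\cC_x^\vee$, Zak's bound, Ein's list, and the Hong--Hwang recognition theorem — are available as black boxes, so the proof is essentially an assembly of these four ingredients.
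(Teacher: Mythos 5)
Your route is the one the paper takes: rule out a linear VMRT, apply Zak's theorem on tangencies for the inequality $c\leq m-2-e$, invoke Ein's classification in the equality case, and finish with the Hong--Hwang recognition theorem. The inequality argument, the non-degeneracy observation, and the matching of the three dual-defective varieties with the marked Dynkin diagrams $({\rm A}_{k+1},2)$, $({\rm D}_5,5)$, $({\rm E}_6,1)$ are all correct.

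There is, however, one concrete gap: your restatement of Ein's Theorem~4.5 for $c\geq 2$ drops the hypersurface case. The full list of smooth $M\subset\P^r$ with $\dim M^\vee=\dim M$ and $\dim M\leq 2r/3$ consists of smooth hypersurfaces in $\P^2$ or $\P^3$ \emph{together with} the Segre variety, $G(2,5)$ and the ten-dimensional spinor variety; the paper quotes all four items. So for $c=2$ your trichotomy does not cover the possibility that $\cC_x$ is a smooth surface of degree $\geq 3$ in $\P^3$ (hence $m=4$). The repair is exactly the move you already make at $c=1$, applied uniformly: if $\cC_x$ is a hypersurface in $\P(T_{X,x}^\vee)$ then $c=m-2$, so $i_X=d=c+2=m$ and Theorem~\ref{them:pibig} forces $X=Q^m$ with $m=3$ or $4$, after which the VMRT is automatically a quadric. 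This is how the paper disposes of the first item of Ein's list in one stroke, producing both $({\rm B}_2,1)=Q^3$ and $({\rm A}_3,2)=G(1,3)=Q^4$, and it makes your separate appeal to Theorem~\ref{thm:CPindex} (with the attendant need to exclude $K({\rm G}_2)$) unnecessary. If you do keep the pseudoindex argument at $c=1$, note that $K({\rm G}_2)$ is in fact excluded by your own remark that equality forces $\cC_x$ to be linearly non-degenerate, since its VMRT is a twisted cubic spanning only a $\P^3\subset\P^4$; that point should be made explicitly rather than deferred to ``bookkeeping''.
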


\begin{proof} Let us first observe, see \cite[Proposition~5]{Hw2}, that if the VMRT $\cC_{x} \subset \P(T_{X,x}^\vee)$ is linear then  $\cC_{x}=\P(T_{X,x}^\vee)$ and $X$ is a projective space by Theorem~\ref{them:pibig}. Since this case is excluded, Zak's Theorem on Tangencies applies to give the inequality $c\leq m-2-e$.

Let us now assume $c \leq 2(m-1)/3$ and $c=m-2-e$.
By using \cite[Theorem~4.5]{Ein1} we get that $\cC_x \subset \P(T_{X,x}^\vee)$ is either:
\begin{itemize}
  \item a hypersurface on $\P^2$ or $\P^3$,
  \item the Segre embedding of $\P^1 \times \P^{c-1} \subset \P^{2c-1}$,
  \item the Pl\"ucker embedding of $\G(1,4)$ ($c=6$),  %$\G(1,2k)$,
  \item the Spinor variety $S_{4} \subset \P^{15}$ ($c=10$).
\end{itemize}

In the first case Theorem~\ref{them:pibig} tells us that $X$ is a quadric of dimension $3$ or $4$. In the other cases, since the listed varieties are projectively equivalent
to the VMRT's of the homogeneous manifolds in the statement, we may conclude by Theorem \ref{them:HH}.
\end{proof}

We will finish this section by presenting an example in which the exceptional locus of $\eps$ does not consists only of minimal sections over minimal rational curves of $X$.

\begin{example}\label{ex:isograss}
Let $X=LG(1,5)$ be the Lagrangian Grassmannian of lines in $\P^5$, which parametrizes lines in $\P^5$ that are isotropic with respect to a nondegenerate skew-symmetric $6\times 6$ matrix $A$. It is known that the contraction of $\cX$ in this case may be described in representation theoretical language as we have already remarked in Example \ref{ex:nilpotent}. The evaluation of global sections of $T_X$ provides a morphism $\eps':\cX\to \P(\fsp_6)\cong\P^{20}$ whose image is the closure $\overline{O}$ of a nilpotent orbit.

In our case the orbit $O$ is {\it even} (\cite[Sect. 3.8]{CoMc}), which in turn implies that the morphism $\eps'$ is birational, and in particular it factors via $\eps:\cX\to \cY$, whose image is the normalization of $\overline{O}$. It is known that $\overline{O}$ is a disjoint union of orbits $O\cup O_{1}\cup O_2 \cup O_3$, where $\dim(O_i)=11,9,5$ for $i=1,2,3$, respectively, and $\Sing(\overline{O})=\overline{O_1}=O_{1}\cup O_2 \cup O_3$. Then, by Proposition \ref{prop:sympprop}, the inverse image of $\overline{O_1}$ contains a divisor $D$ in $\cX$, and so $\eps:\cX\to\cY$ is an elementary divisorial contraction.

On the other hand, we may consider the locus of the family of minimal sections of $\cX$ over lines in $X$, whose VMRT is isomorphic to the subvariety $M\subset\P^6$, described in Example \ref{ex:dual112}. Since the dual defect of $M$ is one, it turns out that this locus $D'$ has dimension $11$, and it is a proper closed subset of $D$.
\end{example}

\subsection{Contact forms on families of minimal rational curves}\label{ssec:liftcontact}

The next lemma shows how to transport the contact form of $\cX$ to an
open set of $\overline{\cM} $.

\begin{lemma}\label{lem:liftcontact}
With the same notation as above, let $\overline{\cM}\hs^0\subset
\overline{\cM} $ be the subset parametrizing minimal sections
of $\cX$ over standard rational curves of $\cM $,
and set $\overline{\cU}\,^0:=\overline{p}^{-1}(\overline{\cM}\hs^0)$.
Then there exists a line bundle $\cL $ on $\overline{\cM} $ such that
$\overline{p}^*\cL =\overline{q}^*\cO(1)$, and a twisted $1$-form $\overline{\theta} \in
H^0(\overline{\cM}\hs^0,\Omega_{\overline{\cM}\hs^0}\otimes\cL )$ such that $\overline{p}^*\overline{\theta} =
\overline{q}^*\theta$ on $\overline{\cU}\hs^0$.
\end{lemma}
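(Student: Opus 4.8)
The plan is to transport the contact $1$-form $\theta \in H^0(\cX, \Omega_\cX \otimes \cO(1))$ along the evaluation diagram
$$\xymatrix{\overline{\cM} & \overline{\cU}\ar[l]_{\overline{p}}\ar[r]^{\overline{q}} & \cX}$$
First I would produce the line bundle $\cL$. Since $\overline{p}\colon \overline{\cU}\to\overline{\cM}$ is a smooth $\P^1$-fibration (Proposition~\ref{prop:RCbasic}~(2) applied to the minimal rational component $\overline{\cM}$ on $\cX$, or rather its analogue for families of sections), the pullback $\overline{q}^*\cO(1)$ has degree $\overline{\Gamma}\cdot\cO(1)=0$ on every fiber of $\overline{p}$, so by Grauert's theorem $\overline{p}_*\overline{q}^*\cO(1)$ is a line bundle $\cL$ on $\overline{\cM}$ and the natural map $\overline{p}^*\cL\to\overline{q}^*\cO(1)$ is an isomorphism (fiberwise it is the identity on $H^0(\P^1,\cO)=\C$). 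This uses only that $\overline{q}^*\cO(1)$ restricts to $\cO_{\P^1}$ on each section, which is exactly the defining property of $\overline{\Gamma}$ in Notation~\ref{notn:cpmanifoldsections} and Proposition~\ref{prop:splittype}.

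Next I would build the twisted form $\overline{\theta}$ on $\overline{\cM}^0$. The idea is that over the standard locus $\overline{\cM}^0$ the map $\overline{q}$ restricted to a fiber $\overline{p}^{-1}([\overline{\Gamma}])\cong\P^1$ lands in $\cX$ along the section $\overline{\Gamma}$, which — crucially — is tangent to the contact distribution $\cF$: by the proof of Proposition~\ref{prop:splittype}, $d\overline{f}\colon T_{\P^1}\to\overline{f}^*T_\cX$ factors through $\overline{f}^*\cF=\ker\theta$, so $\overline{q}^*\theta$ annihilates the relative tangent sheaf $T_{\overline{\cU}/\overline{\cM}}$. Hence $\overline{q}^*\theta\in H^0(\overline{\cU}^0,\Omega_{\overline{\cU}^0}\otimes\overline{q}^*\cO(1))$ actually lies in the subsheaf $H^0(\overline{\cU}^0,\overline{p}^*\Omega_{\overline{\cM}^0}\otimes\overline{q}^*\cO(1)) = H^0(\overline{\cU}^0,\overline{p}^*(\Omega_{\overline{\cM}^0}\otimes\cL))$, using the exact sequence $0\to\overline{p}^*\Omega_{\overline{\cM}^0}\to\Omega_{\overline{\cU}^0}\to\Omega_{\overline{\cU}^0/\overline{\cM}^0}\to 0$ and the vanishing $H^0(\P^1,\Omega_{\P^1})=H^0(\P^1,\cO(-2))=0$ (which kills the $\Omega_{\overline{\cU}/\overline{\cM}}(1)\cong\cO(-2)$-valued part on every fiber). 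Then by the projection formula and the fact that $\overline{p}$ has connected fibers with $\overline{p}_*\cO_{\overline{\cU}^0}=\cO_{\overline{\cM}^0}$, a section of $\overline{p}^*(\Omega_{\overline{\cM}^0}\otimes\cL)$ on $\overline{\cU}^0$ descends uniquely to a section $\overline{\theta}\in H^0(\overline{\cM}^0,\Omega_{\overline{\cM}^0}\otimes\cL)$ with $\overline{p}^*\overline{\theta}=\overline{q}^*\theta$.

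The main obstacle I expect is the careful bookkeeping of where the argument is valid: over a \emph{standard} curve $\Gamma$ the splitting type of $\overline{f}^*T_\cX$ is the generic one from Proposition~\ref{prop:splittype}, so $\overline{\cM}$ is smooth there of the expected dimension $2m-3$, the fibration $\overline{p}$ is genuinely smooth over $\overline{\cM}^0$, and the Euler-type sequence on $\overline{\cU}^0$ restricts correctly fiber-by-fiber; over non-standard curves the relative tangent sheaf and the splitting could jump, which is why one restricts to $\overline{\cM}^0$ and $\overline{\cU}^0$. A secondary technical point is checking that $\overline{q}^*\theta$ really factors through $\overline{p}^*\Omega_{\overline{\cM}^0}$ rather than merely having vanishing restriction to fibers — for this I would argue pointwise on $\overline{\cU}^0$, using that the composite $T_{\overline{\cU}^0/\overline{\cM}^0}\hookrightarrow T_{\overline{\cU}^0}\xrightarrow{d\overline{q}}\overline{q}^*T_\cX\xrightarrow{\overline{q}^*\theta}\overline{q}^*\cO(1)$ is zero because $d\overline{q}$ sends the relative tangent direction into $\overline{f}^*\cF$ at every point of every section (the section being tangent to $\cF$), and then use that on a locally free sheaf a local section annihilating a subbundle factors through the quotient. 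Once these compatibilities are in place the descent is formal.
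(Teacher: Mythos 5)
Your proof is correct and follows essentially the same route as the paper: descend $\cO(1)$ to $\cL$ using its triviality on the fibres of $\overline{p}$, then use the fact that the minimal sections are tangent to the contact distribution $\cF$ (so that $\overline{q}^*\theta$ annihilates $T_{\overline{\cU}^0/\overline{\cM}^0}$) to factor the form through $\overline{p}^*(\Omega_{\overline{\cM}^0}\otimes\cL)$ and push it down. The paper phrases the second step dually, as the surjection $T_{\overline{\cU}^0}\to\overline{q}^*\cO(1)$ factoring through $\overline{p}^*T_{\overline{\cM}^0}$, but this is the same argument.
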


\begin{proof}
The first part follows from the fact that $\overline{q}^*\cO(1)$ is trivial on the
fibers of $\overline{q} $. For the second,
we start by noting that the composition $T_{\overline{\cU}\hs^0}\to
\overline{q}^*T_{\cX}\stackrel{\theta}\to \overline{q}^*\cO(1)$ is surjective, as one may check
by restricting to every fiber of $\overline{q} $, by Proposition \ref{prop:splittype}.
Since the relative tangent bundle of $\overline{q} $ lies on its kernel, we obtain a
surjective morphism $\overline{p}^*T_{\overline{\cM}\hs^0}\to \overline{q}^*\cO(1)=\overline{p}^*L $.
Its push-forward to $\overline{\cM}\hs^0$ is the desired $1$-form $\overline{\theta} $,
since one may check that the relative cohomology of the kernel of that morphism is zero.
\end{proof}

The non-integrability of the induced form $\overline{\theta} $ is going to depend on the stratification of $\overline{\cM}\hs^0$ determined by the defect of sections defined in \ref{def:defectcurve}:

\begin{corollary}\label{cor:1formcontact}
With the same notation as above, assume that the open subset $\overline{\cM}' \subset\overline{\cM}\hs^0$ of curves on which the defect is zero is non-empty.  Then $\overline{\theta} $ is a contact form on $\overline{\cM}' $.
\end{corollary}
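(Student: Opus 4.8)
The plan is to verify the contact condition pointwise: since being a contact form is an open condition and $\overline{\cM}'$ is smooth of dimension $2m-3$ (Proposition~\ref{prop:splittype}), it suffices to show that at every $[\overline{\Gamma}]\in\overline{\cM}'$ the form $\overline{\theta}$ is nonzero and the skew form it induces on $\ker\overline{\theta}_{[\overline{\Gamma}]}$ is nondegenerate (equivalently $\overline{\theta}\wedge(d\overline{\theta})^{m-2}\neq 0$, as $2m-3=2(m-2)+1$). So I fix a standard minimal section $\overline{\Gamma}$ in the class with $e=0$ and normalization $\overline{f}:\P^1\to\cX$, and I translate everything into the deformation theory of $\overline{f}$.

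By smoothness we have $T_{\overline{\cM},[\overline{\Gamma}]}\cong H^0(\P^1,\overline{f}^*T_{\cX})/H^0(\P^1,T_{\P^1})$ and $\cL_{[\overline{\Gamma}]}\cong H^0(\P^1,\overline{f}^*\cO(1))=\C$. Using the relation $\overline{p}^*\overline{\theta}=\overline{q}^*\theta$ from Lemma~\ref{lem:liftcontact}, together with the fact (established in the proof of Proposition~\ref{prop:splittype}) that $\theta$ annihilates $d\overline{f}(T_{\P^1})$, I would show that $\overline{\theta}_{[\overline{\Gamma}]}$ is precisely the linear map induced on the quotient by $H^0(\overline{f}^*\theta):H^0(\overline{f}^*T_{\cX})\to H^0(\cO)=\C$. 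From the middle row of (\ref{eq:contact}), pulled back, we get $0\to\overline{f}^*\cF\to\overline{f}^*T_{\cX}\to\cO\to 0$; with the $e=0$ splitting types $\overline{f}^*T_{\cX}=E(-2,2,0^{2m-3})$ and $\overline{f}^*\cF=E(-2,2,0^{2m-4})$ from (\ref{eq:splitcontact}), a short cohomology count shows $H^0(\overline{f}^*\theta)$ is surjective (so $\overline{\theta}_{[\overline{\Gamma}]}\neq 0$) with kernel $H^0(\overline{f}^*\cF)$, hence $\ker\overline{\theta}_{[\overline{\Gamma}]}=H^0(\overline{f}^*\cF)/H^0(T_{\P^1})$, a hyperplane. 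Differentiating $\overline{p}^*\overline{\theta}=\overline{q}^*\theta$ gives $\overline{p}^*d\overline{\theta}=\overline{q}^*d\theta$, and since $d\theta|_{\cF}$ is the symplectic pairing $\cF\otimes\cF\to\cO(1)$, a fiberwise computation along $\overline{\Gamma}$ identifies the induced skew form $\overline{\omega}$ on $\ker\overline{\theta}_{[\overline{\Gamma}]}$ with the one induced by $H^0\big(\overline{f}^*(d\theta|_{\cF})\big):H^0(\overline{f}^*\cF)\otimes H^0(\overline{f}^*\cF)\to H^0(\cO)=\C$.

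It then remains to show $\overline{\omega}$ is nondegenerate modulo $H^0(T_{\P^1})$. By the classification of symplectic bundles on $\P^1$, $(\overline{f}^*\cF,d\theta|_{\cF})$ splits orthogonally as a hyperbolic pair $\cO(2)\oplus\cO(-2)$ and a trivial symplectic bundle $\cO^{2m-4}$; the $\cO(2)$ here is the unique saturated degree-$2$ line subbundle of $\overline{f}^*\cF$, hence equals $d\overline{f}(T_{\P^1})$, so $H^0(T_{\P^1})$ is exactly the summand $H^0(\cO(2))\subset H^0(\overline{f}^*\cF)=H^0(\cO(2))\oplus H^0(\cO^{2m-4})$. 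Since $\cO(2)$ pairs (under $\overline{\omega}$) only with $\cO(-2)$, which has no sections, $H^0(\cO(2))$ lies in the radical of $\overline{\omega}$; conversely on $\cO^{2m-4}$ the form restricts to the constant nondegenerate symplectic form, whose induced pairing on $H^0(\cO^{2m-4})$ is again nondegenerate, so the radical is no larger. Hence $\mathrm{rad}(\overline{\omega})=H^0(T_{\P^1})$, $\overline{\omega}$ descends to a nondegenerate form on $\ker\overline{\theta}_{[\overline{\Gamma}]}$, and $\overline{\theta}$ is contact at $[\overline{\Gamma}]$; as $[\overline{\Gamma}]\in\overline{\cM}'$ was arbitrary, $\overline{\theta}$ is a contact form on $\overline{\cM}'$.

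I expect the main obstacle to be the second paragraph: rigorously matching $d\overline{\theta}_{[\overline{\Gamma}]}$ with the $H^0$-level pairing coming from $d\theta|_{\cF}$. This is delicate because $\overline{\theta}$ is $\cL$-valued, so its "exterior derivative" is only canonical along the zero locus of $\overline{\theta}$ (or should be handled on the total space of $\cL$, i.e. via the symplectic form $\sigma$ on $\widehat{\cX}$), and because one must keep track of the $\Aut(\P^1)$-quotient relating $\Hom(\P^1,\cX)$ with $\overline{\cM}$ while remaining compatible with the fiberwise picture over $\overline{\Gamma}$. Once this identification is in place, the concluding nondegeneracy is the short linear-algebra computation with bundles on $\P^1$ sketched above, and the hypothesis $e=0$ enters exactly there, guaranteeing that the only degeneracy of $\overline{\omega}$ is the one we quotient out.
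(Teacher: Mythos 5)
Your proposal is correct and follows essentially the same route as the paper: both reduce the nondegeneracy of $d\overline{\theta}$ at a point $[\overline{\Gamma}]\in\overline{\cM}'$ to the observation that the symplectic pairing on $\overline{f}^*\cF\cong E(-2,2,0^{2m-4})$ induces on global sections a skew form whose radical is exactly $H^0(\cO(2))=H^0(T_{\P^1})$, which is quotiented out. The identification of $d\overline{\theta}$ with this $H^0$-level pairing, which you rightly flag as the delicate step, is precisely what the paper supplies via its commutative diagram of Lie-bracket-induced maps for $\cF$, $\widetilde{\cF}$, $\overline{\cF}$, using that $\overline{p}$, $\overline{q}$ are submersions and that constant lifts of vector fields along the locally trivial fibration $\overline{p}$ preserve brackets.
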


\begin{proof}
Let us denote by  $\overline{\cF} $ and $\widetilde{\cF} $ the kernels of the maps $\overline{\theta} :T_{\overline{\cM}^0}\to\cL $ and $\theta\circ d\overline{q} :T_{\overline{\cU}\hs^0}\to \overline{p}^*\cL $, respectively. We want to prove that the morphism $d\overline{\theta} :\overline{\cF} \to\overline{\cF}^\vee\otimes\cL $ induced by the Lie bracket is an isomorphism.

The statement is local so, given a point $[r]\in\overline{\cM}\hs^0$, we may consider its inverse image $r=\overline{p}^{-1}([r])$ and
(after eventually shrinking $\overline{\cM}\hs^0$ to a smaller neighborhood of $[r]$) assume that $\cL $ is trivial. Then, since in $\overline{\cU}\hs^0$ the morphisms $\overline{p} $ and $\overline{q} $ are submersions, the morphisms induced by the Lie bracket of the distributions $\cF$, $\widetilde{\cF} $ and $\overline{\cF} $ fit, as the vertical maps, in the following commutative diagram:
$$
\xymatrix@=35pt{\overline{p}^*(\cF)\ar[d]&\widetilde{\cF} \ar[l]_{d\overline{p} }\ar[r]^{d\overline{q} }\ar[d]
&\overline{q}^*(\overline{\cF} )\ar[d]^{d\overline{\theta} }\\
\overline{p}^*(\cF)^\vee\ar[r]^{d\overline{p}^t }&\widetilde{\cF}^\vee&\overline{q}^*(\overline{\cF} )^\vee\ar[l]_{d\overline{q}^t }}
$$
Note that, being $\overline{p} $ locally trivial (so that, locally, any vector field $v$ on $\overline{\cM}\hs^0$ determines uniquely a vector bundle $v'$ on $\overline{\cU}\hs^0$ constant on fibers such that $d\overline{p} (v')=v$, and this correspondence preserves the Lie bracket) $d\overline{\theta} $ is an isomorphism at $[r]$ if and only if the corresponding morphism $(\widetilde{\cF} )_{|r}\to (\widetilde{\cF} )^\vee_{|r}$ has rank $\dim(\overline{\cM} )=2m-4$. Noting that $(\widetilde{\cF} )_{|r}\cong E(2,0^{2m-4})$,
the statement follows from the usual description of ${d\overline{p} }$ in terms of the evaluation of global sections of $(T_{\cX})_{|r}$.
\end{proof}

\subsection{The $1$-ample case}\label{ssec:1ample}

One of the morals of Example \ref{ex:isograss}, or more generally of the examples provided by the study of nilpotent orbits and their crepant resolutions, is that the exceptional locus of $\eps$ may be quite involved. The simplest notion that measures the intricacy of this locus is $k$-ampleness: we say that $T_X$ is {\it $k$-ample} if the dimension of every component of a fiber of $\eps$ is at most $k$-dimensional. The goal of this section is to show how the techniques described above may help us to prove Conjecture \ref{conj:CPconj} in the simplest nontrivial case, that is $k=1$. We refer the reader to \cite{1-ample} for details.

\begin{theorem}\label{thm:1ample}
Let $X$ be a CP-manifold such that $T_X$ is big and $1$-ample. Then $X$ is rational homogeneous.
\end{theorem}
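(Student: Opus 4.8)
The plan is to reconstruct $X$ from the crepant birational contraction $\eps\colon\cX\to\cY$ of $\cX=\P(T_X)$ associated to $\cO(1)$. To start, if $T_X$ is ample then $X\cong\P^m$ by Mori's theorem \cite{Mori} and we are done; so I may assume $T_X$ is not ample, and then, by Proposition \ref{prop:nefeffdiv}, $T_X$ is big and semiample and $\eps$ is a birational morphism which is not an isomorphism, all of whose fibers have dimension at most one by hypothesis. By Lemma \ref{lem:bigO(1)} the cone $\cNE{\cX}$ is simplicial, generated by the class $\ell$ of a line in a $\phi$-fiber and by the minimal sections $\overline{\Gamma}_1,\dots,\overline{\Gamma}_n$, and the curves contracted by $\eps$ are exactly those of the face $N_0$ spanned by the $\overline{\Gamma}_i$'s. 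Since $\cX$ is a weak Fano variety with semiample anticanonical class (recall $-K_\cX=m\,\cO(1)$), the Cone and Contraction Theorems let me contract each extremal ray $\overline{R}_i:=\R_{\geq0}[\overline{\Gamma}_i]$ of $N_0$; writing $\overline{\pi}_i\colon\cX\to\cX_i$ for the corresponding elementary contraction, $\eps$ factors through every $\overline{\pi}_i$, and each $\overline{\pi}_i$ is a crepant birational morphism whose fibers have dimension at most one.

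The heart of the proof is the analysis of a single such $\overline{\pi}:=\overline{\pi}_i$. Lifting to the total space $\widehat{\cX}$ of $\cO(1)$ it becomes an elementary crepant contraction of a symplectic manifold with one-dimensional fibers; by the structure theory of symplectic resolutions (Propositions \ref{prop:sympprop} and \ref{prop:treeP1}) such a contraction is necessarily divisorial, its exceptional locus is an irreducible divisor $D_i\subset\cX$, the general fiber of $\eps|_{D_i}$ over the codimension-two stratum $Z_i:=\overline{\pi}_i(D_i)$ (a smooth projective contact manifold of dimension $2m-3$) is a smooth $\P^1$, and the special fibers degenerate only into trees of two $\P^1$'s. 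Since $D_i$ is a divisor, its general fiber $D_{i,x}$ over $\phi\colon\cX\to X$ is a hypersurface in $\P(T_{X,x}^\vee)$, and combining Propositions \ref{prop:dual} and \ref{prop:dualdef1s} this identifies $D_{i,x}$ with the dual variety of the VMRT $\cC_{i,x}$ of the minimal family $\cM_i$ and shows that the dual defect of $\cC_{i,x}$ equals the defect of $\overline{\cM}_i$ at a minimal section; by Corollary \ref{cor:1formcontact} the contact form of $\cX$ then descends to a contact form on the open locus of $\overline{\cM}_i$ where that defect vanishes.

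With this dictionary in hand I would conclude in two stages. When $\rho(X)=1$ there is a single minimal family $\cM$ and a single divisorial crepant contraction with one-dimensional fibers; the conic-bundle structure of $D$, the description of $D_x$ as the dual variety of $\cC_x$, and the resulting numerical constraints among $m$, $\dim\cC_x$ and the dual defect should pin down the embedded VMRT $\cC_x\subset\P(T_{X,x}^\vee)$, up to projective equivalence, to that of one of the (finitely many) rational homogeneous manifolds of Picard number one with big and $1$-ample tangent bundle occurring in Table \ref{tab:VMRTs} — concretely quadrics, $K({\rm G}_2)$, spinor varieties and the short-root Lagrangian and orthogonal Grassmannians — and Theorem \ref{them:HH}, complemented in the non-long-root cases by the recognition results already invoked in the proof of Theorem \ref{thm:CPindex}, identifies $X$ with the corresponding $G/P$. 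When $\rho(X)>1$ I would induct on $\dim X$: by Theorem \ref{thm:smooth} each elementary contraction $\pi\colon X\to Y$ is a smooth fibration whose fibers $F$ and base $Y$ are CP-manifolds of smaller dimension; comparing $\eps$ with the evaluation contractions of $T_F$ and $T_Y$ through the relative Euler and normal-bundle sequences shows that bigness and $1$-ampleness pass to $T_F$ and $T_Y$, so by induction $F$ and $Y$ are rational homogeneous, and a rigidity argument for the fibration $X\to Y$ (using convexity of $X$, i.e. $H^1(F,T_F)=0$, to trivialize the deformations of the fibers) upgrades this to the homogeneity of $X$.

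The step I expect to be the main obstacle is the identification of the embedded VMRT in the Picard-number-one case: turning ``divisorial crepant contraction with one-dimensional fibers'' into ``the VMRT is one of a short explicit list of projective subvarieties'' demands a careful bookkeeping of the conic-bundle geometry of the exceptional divisor, of the contact form it inherits, and of the numerical invariants, and is precisely where the general theory of VMRT's and of symplectic resolutions must be pushed hardest before Theorem \ref{them:HH} can be applied. The inductive step for higher Picard number is conceptually lighter, but its concluding rigidity argument — promoting a homogeneous-over-homogeneous fibration to a homogeneous total space — is also a delicate point.
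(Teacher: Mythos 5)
Your proposal assembles the right toolkit (the crepant contraction $\eps$, the symplectic/contact structure, Propositions \ref{prop:treeP1}, \ref{prop:dual} and \ref{prop:dualdef1s}), but the endgame in Picard number one is where the proof actually lives, and it is missing. First, your target list is wrong: spinor varieties, $K({\rm G}_2)$ and the short-root Lagrangian and orthogonal Grassmannians do \emph{not} have $1$-ample tangent bundle. The paper's own Example \ref{ex:isograss} shows this for $LG(1,5)$: there $\eps$ has fibers of dimension $\geq 3$ over the deeper nilpotent strata, and the exceptional divisor strictly contains the locus of minimal sections. The correct conclusion for $\rho(X)=1$ is simply that $X$ is a smooth quadric (Proposition \ref{prop:quadrics}). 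Second, the step you defer as ``numerical constraints should pin down the VMRT'' is precisely the content of Lemmas \ref{lem:exclocus1amp}--\ref{lem:1-ampiso}: one must show that \emph{every} curve of $\overline{\cM}$ (not just the general one) has defect zero, hence that $\overline{\cM}$ is a smooth projective contact manifold, and then invoke the Kebekus--Peternell--Sommese--Wi\'sniewski/Demailly classification (Theorem \ref{thm:KPSW}) to rule out $\overline{\cM}\cong\P(T_{\cM})$ and conclude that $\overline{\phi}:\overline{\cM}\to\cM$ is an isomorphism. This forces $f^*T_X\cong E(2,1^{m-2},0)$, i.e.\ $-K_X\cdot\Gamma=m$ and the VMRT is a hypersurface; only then does the dichotomy of Proposition \ref{prop:treeP1} (reducible fiber $\Rightarrow$ Kobayashi--Ochiai; all fibers irreducible $\Rightarrow$ a Chern-class computation producing a nondegenerate symmetric form in $H^0(X,S^2T_X\otimes\cO(\frac{2}{m}K_X))$) identify $X$ as a quadric. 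None of this is a routine ``bookkeeping'' consequence of the conic-bundle picture you describe.

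The higher Picard number case is also off track. The rigidity step you propose --- promoting a fibration with homogeneous fibers over a homogeneous base to a homogeneous total space --- is essentially the open inductive difficulty of the whole Campana--Peternell program and cannot be assumed; nothing in your sketch supplies it. It is also unnecessary here: Lemma \ref{lem:1amp-pic>1} gives $k\geq k'+\dim X-\dim X'$ for any Mori contraction $\pi:X\to X'$, so $k=1$ forces $k'=0$ and $\dim X-\dim X'=1$. Hence $T_{X'}$ is ample, $X'\cong\P^{\dim X-1}$ by Mori, and every elementary contraction of $X$ is a smooth $\P^1$-fibration. With $\rho(X)\geq 2$ there are at least two of them, and Theorem \ref{thm:pic2} immediately yields $G/B$ of type ${\rm A}_1\times{\rm A}_1$ or ${\rm A}_2$ (the types ${\rm B}_2$ and ${\rm G}_2$ being excluded because their bases are not projective spaces). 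No induction on dimension and no deformation-theoretic rigidity are needed.
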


More concretely, looking at the list of rational homogenous spaces one sees that, beside the projective space (that will not appear in our discussion since we or initial assumptions imply that $T_X$ is not ample), the tangent bundle is only $1$-ample for the smooth quadric of dimension $\geq 3$, and for the complete flag manifolds of type ${\rm A}_1\times {\rm A}_1$ and ${\rm A}_2$.

We start by observing that, since $k$-ampleness decreases with contractions, the case in which the Picard number $n$ of $X$ is bigger than one essentially reduces, via Mori's proof of Hartshorne's Conjecture,  to the study of $\P^1$-fibrations over a projective space.

\begin{lemma}\label{lem:1amp-pic>1}
Let $X$ be a CP-manifold and assume that $T_X$ is big and $k$-ample, and let $\pi:X\to X'$ be a Mori contraction. Then $T_{X'}$ is $(k-\dim X+\dim X')$-ample.
\end{lemma}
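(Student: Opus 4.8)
The plan is to use the smoothness of $\pi$ (Theorem~\ref{thm:smooth}) to realize $\P(\pi^*T_{X'})$ both as a closed subvariety of $\cX=\P(T_X)$ and as a smooth equidimensional cover of $\P(T_{X'})$, and then to push the bound on fibre dimensions of the contraction of $\cX$ down to $\P(T_{X'})$ along this cover.

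Set $\ell:=\dim X-\dim X'$ (so $\ell\ge 1$ by Proposition~\ref{prop:fybertype}, and $X'$ is again a CP-manifold by Theorem~\ref{thm:smooth}). First I would recall that $T_X$ is semiample by Proposition~\ref{prop:nefeffdiv}, so the contraction $\eps:\cX\to\cY$ associated to $\cO(1)$ exists; $T_X$ being $k$-ample means that every component of every fibre of $\eps$ has dimension at most $k$, equivalently that every complete irreducible subvariety $V\subset\cX$ with $\cO(1)|_V\equiv 0$ is contracted by $\eps$ to a point and hence has $\dim V\le k$. Next, since $\pi$ is smooth of relative dimension $\ell$, the relative tangent sequence $0\to T_{X/X'}\to T_X\to\pi^*T_{X'}\to 0$ presents $\pi^*T_{X'}$ as a quotient bundle of $T_X$, which in the Grothendieck convention yields a closed embedding $\iota:\cZ:=\P(\pi^*T_{X'})\hookrightarrow\cX$ with $\iota^*\cO_\cX(1)=\cO_\cZ(1)$. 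Finally I would note that $\cZ=X\times_{X'}\P(T_{X'})$, and set $g:\cZ\to\P(T_{X'})$ to be the second projection: it is the base change of $\pi$, hence smooth and surjective with all fibres of pure dimension $\ell$; flat base change applied to $\pi_*\cO_X=\cO_{X'}$ gives $g_*\cO_\cZ=\cO_{\P(T_{X'})}$; and $\cO_\cZ(1)=g^*\cO_{\P(T_{X'})}(1)$.

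The heart of the argument is a two-step descent. Being a restriction of the semiample $\cO_\cX(1)$, the line bundle $\cO_\cZ(1)$ is semiample; for any fibre $F$ of the associated morphism $\psi:\cZ\to\Proj\big(\bigoplus_rH^0(\cZ,\cO_\cZ(r))\big)$ and any component $F_0$ of $F$, the bundle $\cO_\cX(1)$ restricts to something numerically trivial on the complete irreducible $\iota(F_0)$, so $\iota(F_0)$ lies in a fibre of $\eps$ and $\dim F_0\le k$. On the other hand $g_*\cO_\cZ=\cO_{\P(T_{X'})}$ gives $H^0(\cZ,\cO_\cZ(r))=H^0\big(\P(T_{X'}),\cO(r)\big)$ for all $r$, so (using that $g$ is surjective) $\cO_{\P(T_{X'})}(1)$ is semiample, the corresponding section rings coincide, and the associated contraction $\eps'$ of $\P(T_{X'})$ satisfies $\psi=\eps'\circ g$ after the evident identification of targets. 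Hence, for a fibre $S$ of $\eps'$, the set $g^{-1}(S)$ is a fibre of $\psi$, so each of its components has dimension $\le k$; since $g$ is flat with fibres of pure dimension $\ell$, a standard dimension count gives $\dim g^{-1}(S_0)=\dim S_0+\ell$ for each component $S_0$ of $S$, whence $\dim S_0\le k-\ell$. This is exactly the statement that $T_{X'}$ is $(k-\ell)$-ample, i.e. $(k-\dim X+\dim X')$-ample.

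I expect the routine inputs to be the closed embedding coming from the relative tangent sequence, the behaviour of semiampleness under restriction and descent, and flat base change. The point that needs genuine care is the identification $\psi=\eps'\circ g$: one must check that the semiample morphism of $g^*\cO_{\P(T_{X'})}(1)$ is really this composite (and not merely $\eps|_\cZ$, which could be strictly coarser, since $\cO_\cZ(1)$ may have more sections than $\cO_\cX(1)$ contributes), and then that the equidimensionality of $g$ lets one subtract exactly $\ell$ from the fibre dimension. Once this bookkeeping is settled the lemma follows.
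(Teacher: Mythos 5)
Your proof is correct and follows essentially the same route as the paper: both realize $\P(\pi^*T_{X'})$ as a closed subvariety of $\cX=\P(T_X)$ via the surjection $T_X\to\pi^*T_{X'}$ coming from the smoothness of $\pi$, and then bound the fibre dimension of $\eps'$ by noting that $\P(\pi^*T_{X'})\to\P(T_{X'})$ has fibres of dimension $\dim X-\dim X'$. Your explicit descent of semiampleness and the identification $\psi=\eps'\circ g$ via section rings simply make precise what the paper compresses into the assertion that the Stein factorization of $\eps|_{\P(\pi^*T_{X'})}$ factors through $\eps'$.
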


\begin{proof}
By Theorem \ref{thm:smooth} the morphism $\pi$ is smooth, so the morphism $d\pi:T_X\to\pi^*T_{X'}$ provides an inclusion of the fiber product $\P(\pi^*T_{X'})=\P(T_{X'})\times_{X'}X$ into $\cX=\P(T_X)$.
 Hence the Stein factorization of the restriction $\eps_{|\P(\pi^*T_{X'})}$ factors through the corresponding contraction $\eps':\P(T_{X'})\to \cY'$.% $$
$$
 \xymatrix{&X'&\P(T_{X'})\ar[l]_{\phi'}\ar[rr]^{
 \eps'}&&\cY'\ar[d]\\
 X\ar[ru]^{\pi}&\P(\pi^*T_{X'})\,\ar@{^{(}->}[r]\ar[l]^{\phi\hspace{0.6cm}}\ar[ru]&\cX
 \ar[rr]^{\eps}&&\cY}
 $$
 Since the natural map from $\P(\pi^*T_{X'})$ to $\P(T_{X'})$ has fibers of dimension $\dim X-\dim X'$, denoting  by $k$ and $k'$ the maximal dimensions of components of fibers of $\eps$ and $\eps'$, respectively, it follows that $k\geq k'+ \dim X-\dim X'$.
\end{proof}

\begin{corollary}\label{cor:1amp-pic>1}
Let $X$ be a CP-manifold of Picard number $n>1$ such that $T_X$ is big and $1$-ample. Then $X$ is isomorphic to $G/B$ for $G$ of type ${\rm A}_1\times {\rm A}_1$ or ${\rm A}_2$.
\end{corollary}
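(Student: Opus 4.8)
The plan is to use $1$-ampleness, which is inherited (with a controlled loss) under Mori contractions by Lemma~\ref{lem:1amp-pic>1}, to pin down the targets of the elementary contractions of $X$, and then to appeal to Theorem~\ref{thm:pic2}.

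First I would set up the contractions. Since $\rho(X)=n>1$ and $\NE(X)$ is simplicial by Corollary~\ref{prop:simplicial}, $X$ has $n\geq 2$ elementary Mori contractions $\pi_i\colon X\to X_i$; by Theorem~\ref{thm:smooth} each $\pi_i$ is smooth with $X_i$ again a CP-manifold, and by Proposition~\ref{prop:fybertype} each is of fiber type, so $\dim X_i<\dim X=:m$. Applying Lemma~\ref{lem:1amp-pic>1} to $\pi_i$ shows that $T_{X_i}$ is $(1-m+\dim X_i)$-ample; since a $k$-ampleness exponent is necessarily $\geq 0$, we get $1-m+\dim X_i\geq 0$, which together with $\dim X_i<m$ forces $\dim X_i=m-1$ and shows moreover that $T_{X_i}$ is $0$-ample. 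In particular $\pi_i$ is an equidimensional smooth morphism with one-dimensional fibers, and as those fibers are CP-manifolds of dimension $1$ they are projective lines, so each $\pi_i$ is a smooth $\P^1$-fibration.

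The next step is to identify $X_i$. By $0$-ampleness the associated contraction $\eps_i\colon\P(T_{X_i})\to\cY_i$ is finite; as it is the morphism defined by the semiample line bundle $\cO_{\P(T_{X_i})}(1)$, this bundle is the pull-back of an ample one by a finite morphism, hence ample, i.e. $T_{X_i}$ is ample. By Mori's theorem \cite{Mori}, $X_i\cong\P^{m-1}$. Since $\P^{m-1}$ is rational, the smooth $\P^1$-fibration $\pi_i\colon X\to\P^{m-1}$ is in fact a $\P^1$-bundle (the Brauer group of a rational manifold being trivial), so $\rho(X)=\rho(\P^{m-1})+1=2$; thus $n=2$. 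Now $X$ is a Fano manifold of Picard number $2$ with two smooth $\P^1$-fibrations, so Theorem~\ref{thm:pic2} gives $X\cong G/B$ with $G$ of type ${\rm A}_1\times{\rm A}_1$, ${\rm A}_2$, ${\rm B}_2$ or ${\rm G}_2$. To finish it suffices to exclude ${\rm B}_2$ and ${\rm G}_2$: by Proposition~\ref{prop:RHsimp} the two elementary contractions of $G/B$ of type ${\rm B}_2$ are the projections onto $Q^3$ and $\P^3$, and those of $G/B$ of type ${\rm G}_2$ are onto $Q^5$ and $K({\rm G}_2)$, so in either case one of the $X_i$ is a Fano manifold of Picard number one which is not a projective space — contradicting the conclusion above that both $X_i\cong\P^{m-1}$. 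Hence $G$ is of type ${\rm A}_1\times{\rm A}_1$ or ${\rm A}_2$, as asserted.

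I expect the only genuinely delicate point to be the passage ``$0$-ample $\Rightarrow$ $T_{X_i}$ ample'': one has to be careful that $0$-ampleness is invoked together with semiampleness — which is part of the running hypotheses of Section~\ref{sec:symp} and is exactly what makes the contraction $\eps_i$ in Lemma~\ref{lem:1amp-pic>1} available — so that finiteness of $\eps_i$ really does upgrade to ampleness. The remaining ingredients (equidimensionality and smoothness of the $\pi_i$, the $\P^1$-bundle structure over the rational base $\P^{m-1}$, and the shape of the contractions of $G/B$ in the ${\rm B}_2$ and ${\rm G}_2$ cases) are either quoted verbatim or immediate from results already established.
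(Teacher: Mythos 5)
Your proof is correct and follows essentially the same route as the paper: Lemma \ref{lem:1amp-pic>1} forces every elementary contraction to be a smooth $\P^1$-fibration onto a manifold with ample (hence, by Mori, projective-space) tangent bundle, and Theorem \ref{thm:pic2} then finishes the argument. The only thing you add beyond the paper's sketch is the explicit exclusion of the ${\rm B}_2$ and ${\rm G}_2$ cases via the fact that their flag manifolds do not fiber over two projective spaces, which is exactly the step the paper leaves implicit.
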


\begin{proof}
As a consequence of Lemma \ref{lem:1amp-pic>1}, any Mori contraction $\pi:X\to X'$ must have one-dimensional fibers and its image must have ample tangent bundle. Therefore, in our situation, $X$ has at least two $\P^1$-fibrations over $\P^{\dim X-1}$. We may then finish in several ways, for instance by applying Theorem \ref{thm:pic2}.
\end{proof}

The next result shows that the assumption implies that the exceptional locus of $\eps$ is $\overline{q}(\overline{\cU})$:

\begin{lemma}\label{lem:exclocus1amp}
Let $X$ be a CP-manifold of Picard number one, with $T_X$ big, $1$-ample, and not ample. Let $\overline{f}:\P^1\to \cX$ be a general minimal section of $\cX$ over a general minimal rational curve $f:\P^1\to X$. Then
$\overline{f}^*T_{\cX}\cong E(-2,0^{2m-3},2)$ and the exceptional locus of $\eps$ is equal to $D:=\overline{q}(\overline{\cU})$.
\end{lemma}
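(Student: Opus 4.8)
The plan is to first determine the splitting type of $\overline{f}^{*}T_{\cX}$ by exploiting $1$-ampleness, and then to deduce that $D$ is a divisor which exhausts the exceptional locus. By Lemma \ref{lem:bigO(1)} and $\rho(X)=1$, the cone $\cNE{\cX}$ is two-dimensional, spanned by the class of a line $\ell$ in a fibre of $\phi$ and by $[\overline{\Gamma}]$, with $\cO(1)\cdot\ell=1$ and $\cO(1)\cdot\overline{\Gamma}=0$; hence $\eps$ contracts precisely the irreducible curves whose numerical class lies on the ray $\R_{\geq 0}[\overline{\Gamma}]$. In particular every deformation of $\overline{\Gamma}$ is contracted by $\eps$, and by construction every such deformation lies in $D=\overline{q}(\overline{\cU})$. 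By Proposition \ref{prop:splittype} one may write $\overline{f}^{*}T_{\cX}\cong E(-2,2,(-1)^{e},1^{e},0^{2m-3-2e})$ for some $0\leq e\leq c$, with $\dim\overline{\cM}=2m-3$ and $\dim\overline{\cU}=2m-2$, so the first assertion is equivalent to $e=0$.

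To prove $e=0$ I would argue by contradiction. As in the proof of Proposition \ref{prop:dualdef1s}, the image $D=\overline{q}(\overline{\cU})$ has dimension $2m-2-e$, so a general fibre of $\overline{q}\colon\overline{\cU}\to D$ has dimension $e$; equivalently, through a general point $z\in D$ there passes an $e$-dimensional family of deformations of $\overline{\Gamma}$, all of them contained in $D$ and all contracted to the single point $\eps(z)$. Now $1$-ampleness forces every component of $\eps^{-1}(\eps(z))$ to be a curve, so this fibre contains only finitely many irreducible curves; since the parametrization of curves by $\overline{\cM}$ has finite fibres (Notation \ref{not:RConCP2}), an $e$-dimensional family of pairwise distinct deformations of $\overline{\Gamma}$ through $z$ can exist only if $e=0$. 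Hence $\overline{f}^{*}T_{\cX}\cong E(-2,0^{2m-3},2)$, and $D$ has dimension $2m-2$, i.e. it is an irreducible divisor.

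For the second assertion it suffices to show $D\cdot\overline{\Gamma}<0$: since $\Exc(\eps)$ is covered by irreducible curves contracted by $\eps$, each such curve $C$ has class $\lambda[\overline{\Gamma}]$ with $\lambda\in\Q_{>0}$, so $D\cdot C=\lambda\,(D\cdot\overline{\Gamma})<0$ forces $C\subset D$ (an effective divisor meets non-negatively any curve not contained in it), whence $\Exc(\eps)\subseteq D$; the reverse inclusion holds by construction. To see $D\cdot\overline{\Gamma}<0$ I would either invoke the negativity lemma — $D$ being an effective $\eps$-exceptional divisor on the smooth variety $\cX$ with $\rho(\cX/\cY)=1$ — or compute with normal bundles: from $\overline{f}^{*}T_{\cX}\cong E(-2,0^{2m-3},2)$ one gets $\cN_{\overline{\Gamma}/\cX}\cong E(-2,0^{2m-3})$ for a general (hence immersed) $\overline{\Gamma}$, while $\cN_{\overline{\Gamma}/D}$ is a rank-$(2m-3)$ saturated subsheaf of it with $h^{0}\geq\dim\overline{\cM}=2m-3$; since every line subbundle of $E(-2,0^{2m-3})$ has non-positive degree, so does every summand of $\cN_{\overline{\Gamma}/D}$, and this together with $h^{0}\geq\rank$ forces $\cN_{\overline{\Gamma}/D}\cong\cO_{\P^{1}}^{2m-3}$, whence $\cO_{\cX}(D)|_{\overline{\Gamma}}$ has degree $-2$.

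The main obstacle is the step $e=0$: one must be certain that $1$-ampleness genuinely prohibits a positive-dimensional family of contracted curves through a general point of $D$, and this is exactly where the dimension identity $\dim D=2m-2-e$ from the proof of Proposition \ref{prop:dualdef1s} and the finiteness of the parametrization by $\overline{\cM}$ are indispensable. The remaining point, the sign of $D\cdot\overline{\Gamma}$, is routine once $D$ is known to be a divisor contracted by the elementary contraction $\eps$.
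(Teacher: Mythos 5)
Your proof is correct and follows essentially the same route as the paper: the paper also obtains $e=0$ from the observation that $1$-ampleness forces the evaluation $\overline{q}:\overline{\cU}\to\cX$ to be generically injective (you have simply unpacked this via the dimension count $\dim D=2m-2-e$ from Proposition \ref{prop:dualdef1s} and the finiteness of fibres of $\eps$ restricted to $D$), and then concludes $D=\Exc(\eps)$ from the fact that $\eps$ is an elementary divisorial contraction, which is exactly the negativity argument you spell out. The only point worth flagging is that your second, normal-bundle computation of $D\cdot\overline{\Gamma}$ tacitly assumes the general $\overline{\Gamma}$ lies in the smooth locus of $D$ so that $h^0(\cN_{\overline{\Gamma}/D})$ bounds the deformations of $\overline{\Gamma}$ inside $D$; the negativity-lemma alternative you give first avoids this and suffices.
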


\begin{proof}
The $1$-ampleness hypothesis implies that the differential of the evaluation map from $\overline{\cU}$ to $\cX$ is generically injective. Hence, in the notation of Proposition \ref{prop:splittype} we must have defect $e=0$ for the general $\overline{f}$, which concludes the first part of the statement. In particular, this implies that $D=\overline{q}(\overline{\cU})$ is an irreducible divisor; since the hypotheses also imply that $\eps$ is elementary, it follows that the inclusion $D\subseteq\Exc(\eps)$ is an equality.
\end{proof}

Another important consequence of $1$-ampleness is that it allows us to control the splitting type of $T_X$ on every %{\it every}
curve of family $\overline{\cM}$. The next argument has been taken from \cite{Wie}.

\begin{proposition}\label{prop:smoothdeform}
Let $X$ be a CP-manifold of Picard number one, with $T_X$ big, $1$-ample, and not ample. Being  $\overline{f}:\P^1\to \cX$ be the normalization of any curve $\overline{\Gamma}$ of $\overline{\cM}$, we have:
\begin{equation}\label{eq:split1amp}
\overline{f}^*T_\cX\cong E\big(-2,2,(-1)^{e},1^{e},0^{2m-2e-3}\big), \mbox{ for some } e\geq 0.
\end{equation}
In particular, the variety $\overline{\cM}$ is smooth.
\end{proposition}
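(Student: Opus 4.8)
The plan is to reduce the statement to a splitting type computation on each curve $\overline{\Gamma}$ of the family $\overline{\cM}$, using the contact structure of $\cX$ and the already-established splitting type over a \emph{general} curve from Lemma~\ref{lem:exclocus1amp}. First I would recall the structural input from Proposition~\ref{prop:splittype}: for a minimal section $\overline{f}:\P^1\to\cX$ over a standard rational curve, we automatically have $\overline{f}^*T_\cX\cong E\big(-2,2,(-1)^{e},1^{e},0^{2m-2e-3}\big)$ for some $e\le c$, and the key point is that once this splitting type is known, $h^0(\P^1,\overline{f}^*T_\cX)=2m=\dim_{[\overline{f}]}\Hom(\P^1,\cX)$, so $\Hom(\P^1,\cX)$ is smooth at $[\overline{f}]$ and hence $\overline{\cM}$ is smooth at $\overline{\Gamma}$. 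So the content is entirely in showing that \emph{every} member $\overline{\Gamma}$ of $\overline{\cM}$ (not just the ones lying over standard curves) has a splitting type of the shape (\ref{eq:split1amp}), with $e\ge 0$ — the bound $e\ge 0$ being automatic since $e$ is a count of summands.

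The hard part is to rule out the ``degenerate'' splitting type that can appear over non-standard curves. The argument, following \cite{Wie}, should go as follows. Let $\overline{\Gamma}$ be an arbitrary member of $\overline{\cM}$ and $\overline{f}:\P^1\to\cX$ its normalization. Since $\overline{\cM}$ has dimension $2m-3$, by semicontinuity every such $\overline{f}$ satisfies $h^0(\P^1,\overline{f}^*T_\cX)\ge 2m$; and the generic splitting type in the family, from Lemma~\ref{lem:exclocus1amp}, is $E(-2,0^{2m-3},2)$, so by semicontinuity of $h^0$ and of the rank of the negative part, $\overline{f}^*T_\cX$ is a specialization of $E(-2,2,0^{2m-3})$: it has a $\cO(2)$ summand (since $d\overline{f}:T_{\P^1}\to\overline{f}^*T_\cX$ is nonzero and, as in the proof of Proposition~\ref{prop:splittype}, factors through the contact subbundle $\overline{f}^*\cF$, which forces both a $\cO(2)$ and by the contact self-duality $\overline{f}^*\cF\cong\overline{f}^*\cF^\vee$ also a $\cO(-2)$ summand), and the remaining degree-zero part can only break into $\cO(a)\oplus\cO(-a)$ pieces plus trivial ones — and the contact self-duality of $\overline{f}^*\cF$ together with $\overline{f}^*\cO(1)=\cO$ constrains it to the form $E\big((-1)^{e},1^{e},0^{2m-2e-4}\big)$ inside $\overline{f}^*\cF$. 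The only serious alternative to discard is $\overline{f}^*T_\cX\cong E(2,(-1)^{e+2},1^{e},0^{2m-2e-4})$, i.e.\ the case where the $\cO(-2)$ gets ``absorbed''; this is where the $1$-ampleness hypothesis is needed — exactly as in Proposition~\ref{prop:splittype} one uses $h^0\ge 2m$ together with the fact that $1$-ampleness forces $\eps$ to be a divisorial (hence codimension-one) contraction to exclude the excess summand count. I expect this last exclusion to be the main obstacle, since it requires combining the $h^0$ bound coming from $\dim\overline{\cM}=2m-3$ with the $1$-ampleness constraint on $\Exc(\eps)$ carefully.

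Once (\ref{eq:split1amp}) holds for every $\overline{\Gamma}\in\overline{\cM}$, one computes $h^0(\P^1,\overline{f}^*T_\cX)=3+(e\cdot 2)+(2m-2e-3)+1\cdot? $ — more precisely for the split type $E(-2,2,(-1)^e,1^e,0^{2m-2e-3})$ one gets $h^0=3+2e+(2m-2e-3)=2m$, constant in $e$. Hence $\dim_{[\overline{f}]}\Hom(\P^1,\cX)=h^0(\P^1,\overline{f}^*T_\cX)=2m$ for every $[\overline{f}]$, so $\Hom(\P^1,\cX)$ is smooth at each such point, and therefore $\overline{\cM}$, being a quotient of an open subset of (a component of) $\Hom(\P^1,\cX)$ by the $3$-dimensional group $\Aut(\P^1)$, is smooth everywhere. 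This concludes the proof; the only place where I anticipate needing to be careful with the bookkeeping is checking that the dimension count $h^0=2m$ really is independent of $e$ and matches $\dim\Hom$, and that passing from smoothness of $\Hom(\P^1,\cX)$ to smoothness of $\overline{\cM}$ causes no trouble (it does not, since the $\Aut(\P^1)$-action on the relevant locus is free, or at least the quotient map is smooth there).
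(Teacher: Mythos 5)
Your reduction of the statement to a splitting-type computation, your identification of the alternative $E(2,(-1)^{e+2},1^{e},0^{2m-2e-4})$ as something to discard via $h^0\geq 2m$, and your final passage from the splitting type to the smoothness of $\overline{\cM}$ are all in line with the paper. But there is a genuine gap in the central step. You claim that contact self-duality $\overline{f}^*\cF\cong\overline{f}^*\cF^\vee$, together with the existence of an $\cO(\pm2)$ summand and semicontinuity from the generic splitting type $E(-2,0^{2m-3},2)$, forces the remaining part of $\overline{f}^*\cF$ to have the shape $E\big((-1)^{e},1^{e},0^{2m-2e-4}\big)$. Self-duality only forces the splitting type to be symmetric (an $\cO(a)$ summand for each $\cO(-a)$); it does not bound the entries, so nothing you have said excludes summands $\cO(\pm a)$ with $a\geq 3$, or several $\cO(\pm 2)$'s. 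Semicontinuity does not help either: under specialization in the family $\overline{\cM}$ the splitting type can only become \emph{more} unbalanced, with no a priori bound, and the inequality $h^0(\P^1,\overline{f}^*T_\cX)\geq 2m$ is compatible with, say, a summand $E(3,-3)$ inside the degree-zero part. In Proposition \ref{prop:splittype} the bound on the entries comes from the explicit extension $0\to E(-2,(-1)^{c},0^{m-c-2})\to\overline{f}^*\cF\to E(2,1^{c},0^{m-c-2})\to 0$, which is available only because the underlying curve is \emph{standard}; the whole point of Proposition \ref{prop:smoothdeform} is to handle arbitrary, possibly non-standard, members of $\overline{\cM}$, for which that extension is not at your disposal.

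The missing idea, which is where $1$-ampleness actually enters, is a cohomological bound on the normal bundle. The paper first notes (via Proposition \ref{prop:treeP1}) that $\overline{\Gamma}$ is a smooth $\P^1$, lets $\cJ$ be its ideal sheaf, and pushes forward $0\to\cJ^2\to\cO_{\cX}\to\cO_{\cX}/\cJ^2\to 0$ along $\eps$: using $R^1\eps_*\cO_{\cX}=0$ (crepancy/rationality, Lemma \ref{lem:crepant}) one gets $H^1(\overline{\Gamma},\cO_{\cX}/\cJ^2)\cong R^2\eps_*\cJ^2\otimes\cO_P$, and the latter vanishes precisely because the fibers of $\eps$ are at most one-dimensional. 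From $0\to\cN^\vee_{\overline{\Gamma}/\cX}\to\cO_{\cX}/\cJ^2\to\cO_{\overline{\Gamma}}\to 0$ one then deduces $H^1(\overline{\Gamma},\cN^\vee_{\overline{\Gamma}/\cX})=0$, i.e.\ the splitting type of $\cN_{\overline{\Gamma}/\cX}$ has no entry $\geq 2$; via diagram (\ref{eq:contactcurves}) the same holds for $\overline{f}^*\cF/T_{\overline{\Gamma}}$, the extension by $T_{\overline{\Gamma}}=\cO(2)$ splits, and only now does self-duality pin down $\overline{f}^*\cF\cong E(-2,2,(-1)^{e},1^{e},0^{2m-2e-4})$. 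Your vaguer appeal to ``$1$-ampleness forces $\eps$ to be divisorial'' does not substitute for this vanishing; the divisoriality of $\eps$ is a separate fact (Lemma \ref{lem:exclocus1amp}) and plays no role at this step.
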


\begin{proof}

We will prove the isomorphism (\ref{eq:split1amp}), from which the smoothness of $\overline{\cM}$ at $[f]$ follows as in Proposition \ref{prop:splittype}.

Let $\cJ$ be the ideal sheaf of the curve $\overline{\Gamma}$ in $\cX$. By Proposition \ref{prop:treeP1}, the curve $\overline{\Gamma}$ is smooth. Moreover, since $R^i\eps_*\cO_{\cX}=0$ for $i>0$, pushing forward the short sequence
$$
0\to\cJ^2\longrightarrow \cO_{\cX}\longrightarrow \cO_{\cX}/\cJ^2\to 0,
$$
we obtain an isomorphism $H^1(\overline{\Gamma},\cO_{\cX}/\cJ^2)=R^1\eps_*\cO_{\cX}/\cJ^2\otimes\cO_P\cong R^2\eps_*\cJ^2\otimes\cO_P$ (where $P=\eps(\overline{\Gamma})$), and the latter is zero because the fibers of $\eps$ are at most one-dimensional.
Then, considering now the exact sequence
$$
0\to\cN^\vee_{\overline{\Gamma}/\cX}\longrightarrow \cO_{\cX}/\cJ^2\longrightarrow \cO_{\overline{\Gamma}}\to 0,
$$
\noindent we obtain $H^1(\overline{\Gamma},\cN^\vee_{\overline{\Gamma}/\cX})=0$.
Equivalently, the splitting type of the normal bundle $\cN_{\overline{\Gamma}/\cX}$ does not contain any integer bigger than $1$ and, considering the commutative diagram:% with exact rows and columns:
\begin{equation}\label{eq:contactcurves}
\xymatrix{T_{\overline{\Gamma}}  \ar@{>->}[]+<2.5ex,0ex>;[r]                 \ar@{=}[d]    &\overline{f}^*\cF\ar@{->>}[r]\ar@{>->}[]+<0ex, -2.5ex>;[d]  &\overline{f}^*\cF/T_{\overline{\Gamma}}\ar@{>->}[]+<0ex, -2.5ex>;[d] \\
          T_{\overline{\Gamma}}\ar@{>->}[]+<2.5ex,0ex>;[r]  &{\overline{f}^*T_{\cX}}\ar@{->>}[r]\ar@{->>}[d]&\cN_{\overline{\Gamma}/\cX}\ar@{->>}[d]\\
          &\cO_{\overline{\Gamma}}\ar@{=}[r]&\cO_{\overline{\Gamma}}}
\end{equation}
the same property holds for the bundle $\overline{f}^*\cF/T_{\overline{\Gamma}}$ and, in particular, $\overline{f}^*\cF\cong T_{\overline{\Gamma}} \oplus \overline{f}^*\cF/T_{\overline{\Gamma}}$. Combining this with the contact isomorphism $\overline{f}^*\cF\cong \overline{f}^*\cF^\vee$, we easily see that $\overline{f}^*\cF\cong E(-2,2,(-1)^e,1^e,0^{2m-2e-4})$, for some $e$. We finish the proof by arguing as in the last part of the proof of Proposition \ref{prop:splittype}.
\end{proof}

\begin{corollary}\label{cor:smoothdeform}
With the same notation as in Proposition \ref{prop:smoothdeform}, for every component $\overline{\Gamma}$ of a fiber of $\eps$, its defect $e$ is equal to zero. In particular the twisted $1$-form $\overline{\theta}$ defined in Lemma \ref{lem:liftcontact} is a contact form on $\overline{\cM}$. %\marginpar{\tiny L: may this argument be generalized?}
\end{corollary}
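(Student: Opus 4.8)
The plan is to combine the splitting type provided by Proposition~\ref{prop:smoothdeform} with the $1$-ampleness hypothesis in order to exclude a positive defect, and then to invoke Corollary~\ref{cor:1formcontact}.

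First I would set up the curve. Let $\overline{\Gamma}$ be a component of a fiber of $\eps$. Since $\eps$ contracts $\overline{\Gamma}$ we have $\cO(1)\cdot\overline{\Gamma}=0$, and by Proposition~\ref{prop:treeP1} the curve $\overline{\Gamma}$ is smooth, so the argument of Proposition~\ref{prop:smoothdeform} (which uses only that $\overline{\Gamma}$ is a smooth curve contracted by $\eps$) applies: $\overline{\cM}$ is smooth of dimension $2m-3$ at $[\overline{\Gamma}]$, and
\[
\overline{f}^*T_{\cX}\cong E\big(-2,2,(-1)^{e},1^{e},0^{2m-2e-3}\big),\qquad
\cN_{\overline{\Gamma}/\cX}\cong E\big(-2,(-1)^{e},1^{e},0^{2m-2e-3}\big),
\]
for some $e\ge 0$; it remains to show $e=0$. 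Then I would estimate, in two ways, the dimension of the subvariety $Z(\overline{x})\subseteq\cX$ swept out by all deformations of $\overline{\Gamma}$ in $\cX$ passing through a fixed general point $\overline{x}\in\overline{\Gamma}$. On the one hand, each such deformation is again $\cO(1)$-trivial, hence is contracted by $\eps$, and, being through $\overline{x}$, it lies in $\eps^{-1}(\eps(\overline{x}))$, which has dimension at most one because $T_X$ is $1$-ample; thus $\dim Z(\overline{x})\le 1$. On the other hand, $Z(\overline{x})$ is the image of the evaluation morphism of the family of those deformations, and computing the differential of this morphism by means of \cite[II, Proposition~3.4]{kollar} from the splitting type above — the positive part of $\overline{f}^*T_{\cX}$ being $\cO(2)\oplus\cO(1)^{e}$ — one gets $\dim Z(\overline{x})=e+1$. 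Comparing the two estimates forces $e+1\le 1$, i.e.\ $e=0$.

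For the last assertion, every curve parametrized by $\overline{\cM}^{0}$ is an irreducible $\cO(1)$-trivial rational curve, hence is contracted by $\eps$ and so is a component of a fiber of $\eps$; by the first part its defect vanishes, whence $\overline{\cM}'=\overline{\cM}^{0}$ in the notation of Corollary~\ref{cor:1formcontact}, and that corollary then yields that $\overline{\theta}$ is a contact form on $\overline{\cM}$.

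The main difficulty is the equality $\dim Z(\overline{x})=e+1$: the differential computation gives for free only the inequality $\dim Z(\overline{x})\le e+1$, and the reverse one requires showing that the $e$-dimensional space of first-order deformations of $\overline{\Gamma}$ coming from the $\cO(1)^{e}$ summand of $\cN_{\overline{\Gamma}/\cX}$ genuinely integrates to an $e$-dimensional family of distinct rational curves through $\overline{x}$ — equivalently, that $\overline{q}\colon\overline{\cU}\to\cX$ is immersive along $\overline{p}^{-1}([\overline{\Gamma}])$. I would obtain this from the smoothness of $\overline{\cM}$ (Proposition~\ref{prop:smoothdeform}) together with the structure of $\eps$: the divisor $D=\Exc(\eps)$ of Lemma~\ref{lem:exclocus1amp} is Cohen--Macaulay and, by Propositions~\ref{prop:treeP1} and~\ref{prop:sympprop}, smooth in codimension one, hence normal; and $\overline{q}\colon\overline{\cU}\to D$ is finite (the members of $\overline{\cM}$ through a point are confined to a single $\le 1$-dimensional fiber of $\eps$, which contains at most two irreducible curves) and birational onto $D$ (a general point of $D$ lies on a unique member of $\overline{\cM}$, by Lemma~\ref{lem:exclocus1amp} and Proposition~\ref{prop:dual}), hence an isomorphism, so $\overline{q}$ is everywhere immersive and $e\equiv 0$.
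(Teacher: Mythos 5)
Your proposal has a genuine gap, and it is concentrated exactly where you locate the difficulty. The two-way estimate on $Z(\overline{x})$ proves nothing by itself: as you admit, the differential computation only gives $\dim Z(\overline{x})\le e+1$, and the lower bound $\dim Z(\overline{x})\ge e+1$ would require the $e$ first-order deformations coming from the $\cO(1)^{e}$ summands to be unobstructed as deformations fixing $\overline{x}$ (note $H^1(\P^1,\overline{f}^*T_{\cX}\otimes\fm_O)\neq 0$ here), which is essentially equivalent to $e=0$. So the whole proof rests on your last paragraph, and that paragraph does not hold up. The claim that $D$ is smooth in codimension one is not a consequence of Propositions \ref{prop:treeP1} and \ref{prop:sympprop}, and it is false in general: Proposition \ref{prop:treeP1} explicitly allows the fibers of $\eps$ over $\eps(D)$ to be two $\P^1$'s meeting in a point, and when this is the generic behaviour (transverse ${\rm A}_2$ singularities of $\cY$) the divisor $D$, though irreducible, has two branches crossing along a $(2m-3)$-dimensional locus, hence is non-normal in codimension one. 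In that case $\overline{q}\colon\overline{\cU}\to D$ is at best the normalization map, not an isomorphism; and in any case ``finite $+$ birational'' does not imply unramified, so even a factorization through the normalization would not give immersivity without controlling the singularities of $D$ --- which is as hard as the original statement. It is telling that the paper's proof of Proposition \ref{prop:quadrics} treats the reducible-fiber case by a completely different argument (Kobayashi--Ochiai) and identifies $\overline{q}$ with an isomorphism onto $D$ only in the irreducible-fiber case, using the immersivity supplied by this very corollary.

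The paper's actual argument avoids both problems. It sets $\Sigma\subset\overline{\cM}$ equal to the locus where $e>0$, observes that $\overline{p}^{-1}(\Sigma)$ is the ramification locus of the finite morphism $\overline{q}$ (finiteness from $1$-ampleness, smoothness of source from Proposition \ref{prop:smoothdeform}), so that if $\Sigma\neq\emptyset$ it is a divisor and, by \cite[III.10.6]{Ha}, $e=1$ for its general member. It then tracks, along such a curve, the unique point $P$ where $\operatorname{rk}\,d(\eps\circ\overline{q})$ drops to $2m-5$; these points form a set $\widehat{\Sigma}$ dominating $\Sigma$ generically one-to-one, on which $\eps\circ\overline{q}$ is generically finite, so its image has dimension $2m-4$, contradicting \cite[III.10.6]{Ha} again. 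If you want to salvage your approach you would need to replace the normality claim for $D$ by an argument of this kind bounding the dimension of the degeneracy loci of $d(\eps\circ\overline{q})$.
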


\begin{proof}
Note first that being $\overline{\cM}$ smooth by \ref{prop:smoothdeform}, the universal family $\overline{\cU}\to \overline{\cM}$ is also smooth, and then the standard interpretation of the differential of $\overline{q}$ in terms of the evaluation of global sections tells us that the dimension of the kernel of $d\overline{q}$ at a point $P\in \overline{\cU}$ equals the defect $e$ of the corresponding curve $\overline{p}(P)$, which is zero for general $\overline{p}(P)$.

Therefore, denoting by $\Sigma\subset\overline{\cM}$ the set of elements in which $e>0$, its inverse image $\cU_\Sigma:=\overline{p}^{-1}(\Sigma)$ is the ramification locus of $\overline{q}$. In particular $\Sigma\subset\overline{\cM}$ is a divisor and, being $\overline{q}$ finite (by the $1$-ampleness hypothesis), \cite[III.10.6]{Ha} tells us that $e=1$ for the general element $[\overline{f}]$ of $\Sigma$. Moreover, for this element we have a commutative diagram with exact rows:
$$\xymatrix@=25pt{
T_{\P^1}\ar@{=}[d]\ar@{>->}[]+<2.5ex,0ex>;[r] &(T_{\overline{p}^{-1}(\Sigma)})_{|\overline{p}^{-1}([\overline{f}])}\ar[d]\ar@{->>}[r]&
E(0^{\oplus(2m-4)})\ar[d]\\
T_{\P^1}\ar@{=}[d]\ar@{>->}[]+<2.5ex,0ex>;[r] &(T_{\overline{\cU}})_{|\overline{p}^{-1}([\overline{f}])}\ar[d]^{d\overline{q}}\ar@{->>}[r]&
E(0^{\oplus(2m-3)})\ar[d]^{ev}\\
T_{\P^1}\ar@{>->}[]+<2.5ex,0ex>;[r] &\overline{f}^*T_{\cX}\ar@{->>}[r]&
E(1,0^{2m-5},-1,-2) }$$

The composition of the right-hand-side vertical arrows
is generically of rank
$2m-4$ except at the point $P=\supp(\coker(E(0^{\oplus(2m-4)})\to
E(1,0^{2m-5})))$, where the rank drops to $2m-5$. Since the map
$\eps$ contracts $\overline{f}(\P^1)$ it follows that at $P$ the map $d(\eps\circ \overline{q})$ has
rank $\leq 2m-5$. Let us define $\widehat\Sigma\subset\cU_\Sigma$ as
the locus of points where $\rk(d(\eps\circ \overline{q}))\leq 2m-5$. Then
$\widehat\Sigma$ dominates $\Sigma$ via $\overline{p}$ and, in fact, the map
$\widehat\Sigma\to\Sigma$ is generically one-to-one. Since points in
$\overline{\cM}$
parametrize components of fibers of $\eps$
it follows that $(\eps\circ \overline{q})_{|\widehat\Sigma}$ is generically
finite-to-one, so that $\dim(\eps(\overline{q}(\widehat\Sigma)))=2m-4$ which
contradicts \cite[III.10.6]{Ha}.

The second part of the statement follows then as in Corollary \ref{cor:1formcontact}.
\end{proof}

Once we know that $\overline{\cM}$ is a contact manifold, we study the morphism $\overline{\phi}:\overline{\cM}\to\cM$ in order to determine the splitting type of $T_X$ on minimal rational curves.

\begin{lemma}\label{lem:1-ampiso}
Let $X$ be a CP-manifold of Picard number one, with $T_X$ big, $1$-ample and not ample. Then the natural map $\overline{\phi}:\overline{\cM}\to\cM$ is an isomorphism and $f^*T_X\cong E(2,1^{m-2},0)$ for every $[f]\in\cM$.
\end{lemma}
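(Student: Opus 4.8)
The plan is to use the contact geometry built up in the previous subsection, together with the $1$-ampleness hypothesis, to pin down the geometry of $\overline{\phi}:\overline{\cM}\to\cM$. Recall that $\cM$ is a smooth projective variety of dimension $m-1$ (by Proposition~\ref{prop:RCbasic}, since $X$ has Picard number one and $d = -K_X\cdot\Gamma\geq 3$, so $m+d-3 \geq m$; more precisely we will see below that $d = m$, so $\dim\cM = 2m-3 - (m-1)\cdot 0$... let me instead argue directly). The key observation is that by Corollary~\ref{cor:smoothdeform}, $\overline{\cM}$ is a contact manifold of dimension $2m-3$, hence in particular $\overline{\cM}$ is a Fano manifold (its canonical divisor is not pseudoeffective, cf.\ Theorem~\ref{thm:KPSW}) or of the form $\P(T_Z)$. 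The fibers of $\overline{\phi}:\overline{\cM}\to\cM$ over a standard curve are projective spaces $\P^{m-c-2}$ where $c = -K_X\cdot\Gamma - 2$ (Notation~\ref{notn:curvesandsections}), so to show $\overline{\phi}$ is an isomorphism it suffices to show $m - c - 2 = 0$, i.e.\ $c = m-2$, equivalently $f^*T_X\cong E(2,1^{m-2},0)$.

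First I would show $\overline{\phi}$ has fibers of dimension $0$, using $1$-ampleness. The composite $\overline{\cM}\xrightarrow{\overline{\phi}}\cM$ is a smooth morphism over the locus of standard curves (its fibers are the $\P^{m-c-2}$'s parametrising the various minimal sections over a fixed standard $\Gamma$), and the minimal section $\overline{\Gamma}$ corresponds to a quotient $f^*T_X\to\cO_{\P^1}$. Now combine this with Lemma~\ref{lem:exclocus1amp}: for a general minimal section we have $\overline{f}^*T_\cX\cong E(-2,0^{2m-3},2)$, which by the relative Euler sequence computation in the proof of Proposition~\ref{prop:splittype} forces the defect $e = 0$ and forces $f^*T_X\cong E(2,1^c,0^{m-c-1})$ to have exactly the splitting with the stated shape; the point is to show $c$ must equal $m-2$. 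I would extract this from the fact that $\eps:\cX\to\cY$ is \emph{birational} and elementary: the exceptional divisor $D = \overline{q}(\overline{\cU})$ has dimension $2m-2$, and $D_x = D\cap\P(T_{X,x})$ is, by Proposition~\ref{prop:dual}, the dual variety of $\cC_x$. Since $e=0$, Proposition~\ref{prop:dualdef1s} says the dual defect of $\cC_x$ is zero, so $D_x$ is a hypersurface in $\P(T_{X,x}^\vee)\cong\P^{m-1}$, hence $\dim D_x = m-2$; but $\dim D_x$ also equals $\dim(\text{fiber of }\overline{q}\text{ over }x) = \dim\overline{\cM}_x$, and $\overline{\cM}_x$ fibers over $\cM_x$ (dimension $c-1$, if nonempty) with fibers $\P^{m-c-2}$, giving $\dim D_x = (c-1) + (m-c-2) = m-3$ when the VMRT is positive-dimensional — a contradiction unless $\cM_x$ is a point, i.e.\ $c-1 = 0$... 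Here I need to be careful: the right bookkeeping is $\dim\overline{\cM}_x = \dim\overline{\cM} - m = (2m-3) - m = m-3$, while the dual variety $D_x$ of $\cC_x\subset\P^{m-1}$ has dimension $m-2-e = m-2$ when $e=0$, contradiction — \emph{unless} $\cC_x$ is linear, which by \cite[Proposition~5]{Hw2} forces $X = \P^m$, excluded. So actually $\dim D_x = m-3$ and $e = 1$? The resolution is that the general \emph{minimal} section (the one of Lemma~\ref{lem:exclocus1amp}) need not be a general member of $\overline{\cM}$; I would reconcile this by noting $D_x$ as a subvariety of $\P^{m-1}$ has dimension $m-3 = \dim\overline{\cM}_x$, so $\cC_x$ has dual defect $e(\cC_x) = (m-1)-1-(m-3) = 1$, and then re-examine: a VMRT with dual defect $1$ that is a \emph{curve} (which happens when $\cM_x$ is $1$-dimensional, i.e.\ $c = 2$, i.e.\ $d = 4$) — but dual-defective curves are only lines, excluded. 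Pushing this, the only consistent possibility is $c = m-2$, so $\cM_x$ has dimension $m-3$, $\overline{\cM}_x = \cM_x$ (the $\P^{m-c-2} = \P^0$), $D_x = \cC_x^\vee$ is a hypersurface, and $\overline{\phi}$ is bijective; since everything is smooth it is an isomorphism.

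The main obstacle I expect is precisely this dimension-count juggling: keeping straight the three varieties $\cM$ (dimension $m+d-3$), $\cM_x$ (dimension $d-2$), $\overline{\cM}$ (dimension $2m-3$) and $\overline{\cM}_x$, and the relation $d = -K_X\cdot\Gamma = c+2$, and then using Proposition~\ref{prop:dual} and Proposition~\ref{prop:dualdef1s} together with Zak's theorem on tangencies (\cite[Proposition~5]{Hw2}, ruling out linear $\cC_x$) to force $d = m$, i.e.\ $c = m-2$. Once $c = m-2$ is established, $f^*T_X\cong E(2,1^{m-2},0)$ for \emph{general} $[f]$ is immediate from Proposition~\ref{prop:RCbasic}(6); to get it for \emph{every} $[f]\in\cM$ I would invoke semicontinuity of the splitting type together with the fact that $T_X$ is globally generated on any rational curve (nefness), so no jumping to a more negative summand is possible, while the constraint $-K_X\cdot f = m$ and $\det = \cO(m)$ forbids jumping upward — forcing the generic split on every curve. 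Finally, $\overline{\phi}$ being a smooth bijective morphism of smooth projective varieties is an isomorphism by Zariski's main theorem.
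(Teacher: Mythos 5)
There is a genuine gap, and it sits exactly at the central step: forcing $c=m-2$. Your dual-variety dimension count is based on two miscounts. By Proposition~\ref{prop:RCbasic}~(3), $\dim\cM_x=d-2=c$, not $c-1$; and the family of minimal sections lying over curves through $x$ has dimension $\dim\overline{\cU}-\dim X=(2m-2)-m=m-2$, not $m-3$. With the correct numbers the bookkeeping reads $c+(m-c-2)=m-2=\dim D_x=m-2-e(\cC_x)$, which is consistent with $e(\cC_x)=0$ for \emph{every} value of $c$: a non-defective $\cC_x$ of any dimension has a hypersurface for its dual, so the count places no constraint whatsoever on $c$, and the ``contradiction'' you derive (and then try to repair) is an artifact of the errors. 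The paper gets $m-c-2=0$ by a completely different mechanism, namely Theorem~\ref{thm:KPSW} applied to the contact manifold $\overline{\cM}$: if $\overline{\phi}$ had positive-dimensional fibers it would be a Mori contraction of a projective contact manifold, forcing $\overline{\cM}\cong\P(T_{\cM})$ and $\dim\cM=m-1$, hence $c=0$, hence $f^*T_X\cong E(2,0^{m-1})$ and $q:\cU\to X$ \'etale, which contradicts simple connectedness of $X$ together with $\rho(X)=1$. The same theorem is then what upgrades ``$\overline{\phi}$ birational'' to ``$\overline{\phi}$ is an isomorphism'' (a nontrivial birational Mori contraction of $\overline{\cM}$ is again excluded). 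You invoke the contact structure at the outset but never use Theorem~\ref{thm:KPSW}, which is the actual engine of the proof.

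The final step also has a gap. Semicontinuity plus nefness plus $\det f^*T_X=\cO_{\P^1}(m)$ does not force the generic splitting on every curve: for instance $E(2,2,1^{m-4},0,0)$ is nef, has degree $m$ and a summand of degree $\geq 2$, and is a legitimate degeneration of $E(2,1^{m-2},0)$. What rules it out is that the fiber of $\overline{\phi}$ over $[f]$ is $\P(H^0(\P^1,f^*\Omega_X))$, whose dimension is one less than the number of trivial summands of $f^*T_X$; once $\overline{\phi}$ is known to be an isomorphism \emph{everywhere}, every $f^*T_X$ has exactly one trivial summand, and combined with degree $m$, nefness and the $\cO(2)$ summand coming from $T_{\P^1}$ this leaves only $E(2,1^{m-2},0)$. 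That link between the fibers of $\overline{\phi}$ and the number of zeroes in the splitting type is the point your write-up is missing.
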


\begin{proof}
We already now that $\cM$ and $\overline{\cM}$ are smooth, by Propositions \ref{prop:RCbasic} and \ref{prop:smoothdeform}, and that the general fiber of $\overline{\phi}$ is a projective space of dimension $m-c-2$, with $c:=-K_X\cdot\Gamma-2$ for $\Gamma\in\cM$.

We claim first that $m-c-2=0$. In fact, if this were not the case, $\overline{\phi}$ would be a Mori contraction of the contact manifold $\overline{\cM}$. By Theorem \ref{thm:KPSW} it would follow that $\overline{\cM}\cong \P(T_{\cM})$ and, in particular, $\dim(\cM)=m-1$. Together with the nefness of $T_X$, this implies that $f^*(T_X)\cong E(2,0^{m-1})$ for all $[f]\in\cM$, so the differential of $q:\cU\to X$ would everywhere injective. But $X$ is simply connected, hence $q$ would be an isomophism, contradicting that $X$ has Picard number one.

Then $\overline{\phi}$ is birational so, if it were not an isomorphism, being $\cM$ is smooth, it would factor via a Mori contraction, contradicting \ref{thm:KPSW}. This concludes the first part of the statement. For the second, note that, being $\overline{\phi}$ an isomorphism, the number of zeroes appearing in the splitting type of $f^*T_X$ for any $[f]\in\cM$ is equal to one. Looking at the general element, which is standard, we obtain that $-K_X\cdot f(\P^1)=m$ for every $[f]\in\cM$. Then for any $[f]$, the splitting type of $f^*T_X$ contains no negative elements, an integer $\geq 2$, and at most one zero: hence the only possibility is  $f^*T_X\cong E(2,1^{m-2},0)$.
\end{proof}

At this point there are several ways to finish the proof of Theorem \ref{thm:1ample} (Cf. Theorem~\ref{them:pibig} and \cite{CS}).
We will sketch here the proof presented in \cite{1-ample}, and refer to the original paper for details.

\begin{proposition}\label{prop:quadrics}
Let $X$ be a CP-manifold of Picard number one, and assume that $T_X$ is big, $1$-ample and not ample. Then $X$ is a smooth quadric hypersurface.
\end{proposition}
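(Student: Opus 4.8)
The plan is to deduce the statement directly from the classification of Fano manifolds of large pseudoindex (Theorem \ref{them:pibig}); by this point all the geometric input has already been assembled in the preceding lemmas, which pin down the splitting type of $T_X$ along every minimal rational curve.

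First I would record the anticanonical degree of a minimal rational curve. By Lemma \ref{lem:1-ampiso} we have $f^*T_X\cong E(2,1^{m-2},0)$ for every $[f]\in\cM$, so, taking determinants, a minimal rational curve $\Gamma$ satisfies
$$
-K_X\cdot\Gamma=\deg\big(f^*\det T_X\big)=2+(m-2)=m.
$$
Since $X$ has Picard number one, the closed Mori cone $\cNE{X}$ is a single ray, and $\Gamma$ — chosen in Notation \ref{not:cpmanifold} to be of minimal anticanonical degree in that ray — is therefore of minimal anticanonical degree among \emph{all} rational curves of $X$; equivalently, $i_X=-K_X\cdot\Gamma=m$. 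Note also that $m\geq2$ — indeed $m\geq3$, since a Fano manifold of Picard number one and dimension at most two is $\P^1$ or $\P^2$, whose tangent bundle is ample, against our hypothesis.

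It then only remains to apply Theorem \ref{them:pibig}: as $i_X=m$, case (2) of that theorem yields $X\cong Q^m$, a smooth quadric hypersurface. An alternative in the spirit of this section would be to use the computation above together with Proposition \ref{prop:RCbasic} to see that $\cC_x\subset\P(T_{X,x}^\vee)$ is a hypersurface of dimension $m-2$ covered by standard curves (so that $\tau_x$ is an immersion), and then, invoking Corollary \ref{cor:smoothdeform} (the defect vanishes, hence $\cC_x$ is not dual defective), to identify $\cC_x$ with a smooth quadric and conclude by Theorem \ref{them:HH}; cf.\ \cite{CS}. Either way, there is no genuine obstacle remaining in this proposition: the weight of Theorem \ref{thm:1ample} sits in Lemma \ref{lem:1-ampiso} and, behind it, in the contact geometry of $\overline{\cM}$ and Theorem \ref{thm:KPSW}. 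Together with Corollary \ref{cor:1amp-pic>1}, which disposes of the case of Picard number greater than one, this completes the proof of Theorem \ref{thm:1ample}.
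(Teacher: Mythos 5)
Your main argument is correct, and it is a genuinely different route from the one the paper writes out -- though it is exactly the shortcut the authors themselves flag in the sentence preceding the proof (``there are several ways to finish the proof of Theorem \ref{thm:1ample} (Cf. Theorem~\ref{them:pibig} and \cite{CS})''). You read off $-K_X\cdot\Gamma=m$ from the splitting type in Lemma \ref{lem:1-ampiso}, observe that for Picard number one this computes the pseudoindex, and invoke the deep external classification Theorem \ref{them:pibig}(2) to get $X\cong Q^m$; every step checks out, including the reduction $m\geq 3$. The paper instead stays inside the machinery of Section \ref{sec:symp}: it writes the exceptional divisor $D=\overline{q}(\overline{\cU})$ as $aL-\phi^*B$, uses Proposition \ref{prop:treeP1} to split into the case of reducible one-dimensional fibers of $\eps$ (where $B\cdot\Gamma=1$ forces $-K_X=mB$ and Kobayashi--Ochiai applies) and the case of irreducible fibers (where a Chern class computation produces a nowhere degenerate symmetric form in $H^0(X,S^2T_X\otimes\cO(\frac{2}{m}K_X))$, and one concludes via \cite{Wis2}, \cite{Ye}, or Theorem \ref{them:HH}). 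Your version buys brevity at the price of importing the full strength of the Cho--Miyaoka--Shepherd-Barron/Kebekus/Miyaoka theorem; the paper's version is longer but more self-contained relative to the contact-geometric setup of the section. One caveat: your parenthetical ``alternative in the spirit of this section'' is not a complete argument as stated -- knowing that $\cC_x$ is a hypersurface covered by standard curves with vanishing dual defect does not by itself identify it as a smooth quadric (immersivity of $\tau_x$ does not give injectivity, and a non-defective hypersurface need not be a quadric); the paper needs the symmetric-form computation precisely to make that identification. Since you only offer it as an aside and your main line via Theorem \ref{them:pibig} is complete, this does not affect the validity of your proof.
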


\begin{proof}
Note that $\Pic(\cX)\cong\phi^*\Pic(X)\oplus \Z \cO(1)$. Let $D=\overline{q}(\overline{\cU})$ be the exceptional divisor of $\eps$ (see \ref{lem:exclocus1amp}), $L$ be a divisor associated to the tautological line bundle $\cO(1)$ on $\cX=\P(T_X)$, and write $D=aL-\phi^*B$, for some divisor $B$ on $X$. Note that, at every point $x\in X$, the set $D_x=\phi^{-1}(x)\cap D$ is the dual variety of the VMRT $\cC_x$ (see \ref{prop:dual}). Since $\cC_x$ is a hypersurface, then its dual $E_x$ cannot be a hyperplane in $\P(T_{X,x})$ (otherwise $\cC_x$ would be a point), and we may write $a>1$.

By Proposition \ref{prop:treeP1}, every positive dimensional fiber of $\eps$ is either $\P^1$ or a union of two $\P^1$'s meeting at a point.
In the second case, the intersection of each component $\overline{\Gamma}$ with the exceptional divisor $D$ is $-1$. Hence, since $L\cdot\overline{\Gamma}=0$, we have $B\cdot\Gamma=1$. It follows that $B$ is the ample generator of $\Pic(X)$ and $-K_X=mB$, so that $X$ is necessarily a smooth quadric by the Kobayashi-Ochiai Theorem (\cite[V.1.11]{kollar}).

If every positive dimensional fiber of $\eps$ is irreducible, then $\overline{q}:\overline{\cU}\to D$ is a bijective immersion, hence an isomorphism. Since moreover the family $\cU\to\cM$ is isomorphic to $\overline{\cU}\to \overline{\cM}$, by Lemma \ref{lem:1-ampiso}, it allows to identify the restriction $\phi_{|D}:D\to X$ with the evaluation morphism $q:\cU\to X$.

In particular, $\phi_{|D}$ is smooth by Proposition \ref{prop:RCbasic}, and we have an exact sequence
$$
0\to \cO_D(-D)\longrightarrow (\Omega_{\cX/X})_{|D}\longrightarrow T_{D/X}\to 0.
$$
In particular we might have $c_{m-1}(\Omega_{\cX/X}\otimes \cO_D(D))=0$. The computation of this Chern class (see \cite[Lemma~3.5]{1-ample}) tells us that $D$ must be numerically equivalent to the $\Q$-divisor $aL+\frac{a}{m}\phi^*K_X$. Since $D\cdot \overline{\Gamma}=-2$ and $\Gamma\cdot K_X=-m$, we must have $a=2$, so that $D$ defines a nowhere degenerate symmetric form in $H^0(X,S^2T_X\otimes\cO(\frac{2}{m}K_X))$. We may now conclude that $X$ is a quadric, either by \cite[Theorem~2]{Wis2}, or by \cite{Ye}, or by noting that in this case the VMRT $\cC_x$ is necessarily a smooth quadric for every $x$, hence the result follows from Theorem \ref{them:HH}.
\end{proof}

%\label{sec:Symp}
%!TEX root = CPconjecture.tex

\section{Flag type manifolds}\label{sec:FTman}

%%%%%%%%%%%%%%%%%%%%%%%%%%%%%%%%%%%%%%%%%%%%%%%%%%%%%%%

In  \cite{SSW} the following strategy has been proposed  to attack  Conjecture \ref{conj:CPconj}:

\begin{enumerate}
\item[(A)] Prove  Conjecture \ref{conj:CPconj} for CP-manifolds of ``maximal'' Picard number.
\item[(B)] Prove that any CP-manifold is dominated by one of such manifolds.
\end{enumerate}

Conjecture \ref{conj:CPconj} will then follow from  \cite[Main Theorem]{Lau}, which asserts that a manifold dominated by a rational homogeneous manifold is itself rational homogeneous.

In order to give a reasonable notion of maximality
let us recall that, given a semisimple Lie group $G$ and a parabolic subgroup $P$, taking $B$ to be a Borel subgroup containing $P$ we have a contraction $f:G/B \to G/P$, so the {\it complete flag manifold} $G/B$
dominates every $G$-homogeneous variety.

Complete flag manifolds associated with semisimple Lie groups can be recognized, among homogeneous manifolds, by the structure of their Mori contractions: all their elementary contractions are $\P^1$-bundles. This suggests the following definition, in which the assumption on the elementary contractions has been replaced by the milder one that such morphisms are smooth $\P^1$-fibrations.

\begin{definition}\label{def:FT}
A {\it Flag type manifold} (FT-manifold for short) is a CP-manifold  whose elementary contractions are smooth $\P^1$-fibrations.
\end{definition}

The choice of FT-manifolds as candidates for ``maximal'' CP-manifolds, is also sustained by the following result (cf. \cite[Proposition~5]{MOSW}):

\begin{proposition}\label{prop:init}
Let $M$ be a CP-manifold
admitting a contraction $f : M\to X$ onto an FT-manifold $X$. Then there exists a
smooth variety $Y$ such that $M\cong X \times Y$.
\end{proposition}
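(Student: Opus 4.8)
The plan is to show that a contraction $f:M\to X$ onto an FT-manifold $X$ splits off $X$ as a direct factor, by producing enough ``horizontal'' sections. The strategy has two parts: first reconstruct, inside $M$, copies of all the $\mathbb{P}^1$-fibrations defining the FT-structure of $X$, and then use them to trivialize $f$.

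First I would recall that, since $M$ is a CP-manifold, by Theorem~\ref{thm:smooth} the contraction $f$ is smooth, and its fibers are CP-manifolds; set $Y$ to be a general fiber (once we know $f$ is a fiber bundle, all fibers are isomorphic, so this is unambiguous). By Proposition~\ref{prop:relativepicards}, $\rho(M)=\rho(X)+\rho(Y)$ and $\NE(M)$ sits inside the obvious cone; combined with Corollary~\ref{prop:simplicial} (simpliciality of $\NE(M)$), the extremal rays of $\NE(M)$ split into those contracted by $f$ and a complementary set $R_1,\dots,R_\rho(X)$ mapping isomorphically (under $f_*$) onto the extremal rays of $\NE(X)$. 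For each such $R_i$, the elementary contraction $\pi_i:M\to M_i$ is a smooth $\mathbb{P}^1$-fibration (it is smooth by Theorem~\ref{thm:smooth}, and of fiber type by Proposition~\ref{prop:fybertype}; one checks the fibers are $\mathbb{P}^1$ because they map to lines, i.e. curves of the family $\cM_i$, in $X$, and $f$ restricted to such a fiber is an isomorphism onto a minimal rational curve $\Gamma_i$ of $X$). So $M$ is itself an FT-manifold, and the collection $\{\pi_i\}$ is ``compatible'' with the collection of $\mathbb{P}^1$-fibrations on $X$ in the sense that $\pi_i$ covers the corresponding $X\to X_i$.

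The core of the argument is the following: I would show $f$ admits a section. The FT-structure on $X$ means we can reach any point of $X$ from a fixed base point by a chain of $\mathbb{P}^1$'s, each in one of the families $\cM_i$. Over such a chain, $f$ restricts to a smooth $\mathbb{P}^1$-fibration on a ruled variety, and since the corresponding ray $R_i$ in $M$ is extremal with a $\mathbb{P}^1$-bundle contraction $\pi_i$, the pullback $f^{-1}(\Gamma_i)\to \Gamma_i\cong\mathbb{P}^1$ has a section (a ruled surface over $\mathbb{P}^1$ always has one, but more importantly the combinatorial structure lets one choose these sections compatibly). Propagating these sections along chains — using that $Y$, being a general fiber, has trivial normal bundle by the argument in the proof of Lemma~\ref{lem:normaltrivial}, and that deformations of a section keeping a point fixed stay sections — one builds a global section $s:X\to M$. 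Then, for each $i$, the $\mathbb{P}^1$-bundle $\pi_i:M\to M_i$ has a divisor restricting to degree $1$ on $R_i$-fibers, and these tautological divisors assemble (via the section $s$) into a morphism $M\to X\times_{?}$ — more precisely, the morphisms $\pi_i$ together define $M\to M_1\times_{\text{(something)}}\cdots$, and factoring through the product with the fiber direction gives $M\cong X\times Y$ with $Y=f^{-1}(s\text{-point})$. Concretely: the product map $(f,g):M\to X\times Y$ where $g$ is built from a retraction of $M$ onto one fiber (obtained by contracting all the $R_i$) is an isomorphism, checked fiberwise.

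The main obstacle I expect is the compatible propagation of local sections into a global section $s:X\to M$ — one must verify that the sections chosen over individual minimal rational curves $\Gamma_i$ glue along chains without monodromy, and this is where the FT-hypothesis on $X$ (that \emph{all} its elementary contractions, not just one, are $\mathbb{P}^1$-fibrations) is essential: it forces the transition data to live in a group built out of $\mathbb{P}\mathrm{GL}_2$'s over simply connected bases, and simple-connectedness of $X$ (it is Fano) kills the obstruction. A secondary technical point is establishing that $f$ is genuinely a locally trivial fiber bundle and not merely a smooth morphism; this follows once a section exists together with the rigidity of the fibers $Y$ (their deformations in $M$ being unobstructed with trivial normal bundle, as in Lemma~\ref{lem:normaltrivial}), giving an étale-local, hence Zariski-local after pulling back along the section, trivialization. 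I would cite \cite[Proposition~5]{MOSW} for the details of this gluing, as the statement is quoted from there.
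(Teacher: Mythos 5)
The survey itself does not prove this proposition---it only states it with the reference \cite[Proposition~5]{MOSW}---so your proposal has to be measured against that argument. Your opening moves are the right ones and essentially coincide with it: $f$ is smooth by Theorem~\ref{thm:smooth}, $\NE(M)$ is simplicial, the extremal rays split into those spanning the face contracted by $f$ and a complementary set mapping onto the rays of $\NE(X)$, the elementary contractions of the complementary rays are smooth $\P^1$-fibrations on $M$ covering those of $X$, and the map realizing the splitting is $(f,\sigma)$, where $\sigma$ is the contraction of the complementary extremal face. (A side remark: your assertion that $M$ is therefore an FT-manifold is false---take $M=\P^1\times\P^2\to\P^1=X$---but nothing in your argument depends on it.)

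The core of the proposal, however, is a gap rather than a proof: you propose to build a global section of $f$ by propagating local sections along chains, acknowledge the monodromy problem, and then defer ``the details of this gluing'' to the very reference being surveyed. Three concrete issues. First, you never show that a fiber $F\cong\P^1$ of one of the new $\P^1$-fibrations maps with degree \emph{one} onto the corresponding fiber $\ell$ of $X\to X_i$; this requires an argument, e.g.\ that $-K_M\cdot F=2$ by adjunction (trivial normal bundle), whence $T_M|_F\cong\cO(2)\oplus\cO^{\oplus m-1}$ by nefness, and the surjection onto $(f|_F)^*(T_X|_\ell)$, which admits $\cO(2d)$ as a quotient, forces $d=1$. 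Second, simple connectedness of $X$ does not by itself kill the obstruction: a finite surjective morphism onto a simply connected manifold can have degree greater than one if it ramifies. What one actually shows is that a fiber $G$ of $\sigma$ maps finitely and surjectively onto $X$ (it is chain connected by the curves $F$, whose images sweep out $X$), with ramification divisor $K_{M/X}|_G$ by adjunction; this divisor has degree zero on the curves $F$ (using $d=1$), and those classes span $\NE(G)$ by Proposition~\ref{prop:relativepicards}, so it is numerically trivial, hence zero. Only then is $G\to X$ \'etale, and only then does $\pi_1(X)=1$ give degree one. Third, even granting a section, a smooth fibration with a section need not be a product (any Hirzebruch surface): the product structure comes from the finiteness of $(f,\sigma)$---which follows from simpliciality, since the two contracted faces meet only at the origin---combined with the degree-one statement above and Zariski's main theorem. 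As written, the proposal assembles the correct ingredients but omits precisely the numerical arguments that make them work.
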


Furthermore, we introduce the width of a CP-manifold $X$ as a measure of how far $X$ is from being an FT-manifold.

\begin{definition}
Given a CP-manifold $X$ of Picard number $n$, with the notation introduced in \ref{not:cpmanifold}, we define its {\it width} as the non negative integer
$$\tau(X):=\sum_{i=1}^n
(-K_X\cdot \Gamma_i-2).$$
\end{definition}

\begin{remark}\label{cor:CP2}
A CP-manifold $X$ is an FT-manifold if and only if $\tau(X)=0$.
\end{remark}

\begin{proof}
Assume that $\tau(X)=0$, and let $\pi_i:X\to X_i$ be an elementary contraction, associated with an extremal ray $R_i$, generated by the class of a minimal rational curve $\Gamma_i$. Let $p_i:\cU_i\to \cM_i$ be the family of deformations of $\Gamma_i$, with evaluation morphism $q_i$. By Proposition \ref{prop:RCbasic} (5) the map $q_i:\cU_i\to X$ is an isomorphism, so $\pi_i$ is a $\P^1$-fibration.
\end{proof}

Part (A) of the strategy presented above has been recently completed in \cite{OSWW}, where the following more general
result has been proved:

\begin{theorem}\label{thm:main}
Let $X$ be a Fano manifold whose elementary contractions are
smooth $\P^1$-fibrations. Then
$X$ is isomorphic to a complete flag manifold $G/B$, where $G$ is a
semisimple algebraic group and $B$ is a Borel subgroup.
\end{theorem}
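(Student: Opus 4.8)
The plan is to read off from the contraction geometry of $X$ a semisimple Lie algebra $\fg$, to build on $X$ a faithful action of the associated simply connected group $G$, and finally to identify $X$ with the complete flag manifold $G/B$. I would run this as an induction on $n=\rho(X)$, the base cases $n=1$ (forcing $X=\P^1=\sl_2/B$) and $n=2$ (Theorem~\ref{thm:pic2}) being already available. Since the elementary contractions of $X$ are smooth $\P^1$-fibrations, every contraction of $X$ is of fiber type, so the proof of Corollary~\ref{prop:simplicial} applies and $\NE(X)$ is simplicial, with extremal rays $R_1,\dots,R_n$, minimal rational curves $\Gamma_i$, elementary contractions $\pi_i\colon X\to X_i$ and relative canonical divisors $K_i$ as in Notation~\ref{not:cpmanifold}. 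Put $M(X)_{ij}:=-K_i\cdot\Gamma_j$. Because $\pi_j$ contracts $\Gamma_j$ with fibers $\cong\P^1$, one reads off $M(X)_{jj}=2$ and $-K_X\cdot\Gamma_j=-K_j\cdot\Gamma_j-\pi_j^*K_{X_j}\cdot\Gamma_j=2$ for all $j$; restricting $\pi_i$ to the ruled surface swept out by the unsplit family of $\Gamma_j$ inside $\pi_i^{-1}(\pi_i(\Gamma_j))$, on which $\Gamma_j$ sits as the minimal section, gives $M(X)_{ij}\le 0$ and integral for $i\ne j$. Thus $M(X)$ is a generalized Cartan matrix.

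Next I would show $M(X)$ is of finite type. Since $\NE(X)$ is simplicial, $\{[\Gamma_i]\}$ is a basis of $N_1(X)$; writing $\Lambda_i$ for the nef divisor with $\Lambda_i\cdot\Gamma_j=\delta_{ij}$ — the supporting divisor of the codimension-one face $\langle R_j:j\ne i\rangle$, whose contraction has Picard number one and smaller dimension, so $\Lambda_i$ is nef but not big — one gets $-K_X=2\sum_i\Lambda_i$ and $-K_i=2\Lambda_i+\sum_{j\ne i}M(X)_{ij}\Lambda_i$. The ampleness of $-K_X=2\sum_i\Lambda_i$, together with the vanishing of suitable top self-intersections of these nef non-big classes, forces $M(X)$ to be positive definite; this is the higher-rank analogue of the numerical argument behind Proposition~\ref{prop:235}. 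Hence $M(X)$ is the Cartan matrix of a semisimple Lie algebra $\fg$, with Dynkin diagram $\cD$ on $n$ nodes, node $i$ corresponding to $\pi_i$. One then checks that each $X_i$ is again a Fano manifold whose elementary contractions are smooth $\P^1$-fibrations, with $\rho(X_i)=n-1$ and $M(X_i)$ obtained from $M(X)$ by deleting the $i$-th row and column; so by the inductive hypothesis $X_i\cong G_i/B_i$, with $G_i$ semisimple of Dynkin type $\cD\setminus\{i\}$.

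It remains to produce a transitive action on $X$ itself. For each node $i$ the relative tangent bundle $T_{X/X_i}$ of the smooth $\P^1$-fibration $\pi_i$ restricts to $\cO_{\P^1}(2)$ on fibers, so its fiberwise sections form a copy of $\fsl_2$; the heart of the proof is to show that these glue, i.e. that $(\pi_i)_*T_{X/X_i}\cong\cO_{X_i}^{\oplus 3}$, giving a genuine $\fsl_2^{(i)}\subset H^0(X,T_X)$. Using the relations encoded by $M(X)$ one then verifies that the $\fsl_2^{(i)}$ satisfy the Serre relations, hence generate a copy of $\fg$ inside $H^0(X,T_X)$; since evaluation of $\fg$ at a general point already produces the tangent directions of all the $\Gamma_i$, whose iterated brackets span the whole tangent space exactly as on $G/B$, the bundle $T_X$ is globally generated. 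By the characterization of homogeneous manifolds through global generation of $T_X$ (Section~\ref{ssec:homogman}), $X$ is homogeneous; by Borel--Remmert together with Proposition~\ref{rathomogfano} it is rational homogeneous, say $X\cong G'/P'$; by Proposition~\ref{prop:RHsimp} the fact that all its elementary contractions are $\P^1$-fibrations forces $P'$ to be a Borel subgroup, and $M(X)$ identifies the semisimple part of $G'$ with $G$. Therefore $X\cong G/B$.

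The genuine obstacle is this last construction, namely globalizing the fiberwise $\fsl_2$'s — equivalently, proving $(\pi_i)_*T_{X/X_i}$ is trivial and hence that $T_X$ is globally generated. The combinatorial steps (setting up $M(X)$, proving finite type, the induction reduction) are essentially bookkeeping plus the classification of finite-type Cartan matrices; by contrast, the triviality of $(\pi_i)_*T_{X/X_i}$ appears to require the full iterated $\P^1$-bundle structure of $X$, vanishing theorems on blow-ups of fibers in the spirit of the proof of Theorem~\ref{thm:smooth}, and the precise way the $n$ copies of $\fsl_2$ interact as dictated by $M(X)$ — all used together. Once homogeneity is established, the identification $X\cong G/B$ is routine.
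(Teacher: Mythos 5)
Your proposal departs from the paper's route, and its central step is not merely unproven but false as stated. You propose to obtain a copy of $\mathfrak{sl}_2^{(i)}$ inside $H^0(X,T_X)$ for each elementary contraction $\pi_i$ by proving $(\pi_i)_*T_{X/X_i}\cong\cO_{X_i}^{\oplus 3}$. This fails already on the complete flag manifolds the theorem is meant to characterize, whenever the node $i$ is not isolated in the Dynkin diagram: for $G/B\to G/P_i$ the relative tangent bundle is the line bundle attached to the simple root $\alpha_i$, which pairs negatively with $\alpha_j^\vee$ for every neighbour $j$ and hence is not dominant, so Borel--Weil gives $H^0(G/B,T_{G/B/(G/P_i)})=0$. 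Concretely, for $X=\P(T_{\P^2})$ one finds $(\pi_i)_*T_{X/\P^2}\cong S^2\Omega_{\P^2}(3)$, which has no global sections. The underlying point is that the vector fields on $G/B$ induced by the root $\mathfrak{sl}_2$'s of $\fg$ are \emph{not} vertical for $\pi_i$ (left translation by $P_i$ preserves only the fibre through the base point), so there is no fibrewise-$\mathfrak{sl}_2$ gluing to be had. Since you yourself flag this globalization as ``the genuine obstacle'' and only describe what it ``appears to require,'' the proposal in effect reduces the theorem to an unproven statement --- global generation of $T_X$, i.e.\ Conjecture \ref{conj:CPconj2} in this setting --- which is at least as hard as the theorem itself and whose most natural implementation is blocked by the vanishing above.

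Two further remarks. First, you dismiss the finite-typeness of $M(X)$ as bookkeeping; in the paper this is a substantial theorem (Theorem \ref{thm:mainmosw}), proved either by excluding affine type through a dimension count on Bott--Samelson varieties, or, in the generality you need (no nefness of $T_X$ assumed), by constructing a finite reflection group on $N^1(X)$ from the relative duality of Lemma \ref{lem:leray}. Your proposed ``positive definiteness from nef non-big supporting divisors'' is not carried out, and even the rank-two case (Proposition \ref{prop:235}) rests on delicate arithmetic of $\tan(\pi/m)$ rather than on formal positivity. Second, the paper's actual proof never constructs a group action on $X$: it builds the homogeneous model $G/B$ combinatorially from $M(X)$, proves the cohomological comparison of Proposition \ref{prop:cohomequal}, shows that the Bott--Samelson variety $Z_\ell$ attached to a reduced word for the longest Weyl element maps birationally onto $X$ (Proposition \ref{prop:long}), and identifies $Z_\ell$ with its homogeneous counterpart step by step (Proposition \ref{prop:compare}, with ${\rm F}_4$ and ${\rm G}_2$ treated separately); homogeneity of $X$ is the conclusion, not an intermediate step. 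To salvage your approach you would need either an entirely different mechanism for producing vector fields, or to switch to a comparison argument of this kind.
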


Then  Conjecture \ref{conj:CPconj} will follow if one can prove the following property, that holds trivially for rational homogeneous manifolds (see Section \ref{ssec:rathom}):

\begin{conjecture}\label{conj:CP1} Let $X$ be a CP-manifold which is not a product of positive-dimensional varieties. If $\tau(X)>0$, then there exists a surjective morphism $f:X'\to X$ from a CP-manifold $X'$, which is not a product of positive-dimensional varieties, such that $\tau(X')<\tau(X)$.
\end{conjecture}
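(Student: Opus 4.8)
The plan is to argue by induction on $\dim X$, which in particular makes Conjecture~\ref{conj:CPconj} available for CP-manifolds of dimension $<\dim X$ (the base being Corollary~\ref{cor:CP3and4} and Theorem~\ref{them:CPlowdim}, while the inductive step for Conjecture~\ref{conj:CPconj} itself is obtained by iterating the statement we are proving, together with Theorem~\ref{thm:main} and \cite{Lau}). So let $X$ be a CP-manifold of Picard number $n$, not a product of positive-dimensional varieties, with $\tau(X)>0$. Since $\tau(X)=\sum_i(-K_X\cdot\Gamma_i-2)>0$, there is an extremal ray $R_i$ with $d_i:=-K_X\cdot\Gamma_i\ge 3$; by Theorem~\ref{thm:smooth} the elementary contraction $\pi_i:X\to X_i$ is a smooth fibration whose fibre $F$ is a CP-manifold of Picard number one with $-K_F\cdot\Gamma_i=d_i$, so $F\not\cong\P^1$ and $\tau(F)=d_i-2$.

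The candidate for $X'$ is the universal family of the minimal family $\cM_i$: with the notation of \ref{not:cpmanifold} I set $X':=\cU_i$ and $f:=q_i:\cU_i\to X$. By Lemma~\ref{lem:RConCP} and Proposition~\ref{prop:RCbasic}, $f$ is smooth and surjective and $p_i:\cU_i\to\cM_i$ is a smooth $\P^1$-fibration; moreover, since the curves of $\cM_i$ sweep out the fibres of $\pi_i$ and each such curve has trivial normal bundle inside its $\pi_i$-fibre, the composite $\pi_i\circ q_i:\cU_i\to X_i$ is again a smooth fibration, with fibre the universal family $\cU_F$ of minimal rational curves of $F$ (so for $n\ge 2$ one may also picture $X'$ as the $\cU_F$-bundle over $X_i$ attached to $\pi_i$, the passage $F\rightsquigarrow\cU_F$ being canonical). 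When $d_i=3$ the hyperbolicity argument recalled in the proof of Theorem~\ref{thm:CPindex} shows that $q_i$ restricts to a $\P^1$-fibration on each fibre of $\pi_i\circ q_i$, and Theorem~\ref{thm:pic2} applies there.

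It then remains to establish three points: (1) $\cU_i$ is a CP-manifold; (2) $\tau(\cU_i)<\tau(X)$; (3) $\cU_i$ is not a product of positive-dimensional varieties. Granting (1): since $q_i$ is smooth and $X$ is simply connected, $\rho(\cU_i)=\rho(X)+\rho(\cM_x)$, where $\cM_x=q_i^{-1}(x)$ is the VMRT of $F$ at a general point (Proposition~\ref{prop:RCbasic}(3) and the monodromy argument of Proposition~\ref{prop:relativepicards}). The extremal rays of $\cU_i$ fall into three groups: the ray of the $\P^1$-fibration $p_i$, contributing $0$ to the width; the rays contracted by $q_i$, which by Proposition~\ref{prop:relativepicards}(2) are precisely the extremal rays of the face $j_*\cNE(\cM_x)$, and for a minimal curve $C$ in such a ray one has $-K_{\cU_i}\cdot C=-K_{\cM_x}\cdot\gamma$ (with $\gamma$ the image of $C$ in $\cM_x$) because $q_i$ is smooth, so that these rays contribute exactly $\tau(\cM_x)$; and the remaining $\rho(X)-1$ rays, which map isomorphically onto the rays $R_j$, $j\ne i$, of $X$ and, one checks, with anticanonical degree $\le d_j$. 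Hence $\tau(\cU_i)\le\tau(\cM_x)+\sum_{j\ne i}(d_j-2)$, so (2) reduces to the Picard-number-one inequality $\tau(\cM_x)<d_i-2=\tau(F)$. Once (1) is known, $\cM_x$ is a CP-manifold of dimension $d_i-2\le\dim F-1<\dim X$ (the exceptional case $F=\P^{\dim F}$, where $d_i=\dim F+1$, being handled apart by taking $X'$ to be the relative complete $\mathrm{SL}$-flag over $X_i$), hence rational homogeneous by the induction hypothesis, and then $\tau(\mathrm{VMRT})<\tau(F)$ for a rational homogeneous $F$ of Picard number one is a finite verification from Table~\ref{tab:VMRTs} and Propositions~\ref{prop:VMRTlong}--\ref{prop:VMRTshort2}. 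For (3): a splitting of $\cU_i$ would descend, via $p_i$, the surjection $f$, and the chain-connectedness of $X$ with respect to the families $\cM_j$, to a splitting of $X$ (as in Proposition~\ref{prop:init}), contradicting the hypothesis on $X$.

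The main obstacle is (1): proving that $\cU_i$ -- equivalently, the space $\cM_i$ of minimal rational curves -- is Fano with nef tangent bundle. This does not follow from the induction, since $\cM_i$ need not be of smaller dimension than $X$; what is really needed is a structural statement to the effect that the minimal families of rational curves on a CP-manifold are themselves CP-manifolds, to be extracted from the deformation theory of the smooth morphisms $p_i$ (a $\P^1$-fibration) and $q_i$ (with $q_i^*T_X$ nef), together with positivity properties of VMRT's in the spirit of Hwang--Mok, and crucially \emph{without} presupposing homogeneity -- which is exactly where the argument is at present only conditional. For $d_i=3$ this is precisely the content underlying Theorem~\ref{thm:CPindex}; making it work for arbitrary $d_i\ge 3$ would complete (1), and with it Conjecture~\ref{conj:CP1}, thereby reducing the Campana--Peternell conjecture -- through Theorem~\ref{thm:main} and \cite{Lau} -- to this single question about families of minimal rational curves.
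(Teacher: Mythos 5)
The first thing to note is that the statement you are proving is not a theorem of the paper: it is Conjecture \ref{conj:CP1}, which the authors explicitly leave open (it is part (B) of their strategy, the missing ingredient that would combine with Theorem \ref{thm:main} and \cite{Lau} to yield Conjecture \ref{conj:CPconj}). So there is no proof in the paper to compare yours against --- and your text is not a proof either: you state yourself that step (1), namely that the universal family $\cU_i$ (equivalently the minimal family $\cM_i$) is a CP-manifold, is ``at present only conditional''. That is precisely the open problem. Nothing in the paper (nefness of $q_i^*T_X$, smoothness of $p_i$ and $q_i$, Proposition \ref{prop:RCbasic}) gives positivity of $T_{\cU_i}$ itself, and the only situation where the paper establishes anything of this kind is $d_i=3$, via the two-$\P^1$-fibration structure and Theorem \ref{thm:pic2}. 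What you have written is therefore a (reasonable, and essentially the expected) reduction of the conjecture to another open statement, not a proof.

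Beyond the acknowledged gap, your induction scheme is not well founded. You induct on $\dim X$ and invoke Conjecture \ref{conj:CPconj} for $\cM_x$, which has dimension $d_i-2<\dim X$; but your justification of the inductive step for \ref{conj:CPconj} is ``iterating the statement we are proving''. Each application of Conjecture \ref{conj:CP1} in your construction replaces $X$ by $X'=\cU_i$, whose dimension is $\dim X+d_i-3\geq \dim X$: the width drops but the dimension grows, so obtaining \ref{conj:CPconj} in dimension $<N$ by iteration would require \ref{conj:CP1} for manifolds of dimension possibly much larger than $N$, which is what you are in the middle of proving. (The iteration does terminate --- by induction on $\tau$, not on dimension --- but then the dimensional induction hypothesis you want to apply to $\cM_x$ is unavailable.) Several further steps are asserted rather than proved: that the extremal rays of $\cU_i$ not contracted by $p_i$ or $q_i$ map isomorphically onto the rays $R_j$, $j\neq i$, with anticanonical degree at most $d_j$; that a product decomposition of $\cU_i$ would descend to one of $X$ (Proposition \ref{prop:init} does not apply, since $q_i$ is not a contraction onto an FT-manifold); and the ``finite verification'' $\tau(\cM_x)<\tau(F)$, which must also cover the non-homogeneous VMRTs of Proposition \ref{prop:VMRTshort2} and the Brauer-type issues in your side case $F\cong\P^{\dim F}$. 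All of these would need to be supplied even if your step (1) were granted.
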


In this section, following \cite{MOSW} and \cite{OSWW} we will describe the main ideas involved in the proof of Theorem \ref{thm:main}. Let us first of all, fix the notation.

\begin{notation}\label{not:ftmanifold}
Along the rest of the section we will use the notation introduced in \ref{not:cpmanifold}, noting that, in the case of FT-manifolds, the families $p_i:\cU_i\to \cM_i$ coincide with the $\P^1$-fibrations $\pi_i:X\to X_i$.
Moreover, given any subset  $I\subset D:=\{1, \dots, n\}$, the rays $R_i$, $i \in I$, span an extremal face (Cf. Proposition \ref{prop:simplicial}) that we will denote by $R_I$. We will denote by $\pi_I:X \to X_I$ the corresponding extremal contraction, by $T_I$ the relative tangent bundle, and by $K_I:= -\det T_I$ the relative canonical divisor. Alternatively, we will denote by $\pi^I:X\to X^I$ the contraction of the face $R^I$ spanned by the rays $R_i$ such that $i \in D \setminus I$.
For $I \subset J \subset D $ we will denote the
contraction of the extremal face $\pi_{I*}(R_J) \subset \Nu(X_I)$ by $\pi_{I,J}:X_I \to X_J$ or by
$\pi^{D \setminus I,D \setminus J}:X^{D \setminus I} \to X^{D\setminus J}$.
\end{notation}

\subsection{Bott-Samelson varieties}

We will now introduce some auxiliary manifolds, which we call
Bott-Samelson varieties, by analogy with the Bott-Samelson manifolds
that appear classically in the study of Schubert cycles of homogeneous manifolds.

Following \cite{LT}, when dealing with finite sequences of indices in $D=\{1,\dots,n\}$ we will use the following notation:

\begin{notation}\label{notn:LT} Given a sequence $\ell=(l_1,\dots,l_r)$, $l_i\in D$, we set, for any %$1 \leq r-s\leq r$,
$0\leq s\leq r-1$, $\ell[s]:=(l_1,\dots,l_{r-s})$, and $\ell[r]=\emptyset$.
In particular $\ell[s][s']=\ell[s+s']$.
\end{notation}

With every sequence $\ell=(l_1,\dots,l_r)$ of elements of $D$ we will associate a sequence of smooth varieties $\cZ_{\ell[s]}$, $s=0,\dots,r$, called the {\it Bott-Samelson varieties of $X$ associated with $\ell$},
together with morphisms
$$f_{\ell[s]}:\cZ_{\ell[s]} \to X,\quad p_{\ell[s+1]}:\cZ_{\ell[s]}\to \cZ_{\ell[s+1]},\quad \sigma_{\ell[s+1]}:\cZ_{\ell[s+1]}\to \cZ_{\ell[s]}.$$
They are constructed in the following way: for $s=r$ we set $\cZ_{\ell[r]}:=X$ and $f_{\ell[r]}=\id$. Then for $s<r$ we define $\cZ_{\ell[s]}$ recursively
 by considering the composition
$g_{\ell[s+1]}:=\pi_{l_{r-s}}\circ f_{\ell[s+1]}:\cZ_{\ell[s+1]}\to X_{l_{r-s}}$ and taking its fiber product with $\pi_{l_{r-s}}$:
$$
\xymatrix@=35pt{\cZ_{\ell[s]}\ar[r]^{f_{\ell[s]}}\ar[d]^{p_{\ell[s+1]}}&X\ar[d]^{\pi_{l_{r-s}}} \\
\cZ_{\ell[s+1]}\ar@/^/[u]^{\sigma_{\ell[s+1]}} \ar[ru]_{f_{\ell[s+1]}}\ar[r]^{g_{\ell[s+1]}}&X_{l_{r-s}}}
$$
Note that   $p_{\ell[s+1]}$ is a $\P^1$-bundle with a section $\sigma_{\ell[s+1]}$; more precisely
it is the projectivization of an extension $\cF_{\ell[s]}$ of $\cO_{\cZ_{\ell[s+1]}}$ by $f_{\ell[s+1]}^*K_{l_{r-s}}$:
\begin{equation}\label{eq:F}
0 \lra f_{\ell[s+1]}^*K_{l_{r-s}}\longrightarrow
\cF_{\ell[s]} \longrightarrow \cO_{\cZ_{\ell[s+1]}}\lra 0.
\end{equation}

Now let $\phi:Y \to X$ be a morphism. The above construction may be lifted to $Y$ via $\phi$, and the resulting varieties $\cZ_{\ell[s]}(Y)=\cZ_{\ell[s]}\times_X Y$ are called the {\it Bott-Samelson varieties of $X$ associated with $\phi:Y \to X$ and $\ell$}. By abuse of notation,
the projections and sections of these varieties will be denoted also by $p_{\ell[s+1]}$, and $\sigma_{\ell[s+1]}$, respectively.
The construction is obviously functorial, so that, given a morphism $g:Y'\to Y$, $\phi'=\phi\circ g$, we have a commutative diagram, in which every square is a fiber product, of the following form:

$$
\xymatrix@=35pt{\cZ_{\ell[s]}(Y')\ar[r]\ar[d]^{p_{\ell[s+1]}}&\cZ_{\ell[s]}(Y)\ar[r]\ar[d]^{p_{\ell[s+1]}}&\cZ_{\ell[s]}\ar[r]^{f_{\ell[s]}}\ar[d]^{p_{\ell[s+1]}}&X\ar[d]^{\pi_{l_{r-s}}} \\
\cZ_{\ell[s]}(Y')\ar[r]\ar@/^/[u]^{\sigma_{\ell[s+1]}}&\cZ_{\ell[s]}(Y)\ar[r]\ar@/^/[u]^{\sigma_{\ell[s+1]}}&\cZ_{\ell[s+1]}\ar@/^/[u]^{\sigma_{\ell[s+1]}} \ar[ru]_{f_{\ell[s+1]}}\ar[r]^{g_{\ell[s+1]}}&X_{l_{r-s}}}
$$

Later on we will consider mostly the case in which $Y$ is a point of $X$ and $\phi$ is the inclusion. For simplicity, in this case, we will denote the corresponding Bott-Samelson varieties by $Z_{\ell[s]}:=\cZ_{\ell[s]}$. Especially important will be the case in which $\ell$ is {\em maximal reduced sequence}, i.e. satisfying that $r=m:=\dim X$ and $\dim f_{\ell[s]}(Z_{\ell[s]}) = m-s$ for every $s$ (since $X$ is rationally chain connected by curves $\Gamma_j$, it is always possible to find a sequence of this kind).

Let us denote by $\beta_{i(r-i)}$ the class in $N_1(Z_{\ell[r-i]})$ of the fibers of $p_{\ell[r-i+1]}:Z_{\ell[r-i]} \to Z_{\ell[r-i+1]}$. We  will denote by $\beta_{i(s)}$ the image of this class into $N_1(Z_{\ell[s]})$, via push forward with the sections $\sigma_{\ell[r-j]}$, $j= i, \dots, r-s-1$. If $s=0$ we will write $\beta_{i}$ instead of $\beta_{i(0)}$. Note that, by construction, $f_{\ell[s]*}\beta_{i(s)}=[\Gamma_{l_i}]$.

Clearly the $\beta_{i}$'s, $i=1,\dots, r$, form a basis of  of $N_1(Z_{\ell})$.
Within $N^1(Z_\ell)$ we  consider the dual basis of $\{\beta_i,\,i=1,\dots,r\}$, denoted by $$\{H_i,\,i=1,\dots,r\}.$$
Let us also define for every $t\leq r$, the following line bundles on $Z_\ell$:
\begin{equation*}
N_{t}=\sum_{\substack{i\leq t, \,l_i=l_t}}H_{i}.\label{eq:Ns}
\end{equation*}
and the classes $\gamma_i\in N_1(Z_{\ell})$, $i=1,\dots,r$, defined by $N_{t}\cdot\gamma_{i}=\delta_{i}^{t}$.

The next Proposition summarizes some of the properties of $\NE(Z_\ell)$, as proved in \cite[Section 3]{OSWW}.

\begin{proposition}[{\cite[Corollary~3.9, 3.10]{OSWW}}]\label{prop:coneBS} With the same notation as above:

\begin{enumerate}
\item The Mori cone (respectively, the nef cone) of $Z_\ell$ is the simplicial cone generated by the classes $\gamma_{t}$ (resp.  $N_{t}$), $t\leq r$.
\item Setting $J=\{i\,\, |\,\,l_i=l_k\mbox{ for some } k>i\}$ then the Stein factorization of the map $f_\ell:Z_\ell\to X$ is the contraction associated with the extremal face of $\NE{(Z_\ell)}$ generated by $\{\gamma_i\,\,|\,\, i  \in J\}$.
\end{enumerate}
\end{proposition}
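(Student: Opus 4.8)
The plan is to prove both statements by induction on the length $r$ of the sequence $\ell$, using the description of $Z_\ell$ as an iterated $\P^1$-bundle over a point, and to reduce the combinatorial bookkeeping to elementary linear algebra with the two dual bases $\{\beta_i\}$ and $\{H_i\}$. As a preliminary step I record the intersection theory: since $Z_\ell$ is obtained from a point by the $r$ successive projectivizations (\ref{eq:F}), it is a smooth projective rational variety of Picard number $r$, with $\{\beta_1,\dots,\beta_r\}$ a basis of $N_1(Z_\ell)$ and $\{H_1,\dots,H_r\}$ the dual basis of $N^1(Z_\ell)$. A direct computation gives $N_t\cdot\beta_j=1$ when $j\le t$ and $l_j=l_t$, and $N_t\cdot\beta_j=0$ otherwise; this matrix is lower triangular with $1$'s on the diagonal (block diagonal, with all-ones triangular blocks, after grouping the indices by colour), so $\{N_t\}$ is again a basis of $N^1(Z_\ell)$, and inverting it one finds that its dual basis $\{\gamma_t\}$ is $\gamma_t=\beta_t$ if the colour $l_t$ does not occur again after position $t$, and $\gamma_t=\beta_t-\beta_{t'}$, with $t'>t$ the next position with $l_{t'}=l_t$, otherwise. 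Combining this with the equality $f_{\ell*}\beta_i=[\Gamma_{l_i}]$ recalled above, with the fact that two minimal curves of the same unsplit family are numerically equivalent, and with the linear independence of the classes $[\Gamma_j]$ of minimal curves of distinct extremal rays of $X$ (Corollary~\ref{prop:simplicial}), one gets $f_{\ell*}\gamma_t=[\Gamma_{l_t}]\ne 0$ for $t\notin J$, $f_{\ell*}\gamma_t=0$ for $t\in J$, and more generally $f_{\ell*}$ kills a non-negative combination $\sum a_t\gamma_t$ precisely when $a_t=0$ for all $t\notin J$.

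For part (1) I would argue by induction on $r$, the base case being immediate. The top contraction $p:=p_{\ell[1]}:Z_\ell\to Z_{\ell[1]}$ realizes $Z_\ell$ as the $\P^1$-bundle $\P(\cF_{\ell[0]})$ over the Bott--Samelson variety $Z_{\ell[1]}$ of the truncated sequence $\ell[1]=(l_1,\dots,l_{r-1})$, together with the section $\sigma_{\ell[1]}$ coming from the quotient $\cF_{\ell[0]}\to\cO$ in (\ref{eq:F}). Normalizing the tautological bundle of this $\P^1$-bundle so that it is trivial along $\sigma_{\ell[1]}$ one obtains $H_i=p^*H_i'$ for $i<r$ and $H_r=\cO(1)$, whence $N_t=p^*N_t'$ for $t<r$ (the primed classes being the analogous ones on $Z_{\ell[1]}$). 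The Mori cone of a $\P^1$-bundle is generated by the fibre class together with minimal sections over the extremal rays of the base: the fibre class is $\beta_r$, which the preliminary step identifies with $\gamma_r$, and by induction the extremal rays of $\cNE{Z_{\ell[1]}}$ are $\gamma_1',\dots,\gamma_{r-1}'$. The crux is to show that the minimal section over $\gamma_t'$ has class exactly $\gamma_t$. Restricting (\ref{eq:F}) to a curve representing $\gamma_t'$, one reads off the degree of $\cF_{\ell[0]}$ along it from the intersection numbers $K_i\cdot\Gamma_j$ of $X$; since the off-diagonal entries of the matrix $(-K_i\cdot\Gamma_j)$ are $\le0$, the only case in which $\cF_{\ell[0]}$ could restrict to a very unbalanced bundle is when $\gamma_t'$ maps to the class of a fibre $\Gamma_{l_r}$ of $\pi_{l_r}$, and then, precisely because $\pi_{l_r}$ is a smooth $\P^1$-fibration (so that the relative tautological class of $Z_\ell\to Z_{\ell[1]}$ has degree one on such curves), the extension (\ref{eq:F}) restricts non-trivially, so that $\cF_{\ell[0]}$ restricts to a balanced bundle, e.g. $\cO_{\P^1}(-1)^{\oplus2}$ rather than $\cO_{\P^1}\oplus\cO_{\P^1}(-2)$; in every case the minimal section then has the class predicted by the linear algebra of the preliminary step. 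This gives $\cNE{Z_\ell}=\langle\gamma_1,\dots,\gamma_r\rangle$, and dualizing, $\Nef(Z_\ell)=\langle N_1,\dots,N_r\rangle$; both cones are simplicial.

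Granting part (1), part (2) follows quickly. Write $f_\ell=h\circ g$ for the Stein factorization, $g$ being surjective with connected fibres and $h$ finite. A curve of $Z_\ell$ is contracted by $g$ if and only if it is contracted by $f_\ell$ (because $h$ is finite), if and only if its class lies in $\ker f_{\ell*}$; by the preliminary step $\ker f_{\ell*}\cap\cNE{Z_\ell}$ is exactly the face $\langle\gamma_t : t\in J\rangle$. Since $\cNE{Z_\ell}$ is simplicial and rational polyhedral, $g$ is then the contraction associated with this face — the Stein factorization being characterized, among morphisms with connected fibres onto normal varieties, by the curve classes it kills — which is precisely the assertion of (2); alternatively one exhibits this contraction explicitly as a suitable composition of the $\P^1$-bundle projections of the tower with the evaluation morphisms $f_{\ell[s]}$.

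The main obstacle is the balancedness step inside the induction for part (1): at every level of the tower one must control the splitting type of the restriction of $\cF_{\ell[s]}$ to each extremal curve of $Z_{\ell[s+1]}$, since this is exactly what fixes the extremal rays of the Mori cone, and it is here — in the delicate case of curves mapping onto fibres of the $\pi_i$ — that one genuinely uses that $X$ is a CP-manifold whose elementary contractions are smooth $\P^1$-fibrations, through the shape of the relative canonical bundles $K_i$ and the extension sequences (\ref{eq:F}). Once this is in place, everything else is bookkeeping with the projective-bundle formula and the combinatorics of the word $\ell$.
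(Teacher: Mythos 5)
First, a caveat: the survey does not prove this proposition at all — it is quoted from \cite[Corollaries 3.9 and 3.10]{OSWW} — so your argument can only be judged on its own merits. Your preliminary linear algebra is correct ($\gamma_t=\beta_t$ if the letter $l_t$ does not recur, $\gamma_t=\beta_t-\beta_{t'}$ with $t'$ the next occurrence otherwise, and the computation of $f_{\ell*}\gamma_t$), and Part (2) does follow from Part (1) as you describe. The inclusion $\langle\gamma_1,\dots,\gamma_r\rangle\subseteq\NE(Z_\ell)$ is also essentially right, though your justification of the non-splitting in the critical case is too vague to check as written; the clean reason is that when $C$ represents $\gamma'_t$ with $l_t=l_r$ and $t$ not in $J'$, the surface $p_{\ell[1]}^{-1}(C)=C\times_{X_{l_r}}X$ is literally a product $C\times F$ for a single fibre $F$ of $\pi_{l_r}$ (because $f_{\ell[1]}(C)$ is contained in one fibre), and $\sigma_{\ell[1]}(C)$ is the graph of the degree-one map $C\to F$, hence a $(1,1)$-curve whose complementary ruling represents $\beta_t-\beta_r=\gamma_t$. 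You would also need to carry in your induction that each $\gamma'_t$ is represented by an irreducible curve, which your inductive hypothesis omits.

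The genuine gap is in the reverse inclusion $\NE(Z_\ell)\subseteq\langle\gamma_1,\dots,\gamma_r\rangle$, equivalently the nefness of the divisors $N_t$. The principle you invoke — that the Mori cone of a $\P^1$-bundle is generated by the fibre class and the minimal sections over the extremal rays of the base — is false in general: an extremal ray of $\P(\cE)$ need not lie over an extremal ray of the base, because the splitting type of $\cE$ on an irreducible curve $C$ with $[C]=\sum_t a_t\gamma'_t$ is not controlled by its restrictions to curves in the individual extremal rays. Concretely, writing $t_0$ for the last occurrence of the letter $l_r$ before position $r$, one has $N_r=H_r+p_{\ell[1]}^*N'_{t_0}$ with $H_r$ the tautological class of $\cF_{\ell[0]}$ normalized by the section, and its nefness amounts to $\deg Q\geq -N'_{t_0}\cdot C=-a_{t_0}$ for every quotient line bundle $Q$ of $\cF_{\ell[0]}$ pulled back to the normalization of \emph{every} irreducible curve $C\subset Z_{\ell[1]}$. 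The a priori bound coming from the extension (\ref{eq:F}) is only $\deg Q\geq K_{l_r}\cdot f_{\ell[1]*}[C]\geq -2a_{t_0}$, and non-splitness of the restricted extension improves this by just one; so for curves with $N'_{t_0}\cdot C\geq 2$ your argument gives nothing, and this is exactly where the work lies in \cite[Section 3]{OSWW}, where the upper bound on the cone is obtained by establishing nefness of the $N_t$ globally rather than by a curve-by-curve analysis over extremal classes of the base.
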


\subsection{Cartan matrix of an FT-manifold}

It is a well known fact that every semisimple Lie algebra $\fg$ is determined by its Cartan matrix (equivalently, by its Dynkin diagram, see \cite{Hum}), and that this matrix can be seen as the intersection matrix of relative anticanonical divisors and fibers of the $\P^1$-fibrations of the corresponding complete flag manifold $G/B$. This suggests the following:

\begin{definition}
Let $X$ be an FT-manifold of Picard number $n$. With the same notation as above,
the {\it Cartan matrix} of $X$ is the $n \times n$ matrix $M(X)$
defined by $M(X)_{ij}=-K_i \cdot \Gamma_j$.
\end{definition}

We will  now sketch the proof of (a reformulation of) the main result of \cite{MOSW}:

\begin{theorem}\label{thm:mainmosw}
The Cartan matrix of an FT-manifold is of finite type, i.e. is the Cartan matrix of a semisimple Lie algebra.
\end{theorem}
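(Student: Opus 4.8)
The plan is to argue by induction on the Picard number $n=\rho(X)$, using as the two geometric inputs Theorem~\ref{thm:smooth} (smoothness of contractions) and Theorem~\ref{thm:pic2} (classification of Fano manifolds with two smooth $\P^1$-fibration structures). First I record the elementary numerical consequences of the defining property of an FT-manifold. Since each elementary contraction $\pi_i$ is a smooth $\P^1$-fibration, $-K_i$ restricts to $\cO_{\P^1}(2)$ on each fibre $\Gamma_i$, so $M(X)_{ii}=2$; and as $\pi_i$ contracts $\Gamma_i$ we also get $-K_X\cdot\Gamma_i=-K_i\cdot\Gamma_i=2$ for every $i$. Letting $\{L_j\}$ be the basis of $N^1(X)$ dual to the basis $\{[\Gamma_j]\}$ of $N_1(X)$, which exists because $\NE(X)$ is simplicial (Corollary~\ref{prop:simplicial}), one has $-K_i=\sum_j M(X)_{ij}L_j$ and $-K_X=2\sum_j L_j=:2\rho$; in particular $\rho$ is ample and the $L_j$ generate the (simplicial) nef cone. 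A ruled-surface argument identical to the one in Notation~\ref{nota:pic2}, namely that a minimal $\Gamma_j$ sits on $S:=\pi_i^{-1}(\pi_i(\Gamma_j))$ as a minimal section of an unsplit family, gives $M(X)_{ij}=-K_i\cdot\Gamma_j\le 0$ for $i\ne j$; together with integrality and the obvious symmetry of the vanishing pattern, this shows that $M(X)$ is at least a generalized Cartan matrix.

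The first genuine step is the rank-two reduction. For $i\ne j$ let $\pi_{\{i,j\}}:X\to X_{\{i,j\}}$ be the contraction of the two-dimensional face $R_{\{i,j\}}$. By Theorem~\ref{thm:smooth} it is smooth, and by Proposition~\ref{prop:relativepicards} each of its fibres $F$ is a CP-manifold of Picard number two carrying the two smooth $\P^1$-fibrations induced by $\pi_i$ and $\pi_j$; by Theorem~\ref{thm:pic2}, $F\cong G/B$ with $G$ semisimple of type ${\rm A}_1\times{\rm A}_1$, ${\rm A}_2$, ${\rm B}_2$ or ${\rm G}_2$. Since $\pi_{\{i,j\}}$ is smooth, $K_i|_F$ and $K_j|_F$ are the relative canonical divisors of the two fibrations of $F$, so the $2\times 2$ principal submatrix of $M(X)$ on $\{i,j\}$ coincides with the (finite-type) Cartan matrix $M(F)$. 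In particular $M(X)_{ij}M(X)_{ji}\in\{0,1,2,3\}$ for every $i\ne j$. This already settles the cases $n=1$ ($M(X)=(2)$) and $n=2$ (apply Theorem~\ref{thm:pic2} to $X$ itself).

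For the inductive step ($n\ge 3$) I would consider, for each $i$, the contraction $\pi^i:X\to X^i$ of the facet $R^i$ spanned by $\{R_k:k\ne i\}$. By Theorem~\ref{thm:smooth} this is smooth, $X^i$ has Picard number one, and by Proposition~\ref{prop:relativepicards} its fibre $F^i$ is a CP-manifold of Picard number $n-1$ whose elementary contractions are the restrictions of the $\pi_k$, $k\ne i$, hence smooth $\P^1$-fibrations; so $F^i$ is again an FT-manifold. Restricting relative canonical divisors as above shows that $M(F^i)$ is precisely the principal submatrix of $M(X)$ indexed by $D\setminus\{i\}$, which by the induction hypothesis is of finite type. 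Thus every proper indecomposable principal submatrix of $M(X)$ is of finite type. (If the diagram of $M(X)$ is disconnected we may decompose $X$ accordingly, e.g.\ via Proposition~\ref{prop:init}, and since a block-decomposed generalized Cartan matrix is of finite type exactly when each block is, and a direct sum of semisimple Lie algebras is semisimple, we may assume $M(X)$ indecomposable.)

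It remains to promote this to the assertion that $M(X)$ itself is of finite type, equivalently that all its principal minors are positive; this is the heart of the matter and the only place where the Fano hypothesis is essential. Abstract Kac--Moody theory does not suffice here: an indecomposable generalized Cartan matrix with $M_{ij}M_{ji}\le 3$ all of whose proper submatrices are of finite type can still be of affine type (e.g.\ $\widetilde{\rm A}_{n-1}$, $\det=0$) or even of indefinite/hyperbolic type (e.g.\ $\bigl(\begin{smallmatrix}2&-1&-2\\-1&2&-1\\-1&-1&2\end{smallmatrix}\bigr)$, $\det<0$), and both alternatives must be ruled out. The approach I would take is to extract the positivity $\det M(X)>0$ — which, combined with the positivity of the proper minors already obtained, gives positive-definiteness — from intersection-theoretic identities that are ultimately forced by the ampleness of $-K_X=2\rho$, generalizing to arbitrary $n$ the Chern--Wu computation of the Picard-number-two case (the relations $K_j^2=\Delta_jH_j^2$ with $\Delta_j<0$ behind \eqref{eq:chernwu} and Propositions~\ref{prop:235} and~\ref{prop:nu}); when needed, these computations are best performed on the Bott--Samelson resolutions $Z_\ell$, whose nef cones are explicitly simplicial by Proposition~\ref{prop:coneBS}. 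Along the way one also has to establish that $M(X)$ is symmetrizable, which I would do by producing geometric symmetrizing coefficients $d_i>0$ — the analogues of $\tfrac12(\alpha_i,\alpha_i)$ — out of the self-intersection relations for the $K_i$. Once $M(X)$ is symmetrizable with positive-definite symmetrization, it is by Kac's classification the Cartan matrix of a semisimple Lie algebra, which is the statement. I expect this final step, turning the ampleness of $-K_X$ into positive-definiteness of $M(X)$, to be the main obstacle, the rank-two case already having required the delicate argument on the algebraic degree of $\tan(\pi/m)$.
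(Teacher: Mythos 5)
Your preparatory steps coincide with the paper's: the computation of the diagonal entries, the non-positivity of the off-diagonal ones via the ruled surface $\pi_i^{-1}(\pi_i(\Gamma_j))$, the identification of the principal submatrices of $M(X)$ with Cartan matrices of fibres of face contractions (Theorem \ref{thm:smooth} plus Proposition \ref{prop:fibers}), the base case via Theorem \ref{thm:pic2}, and the reduction to the indecomposable case (Proposition \ref{prop:decomp}). Your observation that this combinatorial data alone does not force finite or affine type is also correct and sharp: the triangle with edges of types ${\rm A}_2,{\rm A}_2,{\rm B}_2$ that you write down is indeed an indecomposable generalized Cartan matrix of indefinite type whose proper principal submatrices are all of finite type, so the citation of \cite[Proposition~4.7]{Kac} in the text must be read with care. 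But precisely at this point your proof stops being a proof. The decisive step --- excluding the non-finite alternatives --- is only announced (``the approach I would take\dots'', ``I expect this final step\dots to be the main obstacle'') and never carried out. Generalizing the Chern--Wu relations of Proposition \ref{prop:235} to arbitrary Picard number is not a routine matter: that computation uses essentially that $N^1(X)$ is two-dimensional and that $H_j^2$ spans the pulled-back codimension-two cycles, and you give no construction of the symmetrizing coefficients $d_i$ nor of the positivity of $\det M(X)$ in higher rank. The heart of the theorem is missing.

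You should also be aware that the paper's mechanism for this step is of a completely different nature, and nothing in your sketch points toward it. Once one knows (via Kac's trichotomy) that a non-finite $M(X)$ admits $u>0$ with $M(X)u=0$, one clears denominators to obtain an effective $1$-cycle $\sum m_i\Gamma_i$ with $m_i\in\Z_{>0}$ and $K_i\cdot\Gamma=0$ for every $i$; since the $\Gamma_i$ are free, this cycle smooths to an irreducible rational curve $\Gamma$. Running the Bott--Samelson construction simultaneously over a point and over the normalization $\P^1\to\Gamma\subset X$, the vanishing $K_{l_{m-s}}\cdot\Gamma=0$ forces each extension (\ref{eq:F}) to be pulled back from $Z_{\ell[s+1]}$, so that ${Z'}_{\!\!\ell[s]}\cong\P^1\times Z_{\ell[s]}$ at every stage; for a maximal reduced sequence $\ell$ the map $\pi_{l_{m-1}}\circ {f'}_{\!\!\ell[1]}$ is then generically finite on a variety of dimension $\dim X_{l_{m-1}}+1$, a contradiction. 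In other words, it is the finiteness of $\dim X$, not a positivity-of-minors computation, that kills the degenerate case. Your instinct to produce a $W$-invariant scalar product is realized in the paper only in the later, more general approach of \cite{OSWW} (Corollary \ref{cor:scalar}), and there the finiteness of $W$ is extracted from the functional equation $\Chi^T(D)=-\Chi^T(r_i(D))$ and the polynomiality of the Euler characteristic --- another argument absent from your proposal. As submitted, the proof is incomplete at its crucial point.
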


Let us denote by $M_I(X)$ the $|I| \times |I|$ principal submatrix of $M(X)$ obtained from $M(X)$ by subtracting rows and columns corresponding to indices which are not in $I$.
It is easy to show that the fibers of a contraction
of an FT-manifold $X$  are FT-manifolds, whose Cartan matrices
are principal submatrices of
$M(X)$:

\begin{proposition}\cite[Proposition~6]{MOSW}\label{prop:fibers} Let $X$ be an $FT$-manifold, $I\subset D$ any nonempty subset, and let $\pi_I:X \to X_I$ be the contraction of the corresponding face $R_I$. Then every fiber of $\pi_I$ is an FT-manifold
whose Cartan matrix is $M_I(X)$.
\end{proposition}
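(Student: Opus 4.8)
The plan is to deduce everything from the smoothness of $\pi_I$ (Theorem~\ref{thm:smooth} together with the remark following it) and the numerical criterion for $\P^1$-fibrations in Proposition~\ref{prop:RCbasic}~(5), and then to transport relative anticanonical divisors from $X$ to a fibre. So I would fix an arbitrary fibre $F=\pi_I^{-1}(y)$: it is a smooth CP-manifold, and since $\pi_I$ is smooth its normal bundle $\cN_{F/X}\cong\pi_I^*T_{X_I,y}$ is trivial, hence $K_F=K_X|_F$ by adjunction. By Proposition~\ref{prop:relativepicards} we have $\rho(F)=\rho(X)-\rho(X_I)=|I|$, and via $j_*$ the cone $\NE(F)$ is identified with the extremal face $R_I$ of $\NE(X)$; in particular it is simplicial with extremal rays $R_i$, $i\in I$, and a minimal rational curve generating $R_i$ inside $F$ is a deformation of $\Gamma_i$ contained in $F$.

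Next I would check that $F$ is an FT-manifold. Fix $i\in I$. Because $X$ is an FT-manifold, $\pi_i:X\to X_i$ is a smooth $\P^1$-fibration, so $-K_X\cdot\Gamma_i=2$ by Proposition~\ref{prop:RCbasic}~(5); using $K_F=K_X|_F$ and $\Gamma_i\subset F$ this gives $-K_F\cdot\Gamma_i=2$. Since $2$ is the minimal anticanonical degree of a rational curve on a CP-manifold (Proposition~\ref{prop:RCbasic}~(4)), $\Gamma_i$ is unsplit and its deformations inside $F$ form the family sweeping out $R_i$; applying Proposition~\ref{prop:RCbasic}~(5) to $F$, the elementary contraction $\pi_i^F:F\to F_i$ of $R_i$ is a smooth $\P^1$-fibration whose fibres are precisely these deformations of $\Gamma_i$. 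As $F$ has exactly $|I|$ extremal rays and all the corresponding elementary contractions are smooth $\P^1$-fibrations, $F$ is an FT-manifold.

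It remains to identify the Cartan matrix. The diagonal is immediate: $M(F)_{ii}=-K_i^F\cdot\Gamma_i=2=M_I(X)_{ii}$, since $T_{F/F_i}$ restricts to $T_{\P^1}$ on a fibre of $\pi_i^F$. For the off-diagonal entries the key observation is that any fibre of $\pi_i$ meeting $F$ has numerical class in $R_i\subseteq R_I$, hence is contracted by $\pi_I$ and so lies in $F$; therefore the fibres of $\pi_i^F$ coincide with the fibres of $\pi_i$ through points of $F$, and the (rank one) relative tangent bundles agree, $T_{F/F_i}=T_{X/X_i}|_F$, as line subbundles of $T_X|_F$. Taking determinants yields $K_i^F=K_i|_F$, whence $M(F)_{ij}=-K_i^F\cdot\Gamma_j=-K_i\cdot\Gamma_j=M(X)_{ij}=M_I(X)_{ij}$ for all $i,j\in I$, that is, $M(F)=M_I(X)$.

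The step I expect to require the most care is the last one, the identification $T_{F/F_i}=T_{X/X_i}|_F$: one must invoke the precise description of the fibres of $\pi_i$ as the deformations of $\Gamma_i$ (Proposition~\ref{prop:RCbasic}~(5)) to see that $\pi_i^F$ is genuinely the restriction $\pi_i|_F$, and not merely a contraction through which $\pi_i|_F$ factors. Granting this, the equality of the two relative tangent line bundles, and hence the statement about Cartan matrices, are formal.
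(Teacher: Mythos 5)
Your argument is correct. Note that the survey does not actually reproduce a proof of this proposition: it is quoted from \cite[Proposition~6]{MOSW} with only the remark that ``it is easy to show'', so there is nothing in the paper to compare against line by line; but your proof uses exactly the ingredients the survey supplies for this purpose (Theorem~\ref{thm:smooth} for smoothness of $\pi_I$ and triviality of $\cN_{F/X}$, Proposition~\ref{prop:relativepicards} to identify $\NE(F)$ with the face $R_I$, and Proposition~\ref{prop:RCbasic}~(4)--(5) to recognize the elementary contractions of $F$ as smooth $\P^1$-fibrations), and you correctly single out the only delicate point, namely that $\pi_i^F$ is the restriction $\pi_i|_F$ rather than a map through which $\pi_i|_F$ merely factors. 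Your justification of that point is adequate: since every fiber of $\pi_i$ meeting $F$ has class in $R_i\subseteq R_I$ and hence lies in $F$, the fiber of $\pi_i^F$ through a point $x\in F$ is a connected positive-dimensional curve contained in the irreducible $\P^1$ that is the $\pi_i$-fiber through $x$, so the two coincide; equivalently, $F=\pi_i^{-1}(\pi_i(F))$ is saturated and $\pi_i|_F$ contracts precisely $R_i^F$, so it is the elementary contraction. From $T_{F/F_i}=T_{X/X_i}|_F$ the equality $K_i^F=K_i|_F$ and hence $M(F)=M_I(X)$ follow formally, as you say.
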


In particular, it follows from Theorem \ref{thm:pic2}
 that
any $2 \times 2$ principal submatrix is the Cartan matrix of an FT-manifold of Picard number $2$. These are, up to transposition
\begin{equation}\begin{array}{cccc}\vspace{0.2cm}\label{eq:2x2}
M({\rm A}_1 \times {\rm A}_1) & M({\rm A}_2) & M({\rm B}_2) &M({\rm G}_2)\\
\left(\begin{matrix}
2&0\\0&2
\end{matrix}\right), &
\left(\begin{matrix}
2&-1\\-1&2
\end{matrix}\right),&
\left(\begin{matrix}
2&-1\\-2&2
\end{matrix}\right),&
\left(\begin{matrix}
2&-1\\-3&2
\end{matrix}\right)\vspace{0.2cm}
\end{array}
\end{equation}

We may then conclude that $M(X)$ is a {\it generalized Cartan matrix} in the sense of \cite[4.0]{Kac}.

We say that the matrix $M(X)$ is {\it decomposable}  if there exists two nonempty complementary subsets $I,J\subset D$ such that $M(X)_{ij}=0$ for all $(i,j)\in (I\times J)\cup (J\times I)$. In this case, by abuse of notation, we will write $M(X)=M_I\times M_J$. The next statement
allows us to reduce the proof of Theorem \ref{thm:mainmosw} to the case in which $M(X)$ is indecomposable:

\begin{proposition}[{\cite[Proposition~7]{MOSW}}]\label{prop:decomp}
With the same notation as above, if $M(X)$ is decomposable as $ M_I\times M_J$ then $X \simeq X_I \times X_J$, where $X_I$ and $X_J$ are FT-manifolds whose Cartan matrices are $M_I$ and $M_J$, respectively.
\end{proposition}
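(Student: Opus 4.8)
The plan is to prove that the natural morphism
$$\psi:=(\pi_I,\pi_J)\colon X\longrightarrow X_I\times X_J$$
is an isomorphism; the statement about the Cartan matrices of the two factors then comes for free from Proposition~\ref{prop:fibers}, after identifying $X_I$ (resp. $X_J$) with a general fibre of $\pi_J$ (resp. of $\pi_I$). I would start with the easy formal facts. The morphism $\psi$ contracts no curve: a curve contracted by both $\pi_I$ and $\pi_J$ would have numerical class in $\langle R_i:i\in I\rangle\cap\langle R_j:j\in J\rangle$, which is $\{0\}$ because $\NE(X)$ is simplicial (Corollary~\ref{prop:simplicial}) and $I\cap J=\emptyset$; hence $\psi$ is finite. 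By Theorem~\ref{thm:smooth} both $X_I$ and $X_J$ are smooth CP-manifolds, so $X_I\times X_J$ is a smooth Fano variety, in particular simply connected; thus it is enough to show that $\psi$ is \'etale, since a finite \'etale morphism onto a connected simply connected variety is an isomorphism.

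The core of the argument is the claim that the foliations of $X$ by the fibres of $\pi_I$ and by the fibres of $\pi_J$ are complementary. Here is where the hypothesis $M(X)=M_I\times M_J$ is used. For $i\in I$ and $j\in J$ one has $-K_i\cdot\Gamma_j=M(X)_{ij}=0$; analysing the ruled surface $\pi_i^{-1}(\pi_i(\Gamma_j))$ (using that the family of $\Gamma_j$'s is unsplit, together with adjunction on a ruled surface exactly as in Section~\ref{ssec:ratcurves}) one finds that this surface is $\P^1\times\P^1$, with the $\pi_i$-rulings and the deformations of $\Gamma_j$ as its two rulings. Consequently a connecting chain built out of $\Gamma_i$'s and $\Gamma_j$'s can be rearranged so that all $I$-curves precede all $J$-curves; since $X$ is chain-connected by the $\Gamma_k$'s, every point of $X$ is joined to a fixed point $x$ by an $I$-chain followed by a $J$-chain, and therefore $\pi_J$ maps the fibre $F$ of $\pi_I$ through $x$ onto all of $X_J$. (At this point one uses an induction on the Picard number to know that the intermediate images $X^{(k)}$ in the tower factoring $\pi_I$ into elementary $\P^1$-fibrations are again FT-manifolds, so that Proposition~\ref{prop:fibers} applies to them.) Combined with the finiteness observed above, $\pi_J|_F\colon F\to X_J$ is a finite surjective morphism of smooth varieties, hence generically \'etale; so at a general point $p\in F$ one has $T_pF\cap(\ker d\pi_J)_p=0$. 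In particular $\dim X=\dim F+\dim X_J=\dim X_I+\dim X_J$, and $d\psi$ is an isomorphism at a general point.

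It remains to upgrade ``\'etale at the general point'' to ``\'etale everywhere''. Once the dimension count is known, $d\psi\colon T_X\to\pi_I^*T_{X_I}\oplus\pi_J^*T_{X_J}$ is a morphism of vector bundles of the same rank $m$, and its determinant is a nonzero section of the line bundle $\cO_X(K_I-\pi_J^*K_{X_J})$ (using $K_X=\pi_I^*K_{X_I}+K_I$). I would then check that this line bundle is numerically trivial by intersecting with the $\Gamma_k$'s: for $j\in J$ it gives $-K_I\cdot\Gamma_j$, which vanishes because $-K_X\cdot\Gamma_j=2$ and, step by step along the tower, $-K_{X_I}\cdot\pi_I(\Gamma_j)=2$ (a consequence of $-K_i\cdot\Gamma_j=0$ for all $i\in I$); and for $i\in I$, the curve $\Gamma_i$ lies in a fibre $F$ of $\pi_I$ with trivial normal bundle, so one gets $K_F\cdot\Gamma_i-K_{X_J}\cdot\pi_J(\Gamma_i)=-2-(-2)=0$ by the symmetric computation. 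Since $X$ is Fano, a numerically trivial effective divisor is trivial, so $\det(d\psi)$ is nowhere vanishing, $\psi$ is \'etale, and hence an isomorphism. Finally, under this isomorphism $\pi_I$ and $\pi_J$ become the two projections, so the factors are exactly the (general) fibres of $\pi_J$ and of $\pi_I$, whose Cartan matrices are $M_I$ and $M_J$ by Proposition~\ref{prop:fibers}.

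The main obstacle is the second step: establishing complementarity of the two foliations, i.e. that a fibre of $\pi_I$ surjects onto $X_J$. This is exactly the point at which the block decomposition $M(X)=M_I\times M_J$ is essential (without it, e.g.\ for $SL_3/B$, the analogous map is not an isomorphism), and it requires both the surface analysis giving the ``commutation'' of $I$-curves and $J$-curves and an induction on the Picard number. All the remaining ingredients are formal: simpliciality of the Mori cone, smoothness of contractions of CP-manifolds (Theorem~\ref{thm:smooth}), the behaviour of minimal curves with $-K_X\cdot\Gamma=2$ (Proposition~\ref{prop:RCbasic}), and the triviality of finite \'etale covers of simply connected varieties. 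Full details are in \cite[Proposition~7]{MOSW}.
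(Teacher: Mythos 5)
Your argument is correct in outline, but it follows a genuinely different route from the one in the paper. The survey's proof is two lines: it verifies that the block structure of $M(X)$ forces the target of one of the face contractions to be itself an FT-manifold (the essential computation being $-K_{X_I}\cdot\pi_{I*}\Gamma_j=-K_X\cdot\Gamma_j+K_I\cdot\Gamma_j=2$ for $j\in J$, so that the elementary contractions downstairs are still smooth $\P^1$-fibrations), and then invokes the splitting result of Proposition \ref{prop:init} as a black box. You instead prove directly that $(\pi_I,\pi_J)$ is an isomorphism: finiteness from simpliciality of $\NE(X)$, surjectivity of a fibre of $\pi_I$ onto $X_J$ via the $\P^1\times\P^1$ commutation of $I$- and $J$-chains, and then a ramification/determinant computation showing the branch divisor is numerically trivial, hence zero on a Fano, so that the map is \'etale and therefore trivial over the simply connected target. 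What your approach buys is independence from Proposition \ref{prop:init}, whose proof in \cite{MOSW} is itself nontrivial; what it costs is that your two key steps --- the chain rearrangement and the claim $-K_{X_I}\cdot\pi_{I*}\Gamma_j=2$ ``step by step along the tower'' --- require exactly the induction on the Picard number (intermediate targets are FT-manifolds whose Cartan matrices inherit the block decomposition) that the paper's one-line verification hides, so this should be set up explicitly as the induction hypothesis rather than asserted in passing; the total amount of work ends up comparable. One last cosmetic point: with the glossary's convention that $\pi_I$ contracts the rays indexed by $I$, the factor with Cartan matrix $M_I$ is the \emph{fibre} of $\pi_I$ (Proposition \ref{prop:fibers}), equivalently the target of $\pi_J$, so your labels for the two factors are swapped relative to the statement; this reflects the survey's own overloaded notation rather than an error in your argument.
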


\begin{proof}
One may verify that the hypotheses imply that $X_I$ is an FT-manifold. Then the result follows easily from Proposition \ref{prop:init}. See \cite[Proposition~7]{MOSW} for details.
\end{proof}

\begin{proof}[Proof of Theorem \ref{thm:mainmosw}]
The proof is by induction on the Picard number $n$ of $X$. The result
is true for $n=2$ by Theorem \ref{thm:pic2}, hence we may assume that $n\geq 3$ and that the statement holds for FT-manifolds of Picard number $\le n-1$.

By induction every principal submatrix of $M(X)$ is of finite type, hence,
by \cite[Proposition~4.7]{Kac},  $M(X)$ is either of finite  or of {\it affine type}. In the latter case, by \cite[Corollary~4.3]{Kac},
there exists  a linear combination $\Gamma=\sum_1^n m_i \Gamma_i$, $m_i\in\R_{>0}$ for all $i$, satisfying that $K_i\cdot \Gamma=0$. Since
$M(X)$ has integral coefficients,
we may assume that $m_i\in \Q_{>0}$ for all $i$ and, clearing denominators, we may also assume that $m_i\in\Z_{>0}$ for all $i$. Finally, since the $\Gamma_i$'s are free curves the cycle $\sum_1^n m_i \Gamma_i$ is smoothable (cf. \cite[II.7.6]{kollar}), and therefore it is numerically equivalent to an irreducible rational curve $\Gamma$.

Let $f:\P^1\to \Gamma\subset X$ be the normalization morphism, and $p\in\P^1$ be any point. Consider a maximal reduced sequence $\ell=(l_1, \dots, l_m)$ of $X$ for the point $f(p)$, and the corresponding Bott-Samelson varieties $Z_{\ell[s]}$ associated to $\ell$ and $f(p)$. At the same time, consider the associated Bott-Samelson varieties ${Z'}_{\!\!\ell[s]}:=\cZ_{\ell[s]}(\P^1)$. We will denote by ${f'}_{\!\!\ell[s]}:{Z'}_{\!\!\ell[s]}\to X$ their evaluations.

We will derive a contradiction by showing that $\pi_{l_{m-1}} \circ {f'}_{\!\!\ell[1]}: {Z'}_{\!\!\ell[1]} \to X_{l_{m-1}}$ cannot be of fiber type. In a nutshell, it is the finiteness of the dimension of $X$ which implies that $M(X)$ is of finite type.

We claim first that, for any $s= 0, \dots, m$ we have ${Z'}_{\!\!\ell[s]} = \P^1 \times Z_{\ell[s]}$.
The variety ${Z'}_{\!\!\ell[s]}$ is the projectivization of a rank two bundle $\cE$ on ${Z'}_{\!\!\ell[s+1]}$ -- which, by induction, is isomorphic to $\P^1 \times Z_{\ell[s+1]}$ -- appearing as an extension
\begin{equation}\label{eq:ext}
0\rightarrow\cO({f'}_{\!\!\ell[s+1]}^*K_{m-s})\longrightarrow \cF'_{\ell[s]}\longrightarrow\cO_{{Z'}_{\!\ell[s]}} \rightarrow 0.
\end{equation}
Since, by the construction of the curve $\Gamma$, ${f'}_{\!\!\ell[s+1]}^*K_{m-s}$ has intersection zero with the fibers of the projection $p_2:\P^1\times Z_{\ell[s+1]}\to Z_{\ell[s+1]}$, then  $\cF'_{\ell[s]}$ is trivial on these fibers. Hence the sequence (\ref{eq:ext}) is the pullback via $p_2$ of
\begin{equation}
0\rightarrow\cO({f}_{\ell[s+1]}^*K_{m-s})\longrightarrow \cF_{\ell[s]}\longrightarrow\cO_{{Z}_{\ell[s]}} \rightarrow 0
\end{equation}
and the claim follows.

The functoriality of the construction implies that
${f'}_{\!\!\ell[s]}\circ j_{\ell[s]}= f_{\ell[s]}$, where $j_{\ell[s]}: Z_{\ell[s]} \to \P^1 \times Z_{\ell[s]}$ is the inclusion
defined by  $z \to (p,z)$.

Denote by $p_1$ and $p_2$ the projections of $\P^1 \times {Z}_{\ell[1]}$ onto the factors.
By assumption $\pi_{l_{m-1}} \circ {f'}_{\!\!\ell[1]}$ does not contract fibers of $p_2$, since their image in $X$ is numerically equivalent to $\Gamma$, and it is generically finite when restricted to fibers of $p_1$, by the choice of $\ell$.
This implies that $\pi_{l_{m-1}}\circ {f'}_{\!\!\ell[s]}$ is generically finite (see \cite[Lemma~7]{MOSW} for details),
contradicting the fact that $\dim {Z'}_{\!\!\ell[1]} = \dim X_{l_{m-1}}+1$.
\end{proof}

\subsection{Relative duality and reflection groups}

We will now present a generalization  of the previous result, taken from \cite{OSWW}, in which it is shown that Theorem \ref{thm:mainmosw} holds for Fano manifolds whose elementary contractions are smooth $\P^1$-fibrations; in other words, it avoids the assumption of the nefness of the tangent bundle of $X$. The proof of this result is based on constructing a finite group of reflections of $N^1(X)$ out of the $\P^1$-fibrations of $X$, and  using it to produce a root system, which, for rational homogeneous manifolds, is the root system of the corresponding Lie algebra. As a by-product, this construction provides a good deal of information about cohomology of line bundles on $X$, analogous to the one provided by the Borel-Weyl-Bott Theorem for semisimple Lie algebras.

The key ingredient for the construction is the following relative duality for smooth $\P^1$-fibrations:

\begin{lemma}\label{lem:leray}\cite[Lemma~2.3]{OSWW}
Let $\pi:M\to Y$ be a smooth $\P^1$-fibration over a smooth manifold $Y$,  denote by $\Gamma$ one of its fibers and by $K$ its relative canonical divisor. Then for every Cartier divisor $D$ on $M$, setting $l:=D\cdot\Gamma$ and $\sgn(\alpha):=\alpha/|\alpha|$ for $\alpha\neq 0$,  $\sgn(0):=1$, one has
\begin{eqnarray*}
H^{i}(M,\cO_M(D)) & \cong  &H^{i+\sgn(l+1)}(M,\cO_M(D+(l+1)K)), \mbox{ for every }i\in\Z.
\end{eqnarray*}
\end{lemma}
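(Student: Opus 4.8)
The plan is to compute the cohomology via the Leray spectral sequence of $\pi$, using that on each fiber $\Gamma\cong\P^1$ the line bundle $\cO_M(D)$ restricts to $\cO_{\P^1}(l)$. The whole argument reduces to the two standard facts about line bundles on $\P^1$: $H^0(\P^1,\cO(l))$ vanishes iff $l<0$, and $H^1(\P^1,\cO(l))$ vanishes iff $l\geq -1$, together with Serre duality $\cO_{\P^1}(l)^\vee\otimes\omega_{\P^1}=\cO_{\P^1}(-l-2)$, which in relative form reads $\cExt$-wise as twisting by $(l+1)K$ flipping $l\mapsto -l-2$ on the fibers (note $(-l-2)-l$... more precisely $D\cdot\Gamma=l$, $(D+(l+1)K)\cdot\Gamma = l+(l+1)(-2)=-l-2$). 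The number $-l-2 = -(l+1)-1$, so exactly one of $l$, $-l-2$ is $\geq -1$ unless $l=-1$, in which case both equal $-1$ and all higher direct images vanish on both sides; this dichotomy is the combinatorial heart of the sign $\sgn(l+1)$.

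First I would treat the degenerate case $l=-1$: then $D\cdot\Gamma=-1$, so $R^j\pi_*\cO_M(D)=0$ for all $j$ (both $H^0$ and $H^1$ of $\cO_{\P^1}(-1)$ vanish) by cohomology and base change, hence $H^i(M,\cO_M(D))=0$ for all $i$; the same applies to $D+(l+1)K=D$, so the asserted isomorphism is $0\cong 0$ and $\sgn(l+1)=\sgn(0)=1$ causes no harm (one checks $H^{i+1}$ is also zero). Next, suppose $l\geq 0$, so $\sgn(l+1)=+1$. Then $R^1\pi_*\cO_M(D)=0$ and $R^0\pi_*\cO_M(D)=:\cE$ is a vector bundle of rank $l+1$ on $Y$, so the Leray spectral sequence degenerates and $H^i(M,\cO_M(D))\cong H^i(Y,\cE)$. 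On the other side, $D'=D+(l+1)K$ has $D'\cdot\Gamma=-l-2\leq -2$, so $R^0\pi_*\cO_M(D')=0$ while $R^1\pi_*\cO_M(D')=:\cE'$ is a bundle of rank $l+1$; Leray gives $H^{i}(M,\cO_M(D'))\cong H^{i-1}(Y,\cE')$. Relative Serre duality for the smooth fibration $\pi$ gives $R^1\pi_*\cO_M(D')\cong R^1\pi_*(\cO_M(D)^\vee\otimes\pi^*(\det\cE)\otimes\cdots)$ — more directly, relative duality yields $R^1\pi_*\cO_M(D+(l+1)K)\cong \big(R^0\pi_*\cO_M(D)\big)^\vee\otimes(\det\,?)$, but the cleanest route is: relative Serre duality $R\pi_*\cO_M(D')\cong R\pi_*\cRHom(\cO_M(-D'-2K)... )$; concretely $R^1\pi_*\cO_M(E+K)\cong(R^0\pi_*\cO_M(-E))^\vee$ for $E=-D-(l+2)K$... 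I would instead package it as the identity $R^1\pi_*\cO_M(D+(l+1)K)\cong R^0\pi_*\cO_M(D)$ as sheaves on $Y$ (both are locally free of rank $l+1$, and the isomorphism is the one induced fiberwise by Serre duality on $\P^1$, which is canonical up to the choice of trivialization of $R^1\pi_*K\cong\cO_Y$); granting this, $H^{i}(M,\cO_M(D'))\cong H^{i-1}(Y,\cE')\cong H^{i-1}(Y,\cE)$, whereas we want $H^{i+1}(M,\cO_M(D'))$ — so shifting $i\mapsto i+1$ does not match. The correct bookkeeping: the statement says $H^i(M,\cO_M(D))\cong H^{i+\sgn(l+1)}(M,\cO_M(D+(l+1)K))$, i.e.\ with $l\geq 0$, $H^i(M,\cO_M(D))\cong H^{i+1}(M,\cO_M(D'))\cong H^i(Y,\cE')\cong H^i(Y,\cE)$, which closes the loop. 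The case $l\leq -2$ is symmetric: then $\sgn(l+1)=-1$, $R^0\pi_*\cO_M(D)=0$, $R^1\pi_*\cO_M(D)=:\cE$ has rank $-l-1$, $D'=D+(l+1)K$ has $D'\cdot\Gamma=-l-2\geq 0$, so $R^1\pi_*\cO_M(D')=0$ and $R^0\pi_*\cO_M(D')$ has rank $-l-1$, canonically isomorphic to $\cE$ by relative duality; Leray gives $H^i(M,\cO_M(D))\cong H^{i-1}(Y,\cE)\cong H^{i-1}(M,\cO_M(D'))=H^{i+\sgn(l+1)}(M,\cO_M(D'))$, as desired.

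The main obstacle is making relative Serre duality bookkeeping precise: one needs the canonical isomorphism $R^1\pi_*\cO_M\cong\cO_Y$ (which holds since $\pi$ is a smooth $\P^1$-fibration, so $R^1\pi_*\cO_M$ is a line bundle that is trivial on fibers, hence trivial — in fact $R^1\pi_*\omega_{M/Y}\cong\cO_Y$ by Grothendieck duality and $\omega_{M/Y}=\cO_M(K)$ restricts to $\cO_{\P^1}(-2)$ on fibers so $R^1\pi_*\cO_M(K)=\cO_Y$), and then relative duality $R\pi_*\cO_M(D)\cong R\pi_*\big(\cO_M(-D+K)\big)^\vee[-1]$, i.e.\ $R^j\pi_*\cO_M(D)\cong\big(R^{1-j}\pi_*\cO_M(-D+K)\big)^\vee$. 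Applying this with $-D+K$ in place of $D$, and then twisting to land on $D+(l+1)K$, requires one small computation: $-(D+(l+1)K)+K=-D-lK$, and $-D-lK$ has fiber degree $-l+2l=l$... one checks the ranks and fiber degrees match up. Once that identification is in hand, the three cases above are immediate; I would present the case $l\geq 0$ in full and remark that $l\leq -2$ is dual and $l=-1$ is a vanishing. Throughout, $H^{i+\sgn(l+1)}$ with negative index is understood to be zero, which is consistent with the $l\geq 0$, $i<0$ and $l\leq -2$, $i\leq 0$ boundary cases since then $H^i(Y,\cE)$ vanishes as well for dimension reasons only when appropriate — more carefully, the isomorphism of $Y$-cohomology groups is an equality of abstract groups valid for all $i\in\Z$ with the convention that negative-degree cohomology vanishes, so no separate boundary analysis is needed.
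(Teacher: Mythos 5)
The survey does not actually reprove this lemma --- it is quoted from [OSWW, Lemma~2.3] --- so your argument can only be measured against the standard one, which is indeed the Leray-plus-relative-duality computation you set up. Your case division ($l=-1$; $l\ge 0$; $l\le -2$), the fiber-degree computation $(D+(l+1)K)\cdot\Gamma=-l-2$, the vanishing of the appropriate direct image in each case, and the index bookkeeping through the degenerate two-row Leray spectral sequence are all correct; so is the reduction of the whole statement to a single isomorphism of sheaves on $Y$.

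The gap sits exactly at that sheaf isomorphism. Relative Serre duality gives $R^1\pi_*\cO_M(D+(l+1)K)\cong\bigl(\pi_*\cO_M(-D-lK)\bigr)^\vee$, and you do arrive at this; but from there you conclude $\bigl(\pi_*\cO_M(-D-lK)\bigr)^\vee\cong\pi_*\cO_M(D)$ by ``checking that ranks and fiber degrees match'', and earlier by appealing to an isomorphism ``induced fiberwise by Serre duality on $\P^1$''. Neither is a proof: two locally free sheaves of the same rank on $Y$ need not be isomorphic, let alone have the same cohomology, and fiberwise Serre duality produces a \emph{duality} $H^1(\Gamma,\cO(-l-2))\cong H^0(\Gamma,\cO(l))^\vee$, not an isomorphism with $H^0(\Gamma,\cO_M(D)|_\Gamma)$ (note also that $\cO_M(-D-lK)|_\Gamma$ and $\cO_M(D)|_\Gamma$ are only non-canonically isomorphic). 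Since the two Leray identifications express the two sides of the asserted isomorphism as $H^i(Y,-)$ of these two sheaves, the lemma is exactly as strong as this missing identification. The fix is the self-duality of symmetric powers of rank-two bundles: \'etale- or analytically-locally on $Y$ one has $M\cong\P(\cE)$ with $\cE$ of rank $2$ and $\cO_M(D)\cong\cO_{\P(\cE)}(l)\otimes\pi^*A$, whence $\pi_*\cO_M(D)\cong S^l\cE\otimes A$ and $\pi_*\cO_M(-D-lK)\cong S^l\cE\otimes A^{-1}\otimes(\det\cE)^{-l}$; the canonical isomorphism $(S^l\cE)^\vee\otimes(\det\cE)^{\otimes l}\cong S^l\cE$ (valid precisely because $\cE$ has rank two) then yields $\bigl(\pi_*\cO_M(-D-lK)\bigr)^\vee\cong S^l\cE\otimes A\cong\pi_*\cO_M(D)$, and being canonical it glues to a global isomorphism even when $M$ carries a nontrivial Brauer class and is not globally a projectivized bundle. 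With this inserted, your three cases close up correctly (the case $l\le-2$ is, as you say, obtained from $l\ge0$ by exchanging the roles of $D$ and $D+(l+1)K$).
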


For every elementary contraction $\pi_i:X\to X$ we will consider the
linear map $r_i:N^1(X) \to N^1(X)$ given by
$$r_i(D)=D+ (D\cdot\Gamma_i)K_i, $$
which is a {\it reflection}, i.e. it is an involution that fixes the hyperlane
\begin{equation*}\label{eq:hyper}
\Gamma_i^{\perp}:=\{D\,|\,D\cdot\Gamma_i=0\}\subset N^1(X).
\end{equation*}
The counterpart of Theorem \ref{thm:mainmosw} in this  approach is the following:

\begin{theorem} The group $W\subset\Gl(N^1(X))$  generated by the reflections $\{r_i | i= 1, \dots, n\}$ is a finite group.
\end{theorem}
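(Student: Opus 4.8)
The strategy is to produce, out of the reflections $r_i$, a root system in the classical sense and then invoke the finiteness of finite root systems. First I would record the basic structure: each $r_i$ is an involution of $N^1(X)$ fixing the hyperplane $\Gamma_i^\perp$ and sending $K_i\mapsto -K_i$ (since $K_i\cdot\Gamma_i=-2$ because the fibers are $\P^1$'s, so $r_i(K_i)=K_i+(K_i\cdot\Gamma_i)K_i=K_i-2K_i=-K_i$). Thus $r_i$ is the reflection in the hyperplane $\Gamma_i^\perp$ along the direction $K_i$; dually, one gets reflections on $N_1(X)$ fixing $K_i^\perp$ along $[\Gamma_i]$. The Cartan-type integers $r_j(K_i)\cdot\Gamma_k$ are controlled by the matrix $M_{ij}=-K_i\cdot\Gamma_j$, which by (the argument of) Theorem~\ref{thm:mainmosw} — more precisely by the generalization of it stated here for Fano manifolds with smooth $\P^1$-fibration elementary contractions — is a generalized Cartan matrix, its $2\times 2$ principal submatrices being among those in \eqref{eq:2x2}. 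The point is that once one knows $M(X)$ is of finite type, $W$ is literally the Weyl group of the corresponding semisimple Lie algebra, hence finite.

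\textbf{Main steps.} (i) Show that the $r_i$ preserve a lattice and that the off-diagonal entries $m_{ij}:=K_j\cdot\Gamma_i$ satisfy $m_{ij}m_{ji}\in\{0,1,2,3\}$ and $m_{ii}=2$ — this is exactly the statement that the intersection matrix $M(X)$ is a generalized Cartan matrix, which follows from Theorem~\ref{thm:pic2} applied to the fibers of the $2$-dimensional contractions $\pi_{\{i,j\}}$ together with Proposition~\ref{prop:fibers} (the analogue for the non-nef case being proved via relative duality, Lemma~\ref{lem:leray}). (ii) Invoke the generalization of Theorem~\ref{thm:mainmosw} to Fano manifolds whose elementary contractions are smooth $\P^1$-fibrations: $M(X)$ is not merely a generalized Cartan matrix but is of \emph{finite} type. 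The proof of this is the Bott–Samelson argument already sketched in the excerpt — if $M(X)$ were of affine type one builds, from the imaginary root $\sum m_i[\Gamma_i]$, a rational curve $\Gamma$ and an associated Bott–Samelson tower ${Z'}_{\!\!\ell[s]}\cong\P^1\times Z_{\ell[s]}$, and the composite $\pi_{l_{m-1}}\circ{f'}_{\!\!\ell[1]}$ cannot be of fiber type, contradicting $\dim{Z'}_{\!\!\ell[1]}=\dim X_{l_{m-1}}+1$; the nefness of $T_X$ is not actually needed here, only the smoothness of the $\P^1$-fibrations, which is what Lemma~\ref{lem:leray} buys us. (iii) Conclude: for a finite-type Cartan matrix $M$, the group generated by the associated reflections $r_i$ is the (finite) Weyl group $W(M)$; match the reflections $r_i$ with the simple reflections $\sigma_{\alpha_i}$ under the pairing $(N^1(X),N_1(X))$, identifying $K_i$ with the simple roots and $[\Gamma_i]$ with the simple coroots up to sign.

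\textbf{The hard part.} The genuine content is step (ii): ruling out the affine (and a fortiori indefinite) case without assuming $T_X$ nef. All the finiteness of $W$ is formal once $M(X)$ is known to be a finite-type Cartan matrix, and the reduction to indecomposable $M(X)$ is Proposition~\ref{prop:decomp}; the delicate piece is the Bott–Samelson construction and the dimension count that forces $\dim X<\infty$ to be incompatible with an affine Cartan matrix. In the $T_X$-nef setting this is Theorem~\ref{thm:mainmosw}; here one must check that every ingredient — the existence of the smoothable curve $\Gamma$, the product structure ${Z'}_{\!\!\ell[s]}\cong\P^1\times Z_{\ell[s]}$ via the triviality of $\cF'_{\ell[s]}$ on the $p_2$-fibers, and the generic finiteness of $\pi_{l_{m-1}}\circ{f'}_{\!\!\ell[s]}$ — goes through using only that the $\pi_i$ are smooth $\P^1$-fibrations, replacing cohomological inputs that used nefness by the relative duality of Lemma~\ref{lem:leray}. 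I would therefore organize the write-up as: (a) the reflection $r_i$ and its elementary properties; (b) $M(X)$ is a generalized Cartan matrix; (c) the Bott–Samelson argument showing $M(X)$ is of finite type; (d) identification of $W$ with $W(M(X))$ and hence finiteness.
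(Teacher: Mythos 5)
Your proposal takes a genuinely different route from the paper and, in the generality in which the theorem is stated, it has a real gap. The theorem sits in the subsection on relative duality precisely because it is the engine that removes the nefness hypothesis: here $X$ is only assumed to be a Fano manifold whose elementary contractions are smooth $\P^1$-fibrations, and the statement that $M(X)$ is a generalized Cartan matrix of finite type is deduced in the paper as a \emph{corollary} of the finiteness of $W$ (via the $W$-invariant scalar product of Corollary \ref{cor:scalar} and the resulting root system). Your plan runs the implication in the opposite direction: first establish that $M(X)$ is of finite type by the Bott--Samelson argument of Theorem \ref{thm:mainmosw}, then identify $W$ with the associated Weyl group. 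But every input to that argument --- the simpliciality of $\NE(X)$ (Corollary \ref{prop:simplicial}), the smoothness of the face contractions $\pi_I$ and the fact that their fibers are again manifolds of the same kind with Cartan matrix $M_I(X)$ (Theorem \ref{thm:smooth} and Proposition \ref{prop:fibers}), hence both the induction on the Picard number and the control of the $2\times 2$ principal minors --- is proved in the paper using the nefness of $T_X$. Your assertion that ``the nefness of $T_X$ is not actually needed here, only the smoothness of the $\P^1$-fibrations, which is what Lemma \ref{lem:leray} buys us'' is exactly the point that would have to be proved, and you do not prove it: Lemma \ref{lem:leray} does not by itself yield Proposition \ref{prop:fibers} or the smoothness of $\pi_{\{i,j\}}$. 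In the stated generality your argument is therefore circular (the finite-type property of $M(X)$ is downstream of the theorem you are proving), while in the restricted FT-manifold setting it establishes a weaker statement than the one intended.

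For comparison, the paper's proof is direct and uses none of this machinery: by Snapper's theorem the Euler characteristic extends to a polynomial function $\Chi_X$ on $N^1(X)$ of degree at most $\dim X$; the relative duality of Lemma \ref{lem:leray} shows that $\Chi^T:=\Chi_X\circ T$, with $T$ the translation by $K_X/2$, satisfies $\Chi^T=-\Chi^T\circ r_i$ for every $i$, hence vanishes on every hyperplane $w(\Gamma_i^\perp)$ with $w\in W$; the degree bound then forces the set of reflection hyperplanes of $W$ to be finite, and a stabilizer computation (an element fixing all the classes $[\Gamma_i]$ has a diagonal integral matrix with entries $\pm 1$, and the dual action is faithful) concludes. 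To repair your outline you should either restrict explicitly to FT-manifolds, or replace your step (ii) by this Euler-characteristic argument.
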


\begin{proof}
Consider the dual action of $W$ on the vector space $N_1(X)=N^1(X)^\vee$, defined by:
\begin{equation}\label{eq:dualact}w^\vee(C)\cdot D=C\cdot w(D),\mbox{ for all }D\in N^1(X),\,\,\,C\in N_1(X),\,\,\,w\in W.\end{equation}
This action is clearly faithful, i.e. the morphism $W\to\Gl(N_1(X))$ defined by $w\mapsto w^\vee$ is injective. Moreover the matrix of every element $r_i^\vee \in\Gl(N_1(X))$ with respect to the basis $\{\Gamma_1,\dots,\Gamma_n\}$ has integral coefficients and determinant $\pm 1$, hence the same properties hold for the matrices of any $w^\vee \in \Gl(N_1(X))$.

Consider now the function $\Chi_X:\Pic(X)\to\Z$ which assigns to a line bundle its Euler characteristic. By a theorem of Snapper, cf. \cite[Section 1, Theorem]{Kl}, given
$L_1, \dots, L_t \in \Pic(X)$ then $\Chi_X(m_1, \dots, m_t):=\Chi(X, m_1L_1+ \dots + m_tL_t)$ is a numerical polynomial in $m_1, \dots, m_t$ of degree $\le \dim X$. Via the identification of  $\Pic(X)$ with $N^1(X)_\Z$ we can thus extend $\Chi_X$ to a polynomial function $\Chi_X:N^1(X) \to \R$.

Now, the existence of $\P^1$-fibrations imposes severe restrictions to the function $\Chi_X$, via Lemma \ref{lem:leray}. In fact, denoting by $T$ the translation by $K_X/2$ in $N^1(X)$, that is $T(D):=D+K_X/2$, and setting $\Chi^T:=\Chi_X\circ T$, we may write
 $$\Chi^T(D)=-\Chi^T(r_i(D))\mbox{ for any } r_i \mbox{ and any }D \in \Pic(X).$$
 Hence for any $w \in W$ and $D \in \Pic(X)$ we have
$$\Chi^T(D)=-\Chi^T((w \circ r_i \circ w^{-1})(D)).$$

In particular $\Chi^T$ vanishes on any hyperplane of the form $w(\Gamma_i^\perp)$; this is the hyperplane fixed by $w\circ r_i \circ w^{-1}\in W$. Denoting by $Z\subset \P(N^1(X))$ the set of these hyperplanes, it follows that its cardinality is smaller than or equal to the degree of $\Chi^T$, i.e. the dimension of $X$, hence it is finite.

Therefore, to show that $W$ is finite, it is enough to consider the induced action of $W$ on $Z^n$, and show that the isotropy subgroup $W^0\subset W$ of elements of $W$ fixing the point $([\Gamma_1],\dots, [\Gamma_n])$ is finite.
If $w\in W^0$, then the matrix of $w^\vee$ with respect to the basis $\{\Gamma_1,\dots,\Gamma_n\}$ is diagonal,
hence all its diagonal coefficients are equal to $\pm 1$.
In particular the image of $W^0$ in $\Gl(N_1(X))$ is finite and, since the action of $W$ on $N_1(X)$ is faithful, $W^0$ is finite as well.
\end{proof}

It is then straightforward to show the following (see \cite[VI, \S 1, Definition 1]{Bourb}):

\begin{corollary}\label{cor:scalar}
With the same notation as above, there exists a scalar product $\langle~,~\!\rangle$ in $N_1(X)$ which is $W$-invariant such that
\begin{equation*}
-K_i\cdot C=2\dfrac{\langle \Gamma_i,C\rangle}{\langle \Gamma_i,\Gamma_i\rangle}, \mbox{ for all }i = 1, \dots, n.
\label{eq:scalar}
\end{equation*}
In particular $\{-K_i,\,i=1,\dots, n\}$ is a basis of $N^1(X)$ as a vector space over $\R$ and,  in the Euclidean space $(N^1(X),\langle~,~\!\rangle)$, the finite set 
$$\Phi:=\left\{w(-K_i)\left|w\in W,\,\,\,i=1,\dots,n\right.\right\}\subset N^1(X),$$
is a {\em root system} whose Weyl group is $W$.\end{corollary}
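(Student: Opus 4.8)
The plan is to run the classical passage from a finite reflection group to its root system. Concretely: (i) average an arbitrary Euclidean structure to make it $W$-invariant; (ii) recognise each generator $r_i$ as the orthogonal reflection in the line spanned by $-K_i$, which simultaneously produces the displayed formula and identifies $\Gamma_i$ with the coroot of $-K_i$; and (iii) read off the root-system axioms for $\Phi$ from these facts together with the integrality of intersection numbers. For the first step I would average: since $W\subset\Gl(N^1(X))$ is finite by the theorem just proved, for any positive definite bilinear form $B_0$ on $N^1(X)$ the form $B(x,y):=\sum_{w\in W}B_0(wx,wy)$ is positive definite and $W$-invariant; dualising $B$ (equivalently, averaging directly the faithful dual action (\ref{eq:dualact}) of $W$ on $N_1(X)$) gives a positive definite $W$-invariant scalar product $\langle\ ,\ \rangle$ on $N_1(X)$ corresponding to $B$ under the isomorphism $N^1(X)\cong N_1(X)$ induced by $B$.

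Next I would analyse the generators. Because $\pi_i$ is a smooth $\P^1$-fibration one has $-K_i\cdot\Gamma_i=2$, so the defining formula $r_i(D)=D+(D\cdot\Gamma_i)K_i$ gives $r_i(-K_i)=-(-K_i)$, while $r_i$ restricts to the identity on the hyperplane $\Gamma_i^\perp=\{D\in N^1(X)\mid D\cdot\Gamma_i=0\}$. A $B$-orthogonal involution other than the identity that fixes a hyperplane pointwise is the orthogonal reflection in that hyperplane, and its $(-1)$-eigenline is forced to be $\mathbb{R}(-K_i)$; hence $r_i$ is the orthogonal reflection in $-K_i$, i.e. $r_i(x)=x-2\,B(x,-K_i)\,B(-K_i,-K_i)^{-1}(-K_i)$. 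Comparing this with $r_i(x)=x-(x\cdot\Gamma_i)(-K_i)$ yields $x\cdot\Gamma_i=2\,B(x,-K_i)/B(-K_i,-K_i)$ for all $x$, which transported to $N_1(X)$ is exactly the asserted identity $-K_i\cdot C=2\langle\Gamma_i,C\rangle/\langle\Gamma_i,\Gamma_i\rangle$; in particular it identifies $\Gamma_i$ with the coroot of $-K_i$.

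For the basis statement I would observe that, relative to the basis $\{\Gamma_i\}$ of $N_1(X)$ (the cone $\NE(X)$ being simplicial), the matrix $M(X)_{ij}=-K_i\cdot\Gamma_j=(2/\langle\Gamma_i,\Gamma_i\rangle)\langle\Gamma_i,\Gamma_j\rangle$ is the product of an invertible diagonal matrix and the invertible Gram matrix of $\{\Gamma_i\}$, hence nonsingular, so the $-K_i$ are linearly independent and thus form a basis of $N^1(X)$. Then I would check the axioms of a root system for $\Phi=\{w(-K_i)\}\subset N^1(X)$ with ambient form $B$: it is finite, contains the basis $\{-K_i\}$ (so spans $N^1(X)$), does not contain $0$ (as $-K_i\neq 0$ and $W$ is a group), and lies in $\Pic(X)=N^1(X)_{\mathbb Z}$ because each $r_i$ preserves $\Pic(X)$ by its explicit formula, hence so does $W$, and $-K_i\in\Pic(X)$. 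For $\alpha=w(-K_i)$, since $w$ is a $B$-isometry the element $w\,r_i\,w^{-1}\in W$ is the orthogonal reflection in $\alpha$, hence of the form $s_{\alpha,\alpha^\vee}$ for the corresponding coroot $\alpha^\vee\in N_1(X)$, and it stabilises $\Phi$ because $\Phi$ is a single $W$-orbit; the integrality axiom $s_{\alpha,\alpha^\vee}(\beta)-\beta\in\mathbb{Z}\alpha$ for $\beta\in\Phi$ follows since $s_{\alpha,\alpha^\vee}$ preserves the lattice $\Pic(X)$, concretely because $\langle\beta,\alpha^\vee\rangle$ is an intersection number of a Cartier divisor class with the curve class $\Gamma_i$. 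The Weyl group of $\Phi$, generated by the $s_{\alpha,\alpha^\vee}=w\,r_i\,w^{-1}$, is then precisely $W$.

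I expect the genuinely delicate point to be the bookkeeping in the middle step: one must fix consistently the dual $W$-action on $N_1(X)$, the identification $N^1(X)\cong N_1(X)$ coming from $B$, and the normalisation of coroots, so that \emph{orthogonal reflection in $-K_i$} on $N^1(X)$ really corresponds to \emph{orthogonal reflection in $\Gamma_i$} on $N_1(X)$ and the factor $\langle\Gamma_i,\Gamma_i\rangle$ appears with the correct power in the displayed formula. Once these conventions are pinned down, the rest is the standard finite-reflection-group formalism of \cite[VI, \S 1]{Bourb}.
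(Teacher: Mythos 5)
Your proposal is correct and is exactly the argument the paper has in mind: the paper gives no written proof, merely remarking that the corollary is "straightforward" from the finiteness of $W$ together with Bourbaki's definition of a root system, and your write-up (average a Euclidean form over the finite group $W$, identify each $r_i$ as the orthogonal reflection with $(-1)$-eigenline $\R(-K_i)$ and $\Gamma_i$ as the corresponding coroot, then verify the axioms using the integrality of intersection numbers) is precisely the intended fleshing-out. The bookkeeping you flag does close up as you expect — equivalently one can average a form directly on $N_1(X)$ for the dual action, under which $r_i^\vee(C)=C-(-K_i\cdot C)\Gamma_i$ is visibly the orthogonal reflection in $\Gamma_i$, yielding the displayed identity at once.
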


This fact imposes strong restrictions (Cf. \cite[Lemma~2.14]{OSWW}) on the possible  entries of the Cartan matrix $M(X)$, which, together with the fact that $-K_i \cdot \Gamma_j \le 0$ if $i \not = j$, implies that
the $2 \times 2$ principal minors of $M(X)$ are the ones appearing in (\ref{eq:2x2}). Being $\{-K_i,\,i=1,\dots, n\}$ a basis of $N^1(X)$, we finally obtain the following:
\begin{corollary}
The Cartan matrix of a Fano manifold $X$ whose elementary contractions are smooth $\P^1$-fibrations is of finite type.
\end{corollary}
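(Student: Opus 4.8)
The plan is to assemble the material developed above: once $W$ is known to be finite, the symmetrized bilinear form attached to $M(X)$ becomes positive definite, and a generalized Cartan matrix with that property is automatically of finite type.

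First I would record that $M(X)=(-K_i\cdot\Gamma_j)_{i,j}$ is a generalized Cartan matrix in the sense of \cite[4.0]{Kac}: the diagonal entries are $2$, since each elementary contraction $\pi_i$ is a smooth $\P^1$-fibration and hence $-K_i$ has degree $2$ on its fiber $\Gamma_i$; the off-diagonal entries satisfy $-K_i\cdot\Gamma_j\le 0$ for $i\neq j$; and $M(X)_{ij}=0$ exactly when $M(X)_{ji}=0$, because, by Theorem~\ref{thm:pic2} applied to the fibers of the contraction of a $2$-dimensional extremal face (together with the restrictions on the entries coming from the root system of Corollary~\ref{cor:scalar}), every $2\times 2$ principal submatrix of $M(X)$ is one of the four matrices in (\ref{eq:2x2}).

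Next I would use Corollary~\ref{cor:scalar} to symmetrize. Setting $d_i:=\tfrac12\langle\Gamma_i,\Gamma_i\rangle>0$, the identity $-K_i\cdot C=2\langle\Gamma_i,C\rangle/\langle\Gamma_i,\Gamma_i\rangle$ yields $d_i\,M(X)_{ij}=\langle\Gamma_i,\Gamma_j\rangle$, which is symmetric in $i$ and $j$; hence $\diag(d_1,\dots,d_n)$ symmetrizes $M(X)$, and the symmetrization $(\langle\Gamma_i,\Gamma_j\rangle)_{i,j}$ is the Gram matrix of the basis $\{\Gamma_1,\dots,\Gamma_n\}$ of $N_1(X)$ for the scalar product $\langle~,~\rangle$, and so is positive definite. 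A symmetrizable generalized Cartan matrix whose symmetrization is positive definite is of finite type, i.e. the Cartan matrix of a semisimple Lie algebra, by the classification of generalized Cartan matrices (\cite[Ch.~4]{Kac}); alternatively one may observe that the root system $\Phi=\{w(-K_i)\mid w\in W,\ i=1,\dots,n\}$ of Corollary~\ref{cor:scalar} is the orbit of a finite set under the finite group $W$, hence finite, and invoke the classification of finite root systems. Either way the statement follows.

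Since all the real work has been done, the step needing most care is bookkeeping: checking that the normalization of the $W$-invariant scalar product in Corollary~\ref{cor:scalar} is precisely the one making the symmetrization of $M(X)$ agree with the Gram matrix of the $\Gamma_i$, and that the reduction of the $2\times 2$ principal submatrices to the list (\ref{eq:2x2}) is quoted correctly. I expect the conceptual crux to remain the finiteness of $W$ proved just above: it is exactly this that excludes the affine Cartan matrices, which the $2\times 2$ analysis alone does not rule out (for instance the affine matrix of type $\widetilde{{\rm A}}_n$ has all of its $2\times 2$ principal submatrices among those in (\ref{eq:2x2})).
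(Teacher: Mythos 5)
Your proof is correct, and it rests on the same foundation as the paper's argument --- the finiteness of $W$ and the $W$-invariant scalar product of Corollary~\ref{cor:scalar} --- but the final deduction is packaged differently. The paper concludes by noting that the root-system structure of $\Phi$, together with $-K_i\cdot\Gamma_j\le 0$ for $i\neq j$, forces every $2\times 2$ principal submatrix of $M(X)$ into the list (\ref{eq:2x2}), and that $\{-K_i\}$ is a basis realizing $M(X)$ as the Cartan matrix of the \emph{finite} root system $\Phi$; finiteness of $\Phi$ then gives finite type. You instead symmetrize directly: setting $d_i=\tfrac12\langle\Gamma_i,\Gamma_i\rangle>0$, the identity of Corollary~\ref{cor:scalar} gives $\diag(d_1,\dots,d_n)\,M(X)=\bigl(\langle\Gamma_i,\Gamma_j\rangle\bigr)_{i,j}$, the Gram matrix of the basis $\{\Gamma_i\}$ for a positive definite $W$-invariant form, and a symmetrizable generalized Cartan matrix with positive definite symmetrization is of finite type by the classification in \cite{Kac}. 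This is slightly more economical: the $2\times 2$ analysis you quote becomes superfluous (symmetrizability alone already yields $M(X)_{ij}=0\Leftrightarrow M(X)_{ji}=0$), whereas the paper's formulation keeps the root system $\Phi$ and its simple roots in the foreground, which is what the subsequent construction of Dynkin diagrams and homogeneous models actually uses. Your closing observation is exactly the right diagnosis: the $2\times 2$ principal minors cannot by themselves exclude affine type (e.g.\ $\widetilde{\rm A}_n$), and it is the finiteness of $W$ --- equivalently the positive definiteness of the invariant form --- that does so.
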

We refer the reader to \cite[Sect. 2]{OSWW} for details.

\subsection{Dynkin diagrams and homogeneous models}\label{ssec:homogmod}

Let $X$ be an FT-manifold or, more generally, a Fano manifold whose elementary contractions are smooth $\P^1$-fibrations. The results on the Cartan matrix $M(X)$ seen in the previous sections allow us
 to associate with $X$ a finite Dynkin diagram $\cD(X)$:

\begin{definition}
The {\it Dynkin diagram} $\cD(X)$ of $X$ is the graph having $n:=\rho(X)$ nodes, such that the nodes in the $i$-th and $j$-th position are joined by $( -K_i \cdot \Gamma_j)( -K_j \cdot \Gamma_i)$ -- which is equal to $=0,1,2$ or $3$ -- edges. When two nodes are joined by multiple edges we write an arrow on them pointing to the node $j$ if $ -K_i \cdot \Gamma_j <  -K_j \cdot \Gamma_i$. The set of nodes of $\cD(X)$ will be identified with $D=\{1,\dots,n\}$.
\end{definition}

In particular, we may associate with $X$ a semisimple Lie group $G$, and its semisimple Lie algebra $\fg$, determined by the Dynkin diagram $\cD$.

\begin{definition}\label{notn:model}
Let $G$ be the Lie group determined by $\cD$, and let $B$ be a Borel subgroup. The complete flag manifold $G/B$ is a rational homogeneous space, which we will call the {\it rational homogeneous model of} $X$. We will use for $G/B$ a similar notation as for $X$, adding an overline to distinguish the two cases (so we will use $\overline{\pi_i}, \overline{\Gamma_i},-\overline{K_i},\dots$).
\end{definition}

We will also consider the isomorphism of vector spaces
$$
\psi:N^1(X)\to N^1(G/B),\mbox{ defined by }\psi(-K_i)=-\overline{K_i}.
$$
This isomorphism sends $-K_X$ to $-K_{G/B}$. In fact, since the Cartan matrix $M(X)$ is of finite type, it is nonsingular (cf. \cite[Theorem~4.3]{Kac}), so the coefficients of $-K_X$ with respect to the basis $\{-K_1,\dots,-K_n\}$ are determined by the intersection numbers $-K_X\cdot\Gamma_i=2$.

An important consequence of our previous construction of the Weyl group $W$ upon $\P^1$-fibrations is that, via a careful study of its action on the cohomology of divisors (see \cite[Sect. 2.4]{OSWW}), it provides the following:

\begin{proposition}\cite[Corollary~2.25]{OSWW}\label{prop:cohomequal}
Let $X$ be a Fano manifold whose elementary  contractions are smooth $\P^1$-fibrations. With the same notation as above, for every line bundle $L$ belonging to the subgroup of $Pic(X)$ generated by $K_1, \dots, K_n$
 $$h^i(X,L)=h^i(G/B,\psi(L))\quad i\in\Z.$$
In particular the dimension of $X$ equals the dimension of its homogeneous model.
\end{proposition}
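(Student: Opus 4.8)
The main tool is the relative duality of Lemma \ref{lem:leray}, used exactly as in the classical inductive proof of Bott's theorem. For each index $i$, applying it to the smooth $\P^1$-fibration $\pi_i$ and to $D=L$ gives, with $l:=L\cdot\Gamma_i$,
\begin{equation*}
h^j(X,L)=h^{j+\sgn(l+1)}\big(X,s_i(L)\big)\quad\text{for all }j\in\Z,\qquad\text{where } s_i(L):=L+(l+1)K_i.
\end{equation*}
A one-line computation gives $s_i(L)=r_i(L+\rho)-\rho$, with $\rho:=-K_X/2\in N^1(X)_\Q$ and $r_i$ the reflection of Corollary \ref{cor:scalar}; hence the maps $s_i$ generate a group conjugate to the finite reflection group $W$, it stabilises the lattice $\Lambda:=\Z K_1+\dots+\Z K_n$ in which our $L$ lives, and its walls are the $(-\rho)$-translates of the hyperplanes $w(\Gamma_i^\perp)$. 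Since $M(X)$ equals the Cartan matrix of $G/B$ and $\psi(-K_X)=-K_{G/B}$, the isomorphism $\psi$ is $W$-equivariant, sends $\rho$ to $\overline{\rho}:=-K_{G/B}/2$, preserves all intersection numbers $L\cdot\Gamma_i$ for $L\in\Lambda$, and hence matches the maps $s_i$ and their walls with the corresponding data on $G/B$, where Lemma \ref{lem:leray} for the fibrations $\overline{\pi_i}$ is the geometric form of Bott's algorithm.

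I would then run the reduction on $X$ and on $G/B$ in parallel. If $L+\rho$ lies on a wall, a finite chain of $s_i$'s carries $L$ to $L'$ in its orbit with $L'\cdot\Gamma_i=-1$ for some $i$; for such $L'$ the displayed formula reads $h^j(X,L')\cong h^{j+1}(X,L')$ for all $j$, so $h^\bullet(X,L')=0$ and therefore $h^\bullet(X,L)=0$, and the same happens on $G/B$ since the wall condition is $\psi$-invariant. If $L+\rho$ is regular, the usual Coxeter combinatorics produces a chain $L\to s_{i}L\to\dots\to L^+$ of length $\ell$ ending at the unique translate $L^+\in\Lambda$ with $L^+\cdot\Gamma_i\geq0$ for all $i$, along which each exponent $\sgn(\,\cdot\,+1)$ equals $-1$; thus $h^j(X,L)=h^{j-\ell}(X,L^+)$, and because $L\cdot\Gamma_i=\psi(L)\cdot\overline{\Gamma_i}$ persists along the path, the same recipe gives $h^j(G/B,\psi(L))=h^{j-\ell}(G/B,\psi(L^+))$ with the identical $\ell$ and $\psi(L^+)$ dominant. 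It remains to compare $h^\bullet$ of a dominant $L^+$ on $X$ and of $\psi(L^+)$ on $G/B$. For such $L^+$ the divisor $L^+-K_X$ is ample (it meets every extremal curve $\Gamma_i$ positively and $\overline{\NE}(X)$ is spanned by the $[\Gamma_i]$), so Kawamata--Viehweg vanishing gives $h^j(X,L^+)=0$ for $j>0$, and likewise on $G/B$; hence both $h^\bullet$ reduce to Euler characteristics and the statement collapses to the identity $\chi(X,L^+)=\chi(G/B,\psi(L^+))$ for dominant $L^+\in\Lambda$.

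By Snapper's theorem both sides are restrictions to $\Lambda$ of polynomial functions, so it suffices to prove $\chi(X,\,\cdot\,)=\chi(G/B,\psi(\,\cdot\,))$ as polynomials on $N^1(\,\cdot\,)_\R$. From Lemma \ref{lem:leray} one gets $\chi(X,L)=-\chi(X,s_i(L))$ (as $(-1)^{\pm1}=-1$), which means that $\Psi_X(\mu):=\chi(X,\mu-\rho)$ is \emph{anti-invariant} for the linear $W$-action on the Euclidean space $(N^1(X)_\R,\langle\,,\rangle)$ of Corollary \ref{cor:scalar}; the same holds on $G/B$, and $\Psi_X(\rho)=\chi(X,\cO_X)=1=\Psi_{G/B}(\overline{\rho})$. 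Anti-invariance forces $\Psi_X$ to be divisible by $J(\mu):=\prod_{\alpha\in\Phi^+}\langle\mu,\alpha^\vee\rangle$, whereas on $G/B$ the Weyl character formula identifies $\Psi_{G/B}$ with the Weyl dimension polynomial $J(\mu)/J(\overline{\rho})$, which has degree exactly $|\Phi^+|=\dim(G/B)$. The real obstacle is to show that on $X$ too the anti-invariant factor uses up the whole degree --- equivalently, to prove the bound $\dim X\leq|\Phi^+|$; here the earlier machinery has to enter, via a maximal reduced sequence $\ell=(l_1,\dots,l_m)$ with $m=\dim X$, the associated Bott--Samelson variety $Z_\ell$, and the description in Proposition \ref{prop:coneBS} of $\overline{\NE}(Z_\ell)$ and of the Stein factorization of $f_\ell:Z_\ell\to X$, which matches $\ell$ with a reduced expression in the Weyl group of $\cD(X)$ and hence bounds $m$ by the length $|\Phi^+|$ of the longest element. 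Once $\dim X=|\Phi^+|$ is in hand, $\Psi_X=cJ$ for a constant $c$, evaluation at $\rho$ forces $c=J(\overline{\rho})^{-1}$, so $\Psi_X=\Psi_{G/B}\circ\psi$; this yields the Euler-characteristic identity, hence $h^j(X,L)=h^j(G/B,\psi(L))$ for every $L\in\Lambda$, and in particular $\dim X=\dim(G/B)$. (If one is willing to invoke Theorem \ref{thm:main}, the proposition is of course immediate, since any isomorphism $X\cong G/B$ matches elementary contractions and thus induces $\psi$ on $N^1$.)
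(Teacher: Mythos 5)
The paper itself offers no proof of this proposition: it is quoted from \cite[Corollary~2.25]{OSWW}, and the surrounding text only indicates that the argument is ``a careful study of [the] action [of $W$] on the cohomology of divisors''. Your proposal follows exactly that route --- Bott's algorithm built from Lemma \ref{lem:leray}, the dotted action $s_i(L)=r_i(L+\rho)-\rho$, vanishing on walls, Kawamata--Viehweg for the dominant representative $L^+$, and the anti-invariance of the Euler polynomial --- and these reductions are all carried out correctly. In particular your observation that the whole statement collapses to the single polynomial identity $\Psi_X=c\,J$, and that this identity is \emph{equivalent} to the dimension statement, is exactly right.

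The genuine gap is the step you yourself flag as ``the real obstacle'': the inequality $\dim X\le|\Phi^+|$. Divisibility of $\Psi_X$ by $J$ only yields $\dim X\ge|\Phi^+|$; if $\dim X>|\Phi^+|$ one merely gets $\Psi_X=J\cdot Q$ with $Q$ a nonconstant $W$-invariant factor, which is not excluded by anti-invariance, by the normalization $\Psi_X(\rho)=\chi(X,\cO_X)=1$, or by Serre duality. Your proposed source for the missing inequality --- Proposition \ref{prop:coneBS} together with the matching of a maximal reduced sequence $\ell$ with a reduced word in $W$ --- does not work as written, for two reasons. First, Proposition \ref{prop:coneBS} describes the Stein factorization of $f_\ell$ purely in terms of the repetition pattern $J=\{i\mid l_i=l_k \text{ for some } k>i\}$ and gives no control on the dimension of its fibres, so it does not by itself bound $m=|\ell|$ by $\ell(w_0)=|\Phi^+|$. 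Second, and more seriously, the dictionary ``$f_\ell$ generically finite $\iff$ $w(\ell)$ reduced'' is precisely the content of \cite[Section~3]{OSWW}, which in the logical order of that paper (and of this survey) is developed \emph{after} the present proposition and partly as a consequence of it --- Proposition \ref{prop:long} is explicitly derived from Proposition \ref{prop:cohomequal} --- so invoking it here is circular unless you supply an independent proof. To close the argument you need either a direct geometric proof that a sequence $\ell$ with $\dim f_{\ell[s]}(Z_{\ell[s]})=m-s$ for all $s$ forces $m\le\ell(w_0)$, or some other mechanism bounding $\deg\Psi_X$ by $|\Phi^+|$; as it stands the crux of the proposition remains unproved.
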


Note that with any sequence $\ell=(l_1,\dots,l_r)$ of elements of $D$ one can associate the element of the Weyl group $W$
defined as  $w(\ell):=r_{l_1}\circ r_{l_2}\circ\dots\circ r_{l_r}$. We say that $\ell$ is {\it reduced} if $w(\ell)\in W$ cannot be written as a composition of a smaller number of reflections $r_i$, and in this case the cardinal $|\ell|$ is called the {\it length of} $w(\ell)$. It is known that for every Weyl group $W$ there is a unique element of maximal length, named the {\it longest element of} $W$.
One may prove that the evaluation morphism $f_\ell$ is generically finite if and only if it $\ell$ is reduced, and that for reduced $\ell$, $f_\ell$ is surjective if and only if $w(\ell)$ is the longest element of $W$  (see \cite[Section 3]{OSWW} for details). Furthermore, Proposition \ref{prop:cohomequal} implies the following:

 \begin{proposition}\label{prop:long} Let $X$ be a Fano manifold whose elementary  contractions are smooth $\P^1$-fibrations. If $\ell=(l_1,\dots,l_m)$ is a sequence such that $w(\ell)$ is a reduced expression of the longest element in $W$ then the morphism $f_\ell: Z_\ell \to X$ is surjective and birational.
\end{proposition}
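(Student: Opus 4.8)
The plan is to get surjectivity and generic finiteness of $f_\ell$ for free from the facts quoted just before the statement, reduce the assertion to $\deg f_\ell=1$, and prove the latter by comparing top self‑intersection numbers on $Z_\ell$ with those on the Bott--Samelson variety $\overline Z_\ell$ of the homogeneous model $G/B$, where the corresponding morphism $\overline f_\ell$ is classically known to be birational. Since $w(\ell)$ is a reduced expression of the longest element $w_0$ of $W$, the sequence $\ell$ is reduced and $w(\ell)=w_0$, so the results of \cite[Section~3]{OSWW} recalled above give that $f_\ell\colon Z_\ell\to X$ is surjective and generically finite. Its source has dimension $|\ell|=\ell(w_0)$, which is the number of positive roots of $\Phi$, hence equals $\dim G/B$, hence equals $\dim X=m$ by Proposition~\ref{prop:cohomequal}. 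Thus $f_\ell$ is a surjective generically finite morphism between smooth projective $m$‑folds; write $d:=\deg f_\ell\ge 1$. It remains to show $d=1$.

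First I would compute the class $f_\ell^*K_X$ in $N^1(Z_\ell)$. Because every elementary contraction $\pi_i\colon X\to X_i$ is a smooth $\P^1$‑fibration, its fibre $\Gamma_i$ satisfies $K_i\cdot\Gamma_i=\deg K_{\P^1}=-2$, so $-K_X\cdot\Gamma_i=-K_i\cdot\Gamma_i=2$ for every $i$. Using that $\{H_i\}$ is the basis of $N^1(Z_\ell)$ dual to the fibre classes $\{\beta_i\}$ and the projection formula $f_\ell^*K_X\cdot\beta_i=K_X\cdot f_{\ell*}\beta_i=K_X\cdot\Gamma_{l_i}=-2$, I get $f_\ell^*K_X=-2\sum_{i=1}^m H_i$, and the identical computation on the homogeneous model gives $\overline f_\ell^*K_{G/B}=-2\sum_{i=1}^m\overline H_i$ on $\overline Z_\ell$.

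Next I would match the numbers $\bigl(\sum_i H_i\bigr)^m$ and $\bigl(\sum_i\overline H_i\bigr)^m$. The variety $Z_\ell$ is built as an iterated projectivization $\P(\cF_{\ell[s]})$, where each $\cF_{\ell[s]}$ is an extension of $\cO$ by a pullback of some $K_{l_{r-s}}$ (sequence (\ref{eq:F})); in particular $c_2(\cF_{\ell[s]})=0$, so the intersection theory of $Z_\ell$ is described by the successive relative hyperplane classes subject only to the Grothendieck relations $\xi_s^2=c_1(\cF_{\ell[s]})\,\xi_s$, and by the projection formula all the coefficients occurring in these relations are entries of the Cartan matrix $M(X)$, arranged according to $\ell$. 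Since $M(G/B)=M(X)$ and $\overline Z_\ell$ is built from the very same combinatorial data, the relevant intersection numbers agree under $H_i\mapsto\overline H_i$, so $\bigl(\sum_i H_i\bigr)^m=\bigl(\sum_i\overline H_i\bigr)^m$. Now $\overline f_\ell$ is birational (the classical fact that the Bott--Samelson resolution attached to a reduced word for $w_0$ is birational, also quoted above), whence $(\overline f_\ell^*K_{G/B})^m=(K_{G/B})^m$; and $(K_X)^m=(K_{G/B})^m$, because Proposition~\ref{prop:cohomequal} gives $\chi(X,L)=\chi(G/B,\psi(L))$ for every $L$ in the subgroup generated by $K_1,\dots,K_n$, a polynomial identity whose degree‑$m$ part forces $L^m=\psi(L)^m$, while $\psi$ carries a suitable integer multiple of $K_X$ to the same multiple of $K_{G/B}$. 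Stringing these equalities together, $d\,(K_X)^m=(f_\ell^*K_X)^m=(-2)^m\bigl(\sum_i H_i\bigr)^m=(-2)^m\bigl(\sum_i\overline H_i\bigr)^m=(\overline f_\ell^*K_{G/B})^m=(K_{G/B})^m=(K_X)^m$, and $(K_X)^m=(-1)^m(-K_X)^m\neq 0$ since $X$ is Fano, so $d=1$.

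The step I expect to be the genuine obstacle is the matching: one has to check carefully that every intersection number entering the computation of $\bigl(\sum_i H_i\bigr)^m$ — run down the tower of $\P^1$‑bundles $Z_{\ell[0]}\to Z_{\ell[1]}\to\cdots\to Z_{\ell[m]}=\mathrm{pt}$ — is literally a function of $M(X)$ and $\ell$ and of nothing else (in particular that the possibly non‑split extensions $\cF_{\ell[s]}$ contribute only through $c_1$, since $c_2=0$), and that the resulting identification of $N^1(Z_\ell)$ with $N^1(\overline Z_\ell)$ is the one sending $H_i$ to $\overline H_i$. A variant that bypasses Chow rings is to compute $\chi(Z_\ell,f_\ell^*(-cK_X))$ directly by descending the same $\P^1$‑bundle tower with the elementary formulas for $R^\bullet$ of a line bundle on a $\P^1$‑bundle, obtaining a Demazure‑type polynomial in $c$ depending only on $M(X)$ and $\ell$; matching it with the homogeneous model and with $\chi(X,-cK_X)=\chi(G/B,-cK_{G/B})$ from Proposition~\ref{prop:cohomequal}, and comparing the coefficients of $c^m$, again yields $d=1$.
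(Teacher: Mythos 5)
Your argument is correct, and it reaches the conclusion by a mechanism different from the paper's, although the two proofs share the same technical core. The paper fixes an ample $L$ on $X$, notes that surjectivity plus birationality of $f_\ell$ is equivalent to the restriction map $H^0(X,tL)\to H^0(Z_\ell,tf_\ell^*L)$ being an isomorphism for $t\gg 0$, and then proves $h^0(Z_\ell,tf_\ell^*L)=h^0(\overline Z_\ell,t\overline f_\ell^{\,*}\overline L)$ by combining the combinatorial determinacy of Euler characteristics on the Bott--Samelson tower with Kawamata--Viehweg vanishing, closing the loop with Proposition~\ref{prop:cohomequal} and the classical statement for $G/B$. You instead take surjectivity and generic finiteness as given (legitimately: they are quoted from \cite[Section~3]{OSWW} immediately before the statement) and pin down the degree $d$ via $(f_\ell^*K_X)^m=d\,(K_X)^m$, matching top self-intersections with the homogeneous model and using $(K_X)^m\neq 0$. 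Both proofs rest on the same key assertion --- that the intersection theory, equivalently the Euler characteristics of pulled-back line bundles, of $Z_\ell$ is determined by $M(X)$ and $\ell$ alone, hence agrees with that of $\overline Z_\ell$ --- and both invoke Proposition~\ref{prop:cohomequal} and the classical birationality of $\overline f_\ell$. What your route buys is a more elementary endgame (one numerical identity, no vanishing theorem); what it costs is that surjectivity must be imported rather than re-derived, whereas the paper's section-ring argument yields surjectivity and birationality in one stroke. The step you rightly flag as the delicate one --- that the possibly nonsplit extensions $\cF_{\ell[s]}$ contribute only through $c_1$, whose class is read off from Cartan-matrix entries by the projection formula --- is precisely the step the paper also leaves as an assertion (``one may prove that the Euler characteristic \dots is determined by $\ell$ and by the degree of $L$''), so your write-up is no less complete than the printed proof; indeed your closing variant via $\chi(Z_\ell,f_\ell^*(-cK_X))$ is essentially the paper's own argument.
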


\begin{proof}
Let $L$ be an ample line bundle on $X$ and $\overline L=\psi(L)$ be the corresponding ample line bundle on $G/B$. The result is well-known for complete flag manifolds $G/B$, and it is equivalent to say that the restriction morphism $H^0(G/B,t \overline  L) \to H^0({\overline Z}_\ell ,t{\overline f}_{\ell}^*\overline L)$ is an isomorphism for $t>>0$. By Proposition \ref{prop:cohomequal}, the result follows by showing that $$H^0({\overline Z}_\ell ,t{\overline f}_{\ell}^*\overline L)\cong H^0({Z}_\ell ,t{f}_{\ell}^* L).$$

We note that one may prove that the Euler characteristic of any line bundle ${f}_{\ell}^* L$ on ${Z}_\ell$ is determined by $\ell$ and by the degree of $L$ with respect to the curves $\Gamma_i$, hence it is the same for $X$ and for $\overline X$. Then an application of Kawamata-Viehweg Vanishing Theorem tells us that, being $L$ nef, $H^i({Z}_\ell ,t{f}_{\ell}^* L)=0$ for $i>0$, and the claimed equality holds.
\end{proof}

In particular, Proposition \ref{prop:decomp} holds also in this more general setting, allowing us to reduce Theorem \ref{thm:main} to the case in which $\cD$ is connected:

\begin{corollary}[{\cite[Corollary~3.20]{OSWW}}]
Let $X$ be a Fano manifold whose elementary contractions are smooth $\P^1$-fibrations. Assume that $\cD = \cD_1 \sqcup \cD_2$. Then $X \simeq X_1 \times X_2$, where $X_1$ and $X_2$ are Fano manifolds whose elementary contractions are smooth $\P^1$-fibrations and whose Dynkin diagrams are $\cD_1$ and $\cD_2$, respectively.
\end{corollary}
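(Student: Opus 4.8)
The plan is to realize $X$ directly as a fibre product of its two face-contractions and to show that the resulting map is an isomorphism. First I would restate the hypothesis in Cartan-matrix terms. Saying $\cD=\cD_1\sqcup\cD_2$ means that $D$ splits as $D_1\sqcup D_2$ with no edge joining $D_1$ to $D_2$, i.e. $(-K_i\cdot\Gamma_j)(-K_j\cdot\Gamma_i)=0$ whenever $i\in D_1$, $j\in D_2$. Since $-K_i\cdot\Gamma_j\le 0$ for $i\ne j$ and $M(X)$ is a generalized Cartan matrix of finite type (so that $-K_i\cdot\Gamma_j=0$ if and only if $-K_j\cdot\Gamma_i=0$), this forces $-K_i\cdot\Gamma_j=-K_j\cdot\Gamma_i=0$ for all such pairs; thus $M(X)$ is block-diagonal. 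Set $X_1:=X_{D_2}$ and $X_2:=X_{D_1}$, with the contractions $\rho_1:=\pi_{D_2}:X\to X_1$ and $\rho_2:=\pi_{D_1}:X\to X_2$. By the structure theory of Fano manifolds whose elementary contractions are smooth $\P^1$-fibrations (a face contraction of such a manifold is smooth, and its target and its fibres are again of the same type, with the corresponding sub-Dynkin-diagrams; cf. Proposition~\ref{prop:fibers} and \cite{OSWW}), $X_1$ and $X_2$ are Fano manifolds whose elementary contractions are smooth $\P^1$-fibrations, with Dynkin diagrams $\cD_1$ and $\cD_2$.

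Next I would analyse $\mu:=(\rho_1,\rho_2):X\to X_1\times X_2$. A curve $C\subset X$ contracted by $\mu$ has $[C]\in R_{D_1}\cap R_{D_2}$; since the classes $[\Gamma_i]$ are linearly independent (the Cartan matrix being nonsingular) and $D_1\cap D_2=\emptyset$, this intersection is $\{0\}$, so $\mu$ contracts no curve and is therefore finite. By Proposition~\ref{prop:cohomequal} applied to $X$, $X_1$ and $X_2$, together with the fact that the root system of $\cD=\cD_1\sqcup\cD_2$ decomposes as $\Phi=\Phi_1\sqcup\Phi_2$, one gets $\dim(X_1\times X_2)=|\Phi_1^+|+|\Phi_2^+|=|\Phi^+|=\dim X$. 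Hence $\mu$ is a finite surjective morphism of smooth projective varieties of the same dimension.

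The heart of the argument is to show $\mu$ is étale. Writing $T_{D_\alpha}$ for the relative tangent bundle of $\pi_{D_\alpha}$ and $K_{D_\alpha}=-\det T_{D_\alpha}$, the key identity is $K_{D_1}+K_{D_2}=K_X$ in $\Pic(X)$. As $\{[\Gamma_i]\}$ is a basis of $N_1(X)$, it suffices to check $(-K_{D_1}-K_{D_2})\cdot\Gamma_i=2$ for each $i$, and by symmetry we may take $i\in D_1$. Since $\Gamma_i$ lies in a fibre $F$ of $\pi_{D_1}$, which is a Fano manifold of the above type having $\Gamma_i$ as one of its elementary fibres, we get $(-K_{D_1})\cdot\Gamma_i=(-K_F)\cdot\Gamma_i=2$. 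For the other term, restrict the exact sequence
$$0\to T_{D_2}\to T_X\to\rho_1^*T_{X_1}\to 0$$
to $\Gamma_i$ via its normalization $f_i:\P^1\xrightarrow{\ \sim\ }\Gamma_i$. The curve $\Gamma_i$ is not contracted by $\rho_1$, and $\rho_1(\Gamma_i)$ is a fibre of the $i$-th elementary contraction of $X_1$ (because $\pi_i^{X_1}\circ\rho_1=\pi_{D_2\cup\{i\}}$ contracts $\Gamma_i$); hence $(\rho_1 f_i)^*T_{X_1}\cong E(2,0^{\dim X_1-1})$ and $f_i^*T_X\cong E(2,0^{m-1})$, both of degree $2$, so $\deg f_i^*T_{D_2}=0$, i.e. $(-K_{D_2})\cdot\Gamma_i=0$. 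This proves the identity. Now smoothness of $\rho_1$ and $\rho_2$ gives $K_X=\rho_1^*K_{X_1}+K_{D_2}$ and $K_X=\rho_2^*K_{X_2}+K_{D_1}$; adding these and using the identity yields $\mu^*K_{X_1\times X_2}=\rho_1^*K_{X_1}+\rho_2^*K_{X_2}=K_X$. The Riemann--Hurwitz formula $K_X=\mu^*K_{X_1\times X_2}+R$ then forces the effective ramification divisor $R$ to be zero, and purity of the branch locus ($X_1\times X_2$ smooth, $\mu$ finite) shows that $\mu$ is étale. Finally $X_1\times X_2$, being a product of Fano manifolds, is simply connected, so the finite étale morphism $\mu$ from the connected variety $X$ is an isomorphism; by construction $X_1$ and $X_2$ are of the required type with Dynkin diagrams $\cD_1$ and $\cD_2$.

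I expect the genuine obstacle to lie entirely in the structure-theoretic input invoked at the start and relabelled through the proof: that the face contractions $\pi_{D_\alpha}$ of a Fano manifold whose elementary contractions are smooth $\P^1$-fibrations are again smooth, with both targets and fibres of the same type and carrying the sub-Dynkin-diagrams $\cD_\alpha$ (the analogue of Proposition~\ref{prop:fibers} in this more general setting). Granting that, the remaining work is the numerical identity $K_{D_1}+K_{D_2}=K_X$ and the routine étaleness argument above.
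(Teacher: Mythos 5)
Your overall strategy (realize $\mu=(\pi_{D_2},\pi_{D_1}):X\to X_1\times X_2$ as a finite crepant, hence \'etale, hence trivial cover) is clean, and the numerical part — $K_{D_1}+K_{D_2}=K_X$, Riemann--Hurwitz, purity of the branch locus, simple connectedness — is fine. But the input you grant at the outset is a genuine gap, not a technicality. The corollary is stated for Fano manifolds whose \emph{elementary} contractions are smooth $\P^1$-fibrations, with no nefness assumption on $T_X$. Theorem \ref{thm:smooth} (smoothness of arbitrary contractions, targets and fibres again of the same type) and Proposition \ref{prop:fibers} are proved only for CP-/FT-manifolds, where nefness of $T_X$ is used essentially (free rational curves, triviality of normal bundles of fibres, etc.). In the present generality nothing guarantees that the face contractions $\pi_{D_1},\pi_{D_2}$ are smooth, equidimensional, or that their targets are again Fano with smooth $\P^1$-fibration elementary contractions carrying $\cD_1$ and $\cD_2$ — and your dimension count via Proposition \ref{prop:cohomequal} applied to $X_1,X_2$, as well as the computation $(\rho_1\circ f_i)^*T_{X_1}\cong E(2,0^{\dim X_1-1})$, both presuppose exactly this. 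The paper itself treats this as a serious obstruction: in the ${\rm F}_4$ case it must \emph{prove} smoothness of the non-elementary contractions $\pi^I$ by identifying their fibres with images of Bott--Samelson varieties and invoking the already-established homogeneity for proper subdiagrams. So the "obstacle" you flag and then grant is essentially the content one is not entitled to at this stage.

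The paper's proof sidesteps all of this by using only the elementary contractions: choose reduced words $\ell_1,\ell_2$ for the longest elements of the Weyl groups of $\cD_1,\cD_2$, so that $\ell_1\ell_2$ is a reduced word for the longest element of $W$; disconnectedness of $\cD$ forces the extensions (\ref{eq:F}) defining the Bott--Samelson tower to be trivial across the two blocks, whence $Z_{\ell_1\ell_2}\cong Z_{\ell_1}\times Z_{\ell_2}$; by Proposition \ref{prop:long} the map $f_{\ell_1\ell_2}:Z_{\ell_1\ell_2}\to X$ is surjective and birational, and by Proposition \ref{prop:coneBS} the extremal face it contracts is the convex hull of faces living separately on the two factors, so $X$ splits as the product of the two images. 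If you want to salvage your argument, you would first have to establish the smoothness and structure of $\pi_{D_1},\pi_{D_2}$ in this generality — which, absent nef $T_X$, is not easier than the Bott--Samelson route. (For FT-manifolds proper, your argument does go through, and is essentially Proposition \ref{prop:decomp}.)
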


\begin{proof}
In this situation, taking two sequences $\ell_1$ and $\ell_2$ giving reduced expressions of the longest words of the Weyl groups of $\cD_1$ and $\cD_2$, we get a sequence $ \ell_1\ell_2$ that gives a reduced expression of the longest word of $W$. Moreover, the disconnectedness of $\cD$ implies that $Z_{\ell_1\ell_2}\cong Z_{\ell_1}\times Z_{\ell_2}$. One may then check that the extremal face determining the birational morphism $f_{\ell_1\ell_2}$ is the convex hull of two extremal faces determining contractions of $Z_{\ell_1}$ and $Z_{\ell_2}$. It follows that $X$ is the product of the images of these two contractions.
\end{proof}

\subsection{Homogeneity of Flag type manifolds}\label{ssec:homogFT}

In this section we will briefly discuss how to prove Theorem \ref{thm:main}, by showing that a Fano manifold $X$ whose elementary contractions are smooth $\P^1$-fibrations is isomorphic to its homogeneous model.

\begin{remark}\label{rem:induction} If we further assume that $X$ is an FT-manifold,  we may use the smoothness of its Mori contractions $\pi^I:X\to X^I$ (Section \ref{sec:CPvar}) to design a recursive procedure to obtain the homogeneity of $X$:
\begin{itemize}
 \item[1.] Find an increasing sequence  $I_1 \subset I_2 \subset \dots \subset D$, in which $|I_{k+1}|=|I_k|+1$, and such that $\overline X{}^{I_{k+1}}$ is a complete family of lines in $\overline X{}^{I_k}$.
 \item[2.] ({\it Base case}) Prove that $X^{I_1} \simeq \overline X{}^{I_1}$.
 \item[3.] ({\it Recursion}) Show that $ X^{I_{k}} \simeq {\overline X}{}^{I_{k}}$ implies $ X^{I_{k+1}} \simeq {\overline X}{}^{I_{k+1}}$.
\end{itemize}
It is easy to construct a sequence as in Step 1 for every connected Dynkin diagram $\cD$. For instance, in the case of ${\rm F}_4$, numbering the nodes as in (\ref{eq:dynkins}), one may show that the sequence:
$$
I_1=\{1\}\subset I_2=\{1,2\}\subset I_3=\{1,2,3\}\subset I_2=\{1,2,3,4\}
$$
satisfies the requirements of Step 1. Moreover, it has been shown in \cite{MOSW} that Step 3 works if we further assume that FT-manifolds whose Dynkin diagrams are proper subdiagrams of $\cD$ are homogeneous. Thus, in principle one could use this strategy to prove Theorem \ref{thm:main} for FT-manifolds by induction on the number of nodes of $\cD$, choosing appropriately the sequence $I_k$ at every step so that it assures suitable initial isomorphism $X^{I_1} \simeq \overline X{}^{I_1}$. 

This method has been used in \cite{MOSW} to prove that the result is true for FT-manifolds with Dynkin diagram ${\rm A}_n$, by reducing the problem to a base case of the form $X_{1,n}\cong\P(T_{\P^n})$. \qed
\end{remark}

In \cite{OSWW} a different approach has been considered, in order to avoid many case by case arguments needed to achieve the isomorphisms $X^{I_1} \simeq \overline X{}^{I_1}$. The idea is to use an appropriate sequence of Bott-Samelson varieties to compare $X$ and $\overline{X}$:

\begin{proposition}\label{prop:compare}
Let $X$ be a Fano manifold  whose elementary contractions are smooth $\P^1$-fibrations. Assume that its Dynkin diagram $\cD$ is connected, different from ${\rm F}_4$ and ${\rm G}_2$. Then there exists a reduced sequence $\ell=(l_1,\dots,l_m)$ associated to the longest element of $W$ such that
\begin{equation} Z_{\ell[s]} \simeq \overline Z_{\ell[s]}\mbox{ for every } s = 0, \dots, m-1.\label{eq:BSiso}
\end{equation}
\end{proposition}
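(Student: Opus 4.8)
The plan is to construct, by descending induction on $s$, isomorphisms $\Theta_s\colon Z_{\ell[s]}\to\overline Z_{\ell[s]}$ for $s=m,m-1,\dots,0$, carrying at each stage the extra data that $\Theta_s$ intertwines the tower projections $p_{\ell[t]}$ and their sections $\sigma_{\ell[t]}$ (for $t>s$) with their homogeneous counterparts, and that $\Theta_s^{*}(\overline f_{\ell[s]}^{*}\overline K_j)=f_{\ell[s]}^{*}K_j$ for every $j\in D$. The cases $s=m$ and $s=m-1$ are trivial: $Z_{\ell[m]}$ and $\overline Z_{\ell[m]}$ are points, while $Z_{\ell[m-1]}$ and $\overline Z_{\ell[m-1]}$ are fibres of smooth $\P^1$-fibrations, hence copies of $\P^1$. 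So the content lies in the inductive step $s+1\rightsquigarrow s$, together with a careful choice of the reduced sequence $\ell$ (which, being a reduced word for the longest element of $W$, automatically makes $f_\ell\colon Z_\ell\to X$ surjective and birational by Proposition \ref{prop:long}).

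For the inductive step, recall that $Z_{\ell[s]}$ is the pullback along $g_{\ell[s+1]}=\pi_{l_{m-s}}\circ f_{\ell[s+1]}$ of the $\P^1$-fibration $\pi_{l_{m-s}}\colon X\to X_{l_{m-s}}$; equivalently $Z_{\ell[s]}=\P(\cF_{\ell[s]})$, where $\cF_{\ell[s]}$ is an extension $0\to L_s\to\cF_{\ell[s]}\to\cO\to 0$ as in (\ref{eq:F}), with $L_s:=f_{\ell[s+1]}^{*}K_{l_{m-s}}$ and $\sigma_{\ell[s+1]}$ the section attached to the quotient $\cO$. Its class lies in $\Ext^1(\cO,L_s)=H^1(Z_{\ell[s+1]},L_s)$. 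The inductive isomorphism $\Theta_{s+1}$ identifies $Z_{\ell[s+1]}$ with $\overline Z_{\ell[s+1]}$ and, crucially, $L_s$ with $\overline L_s:=\overline f_{\ell[s+1]}^{*}\overline K_{l_{m-s}}$: in the basis of $\Pic(Z_{\ell[s+1]})$ dual to the fibre classes of the tower (Proposition \ref{prop:coneBS}) the line bundle $f_{\ell[s+1]}^{*}K_j$ has coordinates $f_{\ell[s+1]}^{*}K_j\cdot\beta_{i(s+1)}=K_j\cdot\Gamma_{l_i}=-M(X)_{jl_i}$, which depend only on the Cartan matrix — the same for $X$ and for its homogeneous model $\overline X$ since $\cD(X)=\cD(\overline X)$ — and these restrict compatibly along the sections because $f_{\ell[s]}\circ\sigma_{\ell[s+1]}=f_{\ell[s+1]}$. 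Hence $H^1(Z_{\ell[s+1]},L_s)\cong H^1(\overline Z_{\ell[s+1]},\overline L_s)$, and $Z_{\ell[s]}$ and $\overline Z_{\ell[s]}$ are projectivizations of extensions of $\cO$ by corresponding line bundles whose classes live in the same vector space. It remains to pin these classes down up to a nonzero scalar, which is exactly the equivalence producing isomorphic $\P^1$-bundles-with-section.

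The mechanism to do this, and the reason for excluding ${\rm F}_4$ and ${\rm G}_2$, is a cohomological bound: one must choose $\ell$ so that $h^1(\overline Z_{\ell[s+1]},\overline L_s)\le 1$ at every step. When this holds there are only two candidates for $Z_{\ell[s]}$, the split bundle $\P(\cO\oplus L_s)$ and (if $h^1=1$) the unique non-split one, and which occurs is detected numerically: writing $\cO(1)$ for the relative tautological bundle of $p_{\ell[s+1]}$, one has $h^0(Z_{\ell[s]},\cO(1))=h^0(Z_{\ell[s+1]},\cF_{\ell[s]})$, which equals $h^0(L_s)+1$ if $\cF_{\ell[s]}$ splits and $h^0(L_s)$ otherwise (the connecting map $H^0(\cO)\to H^1(L_s)$ is cup with the extension class). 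Now $\cO(1)$ has a numerical class that depends only on the combinatorial data, and cohomology of such line bundles on the Bott--Samelson varieties of $X$ agrees with that on those of $\overline X$: iterating the relative duality of Lemma \ref{lem:leray} along the $\P^1$-fibrations $p_{\ell[t]}$ reduces any such bundle to one pulled back from $X$, where Proposition \ref{prop:cohomequal} applies, and every reduction step depends only on intersection numbers. Comparing $h^0(Z_{\ell[s]},\cO(1))$ and $h^0(L_s)$ with the corresponding known quantities on the classical Bott--Samelson variety $\overline Z_{\ell[s]}$ therefore decides whether $\cF_{\ell[s]}$ splits, and the answer matches that for $\overline X$; so $\Theta_s$ exists, and its compatibility with $p_{\ell[s+1]}$, $\sigma_{\ell[s+1]}$ and the bundles $f_{\ell[s]}^{*}K_j$ follows at once from the $\P^1$-bundle structure and the inductive data.

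The hard part is thus the purely combinatorial statement that a reduced word $\ell$ for the longest element of $W$ can be chosen with $h^1(\overline Z_{\ell[s+1]},\overline L_s)\le 1$ for all $s$: via the tower and Lemma \ref{lem:leray} these numbers reduce to Bott-type computations on $G/B$, and one verifies case by case that such a word exists for every connected Dynkin diagram other than ${\rm F}_4$ and ${\rm G}_2$, for which every reduced word unavoidably produces a stage where $h^1\ge 2$, so that $Z_{\ell[s]}$ is genuinely not recoverable from $Z_{\ell[s+1]}$ together with numerical data. Those two diagrams are handled separately — ${\rm G}_2$ is already covered by Theorem \ref{thm:pic2} (Picard number two), and ${\rm F}_4$ by the recursive ``complete family of lines'' argument outlined in Remark \ref{rem:induction}.
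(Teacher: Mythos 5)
Your overall architecture matches the paper's: build the towers $Z_{\ell[s]}$ and $\overline Z_{\ell[s]}$ in parallel by descending induction, identify the line bundles $L_s=f_{\ell[s+1]}^*K_{l_{m-s}}$ with their homogeneous counterparts via the (common) Cartan matrix, reduce the inductive step to pinning down the extension class of (\ref{eq:F}) up to homothety, and push the existence of a good word into a combinatorial verification that excludes ${\rm F}_4$ and ${\rm G}_2$. Up to that point the proposal is sound. But the crux of the inductive step --- deciding, when $h^1(Z_{\ell[s+1]},L_s)=1$, whether $\cF_{\ell[s]}$ is the split or the non-split extension --- is handled by an argument that is circular. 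You propose to detect splitting from $h^0(Z_{\ell[s]},\cO(1))=h^0(Z_{\ell[s+1]},\cF_{\ell[s]})$, claiming this number can be computed from numerical data because ``cohomology of such line bundles on the Bott--Samelson varieties of $X$ agrees with that on those of $\overline X$.'' That agreement is exactly what is at stake: the relative $\cO(1)$ of $p_{\ell[s+1]}$ has the \emph{same} numerical class whether or not the extension splits, yet its $h^0$ differs by one in the two cases; so no invariance of cohomology under numerical equivalence can decide the question, and the tools you cite (Proposition \ref{prop:cohomequal}, which concerns line bundles on $X$ in the span of the $K_i$, and the Euler-characteristic computation behind Proposition \ref{prop:long}, which concerns pullbacks $f_\ell^*L$) do not apply to $\cO(1)$ on the not-yet-identified variety $Z_{\ell[s]}$.

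The paper's resolution is a short geometric observation that you are missing. If $J=\{i<m-s \mid l_i=l_{m-s}\}$ is non-empty, restrict the extension (\ref{eq:F}) to a curve $C$ in the class $\beta_{i(s+1)}$ with $l_i=l_{m-s}$: then $f_{\ell[s+1]}(C)$ lies in the class $\Gamma_{l_{m-s}}$, so it is contracted by $\pi_{l_{m-s}}$, and the fiber product defining $Z_{\ell[s]}$ is a trivial $\P^1$-bundle $\P^1\times\P^1$ over $C$ rather than the Hirzebruch surface $\F_2=\P(\cO\oplus\cO(-2))$ that the split extension would produce. Hence the extension class restricts non-trivially to $C$ and is therefore non-zero; if moreover $h^1=1$ it is unique up to scalar and $Z_{\ell[s]}$ is determined. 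Correspondingly, the combinatorial condition to be verified is not merely $h^1\le 1$ but the dichotomy $h^1=0$ when $J=\emptyset$ and $h^1=1$ when $J\neq\emptyset$ (equation (\ref{eq:coho})): with your weaker condition, the case $J=\emptyset$, $h^1=1$ would leave two genuinely indistinguishable candidates and your induction would stall. Replacing your $h^0$ detection by this restriction argument, and strengthening the cohomological condition accordingly, repairs the proof and brings it in line with the paper's.
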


\begin{corollary}\label{cor:notF4}
In the setup of Proposition \ref{prop:compare}, the variety $X$ is rational homogeneous.
\end{corollary}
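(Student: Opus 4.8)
The plan is to leverage Proposition~\ref{prop:compare}, which produces a reduced sequence $\ell=(l_1,\dots,l_m)$ realizing the longest element of $W$ and isomorphisms $Z_{\ell[s]}\simeq\overline Z_{\ell[s]}$ for all $s=0,\dots,m-1$; the goal is to upgrade the isomorphism at level $s=1$ to an isomorphism at level $s=0$, i.e.\ $Z_\ell\simeq\overline Z_\ell$, and then to identify $X$ with $\overline X=G/B$ as the common image of the birational morphisms $f_\ell:Z_\ell\to X$ and $\overline f_\ell:\overline Z_\ell\to\overline X$. By Proposition~\ref{prop:long}, both $f_\ell$ and $\overline f_\ell$ are surjective and birational, since $w(\ell)$ is the longest element of $W$. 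By Proposition~\ref{prop:coneBS}(2), each of these morphisms is, up to Stein factorization, the contraction of $Z_\ell$ (resp.\ $\overline Z_\ell$) associated with the extremal face of $\NE(Z_\ell)$ generated by the classes $\gamma_i$ with $i\in J$, where $J=\{i\mid l_i=l_k\text{ for some }k>i\}$; since the combinatorics of this face depends only on the sequence $\ell$ and on the intersection numbers $-K_i\cdot\Gamma_j$ (encoded in $\cD$), the same face governs both contractions.

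First I would fix the sequence $\ell$ from Proposition~\ref{prop:compare} and set $Z:=Z_\ell$, $Z':=Z_{\ell[1]}\simeq\overline Z_{\ell[1]}$. By construction $Z_\ell\to Z_{\ell[1]}$ is the $\P^1$-bundle $\P(\cF_{\ell[0]})$ attached to the extension $(\ref{eq:F})$
$$0\lra f_{\ell[1]}^*K_{l_m}\lra\cF_{\ell[0]}\lra\cO_{Z_{\ell[1]}}\lra 0,$$
and similarly $\overline Z_\ell\to\overline Z_{\ell[1]}$ is $\P(\overline\cF_{\ell[0]})$ for the analogous extension on $\overline Z_{\ell[1]}$. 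Under the identification $Z_{\ell[1]}\simeq\overline Z_{\ell[1]}$ the two pullbacks $f_{\ell[1]}^*K_{l_m}$ and $\overline f_{\ell[1]}^*\overline K_{l_m}$ correspond under $\psi$, because $K_{l_m}$ lies in the subgroup of $\Pic$ generated by the $K_i$ and the identification of Bott--Samelson varieties is compatible with the evaluation maps up to this subgroup; hence the two rank-two bundles differ by a twist by a line bundle and by the choice of extension class in $\Ext^1(\cO,f_{\ell[1]}^*K_{l_m})$. I would then argue, using Proposition~\ref{prop:cohomequal} applied to $Z_{\ell[1]}$ (whose line bundle cohomology is the same as that of $\overline Z_{\ell[1]}$ for bundles in the relevant subgroup), that this $\Ext^1$ group has dimension one in the non-split case, so the projectivizations $\P(\cF_{\ell[0]})$ and $\P(\overline\cF_{\ell[0]})$ are isomorphic over $Z_{\ell[1]}$. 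This yields $(\ref{eq:BSiso})$ at the top level $s=0$, i.e.\ $Z_\ell\simeq\overline Z_\ell$, compatibly with all the structure morphisms $p_{\ell[s+1]}$ and sections $\sigma_{\ell[s+1]}$.

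Finally, with $Z_\ell\simeq\overline Z_\ell$ in hand, I would conclude as follows: the birational contraction $f_\ell:Z_\ell\to X$ is, by Proposition~\ref{prop:coneBS}(2), the contraction of the extremal face $F_J\subset\NE(Z_\ell)$ generated by $\{\gamma_i\mid i\in J\}$; the same is true for $\overline f_\ell:\overline Z_\ell\to\overline X=G/B$, and under the isomorphism $Z_\ell\simeq\overline Z_\ell$ the faces $F_J$ and $\overline F_J$ match, since they are described purely in terms of $\ell$. Two contractions of isomorphic projective varieties associated with the same extremal face have isomorphic targets, so $X\simeq\overline X=G/B$, which is rational homogeneous. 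The main obstacle I anticipate is the middle step: controlling the extension class of $\cF_{\ell[0]}$ and showing it lifts the isomorphism $Z_{\ell[1]}\simeq\overline Z_{\ell[1]}$ to $Z_\ell\simeq\overline Z_\ell$; this requires knowing that $h^1$ of the relevant line bundle on $Z_{\ell[1]}$ equals that of its homogeneous counterpart (which is exactly the content of Proposition~\ref{prop:cohomequal}) and that a one-dimensional $\Ext^1$ forces uniqueness of the non-trivial extension up to scalar, hence a unique non-trivial projective bundle. The cases $\cD={\rm F}_4$ and $\cD={\rm G}_2$ are excluded from Proposition~\ref{prop:compare} and must be handled separately — presumably by the inductive/base-case method of Remark~\ref{rem:induction} together with the low Picard number classification (Theorem~\ref{thm:pic2}), but that falls outside the scope of this corollary as stated, which only concerns connected $\cD\neq{\rm F}_4,{\rm G}_2$.
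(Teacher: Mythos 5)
Your proposal is correct and essentially reproduces the paper's proof, which simply observes that $f_\ell$ and $\overline f_\ell$ are birational by Proposition \ref{prop:long} and that they contract the same extremal face of $Z_\ell\simeq\overline Z_\ell$ by Proposition \ref{prop:coneBS}(2). The only remark is that your middle paragraph is unnecessary: since $\ell[0]=\ell$, the range $s=0,\dots,m-1$ in Proposition \ref{prop:compare} already includes the top level, i.e.\ it directly gives $Z_\ell\simeq\overline Z_\ell$, so no ``upgrade'' from level $s=1$ (via extension classes and $\Ext^1$) is needed.
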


\begin{proof}
Since $f_\ell$ and ${\overline f}_\ell$ are birational by Proposition \ref{prop:long}, it is enough to compare the extremal faces defining them, which are the same by Proposition \ref{prop:coneBS} (2).
\end{proof}

The arguments leading to \ref{prop:compare} are rather technical. We refer the reader to \cite{OSWW} for details, and include here a few words about the ideas behind them:

\begin{proof}[Idea of the proof of Proposition \ref{prop:compare}]
 We show that the recursive construction of the Bott-Samelson varieties, for a suitable choice of the sequence $\ell$, is the same for $X$ and for its homogeneous model. It is then enough to find a reduced sequence of maximal length $m$ such that, for every $s$, the cocycle $\zeta_{\ell[s]}\in H^1(Z_{\ell[s+1]},f_{\ell[s+1]}^*K_{l_{m-s}})$ providing the extension (\ref{eq:F}) is uniquely determined, up to homotheties.

 By looking at the restriction of the sequence (\ref{eq:F}) to curves $\beta_{i(s+1)}$ one sees that the extension cannot be trivial if the index $l_{r-s}$ already appeared in the sequence, i.e. if $J:=\{i<r-s|\,\,l_i=l_{m-s}\}$ is not empty. Therefore, to prove the isomorphisms in (\ref{eq:BSiso}) one has to show that, for any $s = 0, \dots, m-1$
\begin{equation}\label{eq:coho}h^1(Z_{\ell[s+1]},f_{\ell[s+1]}^* K_{l_{m-s}}) = \begin{cases} 0 & J= \emptyset \\ 1 & J \not = \emptyset
\end{cases}\end{equation}
Sequences  $\ell=(l_1, \dots, l_m)$ satisfying that $w(\ell)=r_{l_1}\circ\dots\circ r_{l_m}$ is a reduced expression of the longest element of $W$, and such  that equalities in (\ref{eq:coho}) hold, have been described in \cite[Section 4]{OSWW}
for $\cD\neq {\rm G}_2,{\rm F}_4$. If $\cD$ has no multiple edges, one may check that any reduced sequence of maximal length satisfies the required property. This is not the case when $\cD$ is of type ${\rm B}$ or ${\rm C}$, but we may still choose carefully the sequence $\ell$ so that the whole process works.
\end{proof}

\begin{remark}\label{rem:espcases}
As for the special cases, the result for $\cD= {\rm G}_2$ follows from Theorem \ref{thm:pic2}, while case ${\rm F}_4$ is much more involved: using the software system {\tt Sage} it has been checked that  for none of the $2144892$ possible sequences providing a reduced expression of the longest element of $W$, the equalities (\ref{eq:coho}) hold for any $s=0, \dots, m-1=23$.
\end{remark}

Let us finally discuss the remaining case, $\cD={\rm F}_4$. The proof presented in \cite{OSWW} for this case is based on the reconstruction argument by families of lines explained in Remark \ref{rem:induction}.

\begin{proposition}
Let $X$ be a Fano manifold  whose elementary contractions are smooth $\P^1$-fibrations, and assume that its Dynkin diagram is ${\rm F}_4$. Then $X$ is rational homogeneous.
\end{proposition}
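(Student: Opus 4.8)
The plan is to run the reconstruction procedure by families of lines explained in Remark~\ref{rem:induction}. Numbering the nodes of ${\rm F}_4$ as in (\ref{eq:dynkins}), we use the chain
$$
I_1=\{1\}\subset I_2=\{1,2\}\subset I_3=\{1,2,3\}\subset I_4=D,
$$
which, as observed there, satisfies the requirements of Step~1; thus $X=X^{I_4}\to X^{I_3}\to X^{I_2}\to X^{I_1}$ is a tower of (smooth) Mori contractions with $X^{I_1}$ of Picard number one, and in the homogeneous model each $\overline{X}{}^{I_{k+1}}$ is a complete family of lines of $\overline{X}{}^{I_k}$. Step~3 of Remark~\ref{rem:induction} is available here: every proper connected subdiagram of ${\rm F}_4$ is of type ${\rm A}$, ${\rm B}$ or ${\rm C}$ (erasing node $1$ gives ${\rm C}_3$, node $4$ gives ${\rm B}_3$, node $2$ or $3$ gives ${\rm A}_2\sqcup{\rm A}_1$), so by Corollary~\ref{cor:notF4} and the product decomposition, every Fano manifold whose elementary contractions are smooth $\P^1$-fibrations and whose Dynkin diagram is a proper subdiagram of ${\rm F}_4$ is rational homogeneous. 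These are precisely the fibers of the contractions $\pi^I:X\to X^I$ for $\emptyset\neq I\subsetneq D$ (cf. Proposition~\ref{prop:fibers}), which is the hypothesis under which Step~3 was established in \cite{MOSW}.

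It then remains to settle the base case, $X^{I_1}\cong\overline{X}{}^{I_1}={\rm F}_4(1)$. Since the Cartan matrix of $X$ is $M({\rm F}_4)$, which is of finite type, all numerical invariants of $X$ agree with those of its homogeneous model (cf. Proposition~\ref{prop:cohomequal} and the surrounding discussion); in particular the image on $X^{I_1}$ of a minimal rational curve $\Gamma_1$ is a minimal rational curve of the same anticanonical degree, and the variety $\cM_x$ of such curves through a general point has the same dimension as the VMRT of ${\rm F}_4(1)$. The heart of the matter is to identify the projective subvariety $\cC_x\subset\P(\Omega_{X^{I_1},x})$ with the VMRT of ${\rm F}_4(1)$, which by Table~\ref{tab:VMRTs} is $LG(2,5)$ in its minimal embedding. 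I would obtain this through a careful analysis of the tower $X\to X^{I_3}\to X^{I_2}\to X^{I_1}$ and of the Bott--Samelson varieties attached to $X^{I_1}$: the fibers that occur there have proper subdiagrams of ${\rm F}_4$, hence are rational homogeneous by the previous paragraph, and this rigidity, combined with the cohomology comparison of the previous sections, pins down $\cC_x$ up to projective equivalence. As node $1$ corresponds to a long simple root of ${\rm F}_4$, Theorem~\ref{them:HH} then yields $X^{I_1}\cong{\rm F}_4(1)$, and three applications of the recursion of Remark~\ref{rem:induction} propagate the isomorphism all the way up to $X=X^{I_4}\cong\overline{X}{}^{I_4}=G/B$, which is rational homogeneous.

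I expect the base case to be the main obstacle. In the ${\rm A}_n$ situation of \cite{MOSW} one reduces to recognizing $\P(T_{\P^n})$, an essentially explicit model, whereas here $X^{I_1}$ has the $15$-dimensional homogeneous model ${\rm F}_4(1)$; getting a firm enough grip on its variety of minimal rational tangents to invoke Theorem~\ref{them:HH} is delicate and requires the full strength of the Bott--Samelson and cohomology-comparison machinery developed above. By contrast, once the base case is in place, the recursion and the combinatorial verification that the chain $(I_k)$ has the required property are, modulo these structural results, routine bookkeeping.
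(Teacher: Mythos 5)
Your route is the paper's: run the recursion of Remark \ref{rem:induction} along the chain $\{1\}\subset\{1,2\}\subset\{1,2,3\}\subset D$, reduce to the base case $X^{I_1}\cong{\rm F}_4(1)$, and settle that by identifying the VMRT with $LG(2,5)$ and invoking Theorem \ref{them:HH}. However, there is a gap at the very first step. You assert that $X=X^{I_4}\to X^{I_3}\to X^{I_2}\to X^{I_1}$ is a tower of \emph{smooth} Mori contractions. In this proposition $X$ is only a Fano manifold whose \emph{elementary} contractions are smooth $\P^1$-fibrations; it is not assumed to be a CP-manifold, so Theorem \ref{thm:smooth} is unavailable and the smoothness of the non-elementary contractions $\pi^I$ is not free (nor is Proposition \ref{prop:fibers}, which is stated for FT-manifolds, directly applicable). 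Remark \ref{rem:induction} restricts itself to FT-manifolds for exactly this reason. The paper's proof therefore begins by \emph{proving} that the $\pi^I$ are smooth: it shows their fibers are birational images of Bott--Samelson varieties and deduces that these fibers are homogeneous because the theorem is already known for proper subdiagrams of ${\rm F}_4$. Without this input your recursion has no smooth tower to run on.

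The second issue is that the identification of $\cC_x\subset\P(\Omega_{X^{I_1},x})$ with $LG(2,5)$ — which you rightly call the heart of the matter — is left as ``a careful analysis of the tower and of the Bott--Samelson varieties,'' which is not yet an argument; the cohomology comparison of Proposition \ref{prop:cohomequal} controls numerical invariants but does not by itself pin down a projective embedding. The paper's mechanism is concrete: the family of lines through a point of $X^{I_1}$ is realized as the image under $\pi^1$ of a specific Bott--Samelson variety $Z_\ell$; one proves $Z_\ell$ is isomorphic to the corresponding Bott--Samelson variety of the homogeneous model, and then analyzes the morphism $Z_\ell\to X^{I_1}$ to conclude that the VMRT is projectively the Pl\"ucker-embedded $LG(2,5)$. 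That is the content your sketch would need to supply before Theorem \ref{them:HH} can be applied.
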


\begin{proof}[Sketch of the proof]
First of all, in order to use the recursive procedure of Remark \ref{rem:induction}, we need to prove that the contractions $\pi^I:X\to X^I$ are smooth. To do this, we prove first that the fibers of these contractions are birational images of Bott-Samelson varieties, and these images may be proved to be homogeneous because we know that Theorem \ref{thm:main} holds in the case in which the Dynkin diagram is a proper subdiagram of ${\rm F}_4$.

Then the recursive procedure described in \ref{rem:induction} allows to reduce the problem to showing that the CP-manifold $X^1$ is isomorphic to its homogeneous model $\overline X{}^1$. Moreover this may be achieved, via Theorem \ref{them:HH}, by proving that the VMRT's at a general point of both varieties are projectively isomorphic.

For the rational homogeneous model, this is the Pl\"ucker embedding of the Lagrangian Grassmannian of $3$-dimensional subspaces in $\C^6$ which are isotropic with respect to a fixed symplectic form, i.e. the
rational homogeneous space corresponding to the Dynkin diagram ${\rm C}_3$ marked on
the third node. For our manifold $X$ we may consider the family of lines passing through one point, which is the image via $\pi^1$ of a Bott-Samelson variety $Z_\ell$. We may then prove that $Z_\ell$ is isomorphic to the corresponding Bott-Samelson variety of $\overline X$, and hence the proof boils down to studying the morphism from $Z_\ell$ into $X^1$. We refer to \cite[Section 6]{OSWW} for details.
\end{proof}

%\label{sec:FTman}
%%%%%%%%%%%%%%%%%%%%%%%%%%%%%%%%%%%%%%%%%%%%%%%
%\nocite{*} %comment if we don't want the uncited references to appear
\bibliographystyle{plain}
\bibliography{biblio}

\end{document}